\title{Intrinsic Lipschitz graphs and vertical $\beta$-numbers in the Heisenberg group}
\address{University of Connecticut, Department of Mathematics}
\address{University of Jyv\"askyl\"a, Department of Mathematics and Statistics \newline (Current affiliation: University of Fribourg, Department of Mathematics)}
\address{University of Helsinki, Dept. of Mathematics and Statistics}
\subjclass[2010]{28A75 (Primary), 28C10, 35R03 (Secondary)}
\author{Vasileios Chousionis, Katrin F\"assler and Tuomas Orponen}
\thanks{V.C. is supported by  the Simons Foundation via the Collaboration grant \emph{Analysis and dynamics in Carnot groups},  grant no.\  521845. K.F. is supported by the Academy of Finland through the grant \emph{Sub-Riemannian manifolds from a quasiconformal viewpoint}, grant number 285159. T.O. is supported by the Academy of Finland through the grant \emph{Restricted families of projections, and applications to Kakeya type problems}, grant number 274512. T.O. is also a member of the Finnish CoE in Analysis and Dynamics Research. }
\email{vasileios.chousionis@uconn.edu}
\email{katrin.faessler@unifr.ch}
\email{tuomas.orponen@helsinki.fi}
\newcommand{\R}{\mathbb{R}}
\newcommand{\Rn}{\mathbb{R}^n}
\newcommand{\V}{\mathbb{V}}
\newcommand{\N}{\mathbb{N}}
\newcommand{\C}{\mathbb{C}}
\newcommand{\Z}{\mathbb{Z}}
\newcommand{\calT}{\mathcal{T}}
\newcommand{\calL}{\mathcal{L}}
\newcommand{\calH}{\mathcal{H}}
\newcommand{\calG}{\mathcal{G}}
\newcommand{\calB}{\mathcal{B}}
\newcommand{\calF}{\mathcal{F}}
\newcommand{\calR}{\mathcal{R}}
\newcommand{\Hd}{\dim_{\mathrm{H}}}
\newcommand{\E}{\mathbb{E}}
\newcommand{\G}{\Gamma}
\newcommand{\diam}{\operatorname{diam}}
\newcommand{\dist}{\operatorname{dist}}
\newcommand{\W}{\mathbb{W}}
\newcommand{\He}{\mathbb{H}}
\newcommand{\CG}{\textup{CG}}
\newcommand{\Adm}{\operatorname{Adm}}
\newcommand{\ra}{\rightarrow}
\numberwithin{equation}{section}
\newtheorem{thm}{Theorem}[section]
\newtheorem{lemma}[thm]{Lemma}
\newtheorem{cor}[thm]{Corollary}
\theoremstyle{definition}
\newtheorem{proposition}[thm]{Proposition}
\theoremstyle{definition}
\newtheorem{claim}[thm]{Claim}
\theoremstyle{definition}
\newtheorem{definition}[thm]{Definition}
\theoremstyle{definition}
\newtheorem{rem}[thm]{Remark}
\newtheorem{remark}[thm]{Remark}
\begin{document}

\begin{abstract} The purpose of this paper is to introduce and study some basic concepts of quantitative rectifiability in the first Heisenberg group $\He$. In particular, we aim to demonstrate that new phenomena arise compared to the Euclidean theory, founded by G. David and S. Semmes in the 90's. The theory in $\He$ has an apparent connection to certain nonlinear PDEs, which do not play a role with similar questions in $\R^{3}$.

Our main object of study are the \emph{intrinsic Lipschitz graphs} in $\He$, introduced by B. Franchi, R. Serapioni and F. Serra Cassano in 2006. We claim that these $3$-dimensional sets in $\He$, if any, deserve to be called quantitatively $3$-rectifiable.
Our main result is that the intrinsic Lipschitz graphs satisfy a \emph{weak geometric lemma} with respect to \emph{vertical $\beta$-numbers}. Conversely, extending a result of David and Semmes from $\R^{n}$, we prove that a
$3$-Ahlfors-David regular subset in $\He$, which satisfies the weak geometric lemma and has \emph{big vertical projections}, necessarily has \emph{big pieces of intrinsic Lipschitz graphs}.
\end{abstract}

\maketitle

\tableofcontents

\section{Introduction}

\emph{Rectifiability} is a fundamental concept in geometric measure
theory. Rectifiable sets extend the class of surfaces considered in classical differential
geometry; while admitting a few edges and sharp corners, they are still smooth enough to support a rich theory of local analysis. However, for certain questions of more global nature -- the boundedness of singular integrals being the main example -- the notion of rectifiability is too qualitative.

In a series of influential papers around the year 1990, \cite{Da2,DS,DS2,DS3}, G. David and S. Semmes developed an extensive theory of \emph{quantitative rectifiability} in Euclidean spaces. One of their main objectives was to find geometric criteria to characterize the $m$-dimensional subsets of $\Rn$, $0 < m < n$, on which "nice" singular integral operators (SIO) are $L^{2}$-bounded. Here, "nice" refers to SIOs with smooth, odd Calder\'on-Zygmund kernels, the archetype of which is the Riesz kernel $x/|x|^{m+1}$, $x \in \Rn$. Notice that for $n = 2$ and $m=1$, the Riesz kernel essentially coincides with the Cauchy kernel $1/z, z \in \C,$ in the complex plane.

A motivation for the efforts of David and Semmes was the significance of SIOs for the problem of finding a geometric characterization of \emph{removable sets} for bounded analytic functions and Lipschitz harmonic functions. Due to seminal papers by David \cite{Da3}, David and P. Mattila \cite{DM}, and F. Nazarov, X. Tolsa and A. Volberg \cite{NToV1}, \cite{NToV2}, it is now known that these removable sets coincide with the purely $(n-1)$-unrectifiable sets in $\Rn$, i.e. the sets which intersect every  $\mathcal{C}^1$ hypersurface in a set of vanishing $(n-1)$-dimensional Hausdorff measure. The geometric characterization of removability, and its connections to geometric measure theory and harmonic analysis, has a very interesting history; we refer to the excellent survey by Volberg and V. Eiderman \cite{EV}, and to the recent book of Tolsa \cite{To1}.

The problem of characterizing removable sets for Lipschitz harmonic functions has a natural analogue outside the Euclidean setting in certain non-commutative Lie groups, of which the Heisenberg group is the simplest example. In this group, the role of the Euclidean Laplace operator is played by the {\it sub-Laplacian} and harmonic functions are, by definition, the solutions to the sub-Laplacian equation. The question of characterizing removability for Lipschitz harmonic functions was considered in \cite{CM} for Heisenberg groups $\He^n$ endowed with a sub-Riemannian metric. It was shown that sets with Hausdorff dimension lower than $2n+1$ are removable, while those with dimension higher than $2n+1$ are not. Moreover, there exist both removable and non-removable sets with Hausdorff dimension equal to $2n+1$. Hence, as in the Euclidean case, the dimension threshold for removable sets is $\Hd \He^{n} - 1 = 2n + 1$, where $\Hd \He^{n}$ denotes the Hausdorff dimension of $\He^n$. The results from \cite{CM} were extended in \cite{CMT} from Heisenberg groups to a larger class of Lie groups, the \emph{Carnot groups}. There exists a well developed theory of sub-Laplacians in this setting, see for instance the book \cite{BLU} by A. Bonfiglioli, E. Lanconelli and F. Uguzzoni.

In order to characterize removable sets for Lipschitz harmonic functions in $\R^{n}$, one has to characterize the sets on which the SIO associated with the Riesz kernel $x/|x|^n$ is bounded in $L^2$. In $\He^{1}$, one would need to face a SIO with kernel
$$K(p):=\left( \frac{x|z|^2+yt}{(|z|^4+t^2)^{3/2}}, \frac{y|z|^2-xt}{(|z|^4+t^2)^{3/2}} \right)$$
for $p=(z,t), z=x+iy \in \C, t \in \R$. At this point, the knowledge about the action of this SIO on $3$-dimensional subsets of $\He^{1}$ (i.e. subsets of co-dimension $1$) is extremely limited. In the present paper, we will not address this question further, but it motivates the study of quantitative rectifiability in $\He^{1}$.

The main purpose of the present paper is to initiate this study, and to introduce some new, relevant concepts in $\He=\He^1$. Our aims
are twofold. First, we demonstrate that some parts, at least, of
the Euclidean theory of quantitative rectifiability carry over to $\He$. To us, this gives hope
that -- some day in the distant future -- questions on the
boundedness of SIOs on subsets of $\He$ may be understood as well
as they currently are in $\R^{n}$. Our second aim is somewhat more
philosophical: we want to demonstrate that building a theory of
quantitative rectifiability in $\He$ is worth the effort. In
particular, the proofs are not, merely, technically challenging
replicas of their Euclidean counterparts. New phenomena appear. In
particular, proving our main result, Theorem \ref{main}, involved
studying non-smooth solutions of the (planar) non-linear PDE
\begin{equation}\label{burgersEquation} \partial_{y}\phi + \phi \partial_{t} \phi = c, \qquad c \in \R, \end{equation}
known as the (or rather "a") \emph{Burgers equation}. In proving the Euclidean counterpart of Theorem \ref{main}, such considerations are not required. At least to us, any connection between the innocent-looking statement of Theorem \ref{main}, and the PDE \eqref{burgersEquation}, was quite a surprise at first sight.

In the terminology of David and Semmes, Theorem \ref{main} is the \emph{weak geometric lemma} for certain subsets of $\He$, called \emph{intrinsic Lipschitz graphs}.
For now, we just briefly explain the meaning of these concepts;
precise definitions are postponed to Sections \ref{s:prelim} and \ref{s:BPiLG}. We consider two kinds of subgroups of $\He$: \emph{horizontal} and \emph{vertical}.
Writing $\He = \R^{2} \times \R$, the horizontal subgroups are lines through the origin inside $\R^{2} \times \{0\}$, while the vertical subgroups are planes spanned by a horizontal subgroup and the "$t$-axis" $\{0\} \times \R$.

In the present paper, we are mainly concerned with \emph{intrinsic graphs} over a fixed (but arbitrary) vertical subgroup $\W$, which we often take to be the "$(y,t)$-plane" $\W_{y,t} := \{(x,y,t) \in \He : x = 0\}$. Let $\V_{x}$ be the horizontal subgroup $\V_{x} = \{(x,0,0) : x \in \R\} \subset \He$, and consider a function $\phi \colon \W_{y,t} \to \V_{x}$. The intrinsic graph of $\phi$ (over $\W_{y,t}$) is the set
\begin{displaymath} \Gamma^{\phi} := \{w \cdot \phi(w) : w \in \W_{y,t}\} \subset \He, \end{displaymath}
where "$\cdot$" refers to the Heisenberg product (see Section \ref{s:prelim}).
Note that $\Gamma^{\phi}$ is, in general different, from the "Euclidean graph" $\{(\phi(y,t),y,t) : (y,t) \in \W_{y,t}\}$, and in fact $\Gamma^{\phi}$ need not be representable as the Euclidean graph of any function (see Example 2.5 in \cite{FSS}).

Recall that a function $f \colon \R^{2} \to \R$ is (Euclidean)
Lipschitz, if and only if there exists a cone, which, when centered
at any point $x$ on the graph of $f$, only intersects the graph at
$x$. The notion of \emph{intrinsic Lipschitz function} in $\He$ is
\textbf{defined} with this characterization in mind, with "graph"
replaced by "intrinsic graph", and "cone" replaced by a natural
$\He$-analogue, see \eqref{intrinsicCone}. Intrinsic Lipschitz
functions were introduced by B.\ Franchi, R.\ Serapioni and F.\
Serra Cassano in \cite{FSS}, and they turned out to be very influential in the evolution of geometric analysis in Heisenberg groups, see for instance \cite{ASV,ASCV,AS,BCSC, CMPS, FMS, FSS11, MSS}. Curiously, the definition
does not guarantee that an intrinsic Lipschitz function is
(metrically) Lipschitz between the spaces $\W_{y,t}$ and $\V_{x}$.

If the reader is familiar with the theory of rectifiability in metric spaces, but not with that in $\He$, she may wonder why such "intrinsic" notions are necessary in the first place; why cannot one study (metrically) Lipschitz images $\R^{k} \to \He$? The reason is simple: a Lipschitz image $f(\R^{k}) \subset \He$ has vanishing $k$-dimensional measure for $k \in \{2,3,4\}$. This is a result of L. Ambrosio and B. Kirchheim \cite{AK}. The work of Mattila, Serapioni and Serra Cassano \cite{MSS} and Franchi, Serapioni and Serra Cassano \cite{FSS01, FSS11} suggests that intrinsic Lipschitz graphs, instead, are the correct class of sets to consider in connection with $3$-rectifiability in the Heisenberg group. We believe that this is true also in the quantitative setting.

Lipschitz graphs in $\R^{n}$ are, arguably, the most fundamental examples of quantitatively rectifiable sets in the sense of David and Semmes. In the present paper, we propose that intrinsic Lipschitz graphs play the same role in $\He$. In $\R^{n}$, the term "quantitatively rectifiable" has many meanings; the fundamental results of David and Semmes show that a certain class of sets enjoys -- and can be characterized -- by a wide variety of properties, both geometric and analytic, each of which could be taken as the definition of "quantitatively rectifiable". In $\He$, no such results are available, yet, so we have to specify our viewpoint to "quantitative rectifiability". It will be that of "quantitative affine approximation". Theorem \ref{main}, informally stated, says that intrinsic Lipschitz graphs admit good affine approximations "at most places and scales".

The traditional way to quantify such a statement is via the notion of $\beta$-numbers, introduced by P. Jones in \cite{Joesc} in order to control the Cauchy singular integral on $1$-dimensional Lipschitz graphs. They were later used by Jones \cite{Jo1} and David and Semmes \cite{DS, DS2} in order to characterize quantitative rectifiability. The same approach works in $\He$, if the $\beta$-numbers are defined correctly. In Definition \ref{verticalBetas} below, we introduce the \emph{vertical $\beta$-numbers}. These nearly coincide with the usual (Euclidean) $\beta$-numbers, defined with respect to the metric in $\He$ of course: the single, crucial, difference is that approximating affine planes are restricted to sets of the form $z \cdot \W'$, where $z \in \He$ and $\W'$ is a vertical subgroup. Viewing $\He$ as $\mathbb{R}^{3}$ for a moment, these sets are simply (Euclidean) translates of the sets $\W'$. So, they are quite literally vertical planes.

Here is, finally, the main result:
\begin{thm}\label{main} An intrinsic Lipschitz graph in $\He$ satisfies the weak geometric lemma for the vertical $\beta$-numbers. \end{thm}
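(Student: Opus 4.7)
The plan is to reduce Theorem \ref{main} to a quantitative affine-approximation statement for intrinsic Lipschitz functions, and then to invoke the regularity theory for the associated Burgers-type equation \eqref{burgersEquation}. Writing $\Gamma = \Gamma^{\phi}$ for an intrinsic Lipschitz $\phi \colon \W_{y,t} \to \V_{x}$, I would first use the translation- and dilation-invariance of the Heisenberg metric and of the class of intrinsic Lipschitz graphs (with controlled constants) to localize: it suffices to estimate the Carleson packing of ``bad'' scales above a fixed unit ball $B \subset \He$.

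The natural candidate for the best-approximating vertical plane at a point $p = w \cdot \phi(w) \in \Gamma$ and scale $r > 0$ is the intrinsic graph of an affine-in-$y$ profile $\psi_{w,r}(y',t') = \phi(w) + a_{w,r}(y' - y)$, where $a_{w,r}$ is a suitable average of the intrinsic gradient $\nabla^{\phi} \phi$ over an $r$-neighborhood of $w$. This is justified by two facts: vertical planes over $\W_{y,t}$ correspond, via the intrinsic graph construction, to functions that are affine in $y$ and independent of $t$; and intrinsic Lipschitz functions admit a well-defined distributional intrinsic gradient $\nabla^{\phi} \phi \in L^{\infty}$, bounded by the intrinsic Lipschitz constant.

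The heart of the argument should be an $L^{2}$-type Carleson estimate of the form
\[
\int_{0}^{1} \int_{B} \beta(p,r)^{2}\, d\mathcal{H}^{3}(p)\, \frac{dr}{r} \lesssim 1,
\]
which implies the weak geometric lemma by Chebyshev's inequality. Upon parameterising $\Gamma$ by $\W_{y,t}$ via $w \mapsto w \cdot \phi(w)$, this estimate should decompose into a ``horizontal'' contribution, controlled by the oscillation of $\nabla^{\phi}\phi$ in the $y$-direction and tractable by standard Littlewood--Paley reasoning (since $\nabla^{\phi}\phi \in L^{\infty} \subset \mathrm{BMO}$), and a nonlinear ``vertical'' contribution measuring the discrepancy between $\phi(w')$ and $\psi_{w,r}(w')$ for $w'$ displaced from $w$ along the $t$-axis.

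The main obstacle, and the reason \eqref{burgersEquation} enters, is to control the vertical contribution. The pointwise relation $\partial_{y} \phi + \phi\, \partial_{t}\phi = \nabla^{\phi}\phi$ means that the variation of $\phi$ in the $t$-direction is governed by transport along the (non-smooth) characteristics of a Burgers equation, rather than by classical differentiation. Quantifying the closeness of $\phi$ to $\psi_{w,r}$ on a ball of scale $r$ therefore demands a stability result for non-smooth solutions of \eqref{burgersEquation}: informally, that on most balls and at most scales, $\phi$ is close to a genuinely affine solution. Establishing such a stability statement in a form strong enough to yield the Carleson packing estimate is where I expect most of the technical work to lie, and it is the step at which the Heisenberg theory genuinely departs from its Euclidean counterpart.
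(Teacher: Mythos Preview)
Your proposal has a genuine structural gap that the paper identifies explicitly in Section~\ref{outline}. You propose to approximate $\phi$ near a point $w$ at scale $r$ by an affine-in-$y$ function $\psi_{w,r}(y',t') = \phi(w) + a_{w,r}(y'-y)$ with $a_{w,r}$ an average of $\nabla^{\phi}\phi$, and then to control the error via the oscillation of $\nabla^{\phi}\phi$ through Littlewood--Paley/BMO arguments. This is the Euclidean template, and it breaks in $\He$ for the following reason: on bounded domains the equation $\nabla^{\phi}\phi = c$ admits \emph{non-affine} (even non-$\mathcal{C}^1$) intrinsic Lipschitz solutions, e.g.\ $\phi(y,t)=t/(y+1)$ with $\nabla^{\phi}\phi\equiv 0$. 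For such $\phi$ the oscillation of $\nabla^{\phi}\phi$ vanishes identically, yet the vertical $\beta$-numbers of $\Gamma^{\phi}$ are bounded away from zero at all scales inside the domain. Consequently, no estimate that feeds the $\beta$-number bound solely through the fluctuation of $\nabla^{\phi}\phi$ can succeed; your ``horizontal'' Littlewood--Paley term may well be fine, but the ``vertical'' term you isolate cannot be absorbed into it, and your proposed stability statement for \eqref{burgersEquation} would have to be strong enough to rule out exactly these counterexamples---which it cannot do locally.

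A second, smaller issue: the $L^2$ Carleson estimate $\int\!\!\int \beta^2 \,d\mathcal{H}^3\,dr/r \lesssim 1$ you aim for is the \emph{strong} geometric lemma, which is strictly stronger than Theorem~\ref{main} and is not established in the paper (nor, to my knowledge, known for intrinsic Lipschitz graphs). The paper's route is different: it first proves a weak geometric lemma for a relaxed notion of $\beta$-number (approximation by \emph{constant-gradient} intrinsic Lipschitz graphs, not by vertical planes), using the gradient-fluctuation argument that your proposal sketches. Then, separately, it proves that \emph{entire} constant-gradient intrinsic Lipschitz functions are affine (Proposition~\ref{constantGradientGraphs}), and uses this global rigidity plus compactness to show that local constant-gradient graphs are flat at sufficiently small sub-scales. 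A tree/stopping-time argument combines the two steps. The extra layer---approximation by CG graphs rather than planes---is exactly what bypasses the counterexamples above.
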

For a more precise restatement see Theorem \ref{wglintlg}. In brief, the weak geometric lemma states that, for any fixed $\varepsilon$, the vertical $\beta$-numbers of the graph have size at most $\varepsilon$ in all balls, except perhaps a family satisfying a Carleson packing condition (with constants depending on $\varepsilon$). This manner of quantifying the "smallness" of  an exceptional family of balls is ubiquitous in the theory of David and Semmes.

Theorem \ref{main} does not explain our need to define the vertical $\beta$-numbers; since the vertical $\beta$-numbers are, evidently, at least as large as the "usual" ones (with no restrictions on the approximating affine planes), the statement of Theorem \ref{main} merely becomes a little weaker, if the word "vertical" is removed. However, it turns out that the weak geometric lemma for the vertical $\beta$-numbers, combined with a condition on \emph{vertical projections}, essentially \textbf{characterizes} intrinsic Lipschitz graphs. This is the content of our second main result, a counterpart of a theorem of David and Semmes \cite{DS} from 1990:
\begin{thm}\label{main2} Assume that a $3$-regular set $E \subset \He$ satisfies the weak geometric lemma for the vertical $\beta$-numbers, and has big vertical projections. Then $E$ has big pieces of intrinsic Lipschitz graphs.\end{thm}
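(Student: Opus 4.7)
The plan is to adapt the David--Semmes construction of \emph{big pieces of Lipschitz graphs} from the weak geometric lemma in $\R^n$ to the Heisenberg setting, with the two hypotheses playing exactly the roles of their Euclidean counterparts and the cone-condition verification carried out in intrinsic Heisenberg geometry. Fix a ball $B_0 = B(x_0, r_0)$ with $x_0 \in E$. The goal is to exhibit a vertical subgroup $\W$ and an intrinsic Lipschitz function $\phi \colon \W \to \V$, with constants depending only on the ADR, WGL, and big projections constants, such that $\cH^3(E \cap B_0 \cap \Gamma^\phi) \gtrsim r_0^3$.

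\textbf{Selecting the base plane and running the stopping time.} Invoking the big vertical projections hypothesis at $B_0$, I fix a vertical subgroup $\W = \W(B_0)$ and a subset $E_0 \subseteq E \cap B_0$ for which the vertical projection $\pi_\W$ is quantitatively non-degenerate, in the sense that $\cH^3(\pi_\W(E_0)) \gtrsim r_0^3$. Using a system of Christ-type dyadic cubes $\calD$ on $E$, I then run a two-condition stopping-time argument on the descendants of a top cube $Q_0 \supset E_0$: a cube $Q$ of radius $r \sim \ell(Q)$ is declared bad if either (a) the vertical $\beta$-number $\beta_E(x_Q, C r) > \varepsilon$, or (b) the best-approximating vertical plane $L_Q$ through $x_Q$ has horizontal direction differing from that of $\W$ by more than a chosen angle $\delta$. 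Condition (a) defines a Carleson family by the hypothesis. For condition (b), a standard telescoping argument shows that if none of the ancestors of $Q$ are of type (a), then the best-approximating plane can drift from parent to child by at most $O(\varepsilon)$ in direction; hence (b)-cubes can be organized into a corona-type generation whose packing constant is controlled by $\delta/\varepsilon$ together with the ADR data, provided $\varepsilon$ is chosen small enough relative to $\delta$.

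\textbf{From the good set to an intrinsic Lipschitz graph.} Let $G$ be the set of points of $E_0$ contained in no stopping cube, i.e.\ points at which $E$ is $\varepsilon$-flat along well-aligned vertical planes at every scale. The Carleson packing together with the projection bound on $E_0$ gives $\cH^3(G) \gtrsim r_0^3$ for a suitable choice of parameters. The decisive claim is the intrinsic cone condition on $G$: there exists $L = L(\varepsilon, \delta)$ such that for every $p, q \in G$, the displacement $p^{-1} \cdot q$ lies outside the intrinsic horizontal cone of aperture $L^{-1}$ around the horizontal subgroup $\V$ complementary to $\W$ (in the sense of \eqref{intrinsicCone}). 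To establish it, given $p, q \in G$ with $d(p,q) \sim r$, I would locate a good cube $Q$ of radius $\sim r$ containing both: the bound $\beta_E < \varepsilon$ places $p$ and $q$ within distance $\lesssim \varepsilon r$ of $L_Q$, while the tilting bound (b) places $L_Q$ at angle $\lesssim \delta$ from $\W$, and the two together force $p^{-1} \cdot q$ into a small neighborhood of $\W$, excluding the intrinsic horizontal cone around $\V$. Once the cone condition is in hand, the intrinsic Lipschitz extension theorem of Franchi, Serapioni and Serra Cassano produces $\phi \colon \W \to \V$ with $G \subseteq \Gamma^\phi$, and the conclusion $\cH^3(E \cap B_0 \cap \Gamma^\phi) \geq \cH^3(G) \gtrsim r_0^3$ follows, uniformly in $B_0$.

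\textbf{Main obstacle.} The principal difficulty lies in the cone condition on $G$. In the Euclidean case, connecting $p$ and $q$ by a chain of good cubes and summing small tilts across scales is essentially a linear telescoping exercise. In $\He$, however, the relationship between closeness to an approximating vertical plane at scale $r$ and at scale $r/2$ is distorted by Heisenberg left-translation, which couples horizontal displacements with the central $t$-coordinate in a quadratic fashion. Quantifying how the near-alignment of $p^{-1} \cdot q$ with $\W$ propagates from the good cube containing $p,q$, and is not destroyed by residual $\beta$-tilts at smaller scales, is the central technical step; it will dictate both the sharpness of the dependence of the final Lipschitz constant $L$ on $\varepsilon$ and $\delta$, and the exact form of the Carleson packing in part (b) of the stopping-time.
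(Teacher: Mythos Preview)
Your overall architecture mirrors the David--Semmes scheme, and the cone-condition step you flag as the ``main obstacle'' is actually handled in the paper by a fairly direct computation (their Lemma~\ref{l:DS_lem}): once you know both that $\beta(Q)\le\varepsilon$ \emph{and} that $Q$ has a large $\pi_{\W}$-projection, one can show the approximating plane is nearly $\W$, and the intrinsic cone exclusion follows by a unit-scale reduction plus the local H\"older continuity of $\pi_{\W}$. The Heisenberg nonlinearity does not cause serious trouble here.

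The genuine gap is earlier, in your claim that $\cH^3(G)\gtrsim r_0^3$. Your $G$ consists of points lying in \emph{no} stopping cube, but the WGL is only a Carleson packing condition: $\sum_{Q\in\calB_\varepsilon(Q_0)}|Q|\lesssim_\varepsilon|Q_0|$ says nothing about $\big|\bigcup_{Q\in\calB_\varepsilon}Q\big|$, which may well equal $|Q_0|$. Concretely, every point of $E_0$ might lie in at least one type-(a) cube, so $G$ could be empty. The projection hypothesis does not rescue this, since it bounds $\calL^2(\pi_\W(E_0))$ from below, not $\cH^3$ of any ``good'' subset. Your telescoping control on type-(b) cubes is likewise a packing statement, not a measure bound on their union.

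The paper resolves this as follows. First, the ``goodness'' condition is taken to be $\calL^2(\pi_\W(Q))\ge c\,\cH^3(Q)$ (a projection condition) rather than a tilt condition; the bad cubes for this are maximal, disjoint, and by definition have small total \emph{projection}. Second, for the large-$\beta$ cubes one does not discard every point that meets one of them; instead one discards only the set $R_2$ of points lying in at least $N$ of them, whose $\cH^3$-measure is small by the Carleson packing (Chebyshev), and hence whose projection is small by Lemma~\ref{lipschitzLemma}. The remaining set $F=Q_0\setminus(R_1\cup R_2)$ then has large $\pi_\W$-projection by BVP. But $F$ is \emph{not} itself an intrinsic Lipschitz graph, since points of $F$ may still sit in up to $N-1$ bad-$\beta$ cubes. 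The missing ingredient is Jones's \emph{coding argument}: one assigns to each point of $F$ a finite binary word recording which bad cubes it encounters, and partitions $F$ into boundedly many sets $F_1,\dots,F_M$ so that within each $F_j$ the relevant cube at every scale is good. One of the $F_j$ then has large projection, hence large $\cH^3$-measure, and is the desired intrinsic Lipschitz graph. Your proposal omits this mechanism entirely, and without it the argument does not close.
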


As before, we postpone explaining the notions of \emph{big vertical projections} (Definition \ref{BVP}) and \emph{big pieces of intrinsic Lipschitz graphs} (Definition \ref{BGiLG}). The latter condition does not guarantee that $E$ is an intrinsic Lipschitz graph (such a statement would be false, rather obviously). Instead, $E \cap B$ contains a measure-theoretically big piece of an intrinsic Lipschitz for every ball $B$ centered on $E$.

Finally, we mention that Theorem \ref{main2} admits a converse, which follows from Theorem \ref{main} by standard considerations, outlined at the end of the paper:
\begin{thm}\label{mainCor} Assume that a $3$-regular set $E \subset \He$ has big pieces of intrinsic Lipschitz graphs. Then $E \subset \He$ satisfies the weak geometric lemma for the vertical $\beta$-numbers, and has big vertical projections. \end{thm}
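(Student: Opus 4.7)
The plan is to derive the two conclusions of Theorem \ref{mainCor} separately from the hypothesis BPiLG, using Theorem \ref{main} as a black box for the weak geometric lemma on each intrinsic Lipschitz graph provided by the hypothesis.

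\emph{Big vertical projections.} Given a ball $B = B(x,r)$ centered on $E$, BPiLG furnishes an intrinsic Lipschitz graph $\Gamma^{\phi}$ over a vertical subgroup $\W$, with $\calH^{3}(E \cap B \cap \Gamma^{\phi}) \geq \theta r^{3}$ and intrinsic Lipschitz constant bounded by a uniform $L$. The restriction of the vertical projection $\pi_{\W}$ to $\Gamma^{\phi}$ is the set-theoretic inverse of the graph map $w \mapsto w \cdot \phi(w)$, and its Jacobian is bounded below by a constant $c = c(L) > 0$. Hence
\[
\calH^{3}(\pi_{\W}(E \cap B)) \geq \calH^{3}(\pi_{\W}(E \cap B \cap \Gamma^{\phi})) \geq c \cdot \calH^{3}(E \cap B \cap \Gamma^{\phi}) \geq c\theta\, r^{3},
\]
which establishes BVP for $E$ with a uniform constant depending only on $L$ and $\theta$.

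\emph{Weak geometric lemma.} Fix $\varepsilon > 0$ and a root ball $B_{0} = B(x_{0},R)$ centered on $E$. I would run a stopping-time/corona argument on a Christ-type dyadic decomposition of $E \cap B_{0}$. At the root cube $Q_{0}$ we extract a single intrinsic Lipschitz graph $\Gamma_{0}$ via BPiLG. For any dyadic sub-cube $Q$ of diameter $r$ and any vertical plane $L$, splitting the $L^{p}$ integral defining the vertical $\beta$-number over $E \cap Q \cap \Gamma_{0}$ and $E \cap Q \setminus \Gamma_{0}$ (using that $\dist(\cdot,L)/r \leq 2$ on $Q$ and that $E$ and $\Gamma_{0}$ are $3$-regular) yields
\[
\beta_{E}(Q,L) \leq \beta_{\Gamma_{0}}(Q,L) + C \cdot \frac{\calH^{3}(E \cap Q \setminus \Gamma_{0})}{r^{3}}.
\]
Since the infima defining the two vertical $\beta$-numbers range over the same family of vertical planes, this upgrades to $\beta_{E}(Q) \leq \beta_{\Gamma_{0}}(Q) + C\eta$ whenever the correction term is at most $\eta$. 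Declare $Q$ \emph{stopped} once the correction exceeds a threshold $\eta = \eta(\varepsilon)$. Below the stopping time, $\beta_{E}(Q) > \varepsilon$ forces $\beta_{\Gamma_{0}}(Q) > \varepsilon/2$; a routine swap from the $E$-centered cube $Q$ to a $\Gamma_{0}$-centered ball of comparable radius (valid because $E \cap Q$ meets $\Gamma_{0}$ in positive measure, so $\dist(x_{Q},\Gamma_{0}) \lesssim r$) then puts us in position to invoke Theorem \ref{main} on $\Gamma_{0}$, producing the Carleson packing of these cubes. Inside each maximal stopped cube one re-applies BPiLG with a fresh graph $\Gamma_{Q'}$ and iterates; summing across generations delivers the Carleson estimate in $B_{0}$.

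The main obstacle is the calibration of the stopping threshold $\eta$ so that the iteration actually converges: the $\beta$-comparison needs $\eta$ small compared to $\varepsilon$, while the maximal stopped cubes at a single generation have total $\calH^{3}$-measure at most $(C_{1}-\theta)/\eta$ times that of the parent, so a geometric summation across generations requires $\eta > C_{1}-\theta$, where $C_{1}$ is the upper Ahlfors regularity constant of $E$. Reconciling these demands may require either that the BPiLG constant $\theta$ be close enough to $C_{1}$ to leave room for small $\eta$, or a more refined two-stage corona construction in the spirit of David-Semmes. A secondary but genuinely routine issue is the $E$-centered vs.\ $\Gamma_{0}$-centered ball swap needed to apply Theorem \ref{main}. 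Once the corona closes, the argument is a faithful transplant of the Euclidean template, with all affine planes replaced by vertical planes and all projections by vertical projections.
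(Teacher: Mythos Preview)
Your BVP argument is correct and matches the paper's: the paper also appeals to the area formula for intrinsic Lipschitz functions (Remark \ref{remark0} cites \cite[Theorem 1.6]{CMPS}) to get the lower bound on $\calL^{2}(\pi_{\W}(E\cap B))$ from the big graph piece.

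Your WGL argument, however, has a genuine gap. The vertical $\beta$-numbers in this paper are $L^{\infty}$-type: by Definition \ref{verticalBetas},
\[
\beta_{E}(B;z\cdot\W)=\sup_{y\in B\cap E}\frac{\dist_{\He}(y,z\cdot\W)}{r(B)},
\]
a supremum, not an integral. Your comparison
\[
\beta_{E}(Q,L)\leq\beta_{\Gamma_{0}}(Q,L)+C\,\frac{\calH^{3}(E\cap Q\setminus\Gamma_{0})}{r^{3}}
\]
is obtained by ``splitting the $L^{p}$ integral defining the vertical $\beta$-number'', but there is no such integral to split. For an $L^{\infty}$ $\beta$-number the inequality is simply false: a piece of $E\cap Q\setminus\Gamma_{0}$ of arbitrarily small $\calH^{3}$-measure, placed at distance $\sim r$ from every vertical plane close to $\Gamma_{0}$, forces $\beta_{E}(Q)\sim 1$ while the correction term is arbitrarily small. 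So the entire stopping-time scheme built on this comparison collapses. Your own closing paragraph already senses trouble (the $\eta$-calibration does not close unless $\theta$ is nearly the upper regularity constant), and that symptom points to the same disease.

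The paper takes a different route that avoids any pointwise $\beta$-comparison. It formulates the abstract stability statement Theorem \ref{ds18heis} (big pieces of sets satisfying WGL uniformly $\Rightarrow$ WGL), notes that its proof is identical to \cite[Part IV, Theorem 1.8]{DS2}, and then simply combines this with Theorem \ref{wglintlg}. The David--Semmes argument behind Theorem \ref{ds18heis} is a John--Nirenberg-type iteration on the counting function $N(x)=\#\{Q\ni x:\beta_{E}(Q)>\varepsilon\}$: one shows that in every $Q_{0}$ a fixed positive fraction of points have bounded $N$, and then iterates to get exponential decay of the level sets of $N$, hence the Carleson packing. This mechanism does not require $\beta_{E}$ and $\beta_{\Gamma_{0}}$ to be close cube-by-cube, which is exactly what fails for $L^{\infty}$ $\beta$-numbers.
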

So, the property of having big pieces of intrinsic Lipschitz graphs is characterized by the combination of the weak geometric lemma for the vertical $\beta$-numbers, and the big vertical projections condition.

We remark that for $1$-dimensional sets (in contrast to $3$-dimensional sets in the present paper), quantitative rectifiability in $\He$ has been studied earlier. F. Ferrari, B. Franchi, and H. Pajot \cite{ffp},  N. Juillet \cite{jui} and recently S. Li and R. Schul \cite{sean1}, \cite{sean2}, considered the validity of the traveling salesman theorem of P. Jones \cite{Jo1}. Already in $\R^{n}$, there is a considerable difference in the techniques required to treat higher (than one) dimensional quantitative rectifiability. In $\He$, that difference is even more pronounced because of the result of Ambrosio and Kirchheim \cite{AK} mentioned earlier: $1$-dimensional rectifiable sets are, essentially, metric Lipschitz images of $\R$, whereas for $3$-dimensional sets one needs another approach. Finding such an approach is well-motivated: the critical dimension for the removability problem in $\He$ is $3$, and the development of quantitative rectifiability in this dimension is essential for making progress in that direction. 

The paper is organized as follows. In Section \ref{s:prelim} we
lay down the necessary background in the Heisenberg group and we also discuss intrinsic Lipschitz graphs and some of their main properties. In Section \ref{s:BPiLG}, we give sufficient conditions for a $3$-dimensional set in the first Heisenberg group
to have big pieces of intrinsic Lipschitz graphs. Section \ref{s:WGL} is devoted to the proof of Theorem \ref{main} and is definitely the most technical part of the paper. To facilitate the reader's navigation through the somewhat lengthy Section \ref{s:WGL} a second introductory part appears in Section \ref{outline}.

\textbf{Acknowledgements.} Part of the research for
this paper was completed while various subsets of the authors visited
the Universities of Bern, Helsinki and Connecticut. The
hospitality of these institutions is acknowledged.

\section{Preliminaries}\label{s:prelim}

The \emph{Heisenberg group} $\He$ is $\mathbb{R}^3$
endowed with the group law
\begin{equation}\label{eq:Heis}
(x,y,t)\cdot (x',y',t')=(x+x',y+y',t+t'+(xy'-yx')/2)
\end{equation}
for $(x,y,t),(x',y',t') \in \mathbb{R}^3$. We will sometimes
identify $\mathbb{R}^3$ with $\mathbb{C}\times \mathbb{R}$ and
denote points in the Heisenberg group by $(z,t)$ for $z= x+
\mathrm{i}y\in \mathbb{C}$ and $t\in \mathbb{R}$.

We use the following metric on $\He$:
\begin{displaymath}
d_{\mathbb{H}}:\He\times \He\to [0,\infty),\quad
d_{\mathbb{H}}(p,q):= \|q^{-1} \cdot p\|,
\end{displaymath}
where
\begin{displaymath}
\|(z,t)\|:= \max \left\{ |z|,|t|^{1/2}\right\}.
\end{displaymath}
The closed balls in $(\He, d_{\He})$ will be denoted by $B(x,r)$. We will also denote by $\mathcal{H}^s$ the $s$-dimensional Hausdorff measure in $(\He, d_{\He})$. The reader who is not familiar with the notion of Hausdorff measures should have a look at \cite{M}. For more information on the Heisenberg group, see for instance the book \cite{CDPT} by Capogna, Danielli, Pauls and Tyson. Here we just mention that  $\Hd \He = 4$, and the usual Lebesgue measure on $\R^{3}$ coincides (up to a constant) with $\calH^{4}$ on $\He$.

The distance $d_{\mathbb{H}}$ is invariant with respect to left
translations
\begin{displaymath}
\tau_{p}:\He \to \He,\quad \tau_{p}(q) = p \cdot q,\quad
(p\in \mathbb{H}),
\end{displaymath}
and homogeneous with respect to dilations
\begin{displaymath}
\delta_r:\He \to \He,\quad \delta_r((z,t))
=(rz,r^2 t),\quad (r>0).
\end{displaymath}

Recall that a closed set $E\subset \mathbb{H}$ is \emph{
$3$-(Ahlfors-David)-regular}, if there exists a constant $1\leq C
<\infty$, the \emph{regularity constant} of $E$, such that
\begin{displaymath}
C^{-1} r^3 \leq \mathcal{H}^3(B(x,r)\cap E) \leq C r^3
\end{displaymath}
for all $x\in E$ and $0<r\leq \mathrm{diam}(E)$.

We stress once more that metric concepts in $\He$, such as "ball", "Hausdorff measure" or "Ahlfors-David regular" are always defined with respect to $d_{\He}$, unless explicitly stated otherwise.

Identify $\He$ with $\C \times \R$ for a moment. If $V \subset \C$ is a line through the origin, then $\V := V \times \{0\}$ is called a \emph{horizontal subgroup of $\He$.} A \emph{vertical subgroup of $\He$} is a set of the
form $\W = V \times \R$, where $V \subset \C$ is a line through the origin. Note that both $\W$ and $\V$ are subgroups of $\He$, and closed under the action of $\delta_{r}$.

Under the identification of $\W$ with $\mathbb{R}^2$, the
subgroup $\W$ can be endowed with the
$2$-dimensional Lebesgue measure $\mathcal{L}^2$. This turns out to be a Haar measure on $(\W,\cdot)$, and it
agrees (up to a multiplicative constant) with $\calH^{3}$ on $(\W,d_{\mathbb{H}})$, see \cite[Proposition 2.20]{MSS}.

\begin{definition}[Complementary subgroups] Given a vertical subgroup $\W=V \times \mathbb{R}$ of
$\He$, we define the \emph{complementary horizontal subgroup}
\begin{displaymath} \V := \V_{\W} := V^{\bot} \times \{0\}, \end{displaymath}
where $V^{\bot}$ is the orthogonal
complement of $V$ in $\C$. Then every point $p \in \He$
can be written \textbf{uniquely} as $p= p_{\W}\cdot p_{\V}$ with $p_{\W} \in \W$ and $p_{\V} \in \V$. \end{definition}

One could also consider other splittings of the Heisenberg group, but in this paper we will always assume that the groups $\W$ and $\V_{\W}$ are \emph{orthogonal}; by this we mean that $V$ and $V^{\perp}$ are orthogonal as above.

\begin{definition}[Horizontal and vertical projections] Let $\W = V \times \R$ be a vertical subgroup with complementary horizontal subgroup $\V = V^{\perp} \times \{0\}$. As we observed, every point $p \in \He$ can be written uniquely as $p = p_{\W} \cdot p_{\V}$ with $p_{\W} \in \W$ and $p_{\V} \in \V$. This gives rise to the \emph{vertical projection} $\pi_{\W}$ and \emph{horizontal projection} $\pi_{\V}$, defined by
\begin{displaymath} \pi_{\W}(p) := p_{\W} \quad \text{and} \quad \pi_{\V}(p) := p_{\V}. \end{displaymath}
The mappings $\pi_{\W}$ and $\pi_{\V}$ have the following explicit formulae:
\begin{displaymath} \pi_{\V}(z,t) := (z,t)_{\V}=(\pi_{V^{\bot}}(z),0), \end{displaymath}
and
\begin{displaymath} \pi_{\W}(z,t) :=(z,t)_{\W}= (\pi_{V}(z),t - 2\omega(\pi_{V}(z),\pi_{V^{\perp}}(z))), \end{displaymath}
 for $(z,t) \in \C \times \R \cong \He$. Here $\pi_{V}$ and $\pi_{V^{\perp}}$ are the usual orthogonal projections onto the lines $V$ and $V^{\perp}$ in $\C$. We also used the abbreviating notation
 \begin{displaymath}
 \omega(z,z')= \tfrac{1}{4} \Im \left(\overline{z} z'\right)=\tfrac{1}{4}(xy'-yx')
 \end{displaymath}
 for $z = x + \mathrm{i}y$ and $z' = x' + \mathrm{i}y'$.
 \end{definition}

 The horizontal projections $\pi_{\V}$ are both (metrically) Lipschitz functions, and group homomorphisms. The vertical projections $\pi_{\W}$ are neither. However, as we will see many times in this paper, the projections $\pi_{\W}$ interact well with intrinsic Lipschitz graphs, defined below.

\begin{definition}[$\He$-cones] Let $\W$ be a vertical subgroup with complementary horizontal subgroup $\V$. An \emph{$\He$-cone perpendicular to $\W$ and with aperture $\alpha
> 0$} is the following set $C_{\W}(\alpha)$:
\begin{equation}\label{intrinsicCone} C_{\W}(\alpha) := \{p \in \He : \|p_{\W}\| \leq \alpha\|p_{\V}\|\}. \end{equation}
\end{definition}

\begin{definition}[Intrinsic Lipschitz graphs and functions]\label{d:intrLip}
 A subset $\Gamma \subset \He$ is an \emph{intrinsic $L$-Lipschitz graph over a vertical subgroup $\W$}, if
 \begin{equation}\label{coneCondition} (x \cdot C_{\W}(\alpha)) \cap \Gamma = \{x\} \quad \text{for } x \in \Gamma \text{ and } 0 < \alpha < \tfrac{1}{L}. \end{equation}
 If $A \subset \W$ is any set, and $\V$ is the complementary horizontal subgroup of $\W$, we say that a function $\phi \colon A \to \V$ is \emph{an intrinsic $L$-Lipschitz function}, if the \emph{intrinsic graph of $\phi$}, namely
 \begin{displaymath} \Gamma^{\phi} := \{w \cdot \phi(w) : w \in A\} \subset \He, \end{displaymath}
 is an intrinsic $L$-Lipschitz graph.

The \emph{intrinsic Lipschitz constant} of $\phi$ (or $\Gamma$) is defined as the infimum over all constants $L$ for which $\phi$ (or $\Gamma$) is intrinsic $L$-Lipschitz.
\end{definition}

For a nice picture of intrinsic Lipschitz graphs and the $\He$-cones, see Section 3 in \cite{FSS}.

\begin{remark}[Parametrisation of intrinsic Lipschitz graphs]\label{remk:para} An intrinsic Lipschitz graph can be uniquely parametrised by an intrinsic Lipschitz function. More precisely, given an intrinsic $L$-Lipschitz graph $\Gamma$ over a vertical subgroup $\W$, there exists a unique intrinsic $L$-Lipschitz function $\phi_{\Gamma} \colon \pi_{\W}(\Gamma) \to \V$ such that $\Gamma = \Gamma^{\phi_{\Gamma}}$.

To see this, one first checks that the cone condition \eqref{coneCondition} implies the injectivity of $\pi_{\W}|_{\Gamma}$. Indeed, assume that $\pi_{\W}(x) = \pi_{\W}(y)$ for some $x,y \in \Gamma$. Writing (uniquely) $x = x_{\W} \cdot x_{\V}$ and $y = y_{\W} \cdot y_{\V}$, this gives
 \begin{displaymath} \pi_{\W}(x^{-1} \cdot y) = \pi_{\W}(x_{\V}^{-1} \cdot y_{\V}) = 0, \end{displaymath}
 since $x_{\V}^{-1} \cdot y_{\V} \in \V$, and $\pi_{\W}$ annihilates $\V$. Hence, $x^{-1} \cdot y \in C_{\W}(\alpha)$ for any $\alpha > 0$, and so $y \in (x \cdot C_{\W}(\alpha)) \cap \Gamma$, implying $x = y$. Consequently, the following mapping $\phi_{\Gamma} \colon \pi_{\W}(\Gamma) \to \V$ is well-defined:
 \begin{displaymath} \phi_{\Gamma}(\pi_{\W}(x)) := \pi_{\V}(x). \end{displaymath}
The mapping $\phi_{\Gamma}$ clearly satisfies $\Gamma = \Gamma^{\phi_{\Gamma}}$, and thus $\phi_{\Gamma}$ is intrinsic $L$-Lipschitz by definition. The uniqueness of $\phi_{\Gamma}$ follows from the uniqueness of the representation $x = x_{\W} \cdot x_{\V}$, $x \in \He$. We refer to $\phi_{\Gamma}$ as the \emph{parametrisation of $\Gamma$}.
\end{remark}

A key property of intrinsic Lipschitz graphs is that they are invariant under left translations and dilations in $\He$; if $\Gamma$ is an intrinsic $L$-Lipschitz graph, then $\delta_{r}(\tau_{p}(\Gamma))$ is also an intrinsic $L$-Lipschitz graph for any $p \in \He$ and $r > 0$. This is why these sets are called "intrinsic"!

\begin{remark}\label{rotationalInvariance} We will often "without loss of generality" assume that the intrinsic Lipschitz functions and graphs under consideration are defined over the specific vertical plane $\W_{y,t} := \{(0,y,t) : y,t \in \R\}$. This is legitimate, because the notions are invariant under the rotations $R_{\theta}$ around the $t$-axis, given by $R_{\theta}(z,t) := (e^{i\theta}z,t)$. The rotations are both group homomorphisms and isometries with respect to the metric $d_{\He}$. The homomorphism property implies that $\pi_{R_{\theta}\W} \circ R_{\theta} = R_{\theta} \circ \pi_{\W}$, and then the isometry property gives the following: if $\Gamma$ is an intrinsic $L$-Lipschitz graph over $\W$, then $R_{\theta}\Gamma$ is an intrinsic $L$-Lipschitz graph over $R_{\theta}\W$.
\end{remark}

The class of intrinsic Lipschitz functions has (in greater generality)
been introduced and studied by Franchi, Serapioni and Serra
Cassano. The following facts are special cases of the results in \cite{FSS11} and \cite{FS}:
\begin{itemize}
\item For $A \subset \W$, an intrinsic $L$-Lipschitz function $\phi: A \to
\V$ can be extended to an intrinsic $L'$-Lipschitz function $\W
\to \V$, where $L'$ depends only on $L$.
\item An intrinsic
Lipschitz function $\phi:\W \to \V$ is intrinsically
differentiable $\mathcal{L}^2$ almost everywhere on $\W$.
\item An intrinsic $L$-Lipschitz graph over $\W$ is $3$-regular with regularity constant depending only on $L$.
\end{itemize}
We will write more about intrinsic differentiability and the related notion of \emph{intrinsic gradient} in Section \ref{ss:intr_grad}.

If $A,B > 0$, we will use the notation $A \lesssim_{p} B$ to signify that there exists a constant $C \geq 1$ depending only on $p$ such that $A \leq CB$. If the constant $C$ is absolute, we write $A \lesssim B$. The two-sided inequality $A \lesssim_{p} B \lesssim_{p} A$ is abbreviated to $A \sim_{p} B$.

\section{A sufficient condition for big pieces of intrinsic Lipschitz graphs}\label{s:BPiLG}

In this section, we start proving our two main results. To warm up, we begin with the significantly easier Theorem \ref{main2}: if a $3$-regular set $E \subset \He$ satisfies the weak geometric lemma for vertical $\beta$-numbers, and has big vertical projections (see definitions below), then $E$ has BPiLG. The argument is very similar to the Euclidean counterpart, due to David and Semmes \cite{DS}. In fact, the greatest surprise here is probably the similarity of the arguments itself: considering that the vertical projections $\pi_{\W} \colon \He \to \W$ are not Lipschitz, one might expect a rockier ride ahead.

We start with a few central definitions and auxiliary results.
\begin{definition}[BVP]\label{BVP}
 A $3$-regular set $E\subseteq \He$ is said to have \emph{big vertical projections} (BVP in short) if there exists a constant $\delta>0$ with the following property: for all $x\in E$ and for all $0 < R \leq \diam_{\He}(E)$ there exists a vertical subgroup $\W$ such that
 \begin{displaymath}
 \calL^{2}(\pi_{\W}(E\cap B(x,R))) \geq \delta R^3.
 \end{displaymath}
\end{definition}

\begin{definition}[BPiLG]\label{BGiLG}
 A $3$-regular set $E\subseteq \He$ has \emph{big pieces of intrinsic Lipschitz graphs} (BPiLG in short)
 if there exist constants $L \geq 1$ and $\theta > 0$ with the following property. For all $x\in E$ and $0 < R \leq \diam_{\He}(E)$, there exists an intrinsic $L$-Lipschitz graph
 $\Gamma$ over some vertical subgroup such that
 \begin{displaymath}
  \calH^{3}(E\cap \Gamma \cap B(x,R))\geq \theta R^3.
 \end{displaymath}
\end{definition}

\begin{definition}[Vertical $\beta$-numbers]
\label{verticalBetas}
 Let $E\subset \He$ be a set, let $B \subset \He$ be a ball with radius $r(B) > 0$ centered on $E$, let $\W$ be a vertical subgroup, and let $z \in \He$. We write
 \begin{displaymath} \beta_{E}(B;z \cdot \W) := \sup_{y \in B \cap E} \frac{\dist_{\He}(y,z \cdot \W)}{r(B)}, \end{displaymath}
and then we define the \emph{vertical $\beta$-number} as
 \begin{displaymath}
  \beta(B) := \beta_{E}(B) := \inf_{\W,z} \beta_{E}(B;z \cdot \W).
   \end{displaymath}
The infimum is taken over all vertical subgroups $\W$, and all points $z \in \He$.
\end{definition}

\begin{remark}\label{changeOfMidpoint} The following observation is useful, and not quite as trivial as its Euclidean counterpart.
Assume $B \subset \He$ is a ball centered on $E$, $\W$ is any
vertical subgroup, and $z \in \He$. Then
\begin{displaymath} \sup_{x,y \in B \cap E}\frac{\dist_{\He}(x,y \cdot \W)}{r(B)} \leq 2\beta_{E}(B;z \cdot \W). \end{displaymath}
To prove this, observe that $p \cdot \W = \W \cdot p$ for any point $p \in \He$. In particular, if $y \in B \cap E$, we have
\begin{displaymath} y \cdot \W = z \cdot \W \cdot z^{-1} \cdot y. \end{displaymath}
Hence, if further $x \in B \cap E$ and $w,w' \in \W$, we have
\begin{displaymath} \dist_{\He}(x,y \cdot \W) \leq d_{\He}(x,z\cdot w'\cdot w^{-1}\cdot z^{-1}\cdot y) \leq d_{\He}(x,z \cdot w') + d_{\He}(y,z \cdot w). \end{displaymath}
Since this holds for all $w,w' \in \W$, we have
\begin{displaymath} \frac{\dist_{\He}(x,y \cdot \W)}{r(B)} \leq 2\beta_{E}(B;z \cdot \W), \end{displaymath}
as claimed.
\end{remark}

\begin{definition}[WGL]\label{WGLDef}
 We say that a $3$-regular set $E\subseteq \He$ satisfies the \emph{weak geometric lemma for vertical $\beta$-numbers} (WGL in short), if
  \begin{equation*}
  \int_0^R \int_{E \cap B(x,R)}\chi_{\{(y,s)\in E \times \mathbb{R}_+:\; \beta(B(y,s))>\varepsilon\}}(y,s)  \,{d}\calH^3(y)\frac{{d}s}{s} \lesssim_{\varepsilon} R^3
 \end{equation*}
for all $\varepsilon > 0$, $x\in E$ and $R>0$.
\end{definition}

The following lemma shows that even if the vertical projections $\pi_{\W}$ are not Lipschitz, they still cannot increase $\calH^{3}$-measure (too much). This is rather surprising, as it is easy to find less than three-dimensional sets $E \subset \He$ such that $\Hd \pi_{\W}(E) > \Hd E$, see Example 4.1 in \cite{BDFMT}. 

\begin{lemma}\label{lipschitzLemma} Let $\W$ be a vertical subgroup in $\He$.
Then there exists a constant $0<C<\infty$ such that for all $A
\subseteq \He$, one has
\begin{equation}\label{eq:Lipmeas}\calL^{2}(\pi_{\W}(A)) \leq C \mathcal{H}^3(A).
\end{equation}
\end{lemma}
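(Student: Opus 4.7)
The argument reduces to a ballwise estimate via the definition of Hausdorff measure. For every $\varepsilon > 0$ and all small enough $\delta > 0$, there is a cover $\{U_i\}$ of $A$ with $\diam U_i \leq \delta$ and $\sum_i (\diam U_i)^3 \leq \mathcal{H}^3(A) + \varepsilon$. Enclosing each $U_i$ in the ball $B(x_i, \diam U_i)$ with $x_i \in U_i$ and using countable subadditivity of $\calL^2$, the lemma follows once one establishes the pointwise estimate
\begin{displaymath}
\calL^2(\pi_\W(B(x, r))) \leq C r^3 \qquad \text{for all } x \in \He \text{ and } r > 0.
\end{displaymath}
By Remark \ref{rotationalInvariance} one may assume $\W = \W_{y,t}$ throughout.

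Next I would reduce to $x = 0$. Left-invariance of $d_\He$ gives $B(x, r) = x \cdot B(0, r)$. Writing $x = x_\W \cdot x_\V$ and exploiting the fact that $\W$ is a normal subgroup of $\He$ (so $x_\V \cdot \W \cdot x_\V^{-1} \subset \W$), a direct computation yields
\begin{displaymath}
\pi_\W(x \cdot q) = x_\W \cdot \bigl(x_\V \cdot \pi_\W(q) \cdot x_\V^{-1}\bigr), \qquad q \in \He.
\end{displaymath}
The left-translation by $x_\W$ preserves $\calL^2$ because $\calL^2$ is a Haar measure on $(\W, \cdot)$. The conjugation $w \mapsto x_\V w x_\V^{-1}$, written out using the Heisenberg product, turns into the \emph{shear} $(y, t) \mapsto (y, t + \alpha y)$ of $\W_{y,t} \cong \R^2$, where $\alpha$ is the nonzero coordinate of $x_\V$; its Jacobian determinant equals $1$, so this map too is $\calL^2$-preserving. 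Consequently $\calL^2(\pi_\W(B(x, r))) = \calL^2(\pi_\W(B(0, r)))$.

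Finally, $B(0, r) \subset \{(z, t) : |z| \leq r, \, |t| \leq r^2\}$, and combining the explicit formula $\pi_\W(z, t) = (\pi_V(z), \, t - 2\omega(\pi_V(z), \pi_{V^\perp}(z)))$ with the pointwise bound $|\omega(\pi_V(z), \pi_{V^\perp}(z))| \leq |z|^2/4$ gives
\begin{displaymath}
\pi_\W(B(0, r)) \subset \bigl\{(v, s) \in \W : |v| \leq r, \; |s| \leq \tfrac{3}{2} r^2\bigr\},
\end{displaymath}
whose $\calL^2$-measure is at most $6 r^3$. The potentially delicate step is the reduction from general $x$ to $x = 0$: since $\pi_\W$ is not Lipschitz, one might a priori fear that left-translation in $\He$ distorts the image in an uncontrolled way. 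The saving grace is the algebraic identity above, together with the observation that the resulting conjugation is a triangular shear of $\W \cong \R^2$ and therefore automatically area-preserving; after this, everything else is a routine covering argument.
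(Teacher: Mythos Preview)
Your proof is correct and follows the same architecture as the paper: reduce via a covering argument to the ballwise estimate $\calL^{2}(\pi_{\W}(B(x,r))) \lesssim r^{3}$, then handle the ball. The paper simply cites \cite[Lemma 2.20]{FS} for the exact identity $\calL^{2}(\pi_{\W}(B(p,r))) = C r^{3}$, whereas you supply a self-contained argument: the algebraic identity $\pi_{\W}(x\cdot q) = x_{\W}\cdot(x_{\V}\,\pi_{\W}(q)\,x_{\V}^{-1})$ together with the observation that conjugation by $x_{\V}$ acts on $\W\cong\R^{2}$ as a unit-Jacobian shear (this is exactly the content of \eqref{jacobian} used elsewhere in the paper), followed by a direct box bound on $\pi_{\W}(B(0,r))$. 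So you recover the key ingredient of \cite{FS} rather than quoting it, and otherwise the two proofs coincide.
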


\begin{proof}
The lemma follows from \cite[Lemma
2.20]{FS}, which, when specialized to the Heisenberg group, states
that there is a constant $C >0$ such that
$\mathcal{L}^2(\pi_{\mathbb{W}}(B(p,r))=C r^3$ for all $p\in \He$
and $r>0$. See also \cite[Lemma 3.14]{FSS}.

Given a set $A\subset \He$ of positive $\mathcal{H}^3$-measure, choose a covering of $A$ by closed balls $B_i=B(p_i,r_i)$, $i \in \N$, such that
\begin{displaymath}
\sum_{i\in \N} r_i^3 \leq 2\mathcal{H}^3(A).
\end{displaymath}
Then, we find that
\begin{align*}
\calL^{2}(\pi_{\mathbb{W}}(A)) \leq \sum_{i\in \N}
\calL^{2}(\pi_{\mathbb{W}}(B_i)) = C \sum_{i\in \N} r_i^3 \leq 2C
\mathcal{H}^3(A).
\end{align*}
This completes the proof. \end{proof}

\subsubsection{David cubes}\label{davidCubes} We recall the construction of \emph{David cubes}, first introduced by David in \cite{Da2}, which can be defined on any regular set in a geometrically doubling metric space. Let $E \subset \He$ be a $3$-regular set. Then, there exists a family of partitions $\Delta_{j}$ of $E$, $j \in \Z$, with the following properties:
\begin{itemize}
\item[(i)] If $j \leq k$, $Q \in \Delta_{j}$ and $Q' \in
\Delta_{k}$, then either $Q \cap Q' = \emptyset$, or $Q \subset
Q'$. \item[(ii)] If $Q \in \Delta_{j}$, then $\diam_{\He} Q \leq 2^{j}$.
\item[(iii)] Every set $Q \in \Delta_{j}$ contains a set of the form
$B(z_{Q},c2^{j}) \cap E$ for some $z_{Q} \in Q$, and some constant $c >
0$.
\end{itemize}
The sets in $\Delta := \cup \Delta_{j}$ are called David cubes, or
just cubes, of $E$. For $Q \in \Delta_{j}$, we define $\ell(Q) :=
2^{j}$. Thus, by (ii), we have $\diam_{\He}(Q) \leq \ell(Q)$ for $Q \in \Delta$. Given a fixed cube $Q_{0} \in \Delta$, we write
\begin{displaymath} \Delta(Q_{0}) := \{Q \in \Delta : Q \subset Q_{0}\}. \end{displaymath}
It follows from (ii), (iii), and the $3$-regularity of $E$ that
$\calH^{3}(Q) \sim \ell(Q)^{3}$ for $Q \in \Delta_{j}$. It is an
easy fact, needed a bit later, that the following holds: if $x,y
\in E$ are distinct points, there exists an index $j \in \Z$, and
disjoint cubes $Q_{x},Q_{y} \in \Delta_{j}$, containing $x$ and
$y$, respectively, with the properties that $2^{j} \sim
d_{\He}(x,y)$,
\begin{displaymath} Q_{x} \subset B(z_{Q_{y}},4\ell(Q_{y})) \quad \text{and} \quad Q_{y} \subset B(z_{Q_{x}},4\ell(Q_{x})). \end{displaymath}
Indeed, let $j \in \Z$ be the largest integer such that $2^{j}
\leq d_{\He}(x,y)$, and let $Q_{x},Q_{y} \in \Delta_{j}$ be the unique cubes containing $x$ and $y$. Then $2^{j} \sim d_{\He}(x,y)$, and since
$d_{\He}(x,y) \leq 2\ell(Q_{x})$, we have $Q_{y} \subset
B(z_{Q_{x}},4\ell(Q_{x}))$. The same holds with the roles of
$x$ and $y$ reversed.

In the sequel, we write
\begin{displaymath} B_{Q} := B(z_{Q},4\ell(Q)), \qquad Q \in \Delta. \end{displaymath}

The weak geometric lemma (Definition \ref{WGLDef}) implies the
following reformulation in terms of David cubes. Write
\begin{displaymath} \beta(Q) := \beta(B_{Q}) \qquad Q \in \Delta. \end{displaymath}
Then
\begin{equation}\label{cubeWGL} \sum_{\{Q \in \Delta(Q_{0}) : \beta(Q) \geq \varepsilon\}} \calH^{3}(Q) \lesssim_{\varepsilon} \calH^{3}(Q_{0}) \end{equation}
for any $\varepsilon > 0$ and $Q_{0} \in \Delta$. Deriving this
property from Definition \ref{WGLDef} is an easy exercise using
the properties of David cubes, and we omit the details.

\subsection{Proof of Theorem \ref{main2}} Here is the statement of Theorem \ref{main2} once more:

\begin{thm}\label{t:main1_BPiLG} A $3$-regular set in $\He$ with BVP and satisfying the WGL has BPiLG.
\end{thm}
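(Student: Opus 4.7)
My plan is to adapt the Euclidean argument of David--Semmes \cite{DS}, using Lemma \ref{lipschitzLemma} to absorb the failure of $\pi_{\W}$ to be Lipschitz. Fix $x_{0} \in E$ and $0 < R \leq \diam_{\He}(E)$. Apply BVP to obtain a vertical subgroup $\W$ with $\calL^{2}(\pi_{\W}(E \cap B(x_{0}, R))) \geq \delta R^{3}$; by Remark \ref{rotationalInvariance} we may assume $\W = \W_{y,t}$. After localizing to a top David cube $Q_{0}$ with $\ell(Q_{0}) \sim R$ inside a small enlargement of $B(x_{0}, R)$, we still have $\calL^{2}(\pi_{\W}(E \cap Q_{0})) \gtrsim \delta\, \ell(Q_{0})^{3}$. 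For small parameters $\varepsilon, \eta > 0$ to be specified, declare a cube $Q \subsetneq Q_{0}$ \emph{stopped} if it is maximal subject to either (S1) $\beta(Q) > \varepsilon$, or (S2) every near-optimal approximating plane $z \cdot \W'$ for $\beta(Q)$ has $\angle(\W', \W) > \eta$. Denote the stopping family by $\calS = \calS_{1} \cup \calS_{2}$, and set $G := (E \cap Q_{0}) \setminus \bigcup_{Q \in \calS} Q$.

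The first step is to show that $G$ obeys the cone condition of an intrinsic $L$-Lipschitz graph over $\W$. For distinct $x, y \in G$, the smallest David cube $Q^{*} \in \Delta(Q_{0})$ containing both has $\ell(Q^{*}) \sim d_{\He}(x, y)$ and is not stopped, so there is a vertical plane $z \cdot \W'$ with $\angle(\W', \W) \leq \eta$ lying within $\varepsilon \ell(Q^{*})$ of both $x$ and $y$. Remark \ref{changeOfMidpoint} yields $\dist_{\He}(x, y \cdot \W') \leq 2 \varepsilon\, \ell(Q^{*})$, and since $V' \mapsto \pi_{V'^{\perp}}$ is Lipschitz in the direction of $V'$, this upgrades to $\|\pi_{\V}(x^{-1} \cdot y)\| \leq C (\varepsilon + \eta)\, d_{\He}(x, y)$. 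Combined with the ball-box estimate $d_{\He}(x, y) \sim \max\{\|\pi_{\V}(x^{-1}y)\|, \|\pi_{\W}(x^{-1}y)\|\}$, this forces $\|\pi_{\W}(x^{-1}y)\| \geq L \|\pi_{\V}(x^{-1}y)\|$ once $\varepsilon + \eta$ is sufficiently small in terms of $L$. Hence $y \notin x \cdot C_{\W}(1/L)$, so $G$ satisfies the cone condition of Definition \ref{d:intrLip}; its parametrisation then extends, via \cite{FS, FSS11}, to an intrinsic $L'$-Lipschitz graph $\Gamma \supset G$.

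The second step is the measure lower bound $\calH^{3}(G \cap B(x_{0}, R)) \gtrsim R^{3}$. Since $\Gamma$ is $3$-regular with $\calH^{3}|_{\Gamma} \sim_{L} \calL^{2} \circ \pi_{\W}|_{\Gamma}$ via the area formula for intrinsic Lipschitz graphs, it suffices to show
\begin{equation*}
\calL^{2}\Bigl(\pi_{\W}\Bigl(\bigcup_{Q \in \calS}(E \cap Q)\Bigr)\Bigr) \leq \tfrac{\delta}{2}\, \ell(Q_{0})^{3}.
\end{equation*}
For cubes $Q \in \calS_{2}$, a tilted-plane projection estimate (in the spirit of the projection computations in \cite{BDFMT}), combined with Lemma \ref{lipschitzLemma}, gives $\calL^{2}(\pi_{\W}(E \cap Q)) \lesssim (\varepsilon + \eta)\, \ell(Q)^{3}$. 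Summing over the disjoint family $\calS_{2}$ using $\sum_{\calS_{2}} \ell(Q)^{3} \lesssim \ell(Q_{0})^{3}$ bounds the $\calS_{2}$-contribution by $C(\varepsilon + \eta)\ell(Q_{0})^{3}$, absorbable into $\delta\, \ell(Q_{0})^{3}/4$ by choosing $\varepsilon, \eta$ small.

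The main obstacle is the $\calS_{1}$ contribution: WGL provides only $\sum_{\calS_{1}} \ell(Q)^{3} \leq C_{\varepsilon}\, \ell(Q_{0})^{3}$, so Lemma \ref{lipschitzLemma} yields $\sum \calL^{2}(\pi_{\W}(E \cap Q)) \leq C C_{\varepsilon}\, \ell(Q_{0})^{3}$, which need not be below $\delta\, \ell(Q_{0})^{3}/4$. Following \cite{DS}, I would handle this by iteration: if the $\calS_{1}$-contribution already satisfies the required bound, the argument terminates; otherwise a pigeonhole (comparing the projection sum with $\sum \ell(Q)^{3}$) produces some $Q^{*} \in \calS_{1}$ in which BVP is inherited with a (possibly smaller) positive constant, and the construction is restarted inside $Q^{*}$ with the same $\W$. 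The recursion depth is bounded in terms of $\varepsilon, \delta$ by the Carleson packing from WGL, so after finitely many rounds one obtains an intrinsic Lipschitz graph $\Gamma$ with $\calH^{3}(E \cap \Gamma \cap B(x_{0}, R)) \geq \theta R^{3}$, where $\theta > 0$ depends only on the regularity, BVP and WGL constants of $E$.
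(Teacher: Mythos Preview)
Your first step (the cone condition for $G$) is essentially sound, but both halves of the projection estimate fail, and the argument cannot be completed without the paper's coding mechanism.

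The $\calS_{2}$ bound $\calL^{2}(\pi_{\W}(E\cap Q))\lesssim(\varepsilon+\eta)\,\ell(Q)^{3}$ is false. If $E\cap B_{Q}$ lies within $\varepsilon\ell(Q)$ of a coset $z\cdot\W'$, a direct computation (parametrise $\W'$ and apply $\pi_{\W}$) shows that $\pi_{\W}|_{z\cdot\W'}$ has constant Jacobian $\cos\theta$, with $\theta=\angle(\W',\W)$, relative to the $\calL^{2}$-structures on $\W'$ and $\W$. Hence when $\theta$ is only slightly larger than a small $\eta$ the projection has measure $\sim\ell(Q)^{3}$, not $\lesssim(\varepsilon+\eta)\,\ell(Q)^{3}$; the projection degenerates only when $\W'$ is close to $\W^{\perp}$. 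This is exactly why the paper defines ``good'' cubes through the projection condition $\calL^{2}(\pi_{\W}(Q))\geq c\,\calH^{3}(Q)$ (condition \eqref{good1}) rather than through an angle threshold: the small-projection class $\calB_{1}$ then has small total projection by definition, and Lemma \ref{l:DS_lem} delivers the cone separation for the good cubes.

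The $\calS_{1}$ iteration is also unjustified. WGL is scale-invariant, so restarting inside some $Q^{\ast}\in\calS_{1}$ reproduces the same obstruction, and the Carleson packing bounds $\sum_{\calS_{1}}\ell(Q)^{3}$ but says nothing about how many high-$\beta$ cubes are nested over a fixed point; there is no control on recursion depth. The paper's resolution is different in kind: it does \emph{not} attempt to produce a single intrinsic Lipschitz graph. One removes only the set $R_{2}$ of points contained in at least $N$ high-$\beta$ cubes (so $\calH^{3}(R_{2})$, and hence $\calL^{2}(\pi_{\W}(R_{2}))$ via Lemma \ref{lipschitzLemma}, is small), and then applies Jones' coding argument (Section \ref{ss:application}) to partition the remainder $Q_{0}\setminus(R_{1}\cup R_{2})$ into a bounded number $M$ of pieces, each an intrinsic Lipschitz graph. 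A pigeonhole then forces one piece to carry $\gtrsim R^{3}/M$ of the projected mass. That finite decomposition into several graphs, not an iteration toward one graph, is the missing ingredient.
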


Let $\Delta$ be a system of David cubes on $E$, let $c,\varepsilon > 0$ be constants, and let $\W$ be a vertical
subgroup. Throughout this section, a cube $Q \in \Delta$ will be called \emph{good} (more precisely
$(c,\varepsilon,\W)$-good), if
\begin{equation}\label{good1} \calL^{2}(\pi_{\W}(Q)) \geq c\calH^{3}(Q), \end{equation}
and
\begin{equation}\label{good2} \beta(Q) \leq \varepsilon. \end{equation}
We outline the proof of Theorem \ref{t:main1_BPiLG}. The proof is divided into two parts, a geometric one, and an abstract one. The geometric part shows that good cubes $Q \in \Delta$ are already "almost" intrinsic Lipschitz graphs in the following sense: if $x \in Q$ and $y \in B_{Q} \cap E$ satisfy $d_{\He}(x,y) \sim \ell(Q)$, then $y \notin x \cdot C_{\W}(\alpha)$ for some small $\alpha > 0$.

The abstract part uses the WGL and BVP assumptions to infer (cutting a few corners here) that only a small fraction of $E \cap B(z,R)$, $z \in E$, meets $B_{Q}$ for some non-good cube $Q$. Hence, a large set $F \subset E \cap B(z,R)$ meets $B_{Q}$ only for good cubes $Q$. Unfortunately, this is not literally true, and additional (technical) considerations are needed. Ignoring these for now, we can complete the proof as follows. Fixing $x,y \in F$, we can use the discussion in Section \ref{davidCubes} to find a cube $Q \in \Delta$ with $x \in Q$, $y \in B_{Q}$ and $\ell(Q) \sim d_{\He}(x,y)$. Since $x,y \in F$, we know that $Q$ is a good cube, and it follows from the geometric part that $y \notin C_{\W}(\alpha)$. Consequently, $F \subset B(z,R) \cap E$ is an intrinsic Lipschitz graph.

\subsubsection{The geometric part}\label{s:DS_lem} The following lemma is our counterpart of Lemma 2.19 in David and Semmes' proof in \cite{DS}.

\begin{lemma}\label{l:DS_lem} Assume that $Q$ is a $(c,\varepsilon,\W)$-good cube,  $x \in Q$ and $y \in B_{Q} \cap E$ with $d_{\He}(x,y) \sim \ell(Q)$. Then $y \notin x \cdot C_{\W}(\alpha)$, if $\varepsilon$ is sufficiently small with respect to $c$, and $\alpha > 0$ is small enough (depending on the constants $c,\varepsilon$).
\end{lemma}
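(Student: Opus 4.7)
The plan is to argue by contradiction. Suppose $y \in x \cdot C_\W(\alpha)$, and set $p := x^{-1}y \in C_\W(\alpha)$, so that $\|p\| = d_\He(x,y) \sim \ell(Q)$. Since $\pi_\V$ is a Lipschitz homomorphism we have $\|p_\V\| \lesssim \|p\|$, and writing $p = p_\W p_\V$ yields $\|p\| \leq \|p_\W\| + \|p_\V\|$; the cone condition $\|p_\W\| \leq \alpha \|p_\V\|$ then forces $\|p_\V\| \sim \ell(Q)$ and $\|p_\W\| \lesssim \alpha\ell(Q)$. By Remark \ref{rotationalInvariance} I reduce to $\W = \W_{y,t}$ and $\V = \{(s,0,0) : s \in \R\}$, and then by a left translation I arrange $z_Q = 0$, so that the entire argument takes place inside $B(0, C\ell(Q))$. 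This translation preserves \eqref{good1} because $\mathcal{L}^2 \circ \pi_\W$ is invariant under arbitrary left translations (the $\W$-part commutes with $\pi_\W$ directly, and the $\V$-part conjugates $\pi_\W$ by a volume-preserving shear in $(y,t)$-coordinates), while \eqref{good2} and the cone condition are preserved by left-invariance of $d_\He$.

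By \eqref{good2} and Remark \ref{changeOfMidpoint}, there exist a vertical subgroup $\W_0 = V_0 \times \R$, a base point $z_0 \in B(0, C\ell(Q))$ on the approximating plane, and $w_0 \in \W_0$ with $p = e^{-1} w_0$ and $\|e\| \lesssim \varepsilon\ell(Q)$. Let $\theta \in [0,\pi/2]$ denote the angle between $V_0$ and $V$. The first key step is to show $\cos\theta \gtrsim c$. The set $Q$ lies in the Heisenberg $O(\varepsilon\ell(Q))$-tube around $z_0 \W_0$, which in $(x,y,t)$-coordinates is the Euclidean slab
\begin{displaymath}
T = \{(x,y,t) : |(x-X)\cos\theta - (y-Y)\sin\theta| \leq C\varepsilon\ell(Q)\}
\end{displaymath}
with $z_0 = (X,Y,T_0)$. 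A direct coordinate computation using $\pi_\W(x,y,t) = (0, y, t + xy/2)$ and the explicit parametrisation of $T \cap B(0, C\ell(Q))$ produces
\begin{displaymath}
\mathcal{L}^2(\pi_\W(T \cap B(0, C\ell(Q)))) \lesssim \bigl(\cos\theta + \varepsilon + \varepsilon^2/\cos\theta\bigr)\ell(Q)^3.
\end{displaymath}
Combined with \eqref{good1}, this gives $c \lesssim \cos\theta + \varepsilon + \varepsilon^2/\cos\theta$, and a short case split on whether $\cos\theta \geq \sqrt{\varepsilon}$ shows $\cos\theta \gtrsim c$ whenever $\varepsilon$ is sufficiently small in terms of $c$.

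The second key step computes $\pi_\W(p)$ explicitly. Writing $w_0 = (r(\sin\theta + i\cos\theta), \tau)$ and $e^{-1} = (a,b,c')$ with $|a|, |b| \lesssim \|e\| \lesssim \varepsilon\ell(Q)$, the Lipschitz homomorphism $\pi_\V$ applied to $p = e^{-1}w_0$ yields $|r\sin\theta| \sim \|\pi_\V(p)\| \sim \ell(Q)$; together with $\|w_0\| = \|ep\| \sim \ell(Q)$ this forces $|r| \sim \ell(Q)$, and then Step 1 gives $|r\cos\theta| \gtrsim c\ell(Q)$. A short Heisenberg-product calculation shows that the $y$-coordinate of $\pi_\W(p)$ is precisely $b + r\cos\theta$, so
\begin{displaymath}
\|\pi_\W(p)\| \geq |b + r\cos\theta| \geq |r\cos\theta| - |b| \gtrsim c\ell(Q) - C\varepsilon\ell(Q) \gtrsim c\ell(Q)
\end{displaymath}
for $\varepsilon$ small. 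But the cone condition forces $\|\pi_\W(p)\| \lesssim \alpha\ell(Q)$, hence $\alpha \gtrsim c$; taking $\alpha$ strictly below the implicit constant produces the contradiction.

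I expect the main obstacle to be the first key step: because $\pi_\W$ is not Lipschitz, one cannot simply thicken projections inside $\W$ and integrate, so the slab-projection bound must be derived directly in coordinates. This becomes tractable only after the normalisation $z_Q = 0$ confines everything to a bounded region, where the nonlinear cross term $xy$ in the $t$-coordinate of $\pi_\W$ stays under control. The translation invariance of $\mathcal{L}^2 \circ \pi_\W$ underlying this reduction is itself a small point worth checking.
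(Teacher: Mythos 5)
Your overall strategy is sound and rests on the same geometric mechanism as the paper's proof: the cone condition plus the smallness of the vertical $\beta$-number force the approximating vertical plane to be nearly perpendicular to $\W$, and a thin tube around such a plane has vertical projection of small $\calL^2$-measure, which is incompatible with \eqref{good1}. The paper runs this in the order ``cone condition $\Rightarrow$ approximating plane close to $\W^{\perp}$ $\Rightarrow$ $\pi_{\W}(Q)$ lies near the $t$-axis (via the local $1/2$-H\"older continuity of $\pi_{\W}$) $\Rightarrow$ contradiction with \eqref{good1}'', after dilating to unit scale; you instead invoke \eqref{good1} first to force $\cos\theta\gtrsim c$ for the direction of the approximating plane, and then contradict the cone condition by an explicit computation of the $y$-coordinate of $\pi_{\W}(p)$. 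The reduction via the unit Jacobian of $P_{p}$ (translation invariance of $\calL^{2}\circ\pi_{\W}$), the identification of the coset $z_{0}\cdot\W_{0}$ with a Euclidean vertical plane, and the containment of the Heisenberg tube in the Euclidean slab are all correct; note that only the horizontal components $|a|,|b|\lesssim\varepsilon\ell(Q)$ of your error term are actually used (the full bound $\|e\|\lesssim\varepsilon\ell(Q)$ is slightly optimistic, since moving the error across $w_{0}$ degrades its vertical part to order $\sqrt{\varepsilon}\,\ell(Q)$, but this is harmless). Your endgame even gives the slightly stronger conclusion that $\alpha$ need only be small relative to $c$.

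However, Step 1 as written has a hole exactly in the regime it must exclude. Your stated slab bound $\calL^{2}(\pi_{\W}(T\cap B(0,C\ell(Q))))\lesssim(\cos\theta+\varepsilon+\varepsilon^{2}/\cos\theta)\ell(Q)^{3}$ blows up as $\cos\theta\to 0$, so the resulting inequality $c\lesssim\cos\theta+\varepsilon+\varepsilon^{2}/\cos\theta$ is vacuous when $\cos\theta$ is very small (for instance $\cos\theta\sim\varepsilon^{2}$ makes the right-hand side of order $1$), and the advertised case split on $\cos\theta\geq\sqrt{\varepsilon}$ does not close the complementary case. This matters because the dangerous scenario under your counter-assumption is precisely a plane with tiny $\cos\theta$. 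The fix is immediate: for each admissible $y$, the $x$-interval allowed by the slab has length $\min\{2C\ell(Q),\,2C\varepsilon\ell(Q)/\cos\theta\}\lesssim\ell(Q)$, so every fiber of the projection has length $\lesssim\ell(Q)^{2}$ uniformly in $\theta$, while the $y$-extent of the slab-cap is $\lesssim(\cos\theta+\varepsilon)\ell(Q)$; hence $\calL^{2}(\pi_{\W}(T\cap B(0,C\ell(Q))))\lesssim(\cos\theta+\varepsilon)\ell(Q)^{3}$, and \eqref{good1} then yields $\cos\theta\gtrsim c$ outright once $\varepsilon$ is small compared to $c$, with no case split needed. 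With this correction your argument is complete.
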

\begin{proof}

We start with a reduction to "unit scale". Assume that the statement of the lemma fails for certain parameters $c,\varepsilon,\W,\alpha$, and a certain $(c,\varepsilon,\W)$-good cube $Q \in \Delta$. By this, we mean that \eqref{good1} and \eqref{good2} hold for $Q$, yet $y \in x \cdot C_{\W}(\alpha)$ for some $x \in Q$ and $y \in B_{Q} \cap E$ with $d_{\He}(x,y) \sim \ell(Q) =: r$.

Consider the set $Q_{x,r} := \delta_{1/r}(x^{-1} \cdot Q)$, and observe that it, also, satisfies \eqref{good1}, since $\calL^{2}(\pi_{\W}(Q_{x,r})) = r^{-3}\calL^{2}(\pi_{\W}(Q))$ and $\calH^{3}(Q_{x,r}) = r^{-3}\calH^{3}(Q)$. The first equation is not altogether trivial, but it follows from the equations
\begin{displaymath}
\calL^{2}(\pi_{\W}(Q_{x,r}))=r^{-3} \calL^2(\pi_\W (x^{-1} \cdot Q))=r^{-3} \calL^2(\pi_\W (x^{-1} \cdot \pi_{\W}(Q))),
\end{displaymath}
and the fact that the mapping $P_p:\W \to \W$, $P_p(w) = \pi_{\W}(p \cdot w)$, has unit Jacobian for any fixed $p \in \He$, see the proof of Lemma 2.20 in \cite{FS}, or \eqref{jacobian} below.

Further, if $B_{Q_{x,r}} := \delta_{1/r}(x^{-1} \cdot B_{Q})$, then $\beta(Q_{x,r}) := \beta(B_{Q_{x,r}}) \leq \varepsilon$. Here, $\beta$ denotes the vertical $\beta$-number associated with $E_{x,r} := \delta_{1/r}(x^{-1} \cdot E)$. Note that $E_{x,r}$ is $3$-regular with the same constants as $E$. Finally, note that $0 \in Q_{x,r}$, and
\begin{displaymath} y_{x,r} := \delta_{1/r}(x^{-1} \cdot y) \in B_{Q_{x,r}} \cap E_{x,r} \cap C_{\W}(\alpha) \end{displaymath}
with $d_{\He}(0,y_{x,r}) \sim 1$. To sum up, if the lemma fails for $Q$, then we can construct another $3$-regular set $E_{x,r}$, and another good David cube $Q_{x,r}$ (for $E_{x,r}$) with $0 \in Q_{x,r}$ and $\ell(Q_{x,r}) = 1$, such that the lemma fails for $Q_{x,r}$. Thus, it suffices to prove the lemma for a David-cube $Q$ with the additional properties $0 \in Q$ and $\ell(Q) = 1$.

To this end, assume to the contrary that $y \in C_{\W}(\alpha)$ with $d_{\He}(0,y) \sim 1$. We will use this
to show that the entire projection $\pi_{\W}(Q)$ is contained in a small neighbourhood
of the $t$-axis $T$. This will violate \eqref{good1}. Somewhat abusing notation, we write
\begin{displaymath} \W^{\perp} := V^{\perp} \times \R. \end{displaymath}
Let $p \in \He$ and $\W'$ be such that
\begin{displaymath} \beta_{E}(B_{Q};p \cdot \W') \leq 2\beta(Q) \leq 2\varepsilon. \end{displaymath}
The first task is to show that the angle $\theta(\W',\W^{\perp})$
between $\W'=V'\times \R$ and $\W^{\perp}$ satisfies
\begin{equation}\label{eq:angle_planes}
\theta(\W',\W^{\perp}) \lesssim \alpha + \varepsilon.
\end{equation}
Write $y=(y_H,y_t)$. The plan is  to use the smallness of
$\beta_{E}(B_{Q};p \cdot \W')$ in order to find a point
$w'=:(w_H',w_t')$ on $\W'$, but close to $y$, such that
\begin{equation}\label{eq:angle}
|\pi_V(w_H')|\lesssim (\alpha+\varepsilon) |w_H'|.
\end{equation}
This proves that the angle between $V^{\perp}$ and $V'$ is
$\lesssim (\alpha+\varepsilon)$ and thus \eqref{eq:angle_planes},
as claimed. In order to show \eqref{eq:angle}, we first observe that the assumption of $y \in C_{\W}(\alpha)$ implies
\begin{equation}\label{form1} \max \left\{|\pi_{V}(y_{H})|, |y_{t} - 2\omega(\pi_{V}(y_{H}),\pi_{V^{\perp}}(y_{H}))|^{1/2}\right\} \lesssim \alpha|\pi_{V^{\perp}}(y_{H})|. \end{equation}
Recalling that $d_{\He}(0,y) \sim 1$, this is only possible if
\begin{equation}\label{form2} |y_{H}| \sim 1. \end{equation}
Indeed, we even have $|y_{H}|^{2} \geq |y_{t}|$; otherwise the left hand side of
\eqref{form1} can be bounded from below by $ |
y_t/2|^{1/2}$, while the upper bound is then $\lesssim \alpha |y_{H}| \leq \alpha |y_t|^{1/2}$. For small
 enough $\alpha$, this is impossible. Thus we may suppose
 \eqref{form2}.

Further, by Remark \ref{changeOfMidpoint} we find for all
$y' \in B_{Q} \cap E$ that
\begin{equation}\label{form33} \dist_{\He}(y',\W') \leq 8\beta_{E}(B_{Q};p \cdot \W')\ell(Q) \leq 16\varepsilon, \end{equation}
by our choice of $p$ and $\W'$. In particular, for $y'=y$,  there exists a vector $w' = (w'_{H},w'_{t})
\in \W'$ with
\begin{displaymath} |w'_{H}-y_{H}| \leq d_{\He}(y,w') \leq 16\varepsilon. \end{displaymath}
By \eqref{form2}, this gives $|w'_{H}| \sim 1$, and finally, using $y \in C_{\W}(\alpha)$,
\begin{displaymath} |\pi_{V}(w'_{H})| \leq |w'_{H} - y_{H}| + |\pi_{V}(y_{H})| \lesssim (\alpha + \varepsilon)|w_{H}'|. \end{displaymath}
This proves \eqref{eq:angle} and \eqref{eq:angle_planes}.

So, we know that
\begin{itemize}
\item[(i)] $Q$ is close to $\W'$ (by \eqref{form33}),
\item[(ii)] $\W'$ is close to $\W^{\perp}$ (by \eqref{eq:angle_planes}).
\end{itemize}
As we will next demonstrate, $\pi_{\W}(Q)$ is close to $\pi_{\W}(\W^{\perp}) = T$.

Indeed, since we do not care about the best constants here, we can finish the proof very quickly: let $\tau_{\alpha,\varepsilon} > 0$ be a number such that if $w' \in \W' \cap B(0,2)$, then $d_{\He}(w',w^{\perp}) \leq \tau_{\alpha,\varepsilon}$ for some $w^{\perp} \in \W^{\perp} \cap B(0,3)$. Recalling \eqref{eq:angle_planes}, we can pick $\tau_{\alpha,\varepsilon}$ arbitrarily small by choosing $\alpha,\varepsilon$ small enough. Now, if $y' \in Q$, then by \eqref{form33} and the triangle inequality, we have $d_{\He}(y',w^{\perp}) \leq 16\varepsilon + \tau_{\alpha,\varepsilon}$ for some $w^{\perp} \in \W^{\perp}\cap B(0,3)$. Since $\pi_{\W}$ is locally $1/2$-H\"older continuous, it follows that
\begin{displaymath} \dist_{\He}(\pi_{\W}(y'),T) \leq d_{\He}(\pi_{\W}(y'),\pi_{\W}(w^{\perp})) \lesssim d_{\He}(y',w^{\perp})^{1/2} \leq (16\varepsilon + \tau_{\alpha,\varepsilon})^{1/2}. \end{displaymath}
The same holds with $T$ replaced by $\pi_{\W}(B(0,3))$. Finally, the $\calL^{2}$-measure of the $C(16\varepsilon + \tau_{\alpha,\varepsilon})^{1/2}$-neighbourhood of $T \cap \pi_\W(B(0,3))$ is bounded by a constant depending only on $\alpha,\varepsilon$, and this constant tends to zero as $\alpha,\varepsilon \to 0$. For sufficiently small values of $\alpha,\varepsilon$, this violates \eqref{good1}, and the proof is complete. \end{proof}

\subsubsection{The abstract part}\label{ss:application}

In this section, we apply Lemma \ref{l:DS_lem} to good cubes
inside a set $E\subset \He$ satisfying the weak geometric
lemma for vertical $\beta$-numbers. This is a counterpart
for Theorem 2.11 in \cite{DS}, which in turn is modelled on a result of P.\ Jones \cite{J}. The proof below is very similar to that in \cite{DS}; given Lemmas \ref{lipschitzLemma} and \ref{l:DS_lem}, the argument does not really see the difference between $\He$ and $\R^{n}$. We still record the full details.

\begin{thm}\label{t:main2_inv}
 Assume that $E\subseteq \He$ is a $3$-regular set satisfying the WGL and let $b >0$. Then there exist numbers $\alpha > 0$ and $M \in \N$, depending only on $b$ and the $3$-regularity and WGL constants of $E$, such that the following holds:

For every David cube $Q_0$ in $E$ and for all vertical projections
$\pi_{\W}$, there exist intrinsic $(1/\alpha)$-Lipschitz graphs $F_j \subset Q_{0}$, $1\leq j\leq M$,
\begin{displaymath} \calL^{2}(\pi_{\W}(Q_0 \setminus \cup F_j)) \leq b \calH^3(Q_0). \end{displaymath}
\end{thm}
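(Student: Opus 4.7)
\emph{Strategy and parameters.} The plan mirrors the David--Semmes stopping-time argument of \cite[Theorem 2.11]{DS}, with the Heisenberg-specific inputs being Lemma \ref{lipschitzLemma} (vertical projections almost preserve $\calH^{3}$-measure), Lemma \ref{l:DS_lem} (good cubes forbid the Heisenberg cone at their own scale), and the cube form \eqref{cubeWGL} of the WGL. Given $b > 0$, set $c := b/2$, let $\varepsilon = \varepsilon(c)$ and $\alpha = \alpha(c,\varepsilon)$ be the parameters produced by Lemma \ref{l:DS_lem}, and let $C_{\varepsilon}$ denote the WGL constant appearing in \eqref{cubeWGL}.

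\emph{Pruning to a super-good set.} Let $\calB$ be the family of maximal David subcubes $Q \subset Q_{0}$ for which \eqref{good1} fails. These are pairwise disjoint, so
\[
  \calL^{2}\bigl(\pi_{\W}(\cup\calB)\bigr) \leq \sum_{Q \in \calB}\calL^{2}(\pi_{\W}(Q)) < c\sum_{Q \in \calB}\calH^{3}(Q) \leq c\calH^{3}(Q_{0}).
\]
For $x \in Q_{0}$, set $n(x) := \#\{Q \in \Delta(Q_{0}) : x \in Q,\, \beta(Q) > \varepsilon\}$. The cube form of the WGL gives $\int_{Q_{0}} n\,d\calH^{3} \leq C_{\varepsilon} \calH^{3}(Q_{0})$, so Chebyshev combined with Lemma \ref{lipschitzLemma} yields $\calL^{2}(\pi_{\W}(\{n > K\})) \leq (b/2)\calH^{3}(Q_{0})$ once $K = K(b, C_{\varepsilon})$ is large enough. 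Set $M := K+1$. On the super-good set $G := (Q_{0} \setminus \cup\calB) \cap \{n \leq K\}$, every David cube $Q \ni x$ in $\Delta(Q_{0})$ obeys \eqref{good1} and at most $K$ of them violate \eqref{good2}, and $\calL^{2}(\pi_{\W}(Q_{0} \setminus G)) \leq b\calH^{3}(Q_{0})$.

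\emph{Decomposition of $G$ into Lipschitz graphs.} For each $x \in G$, the cubes containing $x$ with $\beta > \varepsilon$ form a nested chain $P_{1}(x) \supsetneq \cdots \supsetneq P_{n(x)}(x)$, produced by a maximal-bad stopping time inside $Q_{0}$. Stratify $G$ by a combinatorial invariant of this chain to obtain classes $F_{1},\ldots,F_{M}$. For distinct $x, y$ in the same class, the David-cube machinery recalled at the end of Section \ref{davidCubes} provides a cube $Q \ni x$ with $y \in B_{Q}$ and $\ell(Q) \sim d_{\He}(x,y)$; if $Q$ is good, Lemma \ref{l:DS_lem} directly delivers $y \notin x \cdot C_{\W}(\alpha)$. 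If instead $Q$ coincides with one of the $P_{i}(x)$, one substitutes a nearby good cube---a parent or a child whose scale still satisfies the elasticity in the ``$\sim$'' of Lemma \ref{l:DS_lem}---with the stratification engineered so that such a substitute always exists in every class. Each $F_{k}$ then satisfies the cone condition of Definition \ref{d:intrLip} with $L = 1/\alpha$ and is an intrinsic $(1/\alpha)$-Lipschitz graph.

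\emph{Main obstacle.} The crux is the last step. Lemma \ref{l:DS_lem} only controls the cone condition at the single scale $\ell(Q)$ of a good cube, whereas being intrinsic Lipschitz requires control at every scale $d_{\He}(x,y)$ realized by the set. The principal difficulty is clusters of consecutive bad $\beta$-scales for a single point: even the immediate parent and child of $P_{i}(x)$ may themselves be bad, so one is forced to look further up or down the cube tree, and the elasticity constant in the ``$\sim$'' of Lemma \ref{l:DS_lem} may not be large enough to span the gap. The combinatorial stratification has to be designed finely enough to avoid long clusters within each class, yet coarsely enough that the number $M$ of classes depends only on $K$---and hence only on $b$ together with the $3$-regularity and WGL constants of $E$.
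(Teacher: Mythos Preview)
Your pruning step is essentially the one in the paper, with one caveat noted below. The genuine gap is in the decomposition of $G$ into intrinsic Lipschitz graphs: you never specify the ``combinatorial invariant'' used to stratify, and in the final paragraph you openly concede that you do not know how to handle clusters of consecutive bad $\beta$-scales. Your proposed remedy---replace a bad $Q$ by a good parent or child and hope that the ``$\sim$'' in Lemma \ref{l:DS_lem} absorbs the change of scale---does not work. The implicit constants in that lemma are fixed once $c,\varepsilon$ are fixed, so they tolerate only a bounded number of generations' gap, whereas a cluster of bad scales for a single point can have length anywhere up to $K$. No stratification based solely on the chain $P_{1}(x)\supsetneq\cdots\supsetneq P_{n(x)}(x)$ of bad cubes \emph{containing} $x$ can arrange that a good substitute exists at every required scale; the obstruction is that the relevant cube $Q_{x}$ at scale $d_{\He}(x,y)$ being bad is a condition involving the position of $y$ as well, and your invariant sees only $x$.

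The paper avoids substitution entirely by a \emph{coding argument} (going back to Jones; see \cite[p.~866--867]{DS}). One assigns to every David subcube $Q\subset Q_{0}$ a finite binary string $\sigma(Q)$, built inductively over generations, with the following separation property: whenever $Q\in\calB_{2}$ and $Q_{1}\subset B_{Q}$ is another cube of the \emph{same} generation, the strings $\sigma(Q)$ and $\sigma(Q_{1})$ are forced to be mutually non-prefix. A point $x\in F$ then inherits the limit string $\sigma(x)$, and one sets $F_{s}:=\{x:\sigma(x)=s\}$. The key is that if $x,y\in F_{s}$ and one forms the usual pair $Q_{x}\ni x$, $Q_{y}\ni y$ of cubes of common generation with $Q_{y}\subset B_{Q_{x}}$ and $Q_{x}\subset B_{Q_{y}}$, then \emph{both} $\beta(Q_{x})\leq\varepsilon$ and $\beta(Q_{y})\leq\varepsilon$---otherwise the separation property would force $\sigma(x)\neq\sigma(y)$. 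So Lemma \ref{l:DS_lem} applies directly to $Q_{x}$ with no substitution needed.

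For this to produce only finitely many classes one needs $|\sigma(x)|$ bounded, and here the caveat on your pruning step matters: the paper's exceptional set is $R_{2}=\{x:\sum_{Q\in\calB_{2}}\chi_{B_{Q}}(x)\geq N\}$, counting bad cubes $Q$ with $x\in B_{Q}$, not merely $x\in Q$ as in your $n(x)$. This stronger count is exactly what bounds the number of generations at which $\sigma(Q_{j+1})$ can differ from $\sigma(Q_{j})$ along the chain $Q_{0}\supset Q_{1}\supset\cdots\ni x$, hence bounds $|\sigma(x)|$ and the number $M$ of classes.
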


\begin{proof}
Let $E$ and $b$ be as in the assumptions of Theorem
\ref{t:main2_inv}. Let further $\varepsilon>0$ be a small number
to be chosen later (based on Lemma \ref{l:DS_lem}). Fix an arbitrary cube $Q_{0} \in \Delta$, and an arbitrary vertical
subgroup $\W$.

First, we will group the cubes in $\triangle(Q_0)$ into "good"
and "bad" cubes, and control the quantity of the bad cubes via the WGL assumption. Second, Lemma \ref{l:DS_lem}, coupled with a "coding argument", will be used to partition the complement of the "bad" cubes in $Q_{0}$ into the sets $F_{j}$.

The "good" cubes $\calG$ are the familiar $(b/2,\varepsilon,\W)$-good cubes defined right above Lemma \ref{l:DS_lem}. The class
$\mathcal{B}_1$ consists of those maximal (hence disjoint) cubes in $Q_0$ that violate the
first goodness condition, i.e.,
\begin{displaymath}
\mathcal{B}_1:= \left\{Q\in \triangle(Q_0): \calL^{2}(\pi_{\W}(Q)) <
\tfrac{b}{2}\calH^3(Q)\right\}.
\end{displaymath}
Let $\mathcal{B}_2$ be the class of (all, not maximal) cubes that violate the second goodness
condition:
\begin{displaymath}
\mathcal{B}_2:= \left\{Q\in \triangle(Q_0): \beta(Q) >
\varepsilon\right\}.
\end{displaymath}

Then, clearly, $\mathcal{G} = \triangle(Q_0) \setminus
\bigcup_{j=1}^2 \mathcal{B}_j$. It is also clear  that the projections of bad cubes from the first
class have small measure: for $R_{1} := \bigcup_{Q \in \calB_{1}} Q$, we have
\begin{equation}\label{eq:est_B1}
\calL^{2}(\pi_{\W}\left(R_{1}\right))\leq
\tfrac{b}{2} \cdot \calH^3\left(R_{1}\right) \leq
\tfrac{b}{2} \cdot \calH^3(Q_0).
\end{equation}
On the other hand, for the second bad class, one can control the
measure of the cubes directly by the variant of WGL formulated in \eqref{cubeWGL}:
\begin{displaymath}
\sum_{Q \in \mathcal{B}_2} \calH^3(Q) \leq C(\varepsilon)
\calH^3(Q_0).
\end{displaymath}

Since $\calL^{2}(\pi_{\W}(A)) \leq C\calH^{3}(A)$ for all $A \subset \He$ by Lemma \ref{lipschitzLemma}, the inequality above shows that the $\pi_{\W}$-projection of $\bigcup_{Q \in \calB_{2}} Q$ has measure no larger than $CC(\varepsilon)\calH^{3}(Q_{0})$. This is a little bit too weak for our purposes; in analogy with \eqref{eq:est_B1}, we wish to replace $CC(\varepsilon)$ by a small constant. To this end, we set
\begin{displaymath}
R_2 = \left\{x\in Q_0:\; \sum_{Q\in\mathcal{B}_2}\chi_{B_{Q}}(x)
\geq N \right\}
\end{displaymath}
where $N=N_{b,\varepsilon}$ is so large that
$\calH^3(R_2)\leq \frac{b}{2C}\calH^3(Q_0)$. This is possible:
\begin{align*}
N \calH^3(R_2)\leq \int_{Q_0} \sum_{Q\in \mathcal{B}_2}\chi_{B_{Q}}(x) \;{d}\calH^3(x) \lesssim \sum_{Q\in
\mathcal{B}_2}\calH^3(Q) \leq C(\varepsilon) \cdot
\calH^3(Q_0).
\end{align*}
With this definition of $R_{2}$, Lemma \ref{lipschitzLemma} gives
\begin{displaymath}
\calL^{2}(\pi_{\W}(R_1\cup R_2)) \leq \tfrac{b}{2} \cdot \calH^3(Q_0)+ \tfrac{b}{2} \cdot
\calH^3(Q_0) \leq b \calH^3(Q_0).
\end{displaymath}

It remains to find subsets $F_1,\ldots, F_M$ such that $Q_0
\setminus (R_1 \cup R_2)=\bigcup F_j$ and for every $j=1,\dots, M$ and every pair $x,y\in F_j, x\neq y,$ it holds that
$y\notin x \cdot C_{\W}(\alpha)$ for $\alpha$ small enough (only depending on $b$ and the $3$-regularity and WGL constants of $E$). This is done via a "coding argument", which goes back to Jones, see \cite[p.866-867]{DS}. The argument is also explained briefly in David's book \cite{Da}, p. 81--82, but we present the full details.

We start with a brief informal overview. Write $F := Q_{0} \setminus (R_{1} \cup R_{2})$. Why do we need a "coding argument"? Maybe we can show, directly, that if $x,y\in F, x\neq y,$ then
$y\notin x \cdot C_{\W}(\alpha)$? Pick two distinct points $x,y \in F$, and pick two disjoint cubes $Q_{x},Q_{y} \subset Q_{0}$ of some common generation such that $x \in Q_{x}$, $y \in Q_{y}$, $Q_{y} \subset B_{Q_{x}}$ and $d_{\He}(x,y) \sim \ell(Q)$ (such cubes exist, as discussed in Section \ref{davidCubes}). Now, since $Q_{x} \not\subset R_{1}$, we know that $\calL^{2}(\pi_{\W}(Q_{x})) \geq \tfrac{b}{2} \cdot \calH^{3}(Q_{x})$. If -- and this is the "big if" -- we also knew that $\beta(Q_{x}) \leq \varepsilon$ for $\varepsilon > 0$ small enough, we could infer from Lemma \ref{l:DS_lem} that $y \notin x \cdot C_{\W}(\alpha)$ (assuming also that $\alpha$ is small enough).
Of course, we do not know that $\beta(Q_{x}) \leq \varepsilon$ for the particular cube $Q_{x}$ we are interested in: even though $x \notin R_{2}$, there can still be up to $N$ "exceptional" cubes $Q \ni x$ such that $\beta(Q) > \varepsilon$. The "coding argument" is needed to fix this issue. Essentially, if we declare that $x \in F_{1}$, say, we want to make sure that the following holds: whenever $Q \ni x$ with $\beta(Q) > \varepsilon$, then all the points $y \in B_{Q}\setminus Q$ are stored safely away in the other sets $F_{j}$, $j \geq 2$. Once that has been accomplished, the argument above works for $F_{1}$ (or any $F_{j}$) in place of $F$.

We turn to the details, which are repeated from \cite{Da} nearly verbatim. For each cube $Q \subset Q_{0}$, we will associate a certain finite sequence of $0$'s and $1$'s, denoted by $\sigma(Q)$. The length of such a sequence is denoted by $|\sigma(Q)|$. We declare $\sigma(Q_{0})$ to be the empty sequence.

Next, assume inductively that the numbers $\sigma(Q)$ have been defined for the descendants of $Q \subset Q_{0}$ down to a certain generation, say $k$. We now aspire to make the definition for cubes $Q$ of generation $k + 1$. If $Q$ is such a cube, and $Q^{\ast}$ is its parent, we initially set $\sigma(Q) := \sigma(Q^{\ast})$.

Assume that $Q \in \calB_{2}$, that is, $\beta(Q) > \varepsilon$, and assume that there exists at least one other cube $Q_{1}$ of the same generation as $Q$ such that $Q_{1} \subset B_{Q}$ (if either of these assumptions fails, we do not alter $\sigma(Q)$ now). Note that $\sigma(Q)$ and $\sigma(Q_{1})$ have both been initially defined.

There are two cases to consider.
\begin{itemize}
\item Case 1: $|\sigma(Q)| = |\sigma(Q_{1})|$. If $\sigma(Q) \neq \sigma(Q_{1})$, we do not alter $\sigma(Q)$ or $\sigma(Q_{1})$. But if $\sigma(Q) = \sigma(Q_{1})$, we re-define $\sigma(Q)$ by adding a "$0$", and we re-define $\sigma(Q_{1})$ by adding a "$1$".
\item Case 2: $|\sigma(Q)| \neq |\sigma(Q_{1})|$. If, for instance, $|\sigma(Q)| > |\sigma(Q_{1})|$, then we do not alter $\sigma(Q)$. But we re-define $\sigma(Q_{1})$ by adding either "$0$" or "$1$" to it in such a fashion that the new $\sigma(Q_{1})$ is \textbf{not} an initial segment of $\sigma(Q)$. Finally, if $|\sigma(Q)| < |\sigma(Q_{1})|$, then we repeat the same step with the roles of $Q$ and $Q_{1}$ reversed.
\end{itemize}

After this procedure is complete, we pick another cube $Q_{2} \subset B_{Q}$ of generation $k + 1$ (if it exists), and perform the previous case chase with the pair $(Q,Q_{1})$ replaced by $(Q,Q_{2})$. Once all the pairs $(Q,Q_{i})$ with $Q_{i} \subset B_{Q}$ have been processed, we move on to other pairs $(Q',Q_{i})$ with $Q' \in \calB_{2}$ and $Q_{i} \subset B_{Q'}$, and give them the same treatment as above.

The algorithm terminates eventually (because there are only finitely many cube-pairs to consider), and, at the end, every cube $Q$ of generation $k + 1$ has an associated sequence $\sigma(Q)$. If $Q \not\subset B_{Q'}$ for all cubes $Q' \in \calB_{2}$ of generation $k + 1$, then $\sigma(Q)$ retains the initial value $\sigma(Q^{\ast})$. Even if $Q \subset B_{Q'}$ for some $Q' \in \calB_{2}$, this can occur only for a bounded number, say $C'$, of alternatives $Q' \in \calB_{2}$ of generation $k + 1$. Consequently, $\sigma(Q)$ differs from $\sigma(Q^{\ast})$ by a sequence of length $\leq C'$.

By applying the procedure at all generations $k$, every sub-cube of $Q_{0}$ gets associated with a (finite) sequence $\sigma(Q)$. Next, we wish to extend the definition of these sequences from cubes to points in $F = Q_{0} \setminus (R_{1} \cup R_{2})$. Fix $x \in F$, and let $Q_{0} \supset Q_{1} \supset \ldots$ be the unique sequence of dyadic cubes converging to $x$. As discussed in the previous paragraph, $\sigma(Q_{j + 1})$ can differ from $\sigma(Q_{j})$ only in case $Q_{j + 1}$ is contained in $B_{Q'}$ for some $Q' \in \calB_{2}$ of the same generation as $Q_{j + 1}$. By definition of $x \notin R_{2}$, there can only be $< N$ such indices $j$. In particular, the sequences $\sigma(Q_{j})$ converge to some finite sequence of $0$'s and $1$'s, denoted by $\sigma(x)$. Furthermore, for those $< N$ indices $j$, where $\sigma(Q_{j + 1})$ possibly differs from $\sigma(Q_{j})$, this difference is a sequence of length at most $C'$. Consequently, the possible values of $\sigma(x)$ form a finite set $S$, whose cardinality can be bounded from above in terms of the constants $N$ and $C'$. Given an element $s \in S$, we now define
\begin{displaymath} F_{s} := \{x \in F : \sigma(x) = s\}. \end{displaymath}
It remains to check that the sets $F_{s}$ satisfy the useful property we hinted at in the informal overview. Assume that $x,y \in F$ belong to "nearby" two cubes $Q_{x},Q_{y}$ of the same generation, namely with $Q_{x} \subset B_{Q_{y}}$ and $Q_{y} \subset B_{Q_{x}}$, and assume that either $\beta(Q_{x}) > \varepsilon$ or $\beta(Q_{y}) > \varepsilon$. Then, we claim that $x$ and $y$ belong to two different sets of the form $F_{s}$. Consider the sequences $\sigma(Q_{x})$ and $\sigma(Q_{y})$ (which are initial sequences in $\sigma(x)$ and $\sigma(y)$, respectively). Assume, for instance, that $\beta(Q_{x}) > \varepsilon$. This means that $Q_{x} \in \calB_{2}$, so the pair $(Q_{x},Q_{y})$ is considered while defining the sequences $\sigma(Q_{x})$ and $\sigma(Q_{y})$. Then, inspecting Case 1 and Case 2, it is clear that neither of the sequences $\sigma(Q_{x})$ and $\sigma(Q_{y})$ can be an initial sequence of the other. This proves that $\sigma(x) \neq \sigma(y)$, as claimed.

Now, we can quickly prove that if $x,y\in F_s, x\neq y,$ then
$y\notin x \cdot C_{\W}(\alpha)$, if $\alpha > 0$ is small enough. Pick $x,y \in F_{s}$, and let $Q_{x}$ and $Q_{y}$ be sub-cubes of $Q_{0}$, containing $x$ and $y$, respectively, with same generation, satisfying $Q_{x} \subset B_{Q_{y}}$ and $Q_{y} \subset B_{Q_{x}}$, and with $\ell(Q) \sim d_{\He}(x,y)$. It follows from the claim in the previous paragraph that $\beta(Q_{x}) \leq \varepsilon$ and $\beta(Q_{y}) \leq \varepsilon$. Consequently, by Lemma \ref{l:DS_lem}, we have $y \notin x \cdot C_{\W}(\alpha)$  for small enough $\alpha$. This completes the proof of the theorem.
\end{proof}
We are now prepared to prove Theorem \ref{t:main1_BPiLG}. Again, the proof is very similar to the Euclidean argument, see Theorem 1.14 in \cite{DS}.

\begin{proof}[Proof of Theorem \ref{t:main1_BPiLG}]
Let $E$ be a $3$-regular subset of $\He$ with BVP, and satisfying the WGL. The former property ensures that for fixed $x\in E$ and $0 < R \leq \diam_{\He}(E)$, there is a vertical subgroup $\W$ such that
\begin{displaymath}
\calL^{2}(\pi_{\W}(E\cap B(x,\tfrac{R}{2}))) \geq \delta
\left(\tfrac{R}{2}\right)^3,
 \end{displaymath}
 where $\delta$ is a constant depending only on $E$.
 Then, there exists $b > 0$, depending only on $\delta$ and the $3$-regularity constant of $E$, and a David cube $Q_0 \subset E \cap B(x,R)$ such that $\calH^{3}(Q_{0}) \sim R^{3}$ and
 \begin{equation}\label{eq:proj_meas}
 \calL^{2}(\pi_{\W}(Q_0)) \geq 2 b \calH^3(Q_0).
 \end{equation}

 Now we apply Theorem \ref{t:main2_inv} to this particular cube $Q_0$ and choice of $b > 0$. It follows that there exist numbers $\alpha > 0$ and $M \in \N$ (depending only on $b$, and the $3$-regularity and WGL constants of $E$) with the following property: there exist intrinsic $(1/\alpha)$-Lipschitz graphs $F_j \subset Q_{0}$, $1\leq j\leq M$, such that
\begin{displaymath} \calL^{2}(\pi_{\W}(Q_0 \setminus \cup F_j)) \leq b \calH^3(Q_0). \end{displaymath}
This, together with \eqref{eq:proj_meas}, implies
 \begin{displaymath}
\sum_{j = 1}^M \calL^{2}(\pi_{\W}(F_j)) \geq \calL^{2}\left(\pi_{\W}\left(\bigcup_{j=1}^M F_j\right)\right) \geq
\calL^{2}(\pi_W(Q_0))-b \calH^3(Q_0) \geq b \calH^3(Q_0).
 \end{displaymath}
 Thus there must exist some $1\leq j\leq M$ such that
 \begin{displaymath}
\calH^3(F_j) \gtrsim \calL^{2}(\pi_{\W}(F_j)) \geq b \calH^3(Q_0)/M \sim bR^{3}/M.
 \end{displaymath}
The proof is complete. \end{proof}

\section{The weak geometric lemma for intrinsic Lipschitz graphs}\label{s:WGL}

\subsection{Introduction, part II}\label{outline}

In the first half of the paper, we saw that any $3$-Ahlfors-David
regular subset of $\He$ with big vertical projections (BVP), and
satisfying the weak geometric lemma (WGL), has big pieces of
intrinsic Lipschitz graphs (BPiLG). The second half of the paper is
devoted to proving the converse. The fact that intrinsic Lipschitz graphs (and thus BPiLG as well) have BVP is almost
trivial, see Remark \ref{remark0}. Hence in order to prove Theorem \ref{mainCor} it suffices to prove the WGL for sets which have BPiLG.

\begin{thm}\label{wglbp} If $E \subset \He$ has big pieces of intrinsic Lipschitz graphs with constants $L\geq 1, \theta>0,$ then it satisfies the weak geometric lemma; i.e.
\begin{equation*}
\label{wgleqbp}
\int_{0}^{R} \int_{E \cap B(x,R)} \chi_{\{(y,s)\in E \times
\mathbb{R}_+:\; \beta(B(y,s))\geq \varepsilon\}}(y,s)
d\calH^{3}(y) \, \frac{ds}{s} \lesssim_{\varepsilon, L, \theta} R^{3}
\end{equation*}
for any $\varepsilon > 0$, $x \in E$ and $R > 0$.
\end{thm}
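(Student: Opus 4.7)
The plan is to prove Theorem~\ref{wglbp} by a Jones--David--Semmes style stopping-time argument, using BPiLG together with Theorem~\ref{main} (the WGL for intrinsic Lipschitz graphs themselves) as the fundamental building block. Using the David cube system $\Delta$ on $E$ from Section~\ref{davidCubes} and the dyadic reformulation~\eqref{cubeWGL} of the WGL, the task reduces to showing
\begin{equation*}
N_{\varepsilon}(Q_0) := \sum_{\{Q \in \Delta(Q_0) \, : \, \beta(Q) \geq \varepsilon\}} \calH^{3}(Q) \leq K_{\varepsilon}\, \calH^{3}(Q_0)
\end{equation*}
uniformly for $Q_0 \in \Delta$, with $K_{\varepsilon}$ depending only on $\varepsilon, L, \theta$ and the $3$-regularity constant of $E$. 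The argument is recursive: inside each $Q_0$ extract, via BPiLG, an intrinsic Lipschitz graph covering a definite fraction of $Q_0$, use Theorem~\ref{main} to handle the ``good'' cubes on which the graph is dense in $E$, and recurse on the ``bad'' cubes on which the graph has fled.

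Fix $Q_0 \in \Delta$. Applying BPiLG at $B_{Q_0}$ produces an intrinsic $L$-Lipschitz graph $\Gamma = \Gamma_{Q_0}$ over some vertical subgroup, with $\calH^{3}(G) \gtrsim \theta\,\ell(Q_0)^{3}$, where $G := E \cap \Gamma \cap B_{Q_0}$. Fix a small threshold $\eta = \eta(\theta)>0$ (much smaller than $\theta$) and let $\calS(Q_0)$ be the collection of maximal cubes $S \in \Delta(Q_0)$ satisfying $\calH^{3}(G \cap S) \leq \eta\,\calH^{3}(S)$; write $\calT(Q_0)$ for the cubes in $\Delta(Q_0)$ not strictly contained in any member of $\calS(Q_0)$. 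A direct mass-balance computation, exploiting the lower bound on $\calH^{3}(G)$ and the $3$-regularity of $E$, yields $\sum_{S \in \calS(Q_0)} \calH^{3}(S) \leq (1 - \eta_{0})\,\calH^{3}(Q_0)$ for some $\eta_{0} = \eta_{0}(\theta) > 0$.

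The heart of the proof is the following dichotomy for a ``bad'' cube $Q \in \calT(Q_0)$ with $\beta(Q) \geq \varepsilon$: either (a) there is a David cube $\tilde Q$ on $\Gamma$ with $\ell(\tilde Q) \sim \ell(Q)$, containing a point of $G \cap Q$, and satisfying $\beta_{\Gamma}(\tilde Q) \gtrsim \varepsilon$; or (b) $Q$ contains a stopping cube $S \in \calS(Q_0)$ of size $\ell(S) \gtrsim_{\varepsilon} \ell(Q)$. To prove this, pick an optimal approximating vertical plane $P^{\ast}$ for $\Gamma$ on a mild enlargement of $B_{Q}$; since $\beta(Q) \geq \varepsilon$, there exists a witness $y \in E \cap B_{Q}$ with $\dist_{\He}(y, P^{\ast}) \geq \varepsilon\, r(B_{Q})$. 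If (a) fails, then $\Gamma$ lies within distance $\ll \varepsilon\, r(B_{Q})$ of $P^{\ast}$ on the enlargement, and the triangle inequality gives $\dist_{\He}(y, \Gamma) \gtrsim \varepsilon\, r(B_{Q})$. Hence the ball $B(y, c\varepsilon\, r(B_{Q}))$ is disjoint from $\Gamma$ yet carries $\gtrsim (\varepsilon\, r(B_{Q}))^{3}$ of $\calH^{3}$-mass of $E$ by $3$-regularity, forcing some David sub-cube $S' \subset Q$ of generation $\sim \log_{2}\ell(Q) - \log_{2}(1/\varepsilon)$ to have $G$-density below $\eta$ and therefore to lie inside a stopping cube $S \in \calS(Q_0)$ with $\ell(S) \gtrsim_{\varepsilon} \ell(Q)$. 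I expect this dichotomy to be the main obstacle: the vertical projection $\pi_{\W}$ is only $1/2$-H\"older, and the interplay between $P^{\ast}$, $\Gamma$, and $y$ must be tracked through non-Euclidean triangle inequalities, with particular care needed to pass between balls centered on $E$ and those centered on $\Gamma$ when transporting information to and from the WGL on $\Gamma$.

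With the dichotomy in hand, alternative~(a) contributes $\lesssim_{\varepsilon} \calH^{3}(Q_0)$ by Theorem~\ref{main}: the assignment $Q \mapsto \tilde Q$, defined by picking any $x_{Q} \in G \cap Q$ (which exists since $Q \in \calT(Q_0)$ forces $\calH^{3}(G \cap Q) \geq \eta\, \calH^{3}(Q) > 0$) and taking the $\Gamma$-cube of the appropriate generation containing $x_{Q}$, has bounded multiplicity at each fixed generation, because the corresponding disjoint cubes $Q$ all lie in a bounded neighbourhood of $\tilde Q$ and have $\calH^{3}$-mass $\sim \ell(Q)^{3}$. Alternative~(b) contributes $\lesssim_{\varepsilon} \calH^{3}(Q_0)$ as well, since each $S \in \calS(Q_0)$ is a witness for at most $O(\log(1/\varepsilon))$ ancestors $Q$, while $\sum_{S \in \calS(Q_0)} \calH^{3}(S) \leq \calH^{3}(Q_0)$. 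Substituting into the recursion
\begin{equation*}
N_{\varepsilon}(Q_0) \leq C_{\varepsilon}' \calH^{3}(Q_0) + \sum_{S \in \calS(Q_0)} N_{\varepsilon}(S) \leq C_{\varepsilon}' \calH^{3}(Q_0) + (1 - \eta_{0}) K_{\varepsilon}\, \calH^{3}(Q_0),
\end{equation*}
and running the usual induction on generations (first establishing the bound for cubes of generation $\geq -M$ and then letting $M \to \infty$) closes the estimate at $K_{\varepsilon} \lesssim_{\varepsilon, L, \theta} 1$, completing the proof.
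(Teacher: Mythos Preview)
Your approach is essentially the standard David--Semmes ``big pieces implies Carleson condition'' argument. The paper itself does not give an independent proof: it simply observes that Theorem~\ref{wglbp} is an instance of \cite[Part~IV, Theorem~1.8]{DS2} (restated as Theorem~\ref{ds18heis}), notes that the Euclidean proof carries over verbatim, and combines it with Theorem~\ref{wglintlg}. So what you have written is a sketch of the proof of the cited black box, specialised to this setting; in that sense you and the paper are doing the same thing, only you unpack the reference.

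There is one genuine gap in your dichotomy, however. The witness $y \in E \cap B_{Q}$ to $\beta(Q) \geq \varepsilon$ need not lie in $Q$, and if $Q$ is near the boundary of $Q_{0}$ it need not even lie in $Q_{0}$; consequently the small David cube $S'$ containing $y$ is not guaranteed to sit inside $Q$ (or inside $Q_{0}$), and your conclusion ``$S' \subset Q$ lies in some $S \in \calS(Q_{0})$ with $\ell(S) \gtrsim_{\varepsilon} \ell(Q)$'' does not follow as stated. A related minor slip: cubes $Q \in \calS(Q_{0})$ belong to $\calT(Q_{0})$ by your definition, yet for these one does not have $\calH^{3}(G \cap Q) \geq \eta\,\calH^{3}(Q)$, so the existence of $x_{Q} \in G \cap Q$ needs a separate one-line argument (pass to the parent, or handle $\calS(Q_{0})$ separately since $\sum_{S \in \calS(Q_{0})} \calH^{3}(S) \leq \calH^{3}(Q_{0})$).

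Both issues are standard and have standard fixes: apply BPiLG on a ball several times larger than $B_{Q_{0}}$, and run the stopping time on a mild enlargement of $Q_{0}$ (or on a neighbouring family of top cubes) so that every $B_{Q}$ with $Q \in \Delta(Q_{0})$ stays inside the region covered by the stopping analysis; alternatively, relax alternative~(b) to ``some $S \in \calS(\cdot)$ with $\ell(S) \sim_{\varepsilon} \ell(Q)$ meets $CB_{Q}$'', which still has bounded multiplicity in $Q$. With these adjustments your outline goes through and matches the argument behind the reference the paper invokes.
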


Theorem \ref{wglbp} will follow using standard arguments, recalled  at the end of the paper, once we have at our disposal the WGL for intrinsic Lipschitz graphs. Therefore the rest of the paper will be devoted to the proof of Theorem \ref{main}, which is precisely stated below.

\begin{thm}\label{wglintlg} Let $\Gamma$ be an intrinsic
$L$-Lipschitz graph.  Then
\begin{displaymath}
\int_{0}^{R} \int_{\Gamma \cap B(x,R)} \chi_{\{(y,s)\in \Gamma
\times \mathbb{R}_+:\; \beta(B(y,s))\geq \varepsilon\}}(y,s)
d\calH^{3}(y) \, \frac{ds}{s} \lesssim_{\varepsilon, L} R^{3}
\end{displaymath}
for any $\varepsilon > 0$, $x \in \Gamma$ and $R > 0$.
\end{thm}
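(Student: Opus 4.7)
\emph{Paragraph 1 -- Reduction to David cubes.} By the rotational invariance from Remark \ref{rotationalInvariance} I may assume $\Gamma = \Gamma^{\phi}$ for some intrinsic $L$-Lipschitz function $\phi \colon \W_{y,t} \to \V_{x}$. Since $\Gamma$ is $3$-regular with constants depending only on $L$, I equip it with a system of David cubes $\Delta$. By the reformulation already recorded in \eqref{cubeWGL}, the statement reduces to the cube-level Carleson packing
\begin{displaymath}
\sum_{\{Q \in \Delta(Q_{0}) \;:\; \beta(Q) \geq \varepsilon\}} \calH^{3}(Q) \lesssim_{\varepsilon,L} \calH^{3}(Q_{0}), \qquad Q_{0} \in \Delta,
\end{displaymath}
and I fix $Q_{0}$ and $\varepsilon > 0$ for the rest of the argument.

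\emph{Paragraph 2 -- Approximating planes come from the intrinsic gradient.} The natural candidate vertical planes approximating $\Gamma^{\phi}$ are graphs of intrinsic affine functions: functions $\phi_{a}$ on $\W_{y,t}$ with constant intrinsic gradient $a \in \R$. A direct computation shows that each $\Gamma^{\phi_{a}}$ is a left translate of a vertical subgroup, so it is precisely the kind of plane over which $\beta(Q)$ is minimised in Definition \ref{verticalBetas}. Since $\phi$ is intrinsically differentiable $\calL^{2}$-a.e., with intrinsic gradient $\nabla^{\phi}\phi$ controlled a.e.\ by $L$, the plan is to prove for each cube $Q$ a geometric bound of the shape
\begin{displaymath}
\beta(Q) \;\lesssim_{L}\; \alpha(Q) \;:=\; \left( \frac{1}{\calL^{2}(D_{Q})} \int_{D_{Q}} |\nabla^{\phi}\phi - a_{Q}|^{2} \, d\calL^{2} \right)^{1/2},
\end{displaymath}
where $D_{Q}$ is a mild enlargement of $\pi_{\W}(Q)$ and $a_{Q}$ is the average of $\nabla^{\phi}\phi$ over $D_{Q}$, so that $\phi_{a_{Q}}$ is the intrinsic affine function best matching $\phi$ on $D_{Q}$.

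\emph{Paragraph 3 -- Carleson packing via an $L^{\infty}$ bound on $\nabla^{\phi}\phi$.} Granting the estimate $\beta(Q) \lesssim \alpha(Q)$, the packing follows from the fact that $\nabla^{\phi}\phi$ is essentially bounded by a constant multiple of $L$. Indeed, the dyadic square-function estimate for bounded functions yields $\sum_{Q \subset Q_{0}} \alpha(Q)^{2} \calH^{3}(Q) \lesssim L^{2}\calH^{3}(Q_{0})$, so that Chebyshev's inequality gives
\begin{displaymath}
\sum_{\{Q \subset Q_{0} \;:\; \alpha(Q) \geq c\varepsilon\}} \calH^{3}(Q) \;\lesssim\; (L/\varepsilon)^{2}\, \calH^{3}(Q_{0}),
\end{displaymath}
which is the desired Carleson packing for $\{Q : \beta(Q) \geq \varepsilon\}$. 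This step is standard once the relevant $L^{\infty}$ bound on $\nabla^{\phi}\phi$ is in hand.

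\emph{Paragraph 4 -- Where Burgers' equation enters, and the main obstacle.} The hard part is the geometric step $\beta(Q) \lesssim \alpha(Q)$: recovering $C^{0}$-closeness between $\phi$ and $\phi_{a_{Q}}$ from $L^{2}$-closeness between $\nabla^{\phi}\phi$ and the constant $a_{Q}$. The relation between $\phi$ and $\nabla^{\phi}\phi$ is governed by the non-linear Burgers-type equation \eqref{burgersEquation}, and since $\phi$ is merely intrinsically Lipschitz this equation holds only in a broad or distributional sense. Converting an oscillation bound on $\nabla^{\phi}\phi$ into a pointwise bound on $\phi - \phi_{a_{Q}}$ therefore amounts to proving a quantitative stability statement for non-smooth solutions of \eqref{burgersEquation} along characteristics. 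This is the genuinely new, PDE-flavoured ingredient missing from the Euclidean theory, and I expect it to absorb the bulk of the technical work --- in particular, controlling the way characteristics of \eqref{burgersEquation} spread and intersect over $D_{Q}$ when $\phi$ is only Lipschitz-like and the forcing term on the right-hand side is merely in $L^{\infty}$.
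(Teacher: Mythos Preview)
Your proposal rests on the inequality $\beta(Q) \lesssim_{L} \alpha(Q)$ from Paragraph~2, and this inequality is \textbf{false}. The paper addresses exactly this point in Section~\ref{outline}: on any bounded domain the equation $\nabla^{\phi}\phi = c$ admits non-affine solutions, for instance $\phi(y,t) = t/(y+1)$ on $(-1,\infty)\times\R$, or the non-$\mathcal{C}^{1}$ example \eqref{exFunction}. Such a $\phi$ has $\alpha(Q) = 0$ for every cube $Q$ in its domain, while $\beta(Q)$ is bounded away from zero because the intrinsic graph is genuinely curved (see Figure~\ref{fig1}). Your sentence in Paragraph~2, ``a direct computation shows that each $\Gamma^{\phi_{a}}$ is a left translate of a vertical subgroup,'' is only true for \emph{entire} functions with constant intrinsic gradient (this is Proposition~\ref{constantGradientGraphs}, and it is not a direct computation --- it requires the characteristic-curve argument and Lemma~\ref{projectionLemma}). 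Locally, constant-gradient graphs need not be flat, so the Euclidean strategy of Paragraphs~2--3 cannot be salvaged, regardless of how carefully you handle the stability problem you describe in Paragraph~4.

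The paper's route is substantially more indirect. It introduces a second family of $\beta$-numbers, $\beta_{\CG}$, measuring approximation by (locally defined) constant-gradient graphs rather than by vertical planes. The analogue of your $\beta(Q) \lesssim \alpha(Q)$ that \emph{is} true is, roughly, that a large $\beta_{\CG}$ forces the intrinsic gradient to fluctuate (Proposition~\ref{dreamProp}); this is proved by a compactness/blow-up argument rather than a direct PDE estimate. Combined with your Paragraph~3 reasoning, this yields the WGL for $\beta_{\CG}$ (Theorem~\ref{t:CG-WGL}). The passage from $\beta_{\CG}$-WGL to the genuine $\beta$-WGL then uses Proposition~\ref{constantGradientGraphs} (entire constant-gradient graphs are affine), a further compactness argument showing that local constant-gradient graphs are flat at sufficiently small scales (Lemma~\ref{stability0}, Corollary~\ref{stability2}), and a tree decomposition in the style of Tolsa~\cite{To2}. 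The PDE content you anticipated in Paragraph~4 is real, but it enters through these rigidity and compactness arguments for the Burgers equation, not through a quantitative stability estimate of the form you propose.
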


The proof of Theorem \ref{wglintlg} in the Euclidean case is relatively
straightforward, and can be carried out as follows. Assume that $f
\colon \R^{2} \to \R$ is an (entire) $L$-Lipschitz function, and
let $\Delta$ be a system of dyadic squares on $\R^{2}$. Assume
that $Q \in \Delta$ is such that $f$ is far from affine in $Q$, in
the sense that $\sup_{x \in Q} |f(x) - A(x)| \geq \varepsilon
\ell(Q)$ for all affine functions $A \colon \R^{2} \to \R$. Then,
it is fairly easy to verify that the gradient of $f$ must
fluctuate significantly near $Q$: there exists a fairly large
sub-cube $Q' \subset Q$ such that $|\E_{Q} (\nabla f) - \E_{Q'}
(\nabla f)| \geq \delta$, where $\delta$ only depends on
$\varepsilon$ and $L$, and $\E_{Q} \nabla f$ denotes the average
of $\nabla f$ over $Q$. The WGL follows from this observation,
plus the fact that $\|\nabla f\|_{L^{2}(Q_{0})} \lesssim_{L}
|Q_{0}|$ for any fixed cube $Q_{0} \in \Delta$ (for more details
on this final step, see the argument after \eqref{form15}).

It is a reasonable first thought that a similar argument should
work for intrinsic Lipschitz functions $\phi \colon \W \to \V$.
After all, for such a function, there is a concept of
an \emph{intrinsic gradient} $\nabla^{\phi} \phi$ (see Section
\ref{ss:intr_grad} below), which is known to exist at almost every
point on $\W$, and moreover $\nabla^{\phi} \phi \in
L^{\infty}(\W)$. So, if it were the case that the local
"non-affinity" of $\phi$ forces $\nabla^{\phi} \phi$ to fluctuate
noticeably, one could wrap up the argument in the fashion outlined
above. However, this is simply not true: in any bounded domain
$\Omega \subset \W$, the equation $\nabla^{\phi} \phi = 0$ admits
(smooth) non-affine solutions! For instance, the function
\begin{displaymath}
\phi:(-1,+\infty)\times \R \to \R,\quad  \phi(y,t)= \tfrac{t}{y+1}
\end{displaymath}
satisfies $\nabla^{\phi}\phi \equiv 0$ on its domain. In fact, even non-smooth continuous solutions are possible: the function $\phi \colon (-1,1)^{2} \to \R$, discussed in \cite[Remark 4.4.2]{B} and defined by
\begin{equation}\label{exFunction} \phi(y,t) := \begin{cases} \frac{t}{y + 1}, & t \geq 0,\\ \frac{t}{y - 1}, & t < 0, \end{cases} \end{equation}
satisfies $\nabla^{\phi} \phi = 0$  on $(-1,1)^{2}$, but it is not $\mathcal{C}^{1}$. The intrinsic graph of $\phi$ over the $(-1,1)^{2}$ is depicted in Figure \ref{fig1}.
\begin{figure}[h!]
\begin{center}
\includegraphics[scale = 0.4]{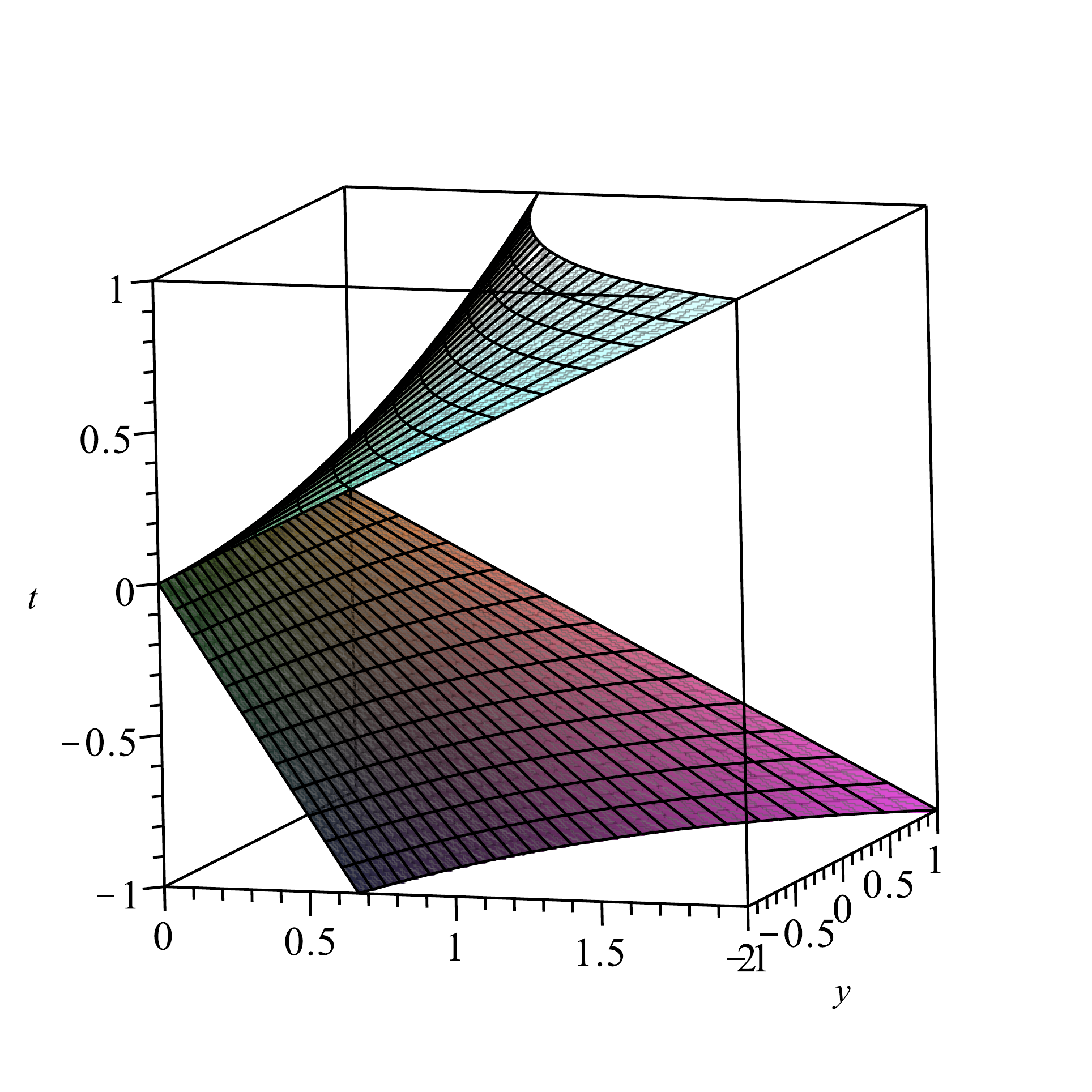}
\caption{The intrinsic graph of the function in
\eqref{exFunction} over the $(y,t)$-plane.}\label{fig1}
\end{center}
\end{figure}
Further examples of similar flavour have been discussed in \cite{ASV} in connection with
minimal surfaces in the Heisenberg group. We emphasise that the graphs of non-affine solutions to $\nabla^{\phi} \phi = 0$ have plenty of non-zero $\beta$-numbers, but this behaviour is not
registered by the fluctuation of $\nabla^{\phi} \phi$.

What can be done? We still want to use the intrinsic gradient, so
we need to invent a condition to replace "non-affinity", which
forces $\nabla^{\phi} \phi$ to fluctuate locally. It turns out
that the right notion is "being far from constant-gradient (CG)
graphs". The following (informal) statement may sound almost
tautological, but it requires a fair amount of work to verify: if
$\phi$ deviates locally from all (locally defined) CG intrinsic
Lipschitz functions, then $\nabla^{\phi} \phi$ must fluctuate
noticeably. Following the Euclidean idea, this observation (made
precise in Proposition \ref{dreamProp}) allows us to conclude that
intrinsic Lipschitz graphs satisfy a "WGL for CG $\beta$-numbers",
see Theorem \ref{t:CG-WGL}.

Up to this point, the results can be accused of being abstract
nonsense; as far as we know, locally defined CG graphs could be
quite wild, and we do not even claim to understand them very well.
What we can understand, however, are \textbf{globally defined} CG
graphs. These turn out to be affine (see Proposition
\ref{constantGradientGraphs})! Using this fact, and a compactness
argument, we can prove that even locally defined CG graphs have
the following key property: if $\Gamma$ is a CG graph "defined in
the whole ball $B(x,r)$", then $\Gamma$ is almost flat in all
sufficiently small sub-balls of $B(x,r)$. This implies almost
immediately that the usual WGL holds for CG-graphs, even if we do
not record the argument separately. Instead, we use our
"approximation by CG graphs" result to conclude directly that the
WGL holds for all intrinsic Lipschitz graphs.

We wish to mention that the proof strategy above was influenced by
X. Tolsa's proof \cite{To2} of the fact that the \emph{weak
constant density condition} implies \emph{uniform rectifiability}
in $\R^{n}$. Should the reader be familiar with that proof, she
may wish to draw the following parallels in her mind: "weak
constant density" is represented by "intrinsic Lipschitz", and
"uniform measure" is represented by "constant gradient graph".

\subsection{The intrinsic gradient} \label{ss:intr_grad}

Our proof of the WGL for intrinsic Lipschitz graphs is based on
the notion of \emph{intrinsic gradient}. The present section
serves the purpose of reviewing the definition and relevant
properties.

\subsubsection{Definitions}

According to a well known theorem by Rademacher, Lipschitz maps
between Euclidean spaces are differentiable almost everywhere. The
same result, appropriately interpreted, holds true for the
intrinsic Lipschitz functions appearing in this paper. Similarly
as in the Euclidean setting, a function $\phi:\W \to \V$ shall be
differentiable at a point $w_0\in \W$, if its graph at $p_0=w_0
\cdot \phi(w_0)$ can be well approximated by the graph of a
"linear" function $L:\W \to \V$.

Following the terminology in \cite{FSS11}, a function $L:\W \to
\V$ between complementary homogeneous subgroups is said to be
\emph{intrinsic linear}, if its intrinsic graph $\{w \cdot
L(w): \; w\in \W\}$ is a homogeneous subgroup of
$\He$. If $\W$ is a vertical subgroup and $\V$ is a
complementary orthogonal horizontal subgroup in $\He$, a
map $L: \W \to \V$ is intrinsic linear if and only if it is a
\emph{homogeneous homomorphism} (see for instance Proposition 3.26
in \cite{AS}). The latter means that $L$ is a group homomorphism
with the additional property that $L(\delta_r(w))=r L(w)$ for all
$w\in \W$ and $r>0$. To give an example, if $\W$ denotes the
$(y,t)$-plane, it is not difficult to see that all intrinsic
linear maps, or equivalently all homogeneous homomorphisms, are of
the form $L(y,t) = cy$ for a constant $c\in \R$.

We are now ready to state the definition of \emph{intrinsic
differentiability}. Again we assume that $\W$ is a vertical
subgroup with complementary horizontal subgroup $\V$. First let us
consider a function $\phi: \Omega \to \V$ defined on an open
subset of $\Omega \subset \W$ containing the origin, which we
assume to be fixed under $\phi$. We say that $\phi$ is intrinsic
differentiable at the origin, if there exists an intrinsic linear
map $L:\W \to \V$ such that, for all $w\in \W$,
\begin{displaymath}
|\phi(w) - L(w)|= o (\|w\|),\quad \text{as }\|w\|\to 0.
\end{displaymath}
The map $L$ is called the \emph{intrinsic differential} of $\phi$ at $0$ and it is denoted by $L= D_0 \phi$.

Since the definition of differentiability is supposed to be
\emph{intrinsic}, we extend it in a left invariant fashion to
arbitrary functions and points. To explain the definition, we consider a function $\phi:\Omega \to \V$ on an open set $\Omega \subset \W$  with intrinsic graph $\Gamma$.
For $w_0 \in \Omega$, we write $p_0 = w_0 \cdot \phi(w_0)$ and let $\phi_{p_0^{-1}}$ be the uniquely defined function $\W \to \V$, which parametrizes the graph $p_0^{-1} \cdot
\Gamma$, see Remark \ref{remk:para}. Note that the definition of intrinsic
graph ensures that any left translate of $\Gamma$ by a point in
$\Gamma$ is an intrinsic graph passing through the origin. This
uniquely determines the function $\phi_{p_0^{-1}}$ and it ensures
that the origin is fixed under this function. We also formulate an equivalent definition of intrinsic Lipschitz functions using the maps $\phi_p$. If $A \subset \W$, a function $\phi: A \ra \V$  with intrinsic graph $\Gamma$ is an intrinsic $L$-Lipschitz function if and only if for every $p \in \Gamma$
\begin{equation}
\label{intrili2}
\|\phi_{p^{-1}}(w)\| \leq L \|w\|
\end{equation}
for all $w$ in the domain of $\phi_{p^{-1}}$. See also \cite[Proposition 4.49]{S} for other equivalent algebraic definitions for intrinsic Lipschitz functions. An explicit formula
for $\phi_{{p_0}^{-1}}$ is given in Lemma \ref{l:transl} below.

\begin{definition} We say that a function $\phi:\Omega \to \V$, defined on an open set $\Omega \subset \W$, is
\emph{intrinsic differentiable} at a point $w_0$, if the function
$\phi_{p_0^{-1}}$ for $p_0 = w_0 \cdot \phi(w_0)$ is intrinsic differentiable at the origin.
The \emph{intrinsic differential} of $\phi$ at $w_0$ is given by
$
D_{w_0} \phi = D_0 \phi_{p_0^{-1}}.
$
\end{definition}

Intrinsic differentiability can be characterized in various
equivalent ways, see for instance the results in \cite{FSS},
\cite{FSS11}, and \cite{AS}.
One can also define intrinsic differentiability in terms of a
"graph distance", see Definition 1.4 in \cite{CMPS}, and for
intrinsic Lipschitz maps this definition is equivalent to the one
above.

The intrinsic differential is unique, and its action can be
expressed in terms of a "gradient" similarly as in the Euclidean
case. To explain this, we identify $\W \cong \R^{2}$ and $\V \cong \R$. Points in $\W$ are then denoted by $(y,t)$. Assume that a map $\phi \colon \W \to \V$ is intrinsic differentiable at $w_{0} = (y_{0},t_{0})$. Then, as pointed out earlier, the intrinsic differential $L := D_{w_{0}}\phi$ of $\phi$ at $w_{0}$ is a linear mapping of the form $L(y,t) = cy$ for some $c \in \R$.

\begin{definition}\label{d:intr_grad} The number $c$ is called the \emph{intrinsic gradient of $\phi$ at $w_{0}$}, and it will be denoted by
\begin{displaymath} \nabla^{\phi}\phi(w_{0}) := c. \end{displaymath}
\end{definition}
We next derive a few useful formulae for $\nabla^{\phi}\phi(w_{0})$. Let $p_{0} := w_{0} \cdot \phi(w_{0})$ be the point on the graph of $\phi$. By definition of intrinsic differentiability,
\begin{displaymath} |\phi_{p_{0}^{-1}}(h,0) - \nabla^{\phi}\phi(w_{0})h| = |\phi_{p_{0}^{-1}}(h,0) - L(h,0)| = o(|h|), \quad h \in \R \setminus \{0\}. \end{displaymath}
Consequently, dividing by $h$, we have
\begin{equation}\label{eq:directional} \nabla^{\phi}\phi(w_{0}) = \lim_{h \to 0} \frac{\phi_{p_{0}^{-1}}(h,0)}{h}. \end{equation}
Moreover, wherever $\phi$ is differentiable in the usual (Euclidean) sense, the formula above, and
\begin{displaymath} \lim_{h \to 0} \frac{\phi_{p_{0}^{-1}}(h,0)}{h} = \lim_{h \to 0} \frac{\phi(y_{0} + h,t_{0} + \phi(y_{0},t_{0})h) - \phi(y_{0},t_{0})}{h} \end{displaymath}
(this follows from the formula for $\phi_{p_{0}^{-1}}$ in Lemma \ref{l:transl}) yields the following representation for $\nabla^{\phi}\phi$:
\begin{equation}\label{eq:intr_grad_expl}
 \nabla^{\phi}\phi = \partial_y \phi + \phi \partial_t \phi.
 \end{equation}
This was observed in Example 5.5 in \cite{AS}. 

A large class of almost everywhere intrinsic differentiable
functions is provided by intrinsic Lipschitz functions whose
target is a  $1$-dimensional horizontal subgroup. This result was
first proved by Franchi, Serapioni and Serra Cassano in
\cite[Theorem 4.29]{FSS11} for Heisenberg groups, and later by
Franchi, Marchi and Serapioni in \cite{FMS} for certain more
general Carnot groups. We state here the result relevant for the
current paper:

\begin{thm}[Franchi, Serapioni, Serra Cassano]\label{t:intrRadem}
Let $\W$ be a vertical subgroup of $\He$ with
complementary horizontal subgroup $\V$. Assume that $\Omega$ is an
open subset of $\W$ and $\phi:\Omega \to \V$ is intrinsic
Lipschitz. Then $\phi$ is intrinsic differentiable $\mathcal{L}^2$
almost everywhere in $\W$.
\end{thm}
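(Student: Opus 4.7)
The plan is to extract from the intrinsic Lipschitz cone condition an explicit Euclidean pointwise estimate, deduce classical regularity of $\phi$, and then combine a Fubini-type argument along the characteristic foliation with a Heisenberg blow-up argument to exhibit the intrinsic differential $\mathcal{L}^{2}$-a.e.\ in $\Omega$. By Remark \ref{rotationalInvariance} I may assume $\W=\W_{y,t}$ and $\V=\V_{x}$. Writing $p_{i}=w_{i}\cdot\phi(w_{i})$ with $w_{i}=(0,y_{i},t_{i})$ and $\phi(w_{i})=(\phi_{i},0,0)$, a direct computation of the splitting $p_{0}^{-1}\cdot p_{1}=(p_{0}^{-1}\cdot p_{1})_{\W}\cdot(p_{0}^{-1}\cdot p_{1})_{\V}$ yields $(p_{0}^{-1}\cdot p_{1})_{\V}=(\phi_{1}-\phi_{0},0,0)$ and $(p_{0}^{-1}\cdot p_{1})_{\W}=(0,\,y_{1}-y_{0},\,t_{1}-t_{0}-\phi_{0}(y_{1}-y_{0}))$, so that \eqref{intrili2} is equivalent to the pointwise inequality
$$|\phi(y_{1},t_{1})-\phi(y_{0},t_{0})|\leq L\max\{|y_{1}-y_{0}|,\,|t_{1}-t_{0}-\phi(y_{0},t_{0})(y_{1}-y_{0})|^{1/2}\}$$
for all $(y_{0},t_{0}),(y_{1},t_{1})\in\Omega$. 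Immediate consequences are that $\phi$ is locally bounded and continuous, that $t\mapsto\phi(y_{0},t)$ is Euclidean $L$-H\"older of exponent $\tfrac{1}{2}$ (set $y_{1}=y_{0}$), and that the slope-$\phi(w_{0})$ line $s\mapsto(y_{0}+s,\,t_{0}+\phi(w_{0})s)$ through $w_{0}$ satisfies $|\phi(y_{0}+s,t_{0}+\phi(w_{0})s)-\phi(w_{0})|\leq L|s|$ (set $t_{1}-t_{0}=\phi(w_{0})(y_{1}-y_{0})$).

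Using the formula $\phi_{p_{0}^{-1}}(y,t)=\phi(y_{0}+y,t_{0}+\phi(w_{0})y+t)-\phi(w_{0})$ from Lemma \ref{l:transl}, intrinsic differentiability at $w_{0}$ amounts to finding $c=c(w_{0})\in\R$ with $\phi(y_{0}+y,t_{0}+\phi(w_{0})y+t)-\phi(w_{0})-cy=o(\max\{|y|,|t|^{1/2}\})$. To construct $c$, I would parametrize the characteristic curves of the vector field $\partial_{y}+\phi\,\partial_{t}$ (well-defined as Lipschitz curves by the continuity and local boundedness of $\phi$), along each of which $\phi$ is Lipschitz in the $y$-parameter by the estimate above. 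Classical one-dimensional Rademacher then gives the a.e.\ existence of the slope $c(w_{0})=\lim_{s\to 0}s^{-1}[\phi(y_{0}+s,t_{0}+\phi(w_{0})s)-\phi(w_{0})]$ along each characteristic, and a Fubini argument using the Lipschitz dependence of the characteristic flow on its initial condition transfers this to $\mathcal{L}^{2}$-a.e.\ $w_{0}\in\Omega$. At points of classical Euclidean differentiability this $c$ coincides with $\partial_{y}\phi+\phi\,\partial_{t}\phi$, consistent with \eqref{eq:intr_grad_expl}.

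The main obstacle is upgrading this characteristic-direction differentiability to genuine intrinsic differentiability. The decomposition
$$\phi(y_{0}+y,t_{0}+\phi(w_{0})y+t)-\phi(w_{0})-cy = \bigl[\phi(y_{0}+y,t_{0}+\phi(w_{0})y+t)-\phi(y_{0}+y,t_{0}+\phi(w_{0})y)\bigr] + \bigl[\phi(y_{0}+y,t_{0}+\phi(w_{0})y)-\phi(w_{0})-cy\bigr]$$
makes the difficulty explicit: the second bracket is $o(|y|)$ by the definition of $c$, but the first is only $O(|t|^{1/2})$ from the $\tfrac{1}{2}$-H\"older estimate, whereas the target error $o(\max\{|y|,|t|^{1/2}\})$ requires a genuine improvement in the $t$-direction at a.e.\ base point. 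I would handle this via a Heisenberg blow-up: set $\phi_{r,w_{0}}(y,t):=r^{-1}\phi_{p_{0}^{-1}}(ry,r^{2}t)$, which is again intrinsic $L$-Lipschitz and (by the previous regularity) equibounded and equicontinuous on compact subsets of $\W$, so subsequential $L^{\infty}_{\mathrm{loc}}$-limits exist as $r\to 0$ by Arzel\`a--Ascoli. The task is to show that at $\mathcal{L}^{2}$-a.e.\ $w_{0}$ every such limit is the intrinsic linear function $(y,t)\mapsto cy$. The uniqueness and linearity of the blow-up constitute the true subtlety: they require a Lebesgue-density argument applied to the intrinsic gradient (which one first establishes to be in $L^{\infty}$), carried out in the Heisenberg setting via the Vitali covering theorem on the doubling metric space $(\He,d_{\He})$, in the spirit of the classical Euclidean proof of Rademacher's theorem. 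Excluding the measure-zero set of points where either $c(w_{0})$ fails to exist or the blow-ups fail to be linear will then give intrinsic differentiability on a set of full $\mathcal{L}^{2}$-measure.
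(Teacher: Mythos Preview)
The paper does not prove this theorem at all: it is stated as a known result and cited from \cite[Theorem 4.29]{FSS11} (and \cite{FMS} for the Carnot case). So there is no ``paper's own proof'' to compare against; your proposal is an independent attempt at the original Franchi--Serapioni--Serra Cassano argument.

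Your strategy --- pointwise cone estimate, one-dimensional Rademacher along the characteristic lines $s\mapsto(y_{0}+s,t_{0}+\phi(w_{0})s)$, then a blow-up --- is indeed the skeleton of the proof in \cite{FSS11}. The computations you write down (the splitting of $p_{0}^{-1}\cdot p_{1}$, the formula for $\phi_{p_{0}^{-1}}$, the $L$-Lipschitz estimate along the slope-$\phi(w_{0})$ line) are all correct. However, the proposal does not actually close the main gap it identifies. Two concrete issues:

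\emph{First}, the ``Fubini argument using the Lipschitz dependence of the characteristic flow on its initial condition'' is not straightforward here. The characteristic curves through different base points are lines with \emph{different} slopes $\phi(w_{0})$, so they do not form a smooth foliation; the flow map $(s,w_{0})\mapsto(y_{0}+s,t_{0}+\phi(w_{0})s)$ is only as regular as $\phi$ itself, and transferring a.e.\ differentiability from each line to $\mathcal{L}^{2}$-a.e.\ in the plane requires an argument that the exceptional sets along the lines do not conspire to have positive planar measure. This needs a careful measurable-selection or area-formula argument, not just Fubini.

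\emph{Second}, the blow-up step is where all the content lives, and you have only named it. Showing that every subsequential limit $\psi$ of $\phi_{r,w_{0}}$ equals $(y,t)\mapsto cy$ requires proving that $\psi$ is independent of $t$. The Arzel\`a--Ascoli limit $\psi$ is intrinsic $L$-Lipschitz with $\psi(0)=0$ and $\psi(y,0)=cy$ along the $y$-axis, but the $\tfrac{1}{2}$-H\"older estimate in $t$ survives the scaling with the same constant, so compactness alone gives no improvement. The actual proof in \cite{FSS11} uses that the intrinsic graph of $\psi$ is a blow-up of the set $\Gamma^{\phi}$ in the Heisenberg metric, and invokes a monotonicity/rigidity argument (ultimately related to the structure of $\He$-regular hypersurfaces and their horizontal normals) to force $\psi$ to be intrinsic linear. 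Your Lebesgue-density suggestion is in the right spirit but is not a proof: you would need to specify which function's Lebesgue points you are using and why a.e.\ Lebesgue-point behaviour of that function forces $t$-independence of the blow-up.
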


It follows that the intrinsic gradient $\nabla^{\phi}\phi$ of an
intrinsic Lipschitz function $\phi: \Omega \to \V$
exists almost everywhere in $\Omega$, and it is an $L^{\infty}$
function, see for instance Proposition 4.4 in \cite{CMPS}.
In our situation, one can say something more precise:

\begin{lemma}\label{l:bound_grad}
Let $\W$ be a vertical subgroup with complementary horizontal
subgroup $\V$ in $\He$, and let $\phi: \Omega
\to \V$ be an intrinsic $L$-Lipschitz function on an open set
$\Omega \subset \W$. Then
\begin{equation}\label{eq:bound_gradient_L}
\|\nabla^{\phi}\phi\|_{L^{\infty}(\Omega)} \leq L.
\end{equation}
\end{lemma}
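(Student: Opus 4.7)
The plan is to combine the explicit description of the intrinsic gradient at a point of intrinsic differentiability, supplied by formula \eqref{eq:directional}, with the equivalent characterization \eqref{intrili2} of the $L$-Lipschitz property in terms of the translated parametrizations $\phi_{p^{-1}}$. Because Theorem \ref{t:intrRadem} guarantees intrinsic differentiability $\mathcal{L}^2$-a.e.\ on $\Omega$, it suffices to produce the pointwise bound $|\nabla^{\phi}\phi(w_0)| \leq L$ at every such $w_0$.

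Fix $w_0 \in \Omega$ where $\phi$ is intrinsic differentiable and set $p_0 := w_0 \cdot \phi(w_0) \in \Gamma$. Since $\Gamma$ is an intrinsic $L$-Lipschitz graph, and intrinsic Lipschitz graphs are invariant under left translations, the left translate $p_0^{-1} \cdot \Gamma$ is again an intrinsic $L$-Lipschitz graph passing through the origin. Its parametrization $\phi_{p_0^{-1}}$ therefore satisfies the intrinsic Lipschitz bound at the origin, which, specialized by \eqref{intrili2} to points of the form $(h,0) \in \W$, reads
\begin{displaymath}
\|\phi_{p_0^{-1}}(h,0)\| \leq L\,\|(h,0)\| = L\,|h|
\end{displaymath}
for every $h \in \R$ in the domain. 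Since $\phi_{p_0^{-1}}(h,0)$ lies in the horizontal subgroup $\V$, its Heisenberg norm equals its Euclidean modulus, so the previous inequality simplifies to $|\phi_{p_0^{-1}}(h,0)| \leq L|h|$.

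Dividing by $|h|$ and letting $h \to 0$ in \eqref{eq:directional} gives
\begin{displaymath}
|\nabla^{\phi}\phi(w_0)| \;=\; \lim_{h \to 0} \frac{|\phi_{p_0^{-1}}(h,0)|}{|h|} \;\leq\; L.
\end{displaymath}
As this holds for a.e.\ $w_0 \in \Omega$, the bound \eqref{eq:bound_gradient_L} follows at once.

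I do not anticipate any genuine obstacle in this argument; the only mild subtlety is making sure that $(h,0)$ belongs to the domain of $\phi_{p_0^{-1}}$ for all sufficiently small $h$, which is clear since $\Omega$ is open, $w_0 \in \Omega$, and (by the extension result recalled after Remark \ref{rotationalInvariance}) one may, if desired, extend $\phi$ to an intrinsic $L'$-Lipschitz function on all of $\W$ without enlarging $L$ in an uncontrolled way — though here one does not even need to enlarge $L$, because only arbitrarily small $h$ matter for the limit.
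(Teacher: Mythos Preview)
Your proof is correct and follows essentially the same approach as the paper: use Theorem \ref{t:intrRadem} to reduce to points of intrinsic differentiability, then combine the formula \eqref{eq:directional} with the Lipschitz characterization \eqref{intrili2} applied to $\phi_{p_0^{-1}}$ at points $(h,0)$. The only minor remark is that your closing aside about extending $\phi$ is unnecessary (and the extension result yields $L'$, not $L$): as you yourself note, openness of $\Omega$ already guarantees that $(h,0)$ lies in the domain of $\phi_{p_0^{-1}}$ for all sufficiently small $h$, which is all the limit requires.
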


Such a result was stated for \emph{difference quotients} in
\cite[Proposition 3.9 (i)]{FSS}. For the convenience of the
reader, we spell out the argument for the intrinsic gradient.

\begin{proof} By Theorem \ref{t:intrRadem}, $\phi$ is intrinsic differentiable
in almost every point of $\Omega$. We let $w_0$ be such a point,
and we write $p_0= w_0 \cdot \phi(w_0)$  for the corresponding point
on the graph. Recall that the function $\phi_{{p_0}^{-1}}$ in
\eqref{eq:directional} is defined as the function whose graph is
the left translate of the graph of $\phi$ by $p_0^{-1}$. We denote
the domain of $\phi_{{p_0}^{-1}}$ by $\Omega_{p_0^{-1}}$.

In order to prove \eqref{eq:bound_gradient_L}, it suffices to find a bound for the limit in
\eqref{eq:directional}. To this end, let $h$ be small enough such
that $(h,0)\in \Omega_{p_0^{-1}}$. Then by \eqref{intrili2}
\begin{displaymath}
\left|\frac{\phi_{{p_0}^{-1}}(h,0))}{h}\right|  \leq h^{-1} L \|(h,0)\| = L.
\end{displaymath}
Here we have used \eqref{intrili2}. Thus $|\nabla^{\phi}\phi(w_0)|\leq L$. Since $w_0$ was an
arbitrary point of intrinsic differentiability,
\eqref{eq:bound_gradient_L} follows by Theorem \ref{t:intrRadem}.
\end{proof}

In the converse direction, Proposition 1.8 in \cite{CMPS} provides
local upper bounds for the Lipschitz constant in terms of the
$L^{\infty}$-norm of the intrinsic gradient. While these results
connect $\nabla^{\phi}\phi$ to the geometry of intrinsic Lipschitz
graphs, the intrinsic gradient has a life of its own outside the
world of intrinsic Lipschitz maps. To see this, it is best to
express $\nabla^{\phi}\phi$ as in \eqref{eq:intr_grad_expl}. The
equation \begin{equation}
\partial_y \phi + \phi \partial_t \phi =0
\end{equation}
is well known in PDE theory as the \emph{inviscid Burgers
equation}. This will be discussed further in Section \ref{s:constantGradientGraphs} below.

In which sense are intrinsic Lipschitz functions solutions to an equation of Burgers' type? By Theorem \ref{t:intrRadem}, the intrinsic gradient of an intrinsic Lipschitz
function exists pointwise almost everywhere. In connection with PDE theory, it is useful to know
that the pointwise intrinsic gradient of an intrinsic Lipschitz
function is also a \emph{distributional} gradient. This is the content of
Proposition 4.7 in \cite{CMPS}. Precisely, if $\phi:\W \to \V$ is intrinsic Lipschitz with intrinsic gradient
$\nabla^{\phi}\phi$, defined almost everywhere by Theorem \ref{t:intrRadem}, then
\eqref{eq:intr_grad_expl} holds in a distributional sense:
\begin{equation}\label{eq:distr}
\int_\W \phi \partial_y \psi + \tfrac{1}{2} \phi^2 \partial_t\psi
\;d\mathcal{L}^2 = - \int_\W \nabla^{\phi}\phi
\psi\;d\mathcal{L}^2
\end{equation}
for all $\psi \in \mathcal{C}^1_c(\W)$.

We emphasize that the intrinsic Lipschitz functions as in Definition
\ref{d:intrLip} coincide with the intrinsic Lipschitz functions in the sense of
\cite{CMPS}, see Theorem 4.60 in \cite{S} and  Remark 3.6 in \cite{FS}. Note further
that the formula \eqref{eq:distr} looks slightly different from
\cite{CMPS} due to a different model for the Heisenberg group, see
Definition 3.1 in \cite{BCSC}.

\subsubsection{Translated and dilated graphs}

While the definition of intrinsic Lipschitz continuity is tailored so that the class of intrinsic $L$-Lipschitz graphs is preserved under dilations and translations in the Heisenberg group, the explicit formula for the parametrization of a translated graph becomes in general slightly complicated due to the non-commutativity of the group law. In the case we consider in the present paper: functions from vertical to horizontal subgroups in $\mathbb{H}$, the computations are straightforward.

For convenience, given a
point $p\in \He$, we define the map
\begin{displaymath}
P_p:\mathbb{W} \to \mathbb{W},\quad P_p(w):=
\pi_{\mathbb{W}}(p \cdot w).
\end{displaymath}
We note that $P_q$ is a diffeomorphism with Jacobian
determinant constant equal to $1$ under the obvious identification
of $\W$ with $\mathbb{R}^2$ (see \cite[Lemma 2.20]{FS}), and with
inverse map $(P_p)^{-1}=P_{p^{-1}}$. The latter claim
follows from the fact that
\begin{displaymath}
P_p( w) = p \cdot w \cdot \pi_{\V}(p)^{-1},\quad\text{for all
}w\in \W.
\end{displaymath}

\begin{lemma}\label{l:transl}
Let $\Gamma$ be the intrinsic graph of a function $\phi:
\W \to \V$ on a vertical subgroup $\W$, and let $\Omega$ be a domain in
$\He$. Then, for $q\in \He$, the set
$\tau_{q}(\Omega\cap \Gamma)$ is the intrinsic graph of the function
\begin{displaymath}
\phi_q:\pi_{\W}(\tau_{q}(\Omega\cap \Gamma))\to \mathbb{V},\quad \phi_q(w):=
\pi_\V(q) \cdot
\phi(P_{q^{-1}}(w)).
\end{displaymath}
If $\phi$ is intrinsic $L$-Lipschitz, then so is  $\phi_q$ with
\begin{equation}\label{eq:chain}
\nabla^{\phi_q} \phi_q = \nabla^{\phi} \phi \circ
P_{q^{-1}},\quad \text{almost everywhere}.
\end{equation}
Analogously, for $r>0$, the set $\delta_{r}(\Omega\cap \Gamma)$ is the
intrinsic graph of the function
\begin{displaymath}
\phi_r: \pi_{\W}( \delta_r(\Omega \cap \Gamma)) \to \V,\quad \phi_r(w)=
\delta_r(\phi(\delta_{1/r}(w))).
\end{displaymath}
If $\phi$ is intrinsic $L$-Lipschitz, then so is  $\phi_r$ with
\begin{displaymath}
\nabla^{\phi_r} \phi_r = \nabla^{\phi} \phi \circ
\delta_{r^{-1}},\quad \text{almost everywhere}.
\end{displaymath}
\end{lemma}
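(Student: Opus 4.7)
The plan is to verify each claim by direct computation, using the uniqueness of the decomposition $p=\pi_\W(p)\cdot\pi_\V(p)$ together with the fact that both left translations and the dilations $\delta_r$ preserve the splitting $\W\cdot\V$ and map $\He$-cones to $\He$-cones. The main obstacle is organising the non-commutative Heisenberg product correctly when identifying the $\W$- and $\V$-parts of translated points.

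For the formula describing $\phi_q$, I will start from the identity $p\cdot w=P_p(w)\cdot\pi_\V(p)$ valid for $w\in\W$, which is immediate from the identity $P_p(w)=p\cdot w\cdot\pi_\V(p)^{-1}$ recalled just above the lemma. Applying it with $p=q$ to a generic point $w\in\pi_\W(\Omega\cap\Gamma)$ yields
\begin{equation*}
q\cdot w\cdot\phi(w)=P_q(w)\cdot\pi_\V(q)\cdot\phi(w),
\end{equation*}
and $\pi_\V(q)\cdot\phi(w)$ lies in $\V$ because $\V$ is a subgroup. By uniqueness of the $\W\cdot\V$ decomposition, this identifies the $\W$-part of the left-translated point with $P_q(w)$ and the $\V$-part with $\pi_\V(q)\cdot\phi(w)$. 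Substituting $w=P_{q^{-1}}(w')$ then gives the claimed expression for $\phi_q(w')$. The formula for the dilated graph is more transparent: because $\delta_r$ is a group automorphism preserving both $\W$ and $\V$, one has $\delta_r(w\cdot\phi(w))=\delta_r(w)\cdot\delta_r(\phi(w))$, and the asserted formula follows at once.

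The intrinsic Lipschitz assertions reduce to the cone condition \eqref{coneCondition}. If $p\in\tau_q(\Gamma)$, write $p=q\cdot p'$ with $p'\in\Gamma$; then for every $\alpha<1/L$,
\begin{equation*}
(p\cdot C_\W(\alpha))\cap\tau_q(\Gamma)=q\cdot\bigl((p'\cdot C_\W(\alpha))\cap\Gamma\bigr)=\{p\},
\end{equation*}
so $\tau_q(\Gamma)$ is intrinsic $L$-Lipschitz. The dilation case is analogous, using that $\delta_r$ is a homomorphism and that $\delta_r(C_\W(\alpha))=C_\W(\alpha)$, the latter being immediate from the definition \eqref{intrinsicCone} once one notes that $\|\delta_r(p)_\W\|=r\|p_\W\|$ and $\|\delta_r(p)_\V\|=r\|p_\V\|$.

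For the chain rules, the crucial observation is that if $p_0=w_0\cdot\phi(w_0)$ and $p_0':=q\cdot p_0$, then $(p_0')^{-1}\cdot\tau_q(\Gamma)=p_0^{-1}\cdot q^{-1}\cdot q\cdot\Gamma=p_0^{-1}\cdot\Gamma$, so the two ``translated-to-origin'' graphs coincide as sets. By the uniqueness of the parametrisation (Remark \ref{remk:para}), this forces $(\phi_q)_{(p_0')^{-1}}=\phi_{p_0^{-1}}$, and the directional limit \eqref{eq:directional} applied at the origin yields $\nabla^{\phi_q}\phi_q(P_q(w_0))=\nabla^\phi\phi(w_0)$, which is precisely \eqref{eq:chain} evaluated at $P_q(w_0)$. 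For the dilation, the analogous identity $(\delta_r(p_0))^{-1}\cdot\delta_r(\Gamma)=\delta_r(p_0^{-1}\cdot\Gamma)$ combined with the already-proved formula for the dilated graph gives $(\phi_r)_{(\delta_r(p_0))^{-1}}(w)=\delta_r(\phi_{p_0^{-1}}(\delta_{1/r}(w)))$; evaluating at $(h,0)$ and using that $\delta_r$ acts on $\V\cong\R$ as multiplication by $r$, the two factors of $r$ cancel in the limit \eqref{eq:directional}, producing $\nabla^{\phi_r}\phi_r(\delta_r(w_0))=\nabla^\phi\phi(w_0)$. Since $\phi$ is intrinsic differentiable almost everywhere by Theorem \ref{t:intrRadem}, both chain rules hold $\mathcal{L}^2$-almost everywhere.
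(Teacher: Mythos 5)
Your proof is correct, but it follows a genuinely different route from the paper's, most notably for the chain rule \eqref{eq:chain}. The paper cites \cite[Proposition 2.7]{FSS} and \cite[Theorem 3.2]{FSS} for the parametrisation formula and the preservation of the Lipschitz constant, and then proves \eqref{eq:chain} \emph{distributionally}: it uses \cite[Proposition 4.7]{CMPS} (the pointwise intrinsic gradient is a distributional gradient, \eqref{eq:distr}), the explicit Jacobian \eqref{jacobian} of $P_q$, and a change of variables against test functions to identify $\nabla^{\phi_q}\phi_q$ with $\nabla^{\phi}\phi\circ P_{q^{-1}}$ as distributions, hence almost everywhere. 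You instead verify the parametrisation formula by the direct algebraic identity $q\cdot w\cdot\phi(w)=P_q(w)\cdot\bigl(\pi_\V(q)\cdot\phi(w)\bigr)$ and uniqueness of the $\W\cdot\V$ splitting, prove the Lipschitz invariance via the cone condition \eqref{coneCondition} and the automorphism/homogeneity properties of $\tau_q$ and $\delta_r$, and obtain the chain rule \emph{pointwise}: since $(q\cdot p_0)^{-1}\cdot\tau_q(\Gamma)=p_0^{-1}\cdot\Gamma$, uniqueness of the parametrisation (Remark \ref{remk:para}) forces $(\phi_q)_{(q\cdot p_0)^{-1}}=\phi_{p_0^{-1}}$, so intrinsic differentiability and the value of the gradient transfer from $w_0$ to $P_q(w_0)$; this is exactly the observation the paper records only at the origin (right after the lemma) and which you upgrade to every differentiability point. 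Your approach is more elementary and self-contained (no appeal to the distributional characterisation), while the paper's buys the statement without having to track pointwise differentiability through the translation. Two small points you should make explicit: (i) to pass from "at every $P_q(w_0)$ with $w_0$ a differentiability point" to "almost everywhere" you need that $P_q$ (and $\delta_r$) map sets of full $\calL^2$-measure in $\W$ to sets of full measure, which is immediate from the unit Jacobian in \eqref{jacobian} (respectively the constant Jacobian $r^3$ of $\delta_r|_\W$); (ii) in the dilation case the limit \eqref{eq:directional} presupposes that $\phi_r$ is intrinsic differentiable at $\delta_r(w_0)$, which should be justified by noting that the full $o(\|w\|)$ estimate for $\phi_{p_0^{-1}}$ rescales, via homogeneity of $\|\cdot\|$, to the corresponding estimate for $\delta_r\circ\phi_{p_0^{-1}}\circ\delta_{1/r}$; both are one-line additions and do not affect the validity of the argument.
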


Directly from the definition of intrinsic differential and intrinsic gradient, it follows that
\begin{displaymath}
\nabla^{\phi_q}\phi_q (0) = \nabla^{\phi}\phi( \pi_{\W}(q^{-1})).
\end{displaymath}
In \eqref{eq:chain}, we show how $\nabla^{\phi_q}\phi_q$ and $\nabla^{\phi}\phi$ are related in a generic point of $\W$.

\begin{proof} We concentrate on proving the statement for left translations.
In \cite[Proposition 2.7]{FSS} it has been shown that if $\Gamma$ is the intrinsic
graph of a function $\phi$ over a domain in $\V$, then
$\tau_{q}(\Gamma)$ is an intrinsic graph parametrized by the function
$\phi_q$. The domain of the new function $\phi_q$ is simply the
image  of $\tau_{q}(\Gamma)$ under the projection onto $\W$. The
intrinsic Lipschitz property of $\phi_q$ -- assuming the
corresponding property for $\phi$ -- is the content of
\cite[Theorem 3.2]{FSS}.

To compute the intrinsic gradient, we may assume without loss of
generality that $\mathbb{W}$ agrees with the $(y,t)$-plane and
that $\phi$ is defined on the entire plane $\mathbb{W}$. In this
case, for $q=(x_0,y_0,t_0)$, we have that
\begin{displaymath}
\phi_q(y,t) = x_0 + \phi(P_{q^{-1}}(y,t)).
\end{displaymath}
We then use the fact, proved in \cite{CMPS}, that the intrinsic gradient of an intrinsic
Lipschitz function is also a distributional gradient; see the discussion in Section \ref{ss:intr_grad}.
Let now $\psi$ be an arbitrary test function, that is, a compactly
supported $\mathcal{C}^1$ function on $\mathbb{W}$. Since
\begin{equation}\label{jacobian}
D P_q =
\begin{pmatrix}1&0\\ x_0 & 1\end{pmatrix},
\end{equation}
we find that
\begin{displaymath}
\frac{\partial \psi\circ P_q}{\partial y} = \frac{\partial
\psi}{\partial y}\circ P_q +  x_0 \frac{\partial \psi}{\partial
t}\circ P_q\quad \text{and}\quad \frac{\partial \psi\circ
P_q}{\partial t} = \frac{\partial \psi}{\partial t}\circ
P_q.
\end{displaymath}
This, together with the facts that $\det DP_q=1$ and
$\nabla^{\phi}\phi$ is a distributional gradient,recall \eqref{eq:distr}, gives
\begin{align*}
\int \nabla^{\phi_q} \phi_q \psi \,d\mathcal{L}^2&= - \int \phi_q \tfrac{\partial \psi}{\partial y} + \tfrac{1}{2} \phi_q^2 \tfrac{\partial \psi}{\partial t}\, d\mathcal{L}^2\\
&= - \int (\phi\circ P_{q^{-1}} )\tfrac{\partial \psi}{\partial y} + x_0( \phi \circ P_{q^{-1}}) \tfrac{\partial \psi}{\partial t} + \tfrac{1}{2}(\phi^2\circ P_{q^{-1}}) \tfrac{\partial \psi}{\partial t} \,d\mathcal{L}^2\\
&= - \int \phi( \tfrac{\partial \psi}{\partial y} \circ P_q) + x_0 \phi (\tfrac{\partial \psi}{\partial t} \circ P_q) +\tfrac{1}{2} \phi^2 (\tfrac{\partial \psi}{\partial t} \circ P_q )\, d\mathcal{L}^2\\
&= - \int \phi \tfrac{\partial \psi \circ P_q}{\partial y}+ \tfrac{1}{2}\phi^2 \tfrac{\partial \psi \circ P_q}{\partial t}  \,d\mathcal{L}^2\\
&= \int \nabla^{\phi} \phi\, (\psi \circ P_q ) \,d\mathcal{L}^2\\
&= \int( \nabla^{\phi} \phi )\circ P_{q^{-1}} \,\psi
\,d\mathcal{L}^2.
\end{align*}
As this computation is valid for arbitrary test functions $\psi$,
the claim \eqref{eq:chain} follows.
\end{proof}

\subsubsection{Graphs with constant gradient}\label{s:constantGradientGraphs} In this subsection, we prove that "entire" intrinsic Lipschitz functions with almost surely constant gradient are affine. As mentioned in Section \ref{ss:intr_grad}, if $\W$ is identified with $\mathbb{R}^2$,
the differential equation $\nabla^{\phi}\phi= 0$ is known as the
inviscid Burgers equation and it is not difficult to see by
the \emph{method of characteristics} (see \cite[Proposition 5.1]{Er}) that the only global
$\mathcal{C}^1$ solutions are constant functions. If the
right-hand side of the equation is replaced by some other constant
$c$, one can show in the same vein that the only $\mathcal{C}^{1}$
solutions are affine functions of the form $\phi(y,t) = cy +d$;
see \cite[Remark 4.3]{ASV}.

Our task is  to establish the same result for functions $\phi$
that are merely assumed to be intrinsic Lipschitz with intrinsic
gradient constant almost everywhere.

\begin{proposition}\label{constantGradientGraphs}
Let $\phi:\W  \to \V$ be an intrinsic Lipschitz function. If there
exists a constant $c\in \mathbb{R}$ such that $\nabla^{\phi}\phi=
c$ almost everywhere in $\W$, then the graph of $\phi$ is the left
translate of some vertical plane $\W' = \W'_{\W,c}$.
\end{proposition}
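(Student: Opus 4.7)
My plan has three main steps: normalise by a Heisenberg translation, reduce to the homogeneous inviscid Burgers equation by a change of coordinates, then run a characteristic argument that exploits the global definition of $\phi$. Using Lemma \ref{l:transl} together with the chain rule \eqref{eq:chain}, left-translation of $\Gamma^{\phi}$ by $p_{0}^{-1}$ (for any $p_{0} \in \Gamma^{\phi}$) produces a new intrinsic Lipschitz function with the same a.e.\ intrinsic gradient $c$. We may therefore assume $\phi(0) = 0$. A short Heisenberg-product computation shows that the graph of the affine map $(y,t) \mapsto cy$ is exactly the vertical plane $\W_{c}' := \{(x,y,t) \in \He : x = cy\}$, so the claim reduces to showing $\phi(y,t) = cy$ for all $(y,t) \in \W$.

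Now set $\Phi(y,\tau) := \phi(y, \tau + cy^{2}/2) - cy$. This substitution straightens the parabolic characteristics of the PDE $\partial_{y}\phi + \phi\,\partial_{t}\phi = c$ and simultaneously subtracts off the expected affine part. A distributional computation, based on the unit-Jacobian change of variables $(y,\tau)\mapsto(y,\tau + cy^{2}/2)$ and on the identity \eqref{eq:distr} for $\phi$, shows that $\Phi$ is a (bounded, measurable) distributional solution of the inviscid Burgers equation $\partial_{y}\Phi + \partial_{\tau}(\tfrac{1}{2}\Phi^{2}) = 0$ on all of $\R^{2}$, with $\Phi(0,0) = 0$. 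Since $\phi(y,t) = cy$ is equivalent to $\Phi \equiv 0$, the proposition reduces to a global uniqueness statement for Burgers' equation in the class of (transformed) intrinsic Lipschitz graphs.

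The final, and most delicate, step is a characteristics argument. For a classical solution, through each $(y_{0},\tau_{0}) \in \R^{2}$ runs the straight line $\tau = \tau_{0} + \Phi(y_{0},\tau_{0})(y-y_{0})$, along which $\Phi$ is constant. Two such lines starting at $(0,\tau_{1})$ and $(0,\tau_{2})$ with $\tau_{1} \neq \tau_{2}$ are either parallel, forcing $\Phi(0,\tau_{1}) = \Phi(0,\tau_{2})$, or they meet at a single point of $\R^{2}$ where $\Phi$ is forced to take two different values. The latter is impossible because $\Phi$ is a genuine single-valued function on the full plane, so $\Phi(0,\cdot)$ must be constant and hence identically zero by $\Phi(0,0) = 0$; propagation along characteristics then yields $\Phi \equiv 0$ everywhere. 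The principal obstacle is making this rigorous when $\Phi$ is only a.e.\ intrinsic differentiable (Theorem \ref{t:intrRadem}) and the characteristics are not a priori well-defined as ODE solutions. The local example in \eqref{exFunction} shows that globality is crucial: without it, non-affine intrinsic Lipschitz solutions of $\nabla^{\phi}\phi = 0$ exist. I expect the rigorous argument to proceed via mollification of $\Phi$, with the Burgers commutator controlled by the $L^{\infty}$ bound on $\nabla^{\phi}\phi$ from Lemma \ref{l:bound_grad}, coupled with the intrinsic Lipschitz cone condition on $\Gamma^{\phi}$ to rule out characteristic crossings.
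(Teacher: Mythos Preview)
Your overall strategy---normalise, reduce to Burgers' equation, run a characteristics argument exploiting globality---is the same as the paper's, and your change of variables to the homogeneous equation is a tidy variant. However, the proposal has a genuine gap at the step you yourself flag as ``most delicate'': you have not actually carried out the characteristics argument for a function that is merely intrinsic Lipschitz, and the mollification route you sketch is not known to work. Mollifying a distributional Burgers solution does not produce a classical solution; the commutator between mollification and the nonlinearity $\partial_{\tau}(\tfrac{1}{2}\Phi^{2})$ need not vanish in the limit with only an $L^{\infty}$ bound available, and the intrinsic cone condition does not obviously control it either. You are right that globality is essential, but you have not supplied the mechanism that converts globality into rigidity at this low regularity.

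The paper closes this gap differently. It first invokes a regularity bootstrap from \cite{BSC}: because the right-hand side $g\equiv c$ is (trivially) Euclidean Lipschitz, any continuous distributional solution of $\nabla^{\phi}\phi=c$ is in fact locally \emph{Euclidean} Lipschitz. This upgrade is the key missing ingredient in your plan: once $\phi$ is Euclidean Lipschitz, $\partial_{t}\phi$ exists a.e.\ and is locally bounded, so along each characteristic $\gamma_{t}$ the function $z(s)=\phi(\gamma_{t}(s))-(cs+\phi(0,t))$ satisfies a linear Carath\'eodory ODE $z'=a(s)z$, $z(0)=0$, with locally integrable coefficient, whose unique solution is $z\equiv 0$. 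The remaining subtlety---that a.e.\ characteristic stays in the full-measure ``good'' set for a.e.\ time, which is not automatic since the curves $\gamma_{t}$ are not known a priori to foliate the plane---is handled by a separate measure-theoretic lemma (Lemma~\ref{avoidance}, itself resting on a Besicovitch-type projection statement, Lemma~\ref{projectionLemma}). Your proposal does not address this issue at all. Once these two points are in place, your crossing-of-characteristics endgame is exactly what the paper does.
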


\begin{rem}
We thank Enrico Le Donne and the anonymous referee for pointing out that Proposition \ref{constantGradientGraphs} also follows directly from existing results in the literature. Indeed, by \cite[Theorem 4.17]{FSS11}, the subgraph $\Gamma^{\phi}_{-}$  of an intrinsic Lipschitz graph $\Gamma^{\phi}$ over a vertical plane is a set with locally finite $\mathbb{H}$-perimeter. If the intrinsic gradient $\nabla^{\phi}\phi$ is constant, then the horizontal normal to the boundary of $\Gamma^{\phi}_{-}$ is constant. According to the proof of 
\cite[Claim 3]{FSS01} (see also \cite[Proposition 5.4]{AKL}), this implies that $\Gamma^{\phi}_{-}$ is the left translate of a vertical halfspace, and hence $\Gamma^{\phi}$ is the left translate of a vertical plane. 
\end{rem}

\begin{proof}[Proof of Proposition \ref{constantGradientGraphs}]
Throughout the proof, we identify $\W$ with $\mathbb{R}^2$, using
coordinates $(y,t)$. We will prove that $\phi(y,t) = cy + d$ for some $d \in \R$.
Since $\phi$ is intrinsic Lipschitz, it is continuous, see for instance \cite[Proposition 3.4]{FSS}.
 We start by observing that the almost sure
constancy of the intrinsic gradient leads to improved regularity
for $\phi$.
By \cite[Proposition 4.7]{CMPS}, the function $\phi$ is  a
distributional solution to the equation $\nabla^{\phi}\phi = g$
for $g(y,t)\equiv c$.
Since $g$ is constant, it is in particular Lipschitz continuous in the Euclidean sense and  \cite[Corollary 1.4]{BSC} implies that $\phi$ is locally Lipschitz
on $\W$ with respect to the Euclidean metric; see also
Theorem 4.2.1 and Theorem 4.4.1 in \cite{B}.   Hence, almost every
point $w$ of $\W$ is "good" in the sense that the function $\phi$
is differentiable at $w$ both in the usual Euclidean sense and in
the intrinsic sense with $\nabla^{\phi}\phi(w)=c$. We denote by
$G$ the set of such good points in $\W$, so that
$\mathcal{L}^{2}(\W \setminus G)=0$.

For every $t\in \R$, we define a curve $\gamma_t:\R \to \W$, by
setting
\begin{displaymath}
 \gamma_{t}(s):= \left(s, \tfrac{c}{2} s^2 + \phi(0,t) s + t\right).
\end{displaymath}
We will prove for almost every $t\in \R$ that
\begin{equation}\label{eq:behavior_along_char}
\phi(\gamma_t(s))= cs+ \phi(0,t)\quad\text{for all }s\in \R.
\end{equation}

Let us assume for a moment that $t\in \R$ is such that
$\gamma_t(s)\in G$ for almost every $s\in \R$. Towards a proof of
\eqref{eq:behavior_along_char}, we define the function
\begin{displaymath}
z:\R \to \R,\quad z(s):= \phi(\gamma_t(s))-(cs + \phi(0,t)).
\end{displaymath}
We note that $z$ is locally Lipschitz continuous, so $z'(s)$
exists for almost all $s$. Further, by the assumption that
$\gamma_{t}(s) \in G$ for almost every $s$, we have for such
points that
\begin{align*}
z'(s)&= \partial_y \phi(\gamma_t(s))+ (cs + \phi(0,t))\partial_t \phi(\gamma_t(s)) -c\\
&= c- \phi(\gamma_t(s))\partial_t \phi(\gamma_t(s))+(cs + \phi(0,t))\partial_t \phi(\gamma_t(s)) -c\\
&= -\left[\phi(\gamma_t(s))-(cs + \phi(0,t))\right]\partial_t \phi(\gamma_t(s))\\
&= -z(s) \partial_t \phi(\gamma_t(s)).
\end{align*}
Here we have used that $\nabla^{\phi}\phi=c=
\partial_y \phi + \phi
\partial_t\phi$ on $G$, see \eqref{eq:intr_grad_expl} and the subsequent discussion.

 Thus $z$ solves an ODE of the form
\begin{equation}\label{eq:ODE}
\left\{ \begin{array}{ll}z'(s)=a(s)z(s),&\text{almost
everywhere},\\ z(0)=0.&
\end{array}\right.
\end{equation}
Clearly, $z\equiv 0$ is a solution,
 but we have to argue that it is the \emph{only} solution. Here we are interested in \emph{Carath\'{e}odory solutions} $z:\R\to \R$, that is, in functions which are absolutely continuous on every closed interval $[\alpha,\beta]\subset \R$ and which fullfill the differential equation pointwise almost everywhere; see for instance \cite[Chapter 1]{F} for a thorough discussion of Carath\'{e}odory differential equations. Since $\phi$ is locally Lipschitz as a function on the Euclidean plane and $\gamma_t$ is a smooth curve with $\gamma_t(s)\in G$ for almost every $s\in \R$, the function
\begin{displaymath}
a:\R \to \R,\quad a(s):= \left\{\begin{array}{ll}- \partial_t
\phi(\gamma_t(s)),&\gamma_t(s)\in G,\\ 0,&\text{else},\end{array}
\right.
\end{displaymath}
is locally integrable on every interval $[\alpha,\beta]\subset
\R$. By Theorem 3 in \cite[Chapter 1]{F}, this suffices to ensure
that the ODE \eqref{eq:ODE} has a unique Carath\'{e}odory solution
on $\R$. Hence, $z\equiv 0$ and \eqref{eq:behavior_along_char}
follows for this particular choice of $t$.

Next, we would like to show that almost every $t\in \R$ has the
crucial property that $\gamma_t(s)\in G$ for almost every $s$.
This is the content of Lemma \ref{avoidance} below. The statement
would be immediate if we knew that the curves $\gamma_{t}$
foliated the plane $\W$, or even a large portion thereof, but
there is no such \emph{a priori} information available. In fact,
this foliation property is part of the statement we want to prove.

So, we have to work a bit harder, and we are essentially rescued
by the local Lipschitz regularity of $t \mapsto \phi(0,t)$. In the
proof of Lemma \ref{avoidance} we need a sharpened version of the
"easy implication" in the Besicovitch projection theorem. This
result may be known to some experts, but we did not find it in the
literature:

\begin{lemma}\label{projectionLemma} Let $K \subset \R^{2}$ be a rectifiable set with $0 < \calH^{1}(K) < \infty$. Then, there exists a set of unit vectors $G \subset S^{1}$, depending only on $K$, with the following properties:
\begin{itemize}
\item[(i)] $\calH^{1}(S^{1} \setminus G) = 0$. \item[(ii)] If $F
\subset K$ is any $\calH^{1}$-measurable subset with $\calH^{1}(F)
> 0$ and $e \in G$, then $\calH^{1}(\pi_{e}(F)) > 0$. Here
$\pi_{e}$ is the orthogonal projection $\pi_{e}(x) = x \cdot e$.
\end{itemize}
\end{lemma}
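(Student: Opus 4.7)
The plan is to exploit the existence of approximate tangents on the rectifiable set $K$, together with a Fubini argument, to show that the set of "good" directions can be chosen uniformly, independently of the subset $F$. The point is that property (ii) need only fail for subsets $F$ that lie (essentially) inside $\{x \in K : \tau(x) \perp e\}$, where $\tau(x)$ is the approximate tangent direction to $K$ at $x$; for fixed $e$, this "bad" set is $\calH^1$-null for $\calH^1$-a.e.\ $e$, and such a unified choice of $e$ is precisely what we need.

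\smallskip

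First, since $K$ is $1$-rectifiable with $\calH^1(K) < \infty$, standard theory (e.g.\ Mattila's book \cite{M}) gives a Borel subset $K_0 \subset K$ with $\calH^1(K \setminus K_0) = 0$ and a Borel map $\tau \colon K_0 \to S^1/\{\pm 1\}$ assigning to each $x \in K_0$ the approximate tangent direction of $K$ at $x$. Consider the Borel set
\begin{displaymath}
 B := \{(x,e) \in K_0 \times S^1 : e \cdot \tau(x) = 0\}.
\end{displaymath}
For every fixed $x \in K_0$, the slice $B^x = \{e \in S^1 : e \perp \tau(x)\}$ consists of exactly two points, hence has $\calH^1$-measure zero. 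Applying Fubini's theorem to the product measure $\calH^1 \llcorner K_0 \times \calH^1 \llcorner S^1$ (both $\sigma$-finite Borel measures) yields
\begin{displaymath}
 \int_{S^1} \calH^1\bigl(\{x \in K_0 : e \perp \tau(x)\}\bigr) \, d\calH^1(e) \;=\; (\calH^1 \times \calH^1)(B) \;=\; 0.
\end{displaymath}
Therefore, if we define
\begin{displaymath}
 G := \bigl\{e \in S^1 : \calH^1(\{x \in K_0 : e \perp \tau(x)\}) = 0\bigr\},
\end{displaymath}
then $G$ depends only on $K$ and satisfies $\calH^1(S^1 \setminus G) = 0$, proving (i).

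\smallskip

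For (ii), fix $e \in G$ and an $\calH^1$-measurable subset $F \subset K$ with $\calH^1(F) > 0$. By the definition of $G$ and the full-measure property of $K_0$, the subset $F' := \{x \in F \cap K_0 : \tau(x) \cdot e \neq 0\}$ satisfies $\calH^1(F') = \calH^1(F) > 0$. Now apply the area formula for $1$-rectifiable sets (see \cite[Theorem~3.2.22]{M} or Federer), which for the Lipschitz map $\pi_e \colon \R^2 \to \R$ and the rectifiable set $F' \subset K_0$ gives
\begin{displaymath}
 \int_{F'} |\tau(x) \cdot e| \, d\calH^1(x) \;=\; \int_{\R} \calH^0\bigl(\pi_e^{-1}(s) \cap F'\bigr) \, d\calH^1(s).
\end{displaymath}
The left-hand side is strictly positive by construction of $F'$. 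The right-hand side is bounded above by $\int_{\pi_e(F')} \infty \cdot \chi_{\pi_e(F')}(s)\, d\calH^1(s)$ in a formal sense; more precisely, the integrand on the right vanishes outside $\pi_e(F) \supset \pi_e(F')$, so positivity of the integral forces $\calH^1(\pi_e(F)) \geq \calH^1(\pi_e(F')) > 0$, which is exactly (ii).

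\smallskip

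The only real subtlety is ensuring joint Borel measurability of $B$ (so that Fubini applies) and invoking the correct version of the area formula for rectifiable sets rather than for Lipschitz graphs; both are standard once one decomposes $K_0$ into countably many pieces covered by Lipschitz graphs. The rest is a direct Fubini/area-formula combination, and no new ideas are required.
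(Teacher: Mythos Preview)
Your proof is correct and takes a genuinely different route from the paper's. The paper proceeds by reducing to a Lipschitz image, approximating by a compact $C^1$ curve, partitioning the curve into short arcs, and then running a combinatorial bipartite-graph argument to show that most directions in $S^1$ project every large enough subset of the curve to a set of positive length. Your argument bypasses all of this by directly using the approximate tangent field $\tau$ on $K$, a Fubini argument to produce the full-measure set $G$, and the area/coarea formula for rectifiable sets to conclude. This is shorter, more conceptual, and makes the dependence of $G$ only on $K$ completely transparent: $G$ is simply the set of directions $e$ not orthogonal to $\tau(x)$ for a positive-measure set of $x$. The paper's approach, by contrast, is more self-contained (it avoids invoking Federer's area formula and the measurable selection of tangents), which may explain why the authors chose it, but your method is the natural one from the standpoint of geometric measure theory. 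One minor comment: your phrasing of the final step (``bounded above by $\int \infty \cdot \chi\ldots$ in a formal sense'') is a little awkward; it is cleaner to say that the nonnegative integrand $s \mapsto \calH^0(\pi_e^{-1}(s)\cap F')$ is supported in $\pi_e(F')$, so positivity of the integral forces $\calL^1(\pi_e(F')) > 0$, hence $\calH^1(\pi_e(F)) > 0$.
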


\begin{remark}
\label{sfinite} The lemma immediately extends to rectifiable sets with $\sigma$-finite $\calH^{1}$-measure.  \end{remark}

\begin{proof}[Proof of Lemma \ref{projectionLemma}] We start with a series of reductions. Without loss of generality we can assume that $K$ is bounded. Moreover it is enough to prove the lemma for $K=f([-R, R])$ for $f:\R \ra \R$ Lipschitz and $R>0$. To see this, first recall that if $K$ is a bounded rectifiable set there exist countably many Lipschitz maps $f_n:\R \ra \R$ such that $\calH^{1}(K \setminus \bigcup_n f_n([-R, R]))=0$ for some $R>0$. Applying the lemma to each of the sets $\gamma_n:=f_n([-R,R])$ we obtain sets $G_n \subset S^1$ satisfying (i) and (ii).  
Now let $G= \bigcap_{n \in \N} G_n$. Trivially $\calH^{1}(S^{1} \setminus G) = 0$ and if $F
\subset K$ is any $\calH^{1}$-measurable subset with $\calH^{1}(F)> 0$, then there exists some $\gamma_n$ such that $\calH^1(\gamma_n \cap F)>0$. Since $\gamma_n$ satisfies (ii), if $e \in G \subset G_n$ then $\calH^{1}(\pi_{e}(F)) > 0$.

Fix $\varepsilon > 0$. It is enough to find a subset
$G_{\varepsilon} \subset S^{1}$ such that $\calH^{1}(S^{1} \setminus
G_{\varepsilon}) = 0$ and the following property holds: if $F \subset K$
is measurable with $\calH^{1}(F) > \varepsilon$ and $e \in
G_{\varepsilon}$, then $\mathcal{H}^1(\pi_{e}(F)) > 0$. Then, we
can complete the proof by setting $G := \bigcap_{j} G_{1/j}$.

By \cite[Theorem 7.4]{M} there exists a compact $\mathcal{C}^{1}$-curve $\Gamma = \Gamma_{\varepsilon,K}$ 
such that $\calH^{1}(K \setminus \Gamma) \leq \varepsilon/2$.
Then, if $F \subset K$ is measurable with $\calH^{1}(F) >
\varepsilon$, we have $\calH^{1}(F \cap \Gamma) > \varepsilon/2$.
Thus, it actually suffices to construct $G_{\varepsilon}$ so that
the following holds: if $F \subset \Gamma$ is measurable with
$\calH^{1}(F) > \varepsilon/2$ and $e \in G_{\varepsilon}$, then
$\mathcal{H}^1(\pi_{e}(F)) > 0$. One final reduction: for fixed
$\delta
> 0$, we construct a set $G_{\varepsilon}^{\delta}$ with the
properties that (a) $\calH^{1}(S^{1} \setminus
G_{\varepsilon}^{\delta}) < \delta$, and (b) if $F \subset \Gamma$
is measurable with $\calH^{1}(F) > \varepsilon/2$ and $e \in
G_{\varepsilon}^{\delta}$, then $\mathcal{H}^1(\pi_{e}(F)) > 0$.
This suffices, since $G_{\varepsilon} := \bigcup_{j}
G_{\varepsilon}^{1/j}$ is then the set we are after.

To construct $G_{\varepsilon}^{\delta}$, we fix a number $m =
m_{\delta,\varepsilon} \in \N$, to be specified later, and cover
$S^{1}$ by a a collection $\mathcal{J} := \{J_{1},\ldots,J_{m}\}$
of disjoint arcs of length  between $1/m$ and $10/m$.
Next, for some $n \in \N$ depending on $m$, we partition
$\Gamma$ into short, connected sub-curves $\calF :=
\{\Gamma_{1},\ldots,\Gamma_{n}\}$ such that the following holds:
for every fixed $\Gamma_{j} \in \calF$, the restriction
$\pi_{e}|_{\Gamma_{j}}$ is bi-Lipschitz for all $e \in S^{1}$,
except possibly those $e$ in the union of four arcs in
$\mathcal{J}$ (depending only on $\Gamma_{j}$). Such a partition
$\calF$ exists, because $\Gamma$ is compact and $\mathcal{C}^{1}$.

Consider a bi-partite graph with vertex set $\calF \cup
\mathcal{J}$ and the following edge set $E$: draw an edge between
$\Gamma_{j} \in \calF$ and $J_{k} \in \mathcal{J}$, if and only if
$\pi_{e}|_{\Gamma_{j}}$ is bi-Lipschitz for \textbf{all} $e \in
J_{k}$. Thus, every vertex $\calF$ is adjacent to at least $(m -
4)$ vertices in $\mathcal{J}$. For an edge $(\Gamma_{j},J_{k}) \in
E$, define the weight
\begin{displaymath} w(\Gamma_{j},J_{k}) := \calH^{1}(\Gamma_{j}). \end{displaymath}
Thus, if $w(E)$ is the sum of all the weights of edges in $E$, we
have
\begin{displaymath} w(E) = \sum_{j} \sum_{k : (\Gamma_{j},J_{k}) \in E} w(\Gamma_{j},J_{k}) \geq \sum_{j} (m - 4)\calH^{1}(\Gamma_{j}) = (m - 4)\calH^{1}(\Gamma). \end{displaymath}
Now, write
\begin{displaymath} \tau := \min\{\delta/20,\varepsilon/(2\calH^{1}(\Gamma))\}, \end{displaymath}
and call a vertex $J_{k} \in \mathcal{J}$ \emph{light}, if the
total weight of edges emanating from $J_{k}$ is at most $(1 -
\tau)\calH^{1}(\Gamma)$. Other vertices in $\mathcal{J}$ are
\emph{heavy}. Denoting the light and heavy vertices in
$\mathcal{J}$ by $\mathcal{J}_{light}$ and $\mathcal{J}_{heavy}$,
respectively, we have
\begin{align*} (m - 4)\calH^{1}(\Gamma) & \leq w(E) \leq (1 - \tau)\calH^{1}(\Gamma)|\mathcal{J}_{light}| + \calH^{1}(\Gamma)|\mathcal{J}_{heavy}|\\
& = m\calH^{1}(\Gamma) - \tau \calH^{1}(\Gamma)
|\mathcal{J}_{light}|, \end{align*} which simplifies to
$|\mathcal{J}_{light}| \leq 4/\tau$. We now fix $m$ so large that
$m \geq 4/\tau^{2}$, which gives $|\mathcal{J}_{light}| \leq \tau
m$. Then, let
\begin{displaymath} G_{\varepsilon}^{\delta} := \bigcup_{J_{k} \in \mathcal{J}_{heavy}} J_{k}. \end{displaymath}
The set $G_{\varepsilon}^{\delta}$ satisfies the correct length
estimate:
\begin{displaymath} \calH^{1}\left(S^{1} \setminus G_{\varepsilon}^{\delta} \right) \leq \sum_{J_{k} \in \mathcal{J}_{light}} \calH^{1}(J_{k}) \leq \frac{10\tau m}{m} < \delta \end{displaymath}
by the choice of $\tau$.

Finally, we want to show that $\mathcal{H}^1(\pi_{e}(F)) > 0$,
whenever $F \subset \Gamma$ is measurable with $\calH^{1}(F) >
\varepsilon/2$, and $e \in G_{\varepsilon}^{\delta}$. So, fix $F
\subset \Gamma$ with $\mathcal{H}^{1}(F) > \varepsilon/2$, and
write $\calF_{F} := \{\Gamma_{j} \in \calF : \calH^{1}(F \cap
\Gamma_{j}) > 0\}$. Then
\begin{displaymath} \varepsilon/2 < \calH^{1}(F) \leq \sum_{\Gamma_{j} \in \calF_{F}} \calH^{1}(\Gamma_{j}), \end{displaymath}
Then, fix $e \in G_{\varepsilon}^{\delta}$, so that $e \in J_{k}$
for some $\mathcal{J}_{heavy}$. This implies that $J_{k}$ is
adjacent to at least one vertex $\Gamma_{j} \in \calF_{F}$;
otherwise, recalling that $\tau <
\varepsilon/(2\calH^{1}(\Gamma))$, we have
\begin{displaymath} \sum_{j : (\Gamma_{j},J_{k}) \in E} w(\Gamma_{j},J_{k}) \leq \sum_{j : \Gamma_{j} \notin \calF_{F}} w(\Gamma_{j},J_{k}) \leq \calH^{1}(\Gamma) - \varepsilon/2 < (1 - \tau)\calH^{1}(\Gamma) \end{displaymath}
which contradicts $J_{k} \in \mathcal{J}_{heavy}$. Now, pick
$\Gamma_{j} \in \calF_{F}$ such that $(\Gamma_{j},J_{k}) \in E$.
By definition of $E$, this means that $\pi_{e}|_{\Gamma_{j}}$ is
bi-Lipschitz, and consequently
\begin{displaymath} \mathcal{H}^1(\pi_{e}(F)) \geq \mathcal{H}^1(\pi_{e}(F \cap \Gamma_{j})) > 0. \end{displaymath}
The proof is complete. \end{proof}

We are ready to prove that the curves $\gamma_{t}$ mostly avoid
the set $\W \setminus G$:

\begin{lemma}\label{avoidance} Let $B \subset \W$ be a set with $\mathcal{L}^{2}(B) = 0$. Then, for almost every $t$, we have $\gamma_{t}(s) \in \W \setminus B$ for almost every $s$.   \end{lemma}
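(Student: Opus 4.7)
The plan is as follows. Set $\Phi \colon \R^{2} \to \W$, $\Phi(s,t) := \gamma_{t}(s) = (s, \tfrac{c}{2}s^{2} + \phi(0,t)s + t)$. By Fubini, the conclusion of the lemma is equivalent to $\calL^{2}(\Phi^{-1}(B)) = 0$; we may assume $B$ is Borel by replacing it with a Borel $\calL^{2}$-null superset. The key observation is that the diffeomorphism $\Psi(x,y) := (x, y - \tfrac{c}{2}x^{2})$ of $\R^{2}$ has unit Jacobian, sends $B$ to a Borel $\calL^{2}$-null set $B'$, and \emph{straightens} the family $\{\gamma_{t}\}$: a direct computation gives
\begin{equation*}
\Phi^{-1}(B) = \{(s,t) : \sigma_{s}(t) \in B'_{s}\}, \qquad \sigma_{s}(t) := s\phi(0,t) + t,
\end{equation*}
where $B'_{s} := \{y : (s,y) \in B'\}$ is the $s$-slice of $B'$. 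Fubini yields $\calL^{1}(B'_{s}) = 0$ for $\calL^{1}$-a.e.\ $s \in \R$.

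Next, writing $e_{s} := (1,s)/\sqrt{1+s^{2}} \in S^{1}$, one has the identity $\sigma_{s}(t) = \sqrt{1+s^{2}} \, \pi_{e_{s}}(t, \phi(0,t))$, so $\sigma_{s}$ is, up to a harmless rescaling, the orthogonal projection of the graph $\Gamma_{0} := \{(t, \phi(0,t)) : t \in \R\}$ onto the direction $e_{s}$. As already invoked in the proof of Proposition \ref{constantGradientGraphs}, the Bigolin--Serra Cassano result forces $\phi(0,\cdot)$ to be locally Lipschitz on $\R$, so $\Gamma_{0}$ is a rectifiable set of $\sigma$-finite $\calH^{1}$-measure and the parametrisation $\iota(t) := (t, \phi(0,t))$ is locally bi-Lipschitz. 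Applying Lemma \ref{projectionLemma}, together with Remark \ref{sfinite}, to $\Gamma_{0}$ produces a set $G \subset S^{1}$ with $\calH^{1}(S^{1} \setminus G) = 0$ such that every $\calH^{1}$-measurable $F \subset \Gamma_{0}$ of positive $\calH^{1}$-measure projects to a set of positive $\calH^{1}$-measure under $\pi_{e}$ for every $e \in G$. Since $s \mapsto e_{s}$ is a smooth embedding of $\R$ into $S^{1}$, the exceptional set $\{s : e_{s} \notin G\}$ is $\calL^{1}$-null.

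For $\calL^{1}$-a.e.\ $s$, then, both $e_{s} \in G$ and $\calL^{1}(B'_{s}) = 0$ hold. For such an $s$, the analytic set $F_{s} := \iota(\sigma_{s}^{-1}(B'_{s})) \subset \Gamma_{0}$ satisfies $\pi_{e_{s}}(F_{s}) \subset (1+s^{2})^{-1/2} B'_{s}$, which is $\calH^{1}$-null. The ``most directions are good'' conclusion then forces $\calH^{1}(F_{s}) = 0$, and the bi-Lipschitz nature of $\iota$ upgrades this to $\calL^{1}(\sigma_{s}^{-1}(B'_{s})) = 0$. Thus $\Phi^{-1}(B)$ has $\calL^{1}$-null $s$-slice for a.e.\ $s$, whence $\calL^{2}(\Phi^{-1}(B)) = 0$ by Fubini, and one final application of Fubini gives the claim that $\{s : \gamma_{t}(s) \in B\}$ is $\calL^{1}$-null for $\calL^{1}$-a.e.\ $t$.

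The main obstacle, as the authors have already flagged, is that there is no a priori reason for the curves $\gamma_{t}$ to foliate $\W$: the slope function $t \mapsto \phi(0,t)$ may be constant on intervals, and the Jacobian $s\phi'(0,t) + 1$ of $(s,t) \mapsto (s, \sigma_{s}(t))$ can vanish on a non-trivial set, ruling out a naive Lipschitz-inverse argument. The projection lemma circumvents both issues by reducing everything to the generic projection of a single locally Lipschitz graph, at the cost of some measurability bookkeeping (namely, replacing $B$ by a Borel cover and verifying that the analytic set $F_{s}$ is universally measurable on $\Gamma_{0}$).
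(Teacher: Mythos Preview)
Your proof is correct and follows essentially the same route as the paper: both reduce via Fubini to controlling, for almost every $s$, the set $\{t: s\phi(0,t)+t \in B_s - \tfrac{c}{2}s^2\}$, and both invoke Lemma \ref{projectionLemma} applied to the locally Lipschitz graph of $t\mapsto \phi(0,t)$ to conclude. The only differences are cosmetic: you package the shift by $\tfrac{c}{2}s^2$ into the explicit diffeomorphism $\Psi$ and argue directly rather than by contradiction, and you are slightly more careful about measurability than the paper is.
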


\begin{proof} With $B_{s} := \{t : (s,t) \in B\}$, we may re-write the claim as follows:
\begin{align*} 0 = \calL^{2}(\{(s,t) : \gamma_{t}(s) \in B\}) & = \calL^{2}(\{(s,t) : (s,\tfrac{c}{2}s^{2} + \phi(0,t)s + t) \in B\})\\
& = \calL^{2}(\{(s,t) : \tfrac{c}{2}s^{2} + \phi(0,t)s + t \in
B_{s}\}). \end{align*} So, it suffices to show that for almost
every $s \in \R$, we have $\calH^{1}(E_{s}) = 0$, where
\begin{displaymath} E_{s} := \{t : \tfrac{c}{2}s^{2} + \phi(0,t)s + t \in B_{s}\}. \end{displaymath}
Assume that this claim is false: there exists a positive measure
set $S$ of parameters $s$ such that $\calH^{1}(E_{s}) > 0$.
Observe that $\calH^{1}(\Gamma_{s}) \geq \calH^{1}(E_{s}) > 0$ for
$s \in S$, where
\begin{displaymath} \Gamma_{s} := \{(\phi(0,t),t) : t \in E_{s}\} \subset \{(\phi(0,t),t) : t \in \R\} =: \Gamma. \end{displaymath}
Next, write $\pi_{s}(y,t) := (y,t) \cdot (s,1)$ for $(y,t) \in
\W$; then, up to scaling, $\pi_{s}$ is the orthogonal projection
onto the line spanned by $(s,1)$ in the $(y,t)$-plane. Recalling Remark \ref{sfinite}  we can apply
Lemma \ref{projectionLemma} to the Lipschitz graph $\Gamma$ and
obtain a set of parameters $G \subset \R$ with $\calH^{1}(\R
\setminus G) = 0$, with the property that
$\calH^{1}(\pi_{s}(\Gamma_{s})) > 0$, whenever $s \in G$ and
$\calH^{1}(\Gamma_{s}) > 0$. In particular,
$\calH^{1}(\pi_{s}(\Gamma_{s})) > 0$ for almost all $s \in S$.
Observing that $\tfrac{c}{2}s^{2} + \pi_{s}(\Gamma_{s}) \subset
B_{s}$ for every $s$, this forces $\calH^{1}(B_{s}) > 0$ for
almost all $s \in S$, which contradicts $\calL^{2}(B) = 0$. The
proof of the lemma is complete. \end{proof}

We have now established that \eqref{eq:behavior_along_char} holds
for almost every $t \in \R$, and the rest of the proof of
Proposition \ref{constantGradientGraphs} is easy. First, notice
that
\begin{displaymath}
\gamma_t(s)= \gamma_{t'}(s')\quad \text{if and only if} \quad
(s=s' \text{ and } \phi(0,t)s+t=\phi(0,t')s+t').
\end{displaymath}
Now, if $\phi(0,t)\neq \phi(0,t')$ for some $t,t'$, then
$\phi(0,t)s_{0} + t = \phi(0,t') s_{0} + t'$ for some $s_{0} \in
\R$. It follows that for such $t,t'$, the curves $\gamma_t$ and
$\gamma_{t'}$ intersect at $\gamma_t(s_0)=\gamma_{t'}(s_0)$.

Recall that we aim to show that $\phi(y,t) =c y + d$ for some $d
\in \R$. We first show that $t \mapsto \phi(0,t)$ is constant.
Pick $t$ and $t'$ satisfying \eqref{eq:behavior_along_char}. If
$\phi(0,t) \neq \phi(0,t')$, then by the discussion in the
previous paragraph, $\gamma_t(s_{0}) = \gamma_{t'}(s_{0})$ for
some $s_{0} \in \R$. Consequently,
\begin{displaymath}
c s_0 + \phi(0,t) = \phi(\gamma_{t}(s_{0})) =
\phi(\gamma_{t'}(s_{0})) = c s_0 + \phi(0,t')
\end{displaymath}
by \eqref{eq:behavior_along_char}, which contradicts $\phi(0,t)
\neq \phi(0,t')$. So, $t \mapsto \phi(0,t)$ is constant, say $d$,
on the set where \eqref{eq:behavior_along_char} holds. Referring
again to \eqref{eq:behavior_along_char}, we find that, for
$\calL^{2}$ almost all $(s,t) \in \R \times \R$, we have
\begin{displaymath}\phi(\gamma_t(s))= \phi(s,\tfrac{c}{2}s^{2} + ds + t) = cs + d. \end{displaymath}
Since $\phi$ is continuous, this is in fact true for all pairs
$(s,t)$, and hence $\phi(y,t) = \phi(y,\tfrac{c}{2}y^{2} + dy + (t
- \tfrac{c}{2}y^{2} - dy)) = cy + d$ for all $(y,t) \in \R^{2}$.
The proof is complete. \end{proof}

\subsection{A weak geometric lemma for constant gradient $\beta$-numbers} In this section, we start to implement the plan outlined in Section \ref{outline}: we define a variant of $\beta$-numbers, the \emph{constant gradient $\beta$-numbers}, and prove that intrinsic Lipschitz graphs satisfy a weak geometric lemma with respect to this new definition.

If $\Gamma$ is an intrinsic Lipschitz graph over $\W$, the constant gradient $\beta$-number (with parameter $L$) of a ball $B(x,r)$ is designed to described how well $\Gamma\cap B(x,r)$ can be approximated by the graph of an intrinsic $L$-Lipschitz function whose gradient is constant almost everywhere in $\pi_{\W}(B(x,b_L r))$. Here $b_L$ is a small constant, given by the following lemma, and $B(x,r)$ denotes a closed ball with radius $r$
centred at $x$.

\begin{lemma}           \label{eq:ball_inclusion}
For every $L>0$ there exists a constant $b_L$ such that if $\phi:\W \to \V$ is an intrinsic $L$-Lipschitz function on a vertical subgroup $\W$, then
\begin{displaymath}
\pi_{\W}(B(x,b_L r ))\subseteq   \pi_{\W}(B(x, r )\cap \Gamma)\subseteq \pi_{\W}(B(x, r ))
\end{displaymath}
for all $x$ on the graph $\Gamma$ of $\phi$ and for all $r>0$.
\end{lemma}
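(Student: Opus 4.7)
The plan is to reduce to the case $x = 0$ using the left-invariance of the setup, and then exhibit an explicit point $y \in \Gamma \cap B(0, r)$ projecting to any given $w \in \pi_{\W}(B(0, b_L r))$. The second inclusion $\pi_{\W}(B(x,r) \cap \Gamma) \subseteq \pi_{\W}(B(x,r))$ is trivially true, so all of the content lies in the first inclusion.

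First I would use Lemma \ref{l:transl} with $q = x^{-1}$: the translated set $x^{-1} \cdot \Gamma$ is again an intrinsic $L$-Lipschitz graph over $\W$, parametrised by a function $\phi_{x^{-1}} \colon \W \to \V$ which fixes the origin (because $x \in \Gamma$ forces $0 = x^{-1} \cdot x \in x^{-1} \cdot \Gamma$). Since both $d_{\He}$ and $\pi_{\W}$ interact with left translations in the right way for the statement of the lemma to be left-invariant, I may henceforth assume $x = 0$ and $\phi(0) = 0$.

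With this reduction, given $w \in \pi_{\W}(B(0, b_L r))$, pick any $z \in B(0, b_L r)$ with $\pi_{\W}(z) = w$, and take the natural candidate $y := w \cdot \phi(w) \in \Gamma$. Automatically $\pi_{\W}(y) = w$, so everything reduces to showing $\|y\| \leq r$ for a suitable choice of $b_L$. Two short Heisenberg-norm calculations combine to give this. From the explicit formula for $\pi_{\W}$ and the Heisenberg norm, one has $\|\pi_{\W}(z)\| \leq C \|z\|$ for an absolute constant $C$ (the $t$-correction $2\omega(\pi_V(z), \pi_{V^\perp}(z))$ is bounded by $\|z\|^2$), hence $\|w\| \leq C b_L r$. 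Next, the intrinsic Lipschitz characterisation \eqref{intrili2} applied at the origin $0 \in \Gamma$ gives $\|\phi(w)\| \leq L \|w\|$, and then unfolding the Heisenberg product $w \cdot \phi(w)$ yields $\|w \cdot \phi(w)\| \leq C_L \|w\|$ for some $C_L$ depending only on $L$. Combining, $\|y\| \leq C_L C b_L r$, so choosing $b_L := (C_L C)^{-1}$ forces $\|y\| \leq r$, as required.

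I do not anticipate a substantive obstacle. The one mild subtlety worth flagging is that the estimate $\|\pi_{\W}(z)\| \lesssim \|z\|$ used above is a \emph{from-origin} bound, and must not be conflated with a global Lipschitz property of $\pi_{\W}$: between two general points, $d_{\He}(\pi_{\W}(p), \pi_{\W}(q))$ is only controlled by $d_{\He}(p,q)^{1/2}$, which is precisely the local $1/2$-H\"older continuity of $\pi_{\W}$ invoked elsewhere in the paper. The preliminary reduction to $x = 0$ via Lemma \ref{l:transl} is what lets a genuinely linear bound enter the argument at the right step.
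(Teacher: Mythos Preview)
Your argument is correct. The paper does not actually prove this lemma; it simply refers the reader to (44) in \cite{FS}, so you have supplied a self-contained proof where the paper defers to the literature. The ingredients you use---the reduction to $x=0$ via the bijectivity of $P_{x^{-1}}$ on $\W$, the from-origin bound $\|\pi_{\W}(z)\|\lesssim\|z\|$, the characterisation \eqref{intrili2} at the base point $0\in\Gamma$, and the elementary Heisenberg-product estimate $\|w\cdot\phi(w)\|\leq (L+1)\|w\|$---are exactly the natural ones, and your closing remark correctly flags that the linear projection bound is only available because the reduction has placed the base point at the origin.
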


For a proof of this lemma, see  (44) in \cite{FS}. We are now ready to state our definition of constant gradient $\beta$-number.

\begin{definition} Let $\Gamma = \{w \cdot \phi(w) : w \in \W \}$ be an intrinsic graph, where $\phi \colon \W \to
\V$ is an intrinsic Lipschitz function. Fix a point $x \in \Gamma$
and a radius $r > 0$. Then, for $L \geq 1$, define
\begin{displaymath} \beta_{\CG}(B(x,r)) := \beta_{\CG,\Gamma,L}(B(x,r)) := \inf_{\psi} \sup_{w \in \pi_{\W}(B(x,r) \cap \Gamma)} \frac{|\phi(w) - \psi(w)|}{r}. \end{displaymath}
The infimum is taken over all intrinsic $L$-Lipschitz functions
$\psi: \W \to \V$ which have
intrinsic gradient constant almost everywhere on the set $\pi_{\W}(B(x,b_L r))$. The class of such "admissible" functions $\psi$ will be denoted by
\begin{displaymath} \Adm(B(x,r)) := \Adm_{\CG,L}(B(x,r)). \end{displaymath}
If the Lipschitz constant $L$ is clear from the context, we omit the subscript $L$ for the constant $b_L$.
Note that $\|\nabla^{\phi}\phi\|_{\infty} \leq L$ for $\phi \in \Adm(B(x,r))$ by Lemma \ref{l:bound_grad}. \end{definition}
\begin{remark}
	\label{remk:bcg} Observe that
\begin{displaymath} |\phi(w)-\psi(w)| = \|\Psi(w)^{-1} \cdot \Phi(w)\| = d_{\He}(\Psi(w),\Phi(w)) \end{displaymath}
for $w \in \pi_{\W}(B(x,r) \cap \Gamma)$, where $\Psi$ and $\Phi$ are the graph mappings
\begin{displaymath} \Psi(w) := w \cdot \psi(w)  \quad \text{and} \quad \Phi(w) = w \cdot \phi(w)\in \Gamma. \end{displaymath}
Thus, if $\beta_{\CG}(B(x,r)) < \varepsilon$, there exists $\psi \in \Adm(B(x,r))$ with graph $\Gamma^{\psi}$ such that
\begin{displaymath} \sup_{y \in \Gamma \cap B(x,r)} \frac{\dist_{\He}(y,\Gamma^{\psi})}{r} \leq \varepsilon. \end{displaymath}
\end{remark}

The aim of this section is to prove the following weak geometric lemma for the constant gradient $\beta$-numbers:

\begin{thm}\label{t:CG-WGL} Let $\Gamma$ be an intrinsic
$L$-Lipschitz graph over a vertical subgroup.  Then
\begin{displaymath}
\int_{0}^{R} \int_{\Gamma \cap B(x,R)} \chi_{\{(y,s)\in \Gamma
\times \mathbb{R}_+:\; \beta_{\CG}(B(y,s))\geq \varepsilon\}}(y,s)
d\calH^{3}(y) \, \frac{ds}{s} \lesssim_{\varepsilon} R^{3}
\end{displaymath}
for any $\varepsilon > 0$, $x \in \Gamma$ and $R > 0$. Here $\beta_{\CG}(B(y,s)) := \beta_{\CG,\Gamma,L}(B(y,s))$.
\end{thm}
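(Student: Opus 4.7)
As in the discussion preceding \eqref{cubeWGL}, the integral formulation of the weak geometric lemma is equivalent to the discrete Carleson packing condition
\begin{displaymath}
\sum_{\{Q \in \Delta(Q_0) : \beta_{\CG}(B_Q) \geq \varepsilon\}} \calH^3(Q) \lesssim_{\varepsilon,L} \calH^3(Q_0), \qquad Q_0 \in \Delta,
\end{displaymath}
where $\Delta$ is a David cube system on $\Gamma$ and $B_Q = B(z_Q, 4\ell(Q))$. Let $\phi \colon \W \to \V$ be the parametrisation of $\Gamma$; we work with the intrinsic gradient $\nabla^\phi \phi \in L^\infty(\W)$ with $\|\nabla^\phi\phi\|_\infty \leq L$ (Lemma \ref{l:bound_grad}). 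For $Q \in \Delta$, set $a_Q := \fint_{\pi_\W(Q)} \nabla^\phi \phi \, d\calL^2$, the average of the intrinsic gradient over $\pi_\W(Q) \subset \W$.

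\textbf{Step 2: Key fluctuation lemma.} The heart of the argument is the following claim, which makes precise the philosophical statement from Section \ref{outline}: being far from a CG graph forces the intrinsic gradient to fluctuate. Precisely, I would prove that there exist $\delta = \delta(\varepsilon,L) > 0$ and $\kappa = \kappa(\varepsilon,L) > 0$ such that whenever $\beta_{\CG}(B_Q) \geq \varepsilon$, one can find a descendant $Q' \subset Q$ with $\ell(Q') \geq \kappa \, \ell(Q)$ and
\begin{displaymath}
|a_Q - a_{Q'}| \geq \delta.
\end{displaymath}

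\textbf{Step 3: Proof of the fluctuation lemma by compactness.} Suppose the claim fails. By the invariance of intrinsic Lipschitz graphs, $\beta_{\CG}$, and the intrinsic gradient under the affine maps $\tau_p \circ \delta_r$ (Lemma \ref{l:transl}), I can translate and dilate each putative counter-example so that $Q$ becomes a David cube of unit size based at the origin of a graph $\Gamma_n = \Gamma^{\phi_n}$ of an intrinsic $L$-Lipschitz function $\phi_n \colon \W \to \V$ with $\phi_n(0) = 0$. The failure then yields, for $\delta_n \to 0$, a sequence $\phi_n$ such that $\beta_{\CG,\Gamma_n,L}(B(0,1)) \geq \varepsilon$ while $|a_{Q^*} - a_{Q'}| < \delta_n$ for every descendant $Q' \subset Q^*$ of depth $\leq n$, where $Q^*$ is the unit cube. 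Since intrinsic $L$-Lipschitz functions vanishing at $0$ form a uniformly Euclidean-locally-Lipschitz family (by \cite[Corollary 1.4]{BSC} combined with $\|\nabla^{\phi_n}\phi_n\|_\infty \leq L$), Arzel\`a--Ascoli produces a subsequence converging locally uniformly on $\W$ to an intrinsic $L$-Lipschitz limit $\phi_\infty \colon \W \to \V$. Passing $\delta_n \to 0$ shows that every dyadic average of $\nabla^{\phi_\infty}\phi_\infty$ inside $\pi_\W(Q^*)$ equals $\lim a_{Q_n^*}$, so by Lebesgue differentiation $\nabla^{\phi_\infty}\phi_\infty$ is constant a.e.\ on $\pi_\W(Q^*)$, in particular on $\pi_\W(B(0,b_L))$ (at least after a harmless dilation). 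Hence $\phi_\infty \in \Adm_{\CG,L}(B(0,1))$. But then $\phi_\infty$ is an admissible competitor in the definition of $\beta_{\CG,\Gamma_n,L}(B(0,1))$, and Remark \ref{remk:bcg} gives
\begin{displaymath}
\varepsilon \leq \beta_{\CG,\Gamma_n,L}(B(0,1)) \leq \sup_{w \in \pi_\W(B(0,1)\cap \Gamma_n)} |\phi_n(w) - \phi_\infty(w)| \xrightarrow{n \to \infty} 0,
\end{displaymath}
a contradiction. Verifying the passage to the limit rigorously (weak convergence of the distributional identity \eqref{eq:distr}, together with the fact that the projections $\pi_\W(B(0,1) \cap \Gamma_n)$ stabilise thanks to Lemma \ref{eq:ball_inclusion}) is the main technical obstacle, but all ingredients are in place.

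\textbf{Step 4: Martingale summation.} With the fluctuation lemma, I conclude by the standard Jones-type $L^2$ argument. For any $Q_0 \in \Delta$, orthogonality of martingale differences gives
\begin{displaymath}
\sum_{Q \in \Delta(Q_0)} |a_Q - a_{Q^*}|^2 \calH^3(Q) \lesssim \int_{\pi_\W(Q_0)} |\nabla^\phi\phi|^2 \, d\calL^2 \lesssim L^2 \calH^3(Q_0),
\end{displaymath}
using $\calL^2(\pi_\W(Q)) \sim \calH^3(Q)$. For each bad cube $Q$, the descendant $Q'$ from Step 2 is reached from $Q$ through a chain of at most $N \sim \log(1/\kappa)$ parent--child pairs $(Q_k, Q_{k+1})$, and Cauchy--Schwarz yields $\delta^2 \leq N \sum_k |a_{Q_k} - a_{Q_{k+1}}|^2$. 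Each parent--child pair $(R,R^*)$ appears in at most $N+1$ such chains, so summing over bad cubes and bounding $\calH^3(Q_k) \sim \calH^3(Q)$,
\begin{displaymath}
\delta^2 \sum_{Q \text{ bad}} \calH^3(Q) \lesssim N^2 \sum_{R \in \Delta(Q_0)} |a_R - a_{R^*}|^2 \calH^3(R) \lesssim_{\varepsilon,L} \calH^3(Q_0),
\end{displaymath}
which is the desired Carleson packing condition. The hard part is Step 3; Steps 1 and 4 are routine Euclidean-style manipulations adapted to the Heisenberg David-cube framework.
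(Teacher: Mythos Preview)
Your overall strategy --- compactness to prove a fluctuation lemma, followed by an $L^2$/martingale summation --- is exactly the paper's. However, Step 3 as written has two real gaps that the paper has to work to close.

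First, the compactness citation is wrong. \cite[Corollary 1.4]{BSC} requires the intrinsic gradient $g$ to be Euclidean Lipschitz, not merely in $L^\infty$; it is invoked in the paper only for the \emph{constant}-gradient case (Proposition \ref{constantGradientGraphs}). General intrinsic $L$-Lipschitz functions are not uniformly Euclidean Lipschitz. The correct compactness statement is \cite[Proposition 3.10]{FS}, which the paper uses in Lemma \ref{eq:limit_beta}.

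Second, and more seriously, your passage to the limit in the gradient averages is not justified. You need
\[
\int_{\pi_\W(Q^\ast_n)} \nabla^{\phi_n}\phi_n \, d\calL^2 \longrightarrow \int_{\pi_\W(Q^\ast_\infty)} \nabla^{\phi_\infty}\phi_\infty \, d\calL^2,
\]
but the domains $\pi_\W(Q^\ast_n)$ depend on $n$ (David cubes are intrinsic to each $\Gamma_n$), and the distributional identity \eqref{eq:distr} only gives weak convergence against $\mathcal{C}^1_c$ test functions. Upgrading weak convergence to convergence of integrals over $n$-dependent rough sets is exactly what forces the paper to abandon David cubes in the limiting argument and work instead with balls $B(y,s)$ having \emph{thin boundary} with respect to $\calH^3|_{\Gamma}$ (Definition above Lemma \ref{l:A-small}). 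The thin-boundary condition is what makes Lemma \ref{l:conv_grad} go through: it controls the $\calL^2$-measure of the symmetric difference $\pi_\W(B(y,s)\cap \Gamma_n) \triangle \pi_\W(B(y,s)\cap \Gamma)$ via Lemma \ref{annulusControl2}. Without this, the step ``every dyadic average of $\nabla^{\phi_\infty}\phi_\infty$ equals $\lim a_{Q^\ast_n}$'' is unsupported. This is not a detail that can be absorbed into ``all ingredients are in place''; it is the reason Proposition \ref{dreamProp} is stated for balls with $A$-thin boundary rather than for David cubes, and why the paper needs the pre-dyadic-to-dyadic refinement (Lemma \ref{predyadic}) before running the $L^2$ argument.

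Your Step 4 is a legitimate variant of the paper's argument: the paper uses explicit one-dimensional subspaces $V_{A^j,B^j}$ and Bessel's inequality, while you use martingale-difference orthogonality on the filtration $\{\pi_\W(Q): Q\in\Delta_j\}$. Both work, and yours is arguably cleaner --- but it only becomes available after Step 3 is repaired, since the balls produced by the corrected fluctuation lemma are not David cubes and must be organised into a dyadic family before any orthogonality argument applies.
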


As explained in Section \ref{outline}, a large $\beta_{\CG}$ number implies that $\nabla^{\phi} \phi$ fluctuates locally. More precisely, Proposition \ref{dreamProp} will show that if $\beta_{\CG}(B(x,r)) \geq \varepsilon$ for some $x \in
\Gamma$ and $r
> 0$, then there exists another ball $B(y,s) \subset B(x,r)$ such $\dist_{\He}(y,\Gamma) \leq s/10$, $s \geq \delta_{\varepsilon,L} r$ and $|\E_{\pi_{\W}(B(x,r) \cap \Gamma)}\nabla^{\phi}\phi - \E_{\pi_{\W}(B(y,s) \cap \Gamma)}\nabla^{\phi}\phi| \geq \delta_{\varepsilon,L}$. In Section \ref{ss:WGL_CG}, we use this  to prove Theorem \ref{t:CG-WGL}.

\subsubsection{Auxiliary results} Before stating Proposition \ref{dreamProp}, we record a few lemmas. The first one gives an upper bound on how much $\E_{\pi_{\W}(B(x,r) \cap \Gamma)} \nabla^{\phi} \phi$ can change as a function of $r$. Here and in the following, we employ the notation
\begin{displaymath}
\E_A f = \frac{1}{\mathcal{L}^2(A)}\int_A f \; d \mathcal{L}^2
\end{displaymath}
for the average of a function $f$ over a set $A$ in the plane.

\begin{lemma}\label{annulusControl} Assume that $\Gamma$ is an intrinsic Lipschitz graph defined over $\W$, and that $f \in L^{\infty}(\W)$. Further, let $x \in \He$, and $0 < s_{1} \leq s_{2} < \infty$. Then,
\begin{displaymath} |\E_{\pi_{\W}(B(x,s_{1}) \cap \Gamma)} f - \E_{\pi_{\W}(B(x,s_{2}) \cap \Gamma)}f| \lesssim \frac{\calH^{3}(A(x,s_{1},s_{2}) \cap \Gamma)}{\calL^{2}(\pi_{\W}(B(x,s_{2}) \cap \Gamma))} \cdot \|f\|_{\infty}, \end{displaymath}
where $A(x,s_{1},s_{2})$ is the annulus $\{y \in \He : s_{1} \leq d_{\He}(x,y) \leq s_{2}\}$.
\end{lemma}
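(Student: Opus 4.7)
Write $A_{i} := \pi_{\W}(B(x,s_{i}) \cap \Gamma)$ for $i = 1,2$, and note that $A_{1} \subset A_{2}$ since $B(x,s_{1}) \subset B(x,s_{2})$. My plan is the standard one for comparing averages on nested sets: split the integral over $A_{2}$ into integrals over $A_{1}$ and $A_{2} \setminus A_{1}$, which gives
\begin{displaymath}
\E_{A_{1}} f - \E_{A_{2}} f = \left(\frac{1}{\calL^{2}(A_{1})} - \frac{1}{\calL^{2}(A_{2})}\right) \int_{A_{1}} f \, d\calL^{2} - \frac{1}{\calL^{2}(A_{2})} \int_{A_{2} \setminus A_{1}} f \, d\calL^{2}.
\end{displaymath}
Since $\calL^{2}(A_{2}) - \calL^{2}(A_{1}) = \calL^{2}(A_{2} \setminus A_{1})$, the trivial bound $|f| \leq \|f\|_{\infty}$ on each integral yields
\begin{displaymath}
|\E_{A_{1}} f - \E_{A_{2}} f| \leq 2\,\frac{\calL^{2}(A_{2} \setminus A_{1})}{\calL^{2}(A_{2})}\,\|f\|_{\infty}.
\end{displaymath}

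It then suffices to relate $\calL^{2}(A_{2} \setminus A_{1})$ to $\calH^{3}(A(x,s_{1},s_{2}) \cap \Gamma)$. Here I would invoke the injectivity of $\pi_{\W}|_{\Gamma}$, which was established in Remark \ref{remk:para}. Injectivity gives the set-theoretic identity
\begin{displaymath}
A_{2} \setminus A_{1} = \pi_{\W}\bigl((B(x,s_{2}) \cap \Gamma) \setminus (B(x,s_{1}) \cap \Gamma)\bigr) \subset \pi_{\W}(A(x,s_{1},s_{2}) \cap \Gamma),
\end{displaymath}
modulo a set of $\calL^{2}$-measure zero coming from the sphere $\{y : d_{\He}(x,y) = s_{1}\}$.

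Finally, I would apply Lemma \ref{lipschitzLemma} (the non-trivial fact that vertical projections do not increase $\calH^{3}$-measure) to the set $A(x,s_{1},s_{2}) \cap \Gamma \subset \He$ to conclude
\begin{displaymath}
\calL^{2}(A_{2} \setminus A_{1}) \leq \calL^{2}(\pi_{\W}(A(x,s_{1},s_{2}) \cap \Gamma)) \lesssim \calH^{3}(A(x,s_{1},s_{2}) \cap \Gamma),
\end{displaymath}
and combining this with the estimate for $|\E_{A_{1}} f - \E_{A_{2}} f|$ above yields the lemma. There is essentially no obstacle here: the only non-routine ingredient is Lemma \ref{lipschitzLemma}, which compensates for the fact that $\pi_{\W}$ is not Lipschitz, but it is already at our disposal from Section \ref{s:BPiLG}.
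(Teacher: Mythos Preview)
Your proof is correct and follows essentially the same route as the paper: the same algebraic comparison of averages yielding the factor $\calL^{2}(A_{2}\setminus A_{1})/\calL^{2}(A_{2})$, the same use of injectivity of $\pi_{\W}|_{\Gamma}$ (Remark \ref{remk:para}) to identify $A_{2}\setminus A_{1}$ with a subset of $\pi_{\W}(A(x,s_{1},s_{2})\cap\Gamma)$, and the same appeal to Lemma \ref{lipschitzLemma}. The caveat about the sphere is unnecessary, since the balls are closed and the open annulus $\{s_{1}<d_{\He}(x,y)\leq s_{2}\}$ is already contained in the closed annulus $A(x,s_{1},s_{2})$.
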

\begin{proof} Write $B_{1} := \pi_{\W}(B(x,s_{1}) \cap \Gamma)$ and $B_{2} := \pi_{\W}(B(x,s_{2}) \cap \Gamma)$. In this proof, let $|U| := \calL^{2}(U)$ for $U \subset \W$. Then,
\begin{align*} |\E_{B_{1}} f - \E_{B_{2}}f| & = \frac{1}{|B_{1}|} \left| \int_{B_{1}} f\,d\mathcal{L}^2 - \frac{|B_{1}|}{|B_{2}|} \int_{B_{2}} f\,d\mathcal{L}^2 \right|\\
& = \frac{1}{|B_{1}|} \left|\left(1 - \frac{|B_{1}|}{|B_{2}|} \right) \int_{B_{1}} f\,d\mathcal{L}^2 - \frac{|B_{1}|}{|B_{2}|} \int_{B_{2} \setminus B_{1}} f \,d\mathcal{L}^2\right|\\
& \leq \frac{|B_{2} \setminus B_{1}|}{|B_{2}|} \cdot \|f\|_{\infty} + \frac{|B_{2} \setminus B_{1}|}{|B_{2}|} \cdot \|f\|_{\infty}  \end{align*}
by the triangle inequality. Next, we observe that
\begin{displaymath} B_{2} \setminus B_{1} = \pi_{\W}([B(x,s_{2}) \setminus B(x,s_{1})] \cap \Gamma) \subset \pi_{\W}(A(x,s_{1},s_{2}) \cap \Gamma) \end{displaymath}
by the injectivity of $\pi_{\W}$ restricted to $\Gamma$. Finally, we use the fact that $\calH^{3}(\pi_{\W}(A)) \leq C\calH^{3}(A)$, recall Lemma \ref{lipschitzLemma}.
\end{proof}

It is desirable to have quantitative control on the
upper bound appearing in Lemma \ref{annulusControl}. This motivates the
following definition:

\begin{definition} Let $(X,d,\mu)$ be a metric measure space. A ball $B(x,r) \subset X$ has \emph{$A$-thin boundary} (with respect to $\mu$) if the following holds:
\begin{displaymath} \mu\left(B(x,2r) \cap A(x,(1 - \lambda)r,(1 + \lambda)r)\right) \leq A\,\lambda \,\mu(B(x,2r)), \qquad \lambda > 0. \end{displaymath}
Here $A(x,s,t) := \{y : s \leq d(x,y) \leq t\}$.
\end{definition}

Balls with thin boundary are abundant:

\begin{lemma}\label{l:A-small}
 Let $(X,d,\mu)$ be metric measure space. For any $0 < \delta < 1/4$, there exists a constant $A = A_{\delta} < \infty$ with the following property: for any ball $B(x,r)$ with $x \in X$ and $r > 0$, there exists a radius $s \in [r,(1 + \delta)r]$ such that $B(x,s)$ has $A$-thin boundary.
\end{lemma}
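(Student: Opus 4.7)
The plan is to reduce the problem to a one-dimensional statement via the pushforward of $\mu|_{B(x,2r)}$ under the distance function $d(x,\cdot)$, and then invoke the weak-type $(1,1)$ bound for the Hardy--Littlewood maximal function on $\R$.

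First I would set $\nu := (d(x,\cdot))_{\ast}\bigl(\mu|_{B(x,2r)}\bigr)$, a finite Borel measure on $[0,2r]$ with total mass $\mu(B(x,2r))$ satisfying
\begin{displaymath}
\nu([a,b]) = \mu(B(x,2r)\cap A(x,a,b)), \qquad 0 \leq a \leq b \leq 2r.
\end{displaymath}
Let $M\nu(s) := \sup_{\eta>0}\nu((s-\eta,s+\eta))/(2\eta)$ denote the (one-dimensional) centered maximal function. The weak-type $(1,1)$ bound gives $\mathcal{L}^{1}(\{s : M\nu(s) > t\}) \leq C\mu(B(x,2r))/t$ for some absolute constant $C$. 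Choosing $t := 2C\mu(B(x,2r))/(\delta r)$ makes this exceptional set have Lebesgue measure strictly less than $\delta r$, and since $[r,(1+\delta)r]$ has length $\delta r$, there must exist some $s \in [r,(1+\delta)r]$ with $M\nu(s) \leq t$, that is
\begin{displaymath}
\nu((s-\eta,s+\eta)) \leq 2\eta t = \frac{4C\eta}{\delta r}\mu(B(x,2r)), \qquad \eta > 0.
\end{displaymath}

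The remaining step is to translate this into the thin boundary condition for $B(x,s)$, splitting according to whether the annulus $A(x,(1-\lambda)s,(1+\lambda)s)$ is contained in $B(x,2r)$. In the regime $(1+\lambda)s \leq 2r$, the annulus lies inside $B(x,2r)$, so the previous display applied with $\eta=\lambda s$, combined with $s \leq (1+\delta)r \leq \tfrac{5}{4}r$ and the trivial bound $\mu(B(x,2r)) \leq \mu(B(x,2s))$, yields
\begin{displaymath}
\mu(B(x,2s)\cap A(x,(1-\lambda)s,(1+\lambda)s)) = \nu((s-\lambda s, s+\lambda s)) \lesssim \frac{\lambda}{\delta}\mu(B(x,2s)).
\end{displaymath}
In the complementary regime $(1+\lambda)s > 2r$, one has $\lambda > 2r/s - 1 \geq (1-\delta)/(1+\delta) \geq 3/5$ because $\delta < 1/4$, and the crude bound $\mu(B(x,2s)\cap A) \leq \mu(B(x,2s)) \leq \tfrac{5}{3}\lambda\mu(B(x,2s))$ suffices. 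Combining both regimes gives the lemma with $A_{\delta} = O(1/\delta)$.

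I do not anticipate any real obstacle here: the only point requiring a little care is the passage from the additive width $\eta$ of an interval to the multiplicative width $\lambda s$ of an annulus, but the estimate $s \sim r$ (valid for $s \in [r,(1+\delta)r]$ since $\delta < 1/4$) makes this translation harmless.
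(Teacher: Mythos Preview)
Your proposal is correct and follows essentially the same route as the paper: push forward $\mu|_{B(x,2r)}$ by $d(x,\cdot)$, apply the weak-type $(1,1)$ bound for the Hardy--Littlewood maximal function on $\R$ to find a good radius $s\in[r,(1+\delta)r]$, and then split into the two regimes (annulus inside $B(x,2r)$ versus $\lambda$ bounded below) exactly as you do. The only cosmetic difference is that the paper phrases the threshold as $\lambda\lessgtr 1/2$ rather than your $(1+\lambda)s\lessgtr 2r$, but the underlying estimate and the resulting constant $A_\delta=O(1/\delta)$ are the same.
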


\begin{proof} For a fixed $x\in X$, let $\pi \colon X \to [0,\infty)$ be the mapping $\pi(y) := d(x,y)$, and consider the push-forward measure $\nu := \pi_{\sharp}[\mu|_{B(x,2r)}]$. Let $M$ be the usual centred Hardy-Littlewood maximal operator on $\R$, namely
\begin{displaymath} Mf(s) := \sup_{t > 0} \frac{1}{2t}\int_{B(s,t)} |f(y)| \, dy. \end{displaymath}
We extend the definition from $f$ to $\nu$ in the obvious way. It is well-known (see for instance \cite[Theorem 2.19]{M}) that $M$ is weakly bounded in the sense that
\begin{displaymath} \calH^{1}(\{s : M\nu(s) > A\}) \lesssim \frac{\|\nu\|}{A} = \frac{\mu(B(x,2r))}{A}. \end{displaymath}
In particular,
\begin{displaymath} \calH^{1}\left(\left\{s : M\nu(s) > A \frac{\mu(B(2x,r))}{r}\right\} \right) \lesssim \frac{r}{A}. \end{displaymath}
For $A = A_{\delta} \geq 1$ large enough, this implies that there is some $s \in [r,(1 + \delta)r]$ such that
\begin{displaymath} \sup_{\lambda > 0} \frac{\nu(B(s,\lambda s))}{2\lambda s} = M\nu(s) \leq A\frac{\mu(B(2x,r))}{r}. \end{displaymath}
Recalling the definition of $\nu$ this means precisely that
\begin{displaymath} \mu(A(x,(1 - \lambda)s,(1 + \lambda)s)) \leq \frac{As}{r} \cdot \mu(B(x,2r)) \leq 2A\mu(B(x,2r)) \end{displaymath}
for all such $\lambda$ that $A(x,(1 - \lambda)s,(1 + \lambda)s)
\subset B(x,2r)$. Since $\delta < 1/4$, this covers all $0 <
\lambda < 1/2$. For $\lambda > 1/2$, the thin boundaries condition
is trivial.  \end{proof}

\subsubsection{Fluctuation of the intrinsic gradient}\label{ss:fluctuation} We are now ready to state our main technical milestone on the way to Theorem \ref{t:CG-WGL}:

\begin{proposition}\label{dreamProp} For every $\varepsilon > 0$ and $L \geq 1$ there exist constants $A = A_{\varepsilon,L} \geq 1$ and $\delta = \delta_{\varepsilon,L} > 0$ with the following property. Assume that $\phi \colon \W \to \V$ is an intrinsic $L$-Lipschitz function with graph $\Gamma$. Assume that $x \in \Gamma$ and $r > 0$ are such that
\begin{displaymath} \beta_{\CG}(B(x,r)) := \beta_{\CG,\Gamma,L}(B(x,r)) \geq \varepsilon, \end{displaymath}
Then, there exists a ball $B(y,s) \subset B(x,r)$ with $A$-thin boundary (with respect to $\calH^{3}|_{\Gamma}$) such that $s \geq \delta r$, $\dist_{\He}(y,\Gamma) \leq s/10$, and
\begin{equation}\label{form29} |\E_{\pi_{\W}(B(y,s) \cap \Gamma)} \nabla^{\phi} \phi - \E_{\pi_{\W}(B(x,r) \cap \Gamma)} \nabla^{\phi} \phi| \geq \delta > 0. \end{equation}
In particular,
\begin{equation}\label{lastStatement} \calL^{2}(\pi_{\W}(B(y,s) \cap \Gamma)) \gtrsim_{\varepsilon,L} r^{3}. \end{equation}
\end{proposition}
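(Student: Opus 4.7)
The plan is to argue by contradiction and compactness. Suppose the proposition fails for some $\varepsilon_0 > 0$ and $L_0 \geq 1$. Then, taking $A_n \to \infty$ and $\delta_n \to 0^+$, I can produce intrinsic $L_0$-Lipschitz functions $\phi_n: \W \to \V$ with graphs $\Gamma_n$, base points $x_n \in \Gamma_n$, and radii $r_n > 0$ with $\beta_{\CG, \Gamma_n, L_0}(B(x_n, r_n)) \geq \varepsilon_0$, yet every ball $B(y, s) \subset B(x_n, r_n)$ satisfying $s \geq \delta_n r_n$, $\dist_\He(y, \Gamma_n) \leq s/10$, and the $A_n$-thin boundary condition obeys
\begin{displaymath}
|\E_{\pi_\W(B(y,s)\cap \Gamma_n)} \nabla^{\phi_n} \phi_n - \E_{\pi_\W(B(x_n,r_n)\cap\Gamma_n)} \nabla^{\phi_n}\phi_n| < \delta_n.
\end{displaymath}
By left-translation and dilation invariance (Lemma \ref{l:transl}), I reduce to $x_n = 0 \in \Gamma_n$ and $r_n = 1$, keeping $\phi_n$ intrinsic $L_0$-Lipschitz and entire on $\W$.

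Using the local Euclidean-H\"older control on intrinsic Lipschitz functions from \cite[Proposition 3.4]{FSS}, Arzela--Ascoli yields a subsequence $\phi_n \to \phi_\infty$ uniformly on compacta, with $\phi_\infty$ again intrinsic $L_0$-Lipschitz and $0 \in \Gamma_\infty$. Testing the definition of $\beta_{\CG, \Gamma_n}(B(0,1))$ against any fixed admissible $\psi$ and passing to the limit gives $\beta_{\CG, \Gamma_\infty, L_0}(B(0, 1)) \geq \varepsilon_0$. Using the distributional identity \eqref{eq:distr} together with the uniform convergence of $\phi_n$ and $\phi_n^2$, I further obtain $\nabla^{\phi_n} \phi_n \rightharpoonup \nabla^{\phi_\infty} \phi_\infty$ weakly-$*$ in $L^\infty_{\mathrm{loc}}(\W)$.

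The key technical step will be transferring the no-fluctuation hypothesis to the limit. Since $\pi_\W(B(y, s) \cap \Gamma_n) = \{w \in \W : w \cdot \phi_n(w) \in B(y, s)\}$, uniform convergence implies that these sets converge in symmetric difference to $\pi_\W(B(y, s) \cap \Gamma_\infty)$, provided $B(y, s)$ has thin boundary relative to $\calH^3|_{\Gamma_\infty}$; an abundance of such radii is furnished by Lemma \ref{l:A-small}. Combined with weak-$*$ convergence of the gradients, the averages pass to the limit, and hence every admissible sub-ball $B(y, s) \subset B(0, 1)$ of $\Gamma_\infty$ satisfies $\E_{\pi_\W(B(y, s) \cap \Gamma_\infty)} \nabla^{\phi_\infty}\phi_\infty = c$ for one common constant $c \in [-L_0, L_0]$. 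Lebesgue differentiation through shrinking families of such balls then yields $\nabla^{\phi_\infty}\phi_\infty \equiv c$ almost everywhere on $\pi_\W(B(0, b_{L_0}) \cap \Gamma_\infty)$.

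The main obstacle will be the final step: promoting this local constancy of $\nabla^{\phi_\infty}\phi_\infty$ to (local) affineness of $\phi_\infty$, so that $\psi(y, t) := cy + \phi_\infty(0, 0)$ becomes an element of $\Adm(B(0, 1))$ approximating $\phi_\infty$ arbitrarily well on $\pi_\W(B(0, 1) \cap \Gamma_\infty)$, contradicting $\beta_{\CG, \Gamma_\infty, L_0}(B(0, 1)) \geq \varepsilon_0$. As Section \ref{outline} warns, constant gradient alone does not force affineness of non-smooth solutions to the inviscid Burgers equation, so Proposition \ref{constantGradientGraphs} cannot be applied as a black box. I plan to adapt its characteristic-curve proof locally: along each $\gamma_t(s) = (s, \tfrac{c}{2}s^2 + \phi_\infty(0, t) s + t)$ that stays inside the constancy region one has $\phi_\infty(\gamma_t(s)) = cs + \phi_\infty(0, t)$, and then the Besicovitch-type Lemma \ref{projectionLemma} and the avoidance Lemma \ref{avoidance} should force $t \mapsto \phi_\infty(0, t)$ to be constant on a set of full measure in the relevant window. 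Finally, the assertion \eqref{lastStatement} is immediate from Lemma \ref{eq:ball_inclusion}: $\calL^2(\pi_\W(B(y, s) \cap \Gamma)) \geq \calL^2(\pi_\W(B(y, b_{L_0} s))) \gtrsim (b_{L_0} s)^3 \gtrsim_{\varepsilon_0, L_0} r^3$, using $s \geq \delta r$.
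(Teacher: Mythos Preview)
Your overall strategy matches the paper's: contradiction, normalization to $B(0,1)$, compactness to extract a limit $\phi_\infty$, passage of $\beta_{\CG}\geq\varepsilon_0$ and of the gradient averages to the limit, and Lebesgue differentiation to conclude that $\nabla^{\phi_\infty}\phi_\infty\equiv c$ a.e.\ on $\pi_\W(B(0,b_{L_0}))$. Up to that point the proposal is essentially the paper's argument.

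The genuine gap is your final step. You propose to ``promote local constancy of $\nabla^{\phi_\infty}\phi_\infty$ to local affineness of $\phi_\infty$'' and then use the affine $\psi(y,t)=cy+\phi_\infty(0,0)$ as the competitor. This cannot work: the paper's own example \eqref{exFunction} (and the discussion around it) shows that an intrinsic Lipschitz function can have $\nabla^{\phi}\phi\equiv 0$ on a bounded domain without being affine there. Your plan to ``adapt the characteristic-curve proof locally'' runs into exactly this obstruction; on a bounded region the characteristics $\gamma_t$ need not all intersect before leaving the domain, so the argument forcing $t\mapsto\phi_\infty(0,t)$ to be constant breaks down. In short, local constant gradient does \emph{not} imply local affineness, and the obstacle you flag is not just technical but genuine.

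The fix is much simpler and is what the paper does: you do not need affineness at all. Reread the definition of $\Adm_{\CG,L}(B(0,1))$: an admissible competitor is any intrinsic $L$-Lipschitz function $\psi:\W\to\V$ whose intrinsic gradient is constant a.e.\ on $\pi_\W(B(0,b_L))$. Once you have shown $\nabla^{\phi_\infty}\phi_\infty\equiv c$ a.e.\ on $\pi_\W(B(0,b_{L_0}))$, the function $\phi_\infty$ \emph{itself} lies in $\Adm_{\CG,L_0}(B(0,1))$. Taking $\psi=\phi_\infty$ in the definition of $\beta_{\CG,\Gamma_\infty,L_0}(B(0,1))$ gives the value $0$, contradicting $\beta_{\CG,\Gamma_\infty,L_0}(B(0,1))\geq\varepsilon_0$. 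That is the entire conclusion (the paper's Lemma \ref{l:constancy} and the two-line proof following it).
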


\begin{remark}\label{remark0} Observe that \eqref{lastStatement} is an immediate consequence of $y$ being contained in the $s/10$-neighborhood of $\Gamma$; this implies that $B(y,s) \cap \Gamma$ contains a set of the form $B(y',s') \cap \Gamma$ with $y' \in \Gamma$ and $s' \sim s$. Further, for such balls $B(y',s')$ centred on $\Gamma$, we can apply Lemma  \ref{eq:ball_inclusion} to find
\begin{displaymath} \pi_{\W}(B(y',s') \cap \Gamma) \supset \pi_{\W}(B(y',s'')) \end{displaymath}
for some $s'' \sim_{L} s'$.
Finally,
\begin{displaymath} \calL^{2}(\pi_{\W}(B(y',s''))) = c(s'')^{3} \end{displaymath}
with $c = \calL^{2}(\pi_{\W}(B(0,1)) > 0$, recalling that the mapping $P_{p} \colon w \mapsto \pi_{\W}(p \cdot w)$, has unit Jacobian for any $p \in \He$, see \eqref{jacobian}. See also  \cite[Lemma 3.14]{FSS}. This concludes the proof of \eqref{lastStatement}

We also explain here, why a set $E \subset \He$ with BPiLG has big vertical projections. This is an immediate consequence of Lemma \ref{eq:ball_inclusion} if $E$ is an intrinsic Lipschitz graph over an (entire) vertical plane $\W$. In the general case, one can easily deduce the BVP property from the area formula for intrinsic Lipschitz functions (Theorem 1.6 in \cite{CMPS}). 
\end{remark}

In the following subsections, we proceed with proving the
remaining statements of Proposition \ref{dreamProp}. The outline
is the following:
\begin{itemize}
\item[\ref{ss:blowup}] We formulate a counter assumption to the
main claim in Proposition \ref{dreamProp}. Assuming the validity
of this assumption, we find $L\geq 1$ and a sequence of intrinsic
$L$-Lipschitz functions $(\phi_j)_j$ and associated graphs
$(\Gamma_j)_j$, such that $\Gamma_j$ has large $\beta_{\CG}$ number
in a ball $B(x_j,r_j)$ centred on $\Gamma_j$, yet
$\nabla^{\phi_j}\phi_j$ does not fluctuate much in that ball. We
use a blow-up procedure to normalize so that we may assume
$B(x_j,r_j)=B(0,1)$ for all $j$. \item[\ref{ss:limiting
procedure}] We show that a subsequence of $(\phi_j)_j$ converges
locally uniformly to an intrinsic $L$-Lipschitz function $\phi$
with graph $\Gamma$ such that, roughly speaking, \\ (i)
$\beta_{\CG}(B(0,1))$ is large, \\ (ii)
$\mathbb{E}_{\pi_{\W}(B(y,s)\cap \Gamma)}\nabla^{\phi}\phi$ is
independent of $B(y,s)\subset B(0,1)$, $y\in \Gamma$.
\item[\ref{ss:conclusion}] We show that the conditions (i)
and (ii) are incompatible, which concludes the proof of
Proposition \ref{dreamProp}.
\end{itemize}

\subsubsection{The counter assumption} \label{ss:blowup}

Denote by $E(\delta)$ the metric $\delta$-neighborhood of a set $E$.
Given an intrinsic Lipschitz graph $\Gamma$, a ball $B(x,r)$ with
$x \in \Gamma$, and $j \in \N$, define the following collection of
"good" balls $\calG_{j} = \calG_{j}(\Gamma,B(x,r))$. A ball
$B(y,sr) \subset B(x,r)$ is in $\calG_{j}$, if
\begin{itemize}
\item[(a)] $y \in \Gamma(sr/10)$, \item[(b)] $s \geq 2^{-j}$, and
\item[(c)] $B(y,sr)$ has $2^{j}$-thin boundary with respect to
$\calH^{3}|_{\Gamma}$.
\end{itemize}
Then, Proposition \ref{dreamProp} follows, if we can prove the
next statement:

\begin{claim}\label{eq:A}
For every $\varepsilon > 0$, $L \geq 1$, there exists  $j =
j_{\varepsilon,L} \in \N$ with the following property. If $\Gamma$
is any intrinsic $L$-Lipschitz graph and $B(x,r)$ is centred on
$\Gamma$ with $\beta_{\CG}(B(x,r)) \geq \varepsilon$, then there
exists a ball $B = B(y,sr) \in \calG_{j}(\Gamma,B(x,r))$ such that
\begin{displaymath} |\E_{\pi_{\W}(B \cap \Gamma)} \nabla^{\phi} \phi - \E_{\pi_{\W}(B(x,r) \cap \Gamma)} \nabla^{\phi} \phi| > \tfrac{1}{j}. \end{displaymath}
\end{claim}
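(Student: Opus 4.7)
The plan is a blow-up and compactness argument. Suppose the claim fails for some $\varepsilon > 0$ and $L \geq 1$. For each $j \in \N$ there is then an intrinsic $L$-Lipschitz graph $\Gamma_{j} = \Gamma^{\phi_{j}}$ over some vertical subgroup $\W_{j}$ and a ball $B(x_{j},r_{j})$ centred on $\Gamma_{j}$ with $\beta_{\CG,\Gamma_{j},L}(B(x_{j},r_{j})) \geq \varepsilon$, yet
\begin{displaymath}
\bigl|\E_{\pi_{\W_{j}}(B \cap \Gamma_{j})} \nabla^{\phi_{j}}\phi_{j} - \E_{\pi_{\W_{j}}(B(x_{j},r_{j})\cap\Gamma_{j})} \nabla^{\phi_{j}}\phi_{j}\bigr| \leq 1/j
\end{displaymath}
for every $B \in \calG_{j}(\Gamma_{j},B(x_{j},r_{j}))$. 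The class of intrinsic $L$-Lipschitz graphs, the $\beta_{\CG}$-number, the family $\calG_{j}$, and the gradient-fluctuation condition are all invariant under rotations $R_{\theta}$, left translations, and anisotropic dilations (Remark \ref{rotationalInvariance}, Lemma \ref{l:transl}). I may therefore assume that $\W_{j}$ equals a fixed vertical plane $\W$, that $x_{j} = 0 \in \Gamma_{j}$, and that $r_{j} = 1$.

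Each $\Gamma_{j}$ is $3$-Ahlfors-David regular with constants depending only on $L$ and passes through the origin, so a diagonal subsequence (still denoted $\Gamma_{j}$) converges in the Hausdorff sense, uniformly on compact subsets of $\He$, to a closed set $\Gamma$. The intrinsic cone condition \eqref{coneCondition} is closed under Hausdorff convergence, so $\Gamma$ is itself an intrinsic $L$-Lipschitz graph over $\W$, parametrised by some $\phi \colon \W \to \V$; using the injectivity of $\pi_{\W}|_{\Gamma_{j}}$ and the intrinsic Lipschitz bound \eqref{intrili2}, the $\phi_{j}$ are locally uniformly bounded and converge locally uniformly to $\phi$. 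By the distributional identity \eqref{eq:distr} and the uniform bound $\|\nabla^{\phi_{j}}\phi_{j}\|_{\infty} \leq L$ from Lemma \ref{l:bound_grad}, the sequence $\nabla^{\phi_{j}}\phi_{j}$ converges weakly-$*$ in $L^{\infty}_{\mathrm{loc}}(\W)$ to $\nabla^{\phi}\phi$.

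Next I transfer the two defining properties to the limit. For the $\beta_{\CG}$ lower bound, if some $\psi \in \Adm_{\CG,L}(B(0,1))$ satisfied $\sup_{\pi_{\W}(B(0,1)\cap\Gamma)}|\phi - \psi| < \varepsilon/2$, then locally uniform convergence $\phi_{j} \to \phi$ combined with Hausdorff convergence of the domains $\pi_{\W}(B(0,1)\cap\Gamma_{j}) \to \pi_{\W}(B(0,1)\cap\Gamma)$ would force the same $\psi$ to witness $\sup_{\pi_{\W}(B(0,1)\cap\Gamma_{j})}|\phi_{j} - \psi| < \varepsilon$ for large $j$, contradicting $\beta_{\CG}(B(0,1)) \geq \varepsilon$. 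Hence $\beta_{\CG,\Gamma,L}(B(0,1)) \geq \varepsilon/2$. For the limit version of the averaged fluctuation bound, fix any $y \in \Gamma$ and $s > 0$ with $B(y,s) \subset B(0,1)$. Choose $y_{j} \in \Gamma_{j}$ with $y_{j} \to y$, and use Lemma \ref{l:A-small} to perturb the radius to some $s_{j} \to s$ so that $B(y_{j},s_{j})$ has $2^{j}$-thin boundary with respect to $\calH^{3}|_{\Gamma_{j}}$. For $j$ large, $B(y_{j},s_{j})$ then belongs to $\calG_{j}(\Gamma_{j},B(0,1))$, so the prelimit gives a fluctuation at most $1/j$. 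Weak-$*$ convergence of $\nabla^{\phi_{j}}\phi_{j}$, combined with convergence $\pi_{\W}(B(y_{j},s_{j})\cap\Gamma_{j}) \to \pi_{\W}(B(y,s)\cap\Gamma)$ in symmetric difference, yields
\begin{displaymath}
\E_{\pi_{\W}(B(y,s)\cap\Gamma)}\nabla^{\phi}\phi = \E_{\pi_{\W}(B(0,1)\cap\Gamma)}\nabla^{\phi}\phi =: c.
\end{displaymath}

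The contradiction follows immediately. As $s \to 0$, the sets $\pi_{\W}(B(y,s)\cap\Gamma)$ shrink to $\pi_{\W}(y)$ with $\calL^{2}$-measure $\sim s^{3}$, so the Lebesgue differentiation theorem upgrades the previous display to $\nabla^{\phi}\phi = c$ almost everywhere on $\pi_{\W}(B(0,1)\cap\Gamma)$. Since $0 \in \Gamma$, Lemma \ref{eq:ball_inclusion} yields $\pi_{\W}(B(0,b_{L})) \subset \pi_{\W}(B(0,1)\cap\Gamma)$, so $\phi$ itself lies in $\Adm_{\CG,L}(B(0,1))$. Taking $\psi = \phi$ in the definition of $\beta_{\CG,\Gamma,L}(B(0,1))$ gives $\beta_{\CG,\Gamma,L}(B(0,1)) = 0$, contradicting the lower bound just established. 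The main obstacle in carrying out this plan is the third step: showing that the weak-$*$ convergence of intrinsic gradients really transports the averaged fluctuation bound to the limit through the moving domains $\pi_{\W}(B(y_{j},s_{j})\cap\Gamma_{j})$. This is precisely why the definition of $\calG_{j}$ carries both a lower bound on radii and a thin-boundary hypothesis: without them, averages of $\nabla^{\phi_{j}}\phi_{j}$ over sets of comparable measure could vary wildly under small perturbations of $(y_{j},s_{j})$, and the limiting equality could not be read off from the prelimit inequality.
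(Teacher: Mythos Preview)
Your plan is correct and follows essentially the same route as the paper's proof of Proposition \ref{dreamProp} (Sections \ref{ss:blowup}--\ref{ss:conclusion}): normalise to $B(0,1)$ via translation/dilation, extract a locally uniform limit $\phi$ with $\beta_{\CG}(B(0,1))\geq\varepsilon$, pass the vanishing fluctuation through the moving domains using the thin-boundary condition, and conclude $\phi\in\Adm_{\CG,L}(B(0,1))$ by Lebesgue differentiation. Two small points where the paper is more careful than your sketch: the sets $\pi_{\W}(B(y,s)\cap\Gamma)$ are not Euclidean balls in $\W$, so the differentiation step is carried out in $(\W,d_{\Gamma},\calL^{2})$ with the graph distance (Lemma \ref{l:constancy}); and in the limiting step the paper keeps the centre $y\in\Gamma$ fixed (rather than using moving centres $y_{j}$), restricts to radii $s$ with $\calH^{3}(\partial B(y,s)\cap\Gamma)=0$, and controls the symmetric difference via Lemma \ref{annulusControl2}.
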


If the claim fails, then it also fails with $B(x,r) = B(0,1)$. This reduction is the content of the next lemma:

\begin{lemma}\label{l:normalization}
If Claim \ref{eq:A} fails, then there there exist
$\varepsilon
> 0$ and $L \geq 1$ such that for every $j \in \N$ we can find an
intrinsic $L$-Lipschitz graph ${\Gamma}_{j}$, parametrised by
$\phi_{j} \colon \W \to \V$ such that
$$\beta_{\CG,\Gamma_{j}}(B(0,1)) \geq \varepsilon,$$
 yet
\begin{equation}
\label{form8_blowup} \sup_{B \in \calG_{j}(\Gamma_{j},B(0,1))}
|\E_{\pi_{\W}(B \cap \Gamma_j)} \nabla^{\phi_{j}} \phi_{j} -
\E_{\pi_{\W}(B(0,1)\cap \Gamma_j)} \nabla^{\phi_{j}} \phi_{j}|
\leq \tfrac{1}{j}. \end{equation}
\end{lemma}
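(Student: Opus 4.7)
The plan is a standard blow-up normalization: unwind the failure of Claim \ref{eq:A} to obtain a sequence of "bad" pairs $(\tilde{\Gamma}_{j},B(\tilde{x}_{j},\tilde{r}_{j}))$, and then apply a translation--dilation--rotation map that sends the ball to $B(0,1)$ and the parametrising vertical subgroup to the prescribed $\W$. All the substance lies in checking that the three quantities entering the conclusion ($\beta_{\CG}$, the family $\calG_{j}$, and the gradient-average oscillation) are invariant under this normalization; none of these verifications uses anything beyond Lemma \ref{l:transl} and Remark \ref{rotationalInvariance}.

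Concretely, the negation of Claim \ref{eq:A} furnishes $\varepsilon > 0$, $L \geq 1$, and, for each $j \in \N$, an intrinsic $L$-Lipschitz graph $\tilde{\Gamma}_{j}$ (over some vertical subgroup $\tilde{\W}_{j}$, with parametrisation $\tilde{\phi}_{j}$) and a ball $B(\tilde{x}_{j},\tilde{r}_{j})$ centred on $\tilde{\Gamma}_{j}$ with $\beta_{\CG,\tilde{\Gamma}_{j},L}(B(\tilde{x}_{j},\tilde{r}_{j})) \geq \varepsilon$ but with gradient-averages essentially constant over all good sub-balls. I would then set
\begin{displaymath}
\Gamma_{j} := R_{\theta_{j}}\bigl(\delta_{1/\tilde{r}_{j}}(\tilde{x}_{j}^{-1} \cdot \tilde{\Gamma}_{j})\bigr),
\end{displaymath}
where $R_{\theta_{j}}$ is a rotation about the $t$-axis chosen so that $\Gamma_{j}$ is a graph over $\W$ (this is possible because left translation preserves the parametrising vertical subgroup by Lemma \ref{l:transl}, dilation preserves subgroups, and rotations move vertical subgroups among themselves by Remark \ref{rotationalInvariance}). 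Since all three operations preserve the intrinsic $L$-Lipschitz class, $\Gamma_{j}$ is the intrinsic $L$-Lipschitz graph of some $\phi_{j} \colon \W \to \V$ passing through the origin.

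Writing $T_{j}(p) := R_{\theta_{j}}(\delta_{1/\tilde{r}_{j}}(\tilde{x}_{j}^{-1} \cdot p))$ for the composite normalization, the remaining task is threefold. First, $T_{j}$ is an isometry-up-to-dilation sending $B(\tilde{x}_{j},\tilde{r}_{j})$ to $B(0,1)$ and $\tilde{\Gamma}_{j}$ to $\Gamma_{j}$, and it scales $\calH^{3}|_{\tilde{\Gamma}_{j}}$ by $\tilde{r}_{j}^{-3}$; consequently $T_{j}$ bijects $\calG_{j}(\tilde{\Gamma}_{j},B(\tilde{x}_{j},\tilde{r}_{j}))$ onto $\calG_{j}(\Gamma_{j},B(0,1))$, since each of the defining conditions (a)--(c) is scale-invariant. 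Second, if $\tilde{\psi}$ is admissible for $\beta_{\CG,\tilde{\Gamma}_{j},L}(B(\tilde{x}_{j},\tilde{r}_{j}))$, then its image under $T_{j}$ gives an admissible $\psi$ for $\beta_{\CG,\Gamma_{j},L}(B(0,1))$: the constant-gradient property is preserved by the chain rules in Lemma \ref{l:transl} (composition with $P_{q^{-1}}$ or $\delta_{r^{-1}}$ sends an a.e.\ constant function to an a.e.\ constant function), and the ratio $|\phi-\psi|/r$ is scale-invariant, yielding $\beta_{\CG,\Gamma_{j},L}(B(0,1)) \geq \varepsilon$. Third, the same chain rules (together with $\det DP_{q} = 1$, see \eqref{jacobian}) show that for any $B \in \calG_{j}(\Gamma_{j},B(0,1))$ with preimage $\tilde{B} = T_{j}^{-1}(B)$,
\begin{displaymath}
\E_{\pi_{\W}(B \cap \Gamma_{j})}\nabla^{\phi_{j}}\phi_{j} = \E_{\pi_{\tilde{\W}_{j}}(\tilde{B} \cap \tilde{\Gamma}_{j})}\nabla^{\tilde{\phi}_{j}}\tilde{\phi}_{j},
\end{displaymath}
so the oscillation bound $1/j$ transfers verbatim and \eqref{form8_blowup} holds. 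The only "obstacle" is bookkeeping in this last step, i.e.\ tracking the three chain rules through a change of variables; there is no analytic content beyond what Lemma \ref{l:transl} already provides.
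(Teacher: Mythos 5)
Your proposal is correct and follows essentially the same route as the paper: negate Claim \ref{eq:A}, translate by $x_{j}^{-1}$ and dilate by $\delta_{1/r_{j}}$ to normalize the bad ball to $B(0,1)$, and verify the invariance of $\beta_{\CG}$, of the good families $\calG_{j}$, and of the gradient averages — precisely the content of the paper's Lemmas \ref{l:beta} and \ref{l:average}. The only (harmless) difference is your explicit rotation $R_{\theta_{j}}$ to fix the vertical subgroup, which the paper handles implicitly via Remark \ref{rotationalInvariance} since translation and dilation already preserve the subgroup over which each graph is defined.
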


\begin{proof}
By definition, if Claim \ref{eq:A} fails, then there exist
$\varepsilon > 0$ and $L \geq 1$ such that for every $j \in \N$ we
can find an intrinsic $L$-Lipschitz graph $\Gamma_{j}$,
parametrised by $\phi_{j} \colon \W \to \V$, and
\emph{some ball} $B(x_{j},r_{j})$ centred on $\Gamma_{j}$ such
that
$$\beta_{\CG,\Gamma_{j}}(B(x_{j},r_{j})) \geq \varepsilon,$$
 yet
\begin{equation}\label{form8} \sup_{B \in \calG_{j}(\Gamma_{j},B(x_{j},r_{j}))} |\E_{\pi_{\W}(B \cap \Gamma_j)} \nabla^{\phi_{j}} \phi_{j} - \E_{\pi_{\W}(B(x_{j},r_{j})\cap \Gamma_j)} \nabla^{\phi_{j}} \phi_{j}| \leq \tfrac{1}{j}. \end{equation}

In order to prove Lemma \ref{l:normalization}, we left-translate
$\Gamma_j$ by $x_j^{-1}$ and dilate it by $\delta_{r_j^{-1}}$. The
resulting set $\widetilde{\Gamma}_j$:
\begin{itemize}
\item[(i)] is again an intrinsic $L$-Lipschitz graph (Lemma
\ref{l:transl}),
\item[(ii)] has $\beta_{\CG}$-number at least
$\varepsilon$ on $B(0,1)$ (Lemma \ref{l:beta} below),
\item[(iii)] is
parametrised by a function $\widetilde{\phi}_j : \W \to \V$ so that \eqref{form8} holds with $\widetilde{\phi}_{j}$ in place of $\phi_{j}$.
(Lemma \ref{l:average} below).
\end{itemize} \end{proof}

We now proceed to establish the two auxiliary results, needed in the
proof of Lemma \ref{l:normalization}.
\begin{lemma}\label{l:beta}
Assume that $\phi:\W \to \V$ is an intrinsic Lipschitz function with graph $\Gamma$, $x$ is a point on
$\Gamma$, and $r>0$. Then
\begin{displaymath}
\beta_{\CG,L,\Gamma}(B(x,r))=\beta_{\CG,L,\widetilde{\Gamma}}(B(0,1))
\end{displaymath}
for $\widetilde{\Gamma}= \delta_{r^{-1}}(\tau_{x^{-1}}(\Gamma))$.
\end{lemma}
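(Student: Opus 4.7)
The plan is to exhibit an explicit bijection $\psi \leftrightarrow \widetilde\psi$ between $\Adm_{\CG,L}(B(x,r))$ and $\Adm_{\CG,L}(B(0,1))$ under which the normalised supremum entering the definition of $\beta_{\CG}$ is preserved. Set $F := \delta_{r^{-1}}\circ \tau_{x^{-1}}\colon \He \to \He$, so that $F(\Gamma) = \widetilde\Gamma$, $F(B(x,r)) = B(0,1)$, and $d_{\He}(F(p),F(q)) = r^{-1} d_{\He}(p,q)$. For any intrinsic $L$-Lipschitz $\psi\colon \W\to\V$, the image $F(\Gamma^{\psi})$ is again an intrinsic $L$-Lipschitz graph by Lemma \ref{l:transl} (applied first to the translation, then to the dilation), and is therefore parametrised by a unique intrinsic $L$-Lipschitz function $\widetilde\psi\colon\W\to\V$. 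This gives a bijection on intrinsic $L$-Lipschitz functions, whose inverse arises from $F^{-1}=\tau_x\circ\delta_r$.

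The first step is to verify that $\psi\mapsto\widetilde\psi$ restricts to a bijection $\Adm_{\CG,L}(B(x,r)) \to \Adm_{\CG,L}(B(0,1))$. Chaining the translation and dilation chain rules in Lemma \ref{l:transl} gives
\begin{displaymath}
\nabla^{\widetilde\psi}\widetilde\psi = \nabla^{\psi}\psi \circ P_x \circ \delta_r \quad \text{a.e.\ on }\W.
\end{displaymath}
Hence $\nabla^{\widetilde\psi}\widetilde\psi$ is constant a.e.\ on $\pi_{\W}(B(0,b_L))$ iff $\nabla^{\psi}\psi$ is constant a.e.\ on $P_x(\delta_r(\pi_{\W}(B(0,b_L))))$. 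Combining $\delta_r\circ\pi_{\W}=\pi_{\W}\circ\delta_r$ (since $\delta_r$ is a homomorphism preserving the splitting $\He=\W\cdot\V$) with the identity $\pi_{\W}(x\cdot w\cdot v)=P_x(w)$ for $w\in\W$, $v\in\V$ (immediate from $\V$ being a subgroup), one checks that this image set equals $\pi_{\W}(B(x,b_Lr))$, as required.

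The second step is to match the normalised suprema for corresponding pairs $(\psi,\widetilde\psi)$. For $q\in\Gamma\cap B(x,r)$, set $w := \pi_{\W}(q)$, so that $q = w\cdot \phi(w)$ and $\Psi(w) := w\cdot\psi(w)\in\Gamma^{\psi}$. The same projection computation, applied with $x^{-1}$ in place of $x$, yields
\begin{displaymath}
\pi_{\W}(F(q)) = \delta_{r^{-1}}(P_{x^{-1}}(w)) = \pi_{\W}(F(\Psi(w))).
\end{displaymath}
Remark \ref{remk:bcg} applied to $\widetilde\Gamma$, together with the scaling $d_{\He}(F(\cdot),F(\cdot))=r^{-1}d_{\He}(\cdot,\cdot)$, then gives
\begin{displaymath}
|\widetilde\phi(\pi_{\W}(F(q))) - \widetilde\psi(\pi_{\W}(F(q)))| = d_{\He}(F(q),F(\Psi(w))) = r^{-1}|\phi(w) - \psi(w)|.
\end{displaymath}
As $q$ ranges over $\Gamma\cap B(x,r)$, the point $F(q)$ ranges over $\widetilde\Gamma\cap B(0,1)$, so the suprema entering $\beta_{\CG,L,\widetilde{\Gamma}}(B(0,1))$ and $\beta_{\CG,L,\Gamma}(B(x,r))$ coincide for matching admissible pairs; taking the infimum over $\psi$ then yields the lemma. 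The only real bookkeeping is the Heisenberg-product identity $\pi_{\W}(x\cdot w\cdot v) = P_x(w)$; once that is noted, both the admissibility transfer and the sup-matching follow routinely from Lemma \ref{l:transl}.
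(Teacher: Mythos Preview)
Your proof is correct and follows essentially the same approach as the paper's: both establish a bijection $\psi\leftrightarrow\widetilde\psi$ between the admissible classes via Lemma \ref{l:transl}, verify it preserves the constant-gradient condition through the identity $P_x(\delta_r(\pi_{\W}(B(0,b_L))))=\pi_{\W}(B(x,b_Lr))$, and then match the normalised suprema. The only stylistic difference is that you compute the sup-matching geometrically via the graph maps $\Phi,\Psi$ and the metric identity of Remark \ref{remk:bcg}, whereas the paper does it by direct substitution into the explicit formula \eqref{eq:formula_transdil} for $\widetilde\psi$; the content is the same.
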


\begin{proof}

According to Lemma \ref{l:transl} and Remark \ref{remk:para} there exists a uniquely defined intrinsic Lipschitz function $\widetilde{\phi}$ that parametrizes $\widetilde{\Gamma}$. By definition,
\begin{displaymath}
\beta_{\CG,L,\widetilde{\Gamma}}(B(0,1)) = \inf_{\sigma \in \Adm_{\CG,L}(B(0,1))}  \sup_{w\in \pi_{\W}(B(0,1)\cap \widetilde{\Gamma})} |\widetilde{\phi}(w)-\sigma(w)|.
\end{displaymath}
We first aim to prove that the family $\Adm_{\CG,L}(B(0,1)) =: \Adm(B(0,1))$ is in $1$-to-$1$ correspondence with the family $\Adm_{\CG,L}(B(x,r)) =: \Adm(B(x,r))$.
Assume that $\psi \in \Adm(B(x,r))$. Define
 \begin{equation}\label{eq:formula_transdil}
\widetilde{\psi}:\W \to \V,\quad \widetilde{\psi}(w):=
\delta_{\frac{1}{r}}\left(\pi_{\mathbb{V}}\left(x\right)^{-1} \cdot \psi \left(P_x(
\delta_{r}(w))\right)\right),
 \end{equation}
and note that $\tilde{\psi} = (\psi_{x^{-1}})_{r^{-1}}$ in the notation of Lemma \ref{l:transl}. Let us prove that $\tilde{\psi} \in \Adm(B(0,1))$. To this end, we observe first that
\begin{displaymath}
\pi_{\W}\left( x \cdot \delta_r(\pi_{\W}(p))\right)= \pi_{\W} \left(x \cdot \delta_r(p)\right),\quad\text{for all }p\in \mathbb{H},
\end{displaymath}
see for instance \cite[Proposition 2.15]{MSS}.
By homogeneity and left invariance of the distance $d_{\mathbb{H}}$, it follows that
\begin{equation}\label{form28}
P_{x}\left(\delta_{r}\left(\pi_{\W}(B(0,1)\cap
\widetilde{\Gamma})\right)\right)= \pi_{\W}(B(x,r)\cap \Gamma) \end{equation}
and
\begin{displaymath}
P_{x}\left(\delta_{r}\left(\pi_{\W}(B(0,b_L))\right)\right)= \pi_{\W}(B(x,b_L r)).
\end{displaymath}
Lemma \ref{l:transl} then implies that $\tilde{\psi} \in \Adm(B(0,1))$.

Conversely, if $\sigma \in \Adm(B(0,1))$, then $(\sigma_{r})_{x} \in \Adm(B(x,r))$. Thus, by \eqref{form28},
\begin{align*}
\beta_{\CG,L,\widetilde{\Gamma}}(B(0,1)) & = \inf_{\psi \in \Adm(B(x,r))} \sup_{w\in \pi_{\W}(B(0,1) \cap \widetilde{\Gamma})}{|\widetilde{\phi}(w)-\widetilde{\psi}(w)|}\\
& = \inf_{\psi \in \Adm(B(x,r))} \sup_{w \in \pi_{\W}(B(0,1) \cap \widetilde{\Gamma})} r^{-1} |\phi(P_x(\delta_r(w)))-\psi(P_x(\delta_r(w)))|\\
& = \inf_{\psi \in \Adm(B(x,r))} \sup_{w \in \pi_{\W}(B(x,r) \cap \Gamma)} r^{-1} |\phi(w) - \psi(w)|\\
& = \beta_{\CG,L,\Gamma}(B(x,r)). \end{align*}
This completes the proof. \end{proof}

\begin{lemma}\label{l:average}
Assume that $\phi:\W \to \V$ is an intrinsic
Lipschitz function with graph $\Gamma$, $x$ is a point on
$\Gamma$, and $r>0$. Then
\begin{equation}\label{eq:good}
\delta_{r^{-1}} \tau_{x^{-1}} \mathcal{G}_j(\Gamma, B(x,r))=
\mathcal{G}_j(\widetilde{\Gamma}, B(0,1))
\end{equation}
and
\begin{equation}\label{eq:expectation}
\mathbb{E}_{\pi_{\W}(B(y,sr)\cap \Gamma)}\nabla^{\phi}\phi=
\mathbb{E}_{\pi_{\W}(B(\delta_{r^{-1}}(x^{-1}\cdot y),s)\cap
\widetilde{\Gamma})}\nabla^{\widetilde{\phi}}\widetilde{\phi}.
\end{equation}
Here $\widetilde{\Gamma} = \delta_{r^{-1}} (\tau_{x^{-1}}(\Gamma))$
is the graph parametrized by $\widetilde{\phi}$ (defined as in
\eqref{eq:formula_transdil}).
\end{lemma}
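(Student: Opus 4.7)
The plan is to reduce both assertions to a direct change of variables based on three ingredients: (i) the map $\Phi := \delta_{r^{-1}} \circ \tau_{x^{-1}}$ is a homeomorphism of $\He$ sending $\Gamma$ to $\widetilde{\Gamma}$, $B(x,r)$ to $B(0,1)$, and $B(y,sr)$ to $B(\tilde y,s)$ where $\tilde y := \delta_{r^{-1}}(x^{-1}\cdot y)$; it rescales $d_{\He}$ by the factor $r^{-1}$ and $\calH^3$ by the factor $r^{-3}$; (ii) Lemma \ref{l:transl}, applied twice to relate $\nabla^{\widetilde{\phi}}\widetilde{\phi}$ to $\nabla^{\phi}\phi$; (iii) the identity $\pi_{\W}(x \cdot \delta_r(p)) = P_x(\delta_r(\pi_{\W}(p)))$, which follows from $\pi_{\W}(x \cdot q) = P_x(\pi_{\W}(q))$ and the commutation of $\delta_r$ with $\pi_{\W}$ (a consequence of \cite[Proposition 2.15]{MSS}).

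For \eqref{eq:good}, I would verify the three good-ball conditions separately. Condition (a), namely $y \in \Gamma(sr/10)$, transforms under $\Phi$ into $\tilde y \in \widetilde{\Gamma}(s/10)$, since $\Phi$ rescales distances by $r^{-1}$ while preserving the common factor $s$. Condition (b), $s \geq 2^{-j}$, is trivially preserved. For the thin-boundary condition (c), the set $B(y, 2sr) \cap A(y,(1-\lambda)sr,(1+\lambda)sr) \cap \Gamma$ maps under $\Phi$ bijectively to $B(\tilde y, 2s) \cap A(\tilde y, (1-\lambda)s, (1+\lambda)s) \cap \widetilde{\Gamma}$, and $\calH^3$ scales by the common factor $r^{-3}$ on both sides of the defining inequality, so the inequality for $\Gamma$ transfers to one for $\widetilde{\Gamma}$ with the same constant $2^j$.

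For \eqref{eq:expectation}, applying Lemma \ref{l:transl} iteratively to $\widetilde{\phi} = (\phi_{x^{-1}})_{r^{-1}}$ gives, almost everywhere,
\begin{equation*}
\nabla^{\widetilde{\phi}}\widetilde{\phi} \;=\; \nabla^{\phi_{x^{-1}}} \phi_{x^{-1}} \circ \delta_r \;=\; \nabla^{\phi} \phi \circ (P_x \circ \delta_r).
\end{equation*}
By ingredient (iii), applying $\pi_{\W}$ to the identity $\tau_x(\delta_r(B(\tilde y, s) \cap \widetilde{\Gamma})) = B(y,sr) \cap \Gamma$ yields
\begin{equation*}
P_x(\delta_r(\pi_{\W}(B(\tilde y, s) \cap \widetilde{\Gamma}))) \;=\; \pi_{\W}(B(y, sr) \cap \Gamma).
\end{equation*}
Substituting $u = P_x(\delta_r(w))$ in the right-hand side of \eqref{eq:expectation}, with Jacobian $|\det DP_x| \cdot |\det D\delta_r| = 1 \cdot r^3 = r^3$ on $\W \cong \R^2$, converts the integral of $\nabla^{\widetilde{\phi}}\widetilde{\phi}$ over $\pi_{\W}(B(\tilde y, s) \cap \widetilde{\Gamma})$ into $r^{-3}$ times the integral of $\nabla^{\phi}\phi$ over $\pi_{\W}(B(y, sr) \cap \Gamma)$, and similarly for the normalizing area. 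The factor $r^{-3}$ appears in both numerator and denominator and cancels, producing the desired equality.

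The only real obstacle is careful bookkeeping: one must track the order of the translation and dilation correctly, confirm that the two cases of Lemma \ref{l:transl} compose into the composite formula $\nabla^\phi \phi \circ (P_x \circ \delta_r)$, and keep track of the Jacobians under $P_x \circ \delta_r$. Once these are in place, both \eqref{eq:good} and \eqref{eq:expectation} reduce to the geometric correspondence supplied by $\Phi$ together with a single change of variables.
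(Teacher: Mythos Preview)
Your proposal is correct and follows essentially the same approach as the paper: both verify \eqref{eq:good} by appealing to the left-invariance and homogeneity of $d_{\He}$ (and the corresponding scaling of $\calH^3$), and both establish \eqref{eq:expectation} via Lemma~\ref{l:transl} to obtain $\nabla^{\widetilde{\phi}}\widetilde{\phi} = \nabla^{\phi}\phi \circ P_x \circ \delta_r$ followed by a change of variables with Jacobian $r^3$ that cancels in the average. Your write-up is in fact somewhat more explicit than the paper's in checking the three good-ball conditions separately and in spelling out the two-step application of Lemma~\ref{l:transl}.
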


\begin{proof}

We start with the first claim. Since the Heisenberg distance is
left invariant with respect to the group law, and homogeneous with
respect to the dilations $(\delta_r)_{r>0}$, if a ball $B(y,sr)$
has $A$-thin boundary, then so does the ball $\delta_{r^{-1}} \tau_{x^{-1}}
B(y,sr)= B(\delta_{r^{-1}}(x^{-1}\cdot y),s)$. The remaining conditions
that one has to verify in order to prove \eqref{eq:good} are also
immediate.

Regarding \eqref{eq:expectation}, we first recall that Lemma
\ref{l:transl} yields that
\begin{displaymath}
\nabla^{\widetilde{\phi}} \widetilde{\phi} = \nabla^{\phi} \phi
\circ P_x \circ \delta_r.
\end{displaymath}
Since $\delta_r$ restricted to $\W$ has Jacobian determinant
constant equal to $r^3$, and $P_x$ has Jacobian determinant
equal to $1$, it follows by the usual transformation formula for
functions on $\mathbb{R}^2$ that
\begin{displaymath}
\calL^{2}(\pi_{\W}(B(y,sr)\cap \Gamma)) = r^3
\calL^{2}(\pi_{\W}(B(\delta_{r^{-1}}(x^{-1}\cdot y),s)\cap \widetilde{\Gamma}))
\end{displaymath}
and
\begin{displaymath}
\int_{\pi_{\W}(B(y,sr)\cap
\Gamma)}\nabla^{\phi}\phi\,d\mathcal{L}^2 = r^{3}
\int_{\pi_{\W}(B(\delta_{r^{-1}}(x^{-1}\cdot y),s)\cap
\widetilde{\Gamma})}\nabla^{\widetilde{\phi}}\widetilde{\phi}\,d\mathcal{L}^2.
\end{displaymath}
This establishes \eqref{eq:expectation}.
\end{proof}

\subsubsection{Limiting procedure}\label{ss:limiting procedure}

In this section, we work under the standing (counter) assumption
to Proposition \ref{dreamProp}. In particular, we may assume by
Lemma \ref{l:normalization} that there exists $\varepsilon,L>0$
and  a sequence $(\phi_j)_{j \in \N}$ of intrinsic $L$-Lipschitz functions
with graphs $(\Gamma_j)_j$ such that $\beta_{\CG,\Gamma_{j},L}(B(0,1)) \geq \varepsilon$, yet the intrinsic gradient
$\nabla^{\phi_j}\phi_j$ fluctuates only little in $B(0,1)\cap
\Gamma_j$ as quantified in \eqref{form8_blowup}.

The main goal of this section is to consider an "accumulation point" $\phi$ of the sequence $(\phi_j)_j$, and to
discuss how the properties of the maps $\phi_j$ carry over to
$\phi$.

\begin{lemma}\label{eq:limit_beta}
The sequence $(\phi_j)_j$ defined above contains a subsequence
that converges locally uniformly on $\W$ to an intrinsic
$L$-Lipschitz function $\phi:\W \to \V$ with graph $\Gamma$ such
that
\begin{equation}\label{form9} \beta_{\CG,\Gamma,L}(B(0,1)) \geq \varepsilon. \end{equation}
\end{lemma}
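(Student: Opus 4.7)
The proof decomposes naturally into three steps: extracting a limit via Arzelà–Ascoli, verifying that the limit is intrinsic $L$-Lipschitz, and transferring the lower bound on $\beta_{\CG}$ to the limit.

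First, I would observe that, after the normalisation carried out in Lemma \ref{l:normalization}, the origin lies on each $\Gamma_j$, so $\phi_j(0) = 0$ for every $j$. Intrinsic $L$-Lipschitz functions from $\W$ to $\V$ are locally $1/2$-H\"older continuous in the Euclidean sense, with constants depending only on $L$ (see Proposition~3.9 in \cite{FSS}). Together with the pointwise normalisation $\phi_j(0)=0$, this makes the family $(\phi_j)_j$ equicontinuous and pointwise bounded on every Euclidean compact subset of $\W$. Arzel\`a--Ascoli then extracts a subsequence, which I relabel, converging locally uniformly to some continuous $\phi \colon \W \to \V$ with $\phi(0) = 0$.

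Second, I would show that $\phi$ is intrinsic $L$-Lipschitz via the cone characterisation of Definition \ref{d:intrLip}. Fix $x,y \in \Gamma$ and $\alpha < 1/L$ with $y \in x \cdot C_{\W}(\alpha)$; writing $x = w \cdot \phi(w)$ and $y = w' \cdot \phi(w')$ with $w = \pi_{\W}(x)$, $w' = \pi_{\W}(y)$, the points $x_j := w \cdot \phi_j(w)$ and $y_j := w' \cdot \phi_j(w')$ lie on $\Gamma_j$ and converge to $x$ and $y$ respectively. Continuity of the group operations and of $\pi_{\W},\pi_{\V}$ ensures that, for any fixed $\alpha' \in (\alpha, 1/L)$, one has $y_j \in x_j \cdot C_{\W}(\alpha')$ for all sufficiently large $j$. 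The intrinsic $L$-Lipschitz property of $\Gamma_j$ then forces $y_j = x_j$, and letting $j \to \infty$ gives $y = x$.

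Third, and this is the main obstacle, I would prove \eqref{form9} by contradiction. Suppose $\beta_{\CG,\Gamma,L}(B(0,1)) < \varepsilon$, and pick $\psi \in \Adm(B(0,1))$ witnessing
\begin{equation}\label{eq:beta-witness}
\sup_{w \in \pi_{\W}(B(0,1) \cap \Gamma)} |\phi(w) - \psi(w)| < \varepsilon.
\end{equation}
The crucial point is that the admissible class $\Adm(B(0,1))$ depends only on the ball $B(0,1)$ and not on the graph, so \emph{the same} $\psi$ is admissible for every $\Gamma_j$. The hypothesis $\beta_{\CG,\Gamma_j,L}(B(0,1)) \geq \varepsilon$ therefore supplies points $w_j \in \pi_{\W}(B(0,1) \cap \Gamma_j)$ with $|\phi_j(w_j) - \psi(w_j)| \geq \varepsilon - 1/j$. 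The compact set $K := \overline{\pi_{\W}(B(0,1))}$ contains every $w_j$, so after passing to a further subsequence $w_j \to w_\infty \in K$. The genuine difficulty is that the projection sets $\pi_{\W}(B(0,1)\cap\Gamma_j)$ move with $j$, so $\psi$ is \emph{a priori} controlled by \eqref{eq:beta-witness} only on $\pi_{\W}(B(0,1)\cap\Gamma)$; the key observation is that since $B(0,1)$ is closed and $w_j \cdot \phi_j(w_j) \to w_\infty \cdot \phi(w_\infty)$, the limit point satisfies $w_\infty \cdot \phi(w_\infty) \in B(0,1)$, hence $w_\infty \in \pi_{\W}(B(0,1)\cap\Gamma)$. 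Uniform convergence of $\phi_j$ to $\phi$ on $K$ and continuity of $\psi$ yield $|\phi(w_\infty) - \psi(w_\infty)| \geq \varepsilon$, contradicting \eqref{eq:beta-witness}. This completes the plan.
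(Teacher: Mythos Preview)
Your proposal is correct and follows essentially the same route as the paper. The paper packages Steps~1 and~2 into a citation of \cite[Proposition~3.10]{FS} (which is precisely the Arzel\`a--Ascoli/compactness statement you unfold by hand), and for Step~3 argues directly rather than by contradiction, but the mechanism---noting that $\Adm_{\CG,L}(B(0,1))$ is graph-independent, extracting a limit of the witness points $w_j \cdot \phi_j(w_j) \in B(0,1)$, and using closedness of $B(0,1)$ to place the limit in $\pi_{\W}(B(0,1)\cap\Gamma)$---is identical.
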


\begin{proof} Since each graph $\Gamma_{j}$ by construction contains the origin, we have
$\phi_{j}(0) = 0$ for every $j$. Therefore \ref{intrili2}  implies that the family
$(\phi_{j})_{j \in \N}$ is locally equibounded. Hence, by
Proposition 3.10 in \cite{FS}, there exists a subsequence which
converges locally uniformly to an intrinsic $L$-Lipschitz function
$\phi$ on $\W$ with intrinsic graph $\Gamma$. For simplicity, we also denote this subsequence
by $(\phi_j)_{j \in \N}$.

Let $\delta > 0$. In order to prove that $\beta_{\CG,\Gamma,L}(B(0,1)) \geq \varepsilon$, it suffices to fix $\psi \in \Adm_{\CG,L}(B(0,1))$ and find a point $w_{\psi} \in \pi_{\W}(\Gamma \cap B(0,1))$ with $|\psi(w_{\psi}) - \phi(w_{\psi})| \geq (1 - \delta)\varepsilon$. To this end, the assumption
\begin{displaymath}
\beta_{\CG,\Gamma_j,L}(B(0,1)) \geq \varepsilon,\quad\text{for all }j\in \N,
\end{displaymath}
implies that, for each $j \in \N$, there exists a point $w_{\psi}^{j}  \in \pi_{\W}(\Gamma_j \cap B(0,1))$ such that
\begin{displaymath}
|\psi(w_{\psi}^{j})-\phi_j(w_{\psi}^{j})|\geq (1-\delta)\varepsilon.
\end{displaymath}
Write $p_{\psi}^{j} := w_{\psi}^{j} \cdot \phi_{j}(w_{\psi}^{j})$. The sequence $(p^j_{\psi})_{j\in \N}$ has a subsequence $(p^{j_k}_{\psi})_{k\in \N}$ convergent to a point $p_{\psi} \in \Gamma \cap B(0,1)$. Since $\pi_{\W}$ is continuous, the points $w_{\psi}^{j_{k}} = \pi_{\W}(p_{\psi}^{j_{k}})$ converge to $w_{\psi} := \pi_{\W}(p_{\psi})\in \pi_{\W}(\Gamma \cap B(0,1))$. 
\begin{displaymath} |\psi(w_{\psi}) - \phi(w_{\psi})| = \lim_{k \to \infty} |\psi(w_{\psi}^{j_{k}}) - \phi_{j_{k}}(w^{j_{k}}_{\psi})| \geq (1 - \delta)\varepsilon \end{displaymath}
by the continuity of $\psi$, and the locally uniform convergence $\phi_{j} \to \phi$. The proof is complete.
\end{proof}

Without loss of generality, we assume in the following that the
whole sequence $(\phi_{j})_{j \in \N}$ converges locally uniformly
to $\phi$. Our next goal is to prove the following convergence
result for the corresponding intrinsic gradients.

\begin{lemma}\label{l:conv_grad}
Let $(\phi_j)_j:\W \to \V$ be a sequence of intrinsic
$L$-Lipschitz functions converging locally uniformly to an
$L$-Lipschitz function $\phi:\W \to \V$. Then
\begin{equation}\label{form10} \int_{\pi_{\W}(B(y,s)\cap \Gamma)} \nabla^{\phi}\phi \, d\calL^{2} = \lim_{j \to \infty} \int_{\pi_{\W}(B(y,s)\cap \Gamma_j)} \nabla^{\phi_{j}}\phi_{j} \, d\calL^{2} \end{equation}
for all balls $B(y,s) \subset B(0,1)$ with $y\in \Gamma$, and such
that $\mathcal{H}^3(\partial B(y,s)\cap \Gamma)=0$.
\end{lemma}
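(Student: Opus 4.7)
The plan is to combine weak-$\ast$ convergence of the intrinsic gradients in $L^{\infty}(\W)$ with $L^{1}$-convergence of the characteristic functions of the projected balls. The underlying reason this works is the distributional characterisation \eqref{eq:distr} of the intrinsic gradient, whose right-hand side involves $\phi$ and $\phi^{2}$ only; both behave well under locally uniform convergence.

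First I would identify the weak-$\ast$ limit. By Lemma \ref{l:bound_grad}, $\|\nabla^{\phi_j}\phi_j\|_{L^{\infty}(\W)}\leq L$ for every $j$, so after extracting a subsequence, $\nabla^{\phi_j}\phi_j$ converges weakly-$\ast$ in $L^{\infty}(\W)$ to some $F$ with $\|F\|_{\infty}\leq L$. For any test function $\psi\in \mathcal{C}^1_c(\W)$, the locally uniform convergences $\phi_j\to \phi$ and $\phi_j^2\to \phi^2$, together with \eqref{eq:distr} applied to $\phi_j$ and to $\phi$, give
\begin{displaymath}
\int_{\W} F\,\psi\,d\mathcal{L}^2=\lim_{j\to\infty}\int_{\W}\nabla^{\phi_j}\phi_j\,\psi\,d\mathcal{L}^2 = -\int_{\W}\phi\,\partial_y\psi+\tfrac{1}{2}\phi^2\,\partial_t\psi\,d\mathcal{L}^2=\int_{\W}\nabla^{\phi}\phi\,\psi\,d\mathcal{L}^2.
\end{displaymath}
Hence $F=\nabla^{\phi}\phi$ almost everywhere, and uniqueness of the weak-$\ast$ limit forces the whole sequence to converge.

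Next I would prove $L^{1}$-convergence of the indicators. Set $\Phi_j(w):=w\cdot\phi_j(w)$ and $\Phi(w):=w\cdot \phi(w)$; left-invariance of $d_{\He}$ and the fact that $\phi_j(w),\phi(w)\in \V$ yield $d_{\He}(\Phi_j(w),\Phi(w))=|\phi_j(w)-\phi(w)|$, so $\Phi_j\to\Phi$ locally uniformly on $\W$. Consequently, on any compact set and for every $\varepsilon>0$,
\begin{displaymath}
\Phi^{-1}(B(y,s-\varepsilon))\subset \Phi_j^{-1}(B(y,s))\subset \Phi^{-1}(B(y,s+\varepsilon))
\end{displaymath}
for all sufficiently large $j$. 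By Lemma \ref{lipschitzLemma},
\begin{displaymath}
\mathcal{L}^2\bigl(\pi_{\W}(\partial B(y,s)\cap\Gamma)\bigr)\lesssim \mathcal{H}^3\bigl(\partial B(y,s)\cap\Gamma\bigr)=0,
\end{displaymath}
and the set on the left is exactly the $\mathcal{L}^2$-boundary of $\Phi^{-1}(B(y,s))$. Letting $\varepsilon\to 0$, the $\mathcal{L}^2$-measure of the symmetric difference between $\Phi_j^{-1}(B(y,s))$ and $\Phi^{-1}(B(y,s))$ therefore tends to zero, giving $\chi_{\Phi_j^{-1}(B(y,s))}\to \chi_{\Phi^{-1}(B(y,s))}$ in $L^1(\W)$.

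Finally I would combine the two steps by splitting
\begin{displaymath}
\int \chi_{\Phi_j^{-1}(B(y,s))}\nabla^{\phi_j}\phi_j\,d\mathcal{L}^2-\int \chi_{\Phi^{-1}(B(y,s))}\nabla^{\phi}\phi\,d\mathcal{L}^2
\end{displaymath}
into a difference-of-indicators part, bounded by $L$ times the symmetric difference above and thus vanishing, and a fixed-indicator part, which vanishes by the weak-$\ast$ convergence applied to the test function $\chi_{\Phi^{-1}(B(y,s))}\in L^1(\W)$ (integrability follows from Lemma \ref{lipschitzLemma} and boundedness of $B(y,s)\cap\Gamma$). The main obstacle is the set convergence: without the hypothesis $\mathcal{H}^3(\partial B(y,s)\cap\Gamma)=0$, mass from $\nabla^{\phi_j}\phi_j$ could concentrate on the boundary of $\Phi^{-1}(B(y,s))$ and the identity would fail, and this is precisely the role played by the thin-boundary assumption.
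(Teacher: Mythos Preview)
Your proof is correct and follows the same overall architecture as the paper: first obtain convergence of $\int \nabla^{\phi_j}\phi_j\,\psi$ for smooth $\psi$ from the distributional identity \eqref{eq:distr} and the locally uniform convergence $\phi_j\to\phi$, then control the change of integration domain from $\pi_{\W}(B(y,s)\cap\Gamma)$ to $\pi_{\W}(B(y,s)\cap\Gamma_j)$ using the hypothesis $\mathcal{H}^3(\partial B(y,s)\cap\Gamma)=0$.

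There is, however, one genuine difference worth recording. For the second step the paper uses only \emph{inner} approximation $\pi_{\W}(B(y,(1-\tfrac{1}{n})s)\cap\Gamma)\subset \pi_{\W}(B(y,s)\cap\Gamma_j)$, which forces the remainder in the direction $\Gamma_j\setminus\Gamma$ to be expressed via $\mathcal{H}^3(A(y,(1-\tfrac{1}{n})s,s)\cap\Gamma_j)$; since the thin-boundary hypothesis concerns $\Gamma$ and not $\Gamma_j$, the paper then invokes the auxiliary Lemma \ref{annulusControl2}. Your two-sided sandwich $\Phi^{-1}(B(y,s-\varepsilon))\subset\Phi_j^{-1}(B(y,s))\subset\Phi^{-1}(B(y,s+\varepsilon))$ traps the symmetric difference directly inside $\pi_{\W}(A(y,s-\varepsilon,s+\varepsilon)\cap\Gamma)$, whose measure is controlled by $\mathcal{H}^3(A(y,s-\varepsilon,s+\varepsilon)\cap\Gamma)\to 0$ without ever passing through $\Gamma_j$. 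This bypasses Lemma \ref{annulusControl2} entirely for the present lemma, which is a modest but clean simplification. (The one loose phrase is ``exactly the $\mathcal{L}^2$-boundary of $\Phi^{-1}(B(y,s))$''; what you actually use, and correctly, is the annulus bound just described.) On the weak-convergence side, your route through weak-$\ast$ sequential compactness of $L^{\infty}(\W)=(L^1(\W))^*$ (legitimate since $L^1(\W)$ is separable) is equivalent to the paper's explicit smooth-cutoff approximation of $\chi_{\pi_{\W}(B(y,s)\cap\Gamma)}$; both amount to extending the convergence from $\mathcal{C}^1_c$ to $L^1$ test functions using the uniform bound $\|\nabla^{\phi_j}\phi_j\|_\infty\leq L$.
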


The subtle point here is, of course, that the domain of integration is different on both sides of the above equation.
Therefore the following auxiliary result will be useful in the proof of Lemma \ref{l:conv_grad}.

\begin{lemma}\label{annulusControl2} Assume that $\Gamma_{j}$ is a sequence of intrinsic $L$-Lipschitz graphs, which converges locally in the Hausdorff metric in $\He$ to an intrinsic Lipschitz graph $\Gamma$. Then, for any $x \in \Gamma$ and $0 < r < s < \infty$, we have
\begin{displaymath} \limsup_{j \to \infty} \calH^{3}(A(x,r,s) \cap \Gamma_{j}) \lesssim_{L} \calH^{3}(A(x,r,s) \cap \Gamma). \end{displaymath}
\end{lemma}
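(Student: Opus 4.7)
The strategy is a soft compactness argument via weak$^\ast$ limits: pass to a weak$^\ast$ limit $\nu$ of the Radon measures $\mu_j := \calH^3|_{\Gamma_j}$, show $\nu \leq C_L \mu$ for $\mu := \calH^3|_\Gamma$, and exploit that the annulus $A(x,r,s)$ is \emph{closed} via Portmanteau's theorem. Since intrinsic $L$-Lipschitz graphs are $3$-Ahlfors regular with constants depending only on $L$ (one of the properties recalled in Section \ref{s:prelim}), the masses $\mu_j(B(x, 2s))$ are uniformly bounded in $j$, so by local weak$^\ast$ compactness of Radon measures on $\He$ we may, after extracting a subsequence, assume $\mu_j \to \nu$ weakly$^\ast$ for some Radon measure $\nu$.

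The next task is to identify $\nu$ as a $3$-regular measure supported on $\Gamma$. Local Hausdorff convergence $\Gamma_j \to \Gamma$ immediately gives $\spt \nu \subseteq \Gamma$: if $y \notin \Gamma$, some open ball $U$ around $y$ is eventually disjoint from $\Gamma_j$, and the open-set inequality in Portmanteau forces $\nu(U) \leq \liminf_j \mu_j(U) = 0$. To show that $\nu$ is itself $3$-Ahlfors regular on $\Gamma$, fix $z \in \Gamma$ and take $z_j \in \Gamma_j$ with $z_j \to z$. The closed-set inequality $\limsup_j \mu_j(B(z,r)) \leq \nu(B(z,r))$, combined with the regularity of $\mu_j$ at $z_j$ (and a small shrinkage $r \mapsto r - \eta$ sent to zero at the end), yields $\nu(B(z,r)) \gtrsim_L r^3$. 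The upper bound $\nu(B(z,r)) \lesssim_L r^3$ is obtained by sandwiching $B(z,r)$ inside the open balls $\{p : d_{\He}(z,p) < r + \eta\}$, applying the open-set Portmanteau inequality, and letting $\eta \to 0$.

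Given ball-by-ball comparability of $\nu$ and $\mu$, a standard Vitali covering / differentiation argument in the doubling metric space $(\He, d_\He)$ upgrades to a measure-wise bound $\nu \leq C_L \mu$ on all Borel sets. Since $A(x,r,s)$ is closed, Portmanteau then gives
\[ \limsup_j \mu_j(A(x,r,s)) \leq \nu(A(x,r,s)) \leq C_L \mu(A(x,r,s)) = C_L \calH^3(A(x,r,s) \cap \Gamma), \]
which is the claim along the extracted subsequence. To remove the subsequence dependence, any subsequence of $(\mu_j)$ along which $\mu_j(A(x,r,s))$ approaches the full $\limsup$ admits a further weakly$^\ast$-convergent refinement, and the constant $C_L$ is uniform across all possible weak$^\ast$ limits. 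This delivers the bound for the $\limsup$ over the original sequence.

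The principal technical point I anticipate is the upgrade from ball-wise comparability of $\nu$ and $\mu$ to the measure-wise domination $\nu \leq C_L \mu$: this is not automatic, and it genuinely uses the doubling property of $(\He, d_\He)$ together with standard Vitali/Besicovitch differentiation. Once that upgrade is in place, the rest is a routine application of Portmanteau, local weak$^\ast$ compactness, and the uniform Ahlfors regularity of intrinsic $L$-Lipschitz graphs.
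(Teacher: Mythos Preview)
Your proof is correct and takes a genuinely different route from the paper's. The paper gives a direct covering argument: fix $\delta>0$, enlarge the annulus slightly to $A(x,r_1,s_1)$ with $\calH^3(A(x,r_1,s_1)\cap\Gamma)\leq\calH^3(A(x,r,s)\cap\Gamma)+\delta$, take an $\varepsilon$-net in $A(x,r,s)$, and use the $3$-regularity of $\Gamma_j$ to bound $\calH^3(A(x,r,s)\cap\Gamma_j)$ by $|Y_j|\cdot\varepsilon^3$, where $Y_j$ is the set of net points whose $5\varepsilon$-balls meet $\Gamma_j$. Hausdorff convergence then forces each such ball (slightly enlarged) to meet $\Gamma$, and the $3$-regularity of $\Gamma$ together with bounded overlap converts $|Y_j|\cdot\varepsilon^3$ back into $\calH^3(A(x,r_1,s_1)\cap\Gamma)$.

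Your approach replaces this hands-on covering by the soft package of weak$^\ast$ compactness, Portmanteau on the compact set $A(x,r,s)$, and the standard fact that two Ahlfors $3$-regular measures supported on the same set are comparable. The paper's argument is more elementary and entirely self-contained (no measure-theoretic machinery beyond regularity), while yours is more conceptual and would transfer verbatim to any setting where uniform Ahlfors regularity and local Hausdorff convergence are available. One minor remark: for the step $\nu\leq C_L\mu$ you do not actually need the full Vitali/differentiation argument you flag as the ``principal technical point''; it is enough to observe that any Ahlfors $3$-regular measure on $\Gamma$ is comparable to $\calH^3|_\Gamma$, which is a one-line covering estimate.
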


\begin{proof} Fix $\delta > 0$. First, pick $r_{1} < r$ and $s_{1} > s$ so that
\begin{displaymath} \calH^{3}(A(x,r_{1},s_{1}) \cap \Gamma) \leq \calH^{3}(A(x,r,s) \cap \Gamma) + \delta. \end{displaymath}
Then, let $\varepsilon > 0$ be so small that if $y \in A(x,r,s)$,
then $B(y,20\varepsilon) \subset A(x,r_{1},s_{1})$. Let
$\{y_{1},\ldots,y_{N}\}$ be an $\varepsilon$-net in $A(x,r,s)$,
and, finally, let $Y_{j}$ be the subset of points $y$ in this net
with the property that $B(y,5\varepsilon)$ contains a point in
$A(x,r,s) \cap \Gamma_{j}$. Then
\begin{displaymath} \calH^{3}(A(x,r,s) \cap \Gamma_{j}) \leq \sum_{y \in Y_{j}} \calH^{3}(B(y,5\varepsilon) \cap \Gamma_{j}) \lesssim_{L} |Y_{j}| \cdot \varepsilon^{3}, \end{displaymath}
by the $3$-regularity of intrinsic $L$-Lipschitz graphs. For $j$ large enough, every ball $B(y,10\varepsilon)$ with $y \in
Y_{j}$ also contains a point in $\Gamma$, whence $\varepsilon^{3}
\lesssim_{L} \calH^{3}(B(y,20\varepsilon) \cap \Gamma)$, $y \in
Y_{j}$. Since the balls $B(y,20\varepsilon) \subset A(x,r_1,s_1)$ have
bounded overlap (independent of $\varepsilon > 0$), we conclude
that
\begin{displaymath} \calH^{3}(A(x,r,s) \cap \Gamma_{j}) \lesssim_{L} \calH^{3}(A(x,r_{1},s_{1}) \cap \Gamma) \leq \calH^{3}(A(x,r,s) \cap \Gamma) + \delta \end{displaymath}
for all large enough $j$. This completes the proof. \end{proof}

\begin{proof}[Proof of Lemma \ref{l:conv_grad}]

We employ the fact proven in \cite[Proposition 4.7]{CMPS} that the
intrinsic gradient is also a distributional gradient for intrinsic
Lipschitz functions: if $\phi$ is intrinsic Lipschitz and defined
on $\W$, then
\begin{displaymath} \int_{\W} (\phi \partial_{y} \psi +\tfrac{1}{2} \phi^{2} \partial_{t} \psi) \, d\calL^{2} = -\int_{\W} [\nabla^{\phi} \phi] \psi \, d\calL^{2} \end{displaymath}
for all compactly supported $\mathcal{C}^{1}$-functions $\psi$ on $\W$ (here
we assume that $\W$ is the $(y,t)$-plane, as we may). Since
uniform convergence implies weak convergence, we infer that
\begin{equation}\label{form32} \int_{\W} [\nabla^{\phi} \phi] \psi \, d\calL^{2} = \lim_{j \to \infty} \int_{\W} [\nabla^{\phi_{j}}\phi_{j}] \psi \, d\calL^{2} \end{equation}
for all compactly supported $\mathcal{C}^{1}$ functions $\psi \colon \W \to
\V$. It remains to deduce from this statement the claim
\eqref{form10}.

To achieve this, we recall from Lemma \ref{l:bound_grad} that
\begin{equation}\label{eq:norm_bound}
\|\nabla^{\phi_j}\phi_j\|_{\infty}
\leq L,\quad j\in\mathbb{N},\quad \text{and} \quad
\|\nabla^{\phi}\phi\|_{\infty} \leq L.
\end{equation}
Then, given $B(y,s) \subset B(0,1)$ with $y\in \Gamma$, for
$\varepsilon>0$, choose an open set $U$ in
$\pi_{\mathbb{W}}(B(0,2))$ so that
 \begin{displaymath}
\overline{\pi_{\mathbb{W}}(B(y,s)\cap \Gamma)} \subset U,
\end{displaymath}
and $\calL^{2}(U \setminus \pi_{\W}(B(y,s) \cap \Gamma)) \leq \varepsilon/4L$. In particular,
\begin{equation}\label{eq:int_bound1}
\left|\int_{U\setminus \pi_{\mathbb{W}}(B(y,s)\cap \Gamma)}
\left(\nabla^{\phi_j}\phi_j -\nabla^{\phi}\phi \right)
\psi\;{d}\mathcal{L}^2\right| \leq \tfrac{\varepsilon}{2}
\end{equation}
for all test functions $\psi$ with $0\leq \psi \leq 1$ and all
$j\in \mathbb{N}$. Now, let $\psi$ be a smooth cut-off function
with compact support in $U$, $0\leq \psi \leq 1$ on $U$ and such
that $\psi \equiv 1$ on $\pi_{\mathbb{W}}(B(y,s)\cap \Gamma)$.
Then, by \eqref{form32}, there exists $j(\varepsilon)\in \mathbb{N}$ such that for
$j\geq j(\varepsilon)$, one has
\begin{equation}\label{eq:int_bound2}
\left| \int_{\W} \left(\nabla^{\phi_j}\phi_j
-\nabla^{\phi}\phi\right) \psi \;{d}\mathcal{L}^2 \right| \leq
\tfrac{\varepsilon}{2}.
\end{equation}
Let us denote $B:=\pi_{\mathbb{W}}(B(y,s)\cap \Gamma)$. Combining \eqref{eq:int_bound1} and \eqref{eq:int_bound2}, and
recalling that $\psi$ is supported on $U$, we find that
\begin{align*}
\left|\int_{B}\nabla^{\phi_j}\phi_j-\nabla^{\phi}\phi{d}\mathcal{L}^2\right|=
\left|\int_{U}\left(\nabla^{\phi_j}\phi_j-\nabla^{\phi}\phi\right)\psi
{d}\mathcal{L}^2- \int_{U \setminus B}\left(\nabla^{\phi_j}\phi_j-\nabla^{\phi}\phi\right)\psi
{d}\mathcal{L}^2\right| \leq \varepsilon
\end{align*}
for $j\geq j(\varepsilon)$. This yields
\begin{equation*}\int_{\pi_{\W}(B(y,s)\cap \Gamma)} \nabla^{\phi}\phi \, d\mathcal{L}^2 = \lim_{j \to \infty} \int_{\pi_{\W}(B(y,s)\cap \Gamma)} \nabla^{\phi_{j}}\phi_{j} \, d\mathcal{L}^2. \end{equation*}

To establish \eqref{form10}, it remains to show that $\Gamma$ on
the right hand side of the above equation can be replaced by
$\Gamma_j$. This follows immediately from the uniform bound for
$\nabla^{\phi}\phi$ and $\nabla^{\phi_j}\phi_j$, see
\eqref{eq:norm_bound}, provided we can show the following:
\begin{equation}\label{eq:symm_diff}
\calL^{2}(\pi_{\mathbb{W}}(B(y,s)\cap\Gamma_j)\triangle
\pi_{\mathbb{W}}(B(y,s)\cap\Gamma)) \to 0,\quad\text{as }j\to
\infty.
\end{equation}
We now prove \eqref{eq:symm_diff}. Here we need to assume that
$\mathcal{H}^3(\partial B(y,s)\cap \Gamma)=0$. Since
\begin{displaymath}
\partial B(y,s)\cap \Gamma = \bigcap_{n\in \mathbb{N}} A(y,
(1-\tfrac{1}{n})s,s) \cap \Gamma,
\end{displaymath}
there exists for every $\delta>0$ a number $n\in \mathbb{N}$ such
that
\begin{equation}\label{eq:measure_annulus}
\mathcal{H}^3\left(A(y, (1-\tfrac{1}{n})s,s) \cap \Gamma\right) \leq \delta.
\end{equation}
Let $n=n(\delta)$ be such and observe that
\begin{displaymath} \pi_{\W}(B(y,s) \cap \Gamma) \subset \pi_{\W}(B(y,(1 - \tfrac{1}{n})s) \cap \Gamma) \cup \pi_{\W}(A(y,(1 - \tfrac{1}{n})s,s) \cap \Gamma) \end{displaymath}
and
\begin{displaymath} \pi_{\W}(B(y,(1 - \tfrac{1}{n})s) \cap \Gamma) \subset \pi_{\W}(B(y,s) \cap \Gamma_{j}) \end{displaymath}
for  $j$ larger than some $j(\delta)$ given by uniform
convergence. So
\begin{displaymath} \pi_{\W}(B(y,s) \cap \Gamma) \setminus \pi_{\W}(B(y,s) \cap \Gamma_{j}) \subset \pi_{\W}(A(y,(1 - \tfrac{1}{n})s,s) \cap \Gamma) \end{displaymath}
for $j \geq j(\delta)$. Since
\begin{displaymath} \calL^{2}(\pi_{\W}(A(y,(1 - \tfrac{1}{n})s,s) \cap \Gamma)) \lesssim \calH^{3}(A(y,(1 - \tfrac{1}{n})s,s) \cap \Gamma)) \leq \delta, \end{displaymath}
by the general inequality $\calL^{2}(\pi_{\W}(B)) \lesssim \calH^{3}(B)$ (Lemma \ref{lipschitzLemma})
and by \eqref{eq:measure_annulus}, we infer that
\begin{displaymath} \calL^{2}\left(\pi_{\W}(B(y,s) \cap \Gamma) \setminus \pi_{\W}(B(y,s) \cap \Gamma_{j})\right) \to 0, \end{displaymath}
as $j \to \infty$. It remains to prove the same with the roles of
$\Gamma$ and $\Gamma_{j}$ reversed. Repeating the argument above,
it suffices to estimate the measure of the projection
$\pi_{\W}(A(y,(1 - \tfrac{1}{n})s,s) \cap \Gamma_{j})$. This is
not altogether trivial, since the assumption on $\partial B(y,s)$
concerns $\mathcal{H}^3|_{\Gamma}$, not
$\mathcal{H}^3|_{\Gamma_{j}}$. However, Lemma
\ref{annulusControl2} still implies that
\begin{displaymath} \limsup_{j \to \infty} \calH^{3}(A(y,(1 - \tfrac{1}{n})s,s) \cap \Gamma_{j}) \lesssim_{L} \calH^{3}(A(y,(1 - \tfrac{1}{n})s,s) \cap \Gamma) \leq \delta, \end{displaymath}
which proves that
\begin{displaymath} \limsup_{j \to \infty} \calL^{2}(\pi_{\W}(B(y,s) \cap \Gamma_{j}) \setminus \pi_{\W}(B(y,s) \cap \Gamma)) \lesssim \delta. \end{displaymath}
Since $\delta > 0$ is arbitrary, \eqref{eq:symm_diff}, and hence
also \eqref{form10}, follows. This concludes the proof of Lemma
\ref{l:conv_grad}.
\end{proof}

We now return to the specific sequence $(\phi_j)_{j \in \N}$ and limit
function $\phi$ (with graph $\Gamma$) given by Lemma
\ref{eq:limit_beta}. In particular, we work under the standing
assumptions that $\beta_{\CG,\Gamma_j}(B(0,1))\geq \varepsilon$
and $\nabla^{\phi_j}\phi_j$ fluctuates only little on
$\pi_{\W}(B(0,1)\cap \Gamma_j)$, as made precise in Lemma
\ref{l:normalization}.

\begin{lemma}\label{l:a.s.const}
Let $\phi$ be as in Lemma \ref{eq:limit_beta}.  Then $\E_{\pi_{\W}(B(y,s))}
\nabla^{\phi}\phi$ is a constant independent of $B(y,s) \subset
B(0,1)$, for all balls $B(y,s)$ such that
 $y\in \Gamma$ and $\calH^{3}(\partial B(y,s) \cap \Gamma) = 0$.
\end{lemma}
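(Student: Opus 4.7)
The plan is to fix a ball $B(y,s) \subset B(0,1)$ satisfying the hypotheses of the lemma and approximate it by a sequence of ``good'' balls $B(y_{j},s_{j}) \in \calG_{j}(\Gamma_{j}, B(0,1))$, then pass to the limit in the fluctuation estimate \eqref{form8_blowup} furnished by the counter assumption. The constant will turn out to be (the common limit of) $\E_{\pi_{\W}(B(0,1) \cap \Gamma_{j})} \nabla^{\phi_{j}} \phi_{j}$.

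To construct the approximating balls, write $y = w \cdot \phi(w)$ and set $y_{j} := w \cdot \phi_{j}(w) \in \Gamma_{j}$; locally uniform convergence $\phi_{j} \to \phi$ forces $y_{j} \to y$. For each $j$, I apply Lemma \ref{l:A-small} to the measure $\calH^{3}|_{\Gamma_{j}}$ with thinness parameter $\delta_{j}$ chosen so small that $A_{\delta_{j}} \leq 2^{j}$; this yields a radius $s_{j}$, arbitrarily close to $s$, such that $B(y_{j},s_{j})$ has $2^{j}$-thin boundary with respect to $\calH^{3}|_{\Gamma_{j}}$. For $j$ large, $B(y_{j},s_{j}) \subset B(0,1)$, $s_{j} \geq 2^{-j}$, and trivially $y_{j} \in \Gamma_{j} \subset \Gamma_{j}(s_{j}/10)$, so $B(y_{j},s_{j}) \in \calG_{j}(\Gamma_{j}, B(0,1))$. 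Thus \eqref{form8_blowup} delivers
\begin{displaymath}
|\E_{\pi_{\W}(B(y_{j},s_{j}) \cap \Gamma_{j})} \nabla^{\phi_{j}} \phi_{j} - \E_{\pi_{\W}(B(0,1) \cap \Gamma_{j})} \nabla^{\phi_{j}} \phi_{j}| \leq 1/j.
\end{displaymath}

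The crux of the argument (and the main obstacle) is to show the convergence
\begin{displaymath}
\E_{\pi_{\W}(B(y_{j},s_{j}) \cap \Gamma_{j})} \nabla^{\phi_{j}} \phi_{j} \longrightarrow \E_{\pi_{\W}(B(y,s) \cap \Gamma)} \nabla^{\phi} \phi,
\end{displaymath}
since both the graph and the ball vary with $j$. I would handle numerator and denominator by reducing to the symmetric difference $\pi_{\W}(B(y_{j},s_{j}) \cap \Gamma_{j}) \triangle \pi_{\W}(B(y,s) \cap \Gamma)$: sandwich $B(y_{j},s_{j})$ between $B(y,s-\varepsilon)$ and $B(y,s+\varepsilon)$ for $j$ large, then use Lemma \ref{lipschitzLemma} and Lemma \ref{annulusControl2} to obtain
\begin{displaymath}
\limsup_{j \to \infty} \calL^{2}\left(\pi_{\W}(B(y_{j},s_{j}) \cap \Gamma_{j}) \triangle \pi_{\W}(B(y,s) \cap \Gamma_{j})\right) \lesssim_{L} \calH^{3}(A(y,s-\varepsilon,s+\varepsilon) \cap \Gamma),
\end{displaymath}
which tends to $0$ as $\varepsilon \to 0$ thanks to the hypothesis $\calH^{3}(\partial B(y,s) \cap \Gamma) = 0$. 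Combining this with the uniform $L^{\infty}$-bound $\|\nabla^{\phi_{j}} \phi_{j}\|_{\infty} \leq L$ and applying Lemma \ref{l:conv_grad} (with the fixed ball $B(y,s)$) handles the numerator; the denominator is treated identically, with positivity of the limit guaranteed by Lemma \ref{eq:ball_inclusion} applied to $B(y, b_{L} s) \subset B(y,s) \cap \Gamma$.

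To conclude, note that the construction applies to any two valid balls $B(y,s)$ and $B(y',s')$. For each $j$, the triangle inequality applied twice to \eqref{form8_blowup} yields
\begin{displaymath}
|\E_{\pi_{\W}(B(y_{j},s_{j}) \cap \Gamma_{j})} \nabla^{\phi_{j}} \phi_{j} - \E_{\pi_{\W}(B(y'_{j},s'_{j}) \cap \Gamma_{j})} \nabla^{\phi_{j}} \phi_{j}| \leq 2/j,
\end{displaymath}
and passing to the limit with the previous step gives $\E_{\pi_{\W}(B(y,s) \cap \Gamma)} \nabla^{\phi} \phi = \E_{\pi_{\W}(B(y',s') \cap \Gamma)} \nabla^{\phi} \phi$, proving the desired constancy. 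The critical steps are (i) the simultaneous variation of center, radius, and graph in the convergence of averages, where the thin-boundary assumption on the limit ball must compensate for the absence of any \emph{a priori} thin-boundary control on the $\Gamma_{j}$-side after translation, and (ii) the careful choice of $\delta_{j}$ in Lemma \ref{l:A-small} so that both $s_{j} \to s$ and the thinness constant is allowed to grow with $j$.
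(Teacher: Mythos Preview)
Your proposal is correct and follows essentially the same route as the paper: both arguments use Lemma \ref{l:A-small} to perturb the radii so the balls land in $\calG_{j}$, use Lemma \ref{l:conv_grad} and Lemma \ref{annulusControl2} together with the hypothesis $\calH^{3}(\partial B(y,s) \cap \Gamma)=0$ to pass to the limit in the averages, and finish with the triangle inequality on \eqref{form8_blowup}. The one organizational difference is that you translate the centers to $y_{j} = w \cdot \phi_{j}(w) \in \Gamma_{j}$, whereas the paper keeps the centers fixed at $y_{1}, y_{2} \in \Gamma$ and instead verifies condition (a) of $\calG_{j}$ via $\Gamma_{j} \to \Gamma$; this leads the paper to a cleaner three-term splitting $D_{1}+D_{2}+D_{3}$ (and then $E_{1}+E_{2}+E_{3}$), while your version requires the sandwich $B(y,s-\varepsilon) \subset B(y_{j},s_{j}) \subset B(y,s+\varepsilon)$ to handle the simultaneous variation of center and radius. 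Both are valid; the paper's fixed-center approach separates the ``change of function'' and ``change of radius'' errors more transparently, while yours avoids checking condition (a) at the cost of a slightly more delicate double-limit. One small phrasing correction: in applying Lemma \ref{l:A-small} you want $\delta_{j}\to 0$ \emph{slowly enough} that $A_{\delta_{j}} \leq 2^{j}$ (the thinness constant $A_{\delta}$ grows as $\delta \to 0$, not shrinks).
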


\begin{proof}
Pick two  balls $B(y_{1},s_{1}) \subset B(0,1)$ and
$B(y_{2},s_{2}) \subset B(0,1)$ with $y_1,y_2 \in \Gamma$
satisfying the assumption stated in the lemma, and let $\delta >
0$ be arbitrary. Our goal is to show that
\begin{displaymath}
|\E_{\pi_{\W}(B(y_{1},s_{1})\cap\Gamma)} \nabla^{\phi} \phi - \E_{\pi_{\W}(B(y_{2},s_{2})\cap \Gamma)}\nabla^{\phi} \phi| \lesssim \delta.
\end{displaymath}
We start by applying the triangle inequality:
\begin{displaymath}
|\E_{\pi_{\W}(B(y_{1},s_{1})\cap\Gamma)} \nabla^{\phi} \phi - \E_{\pi_{\W}(B(y_{2},s_{2})\cap \Gamma)}\nabla^{\phi} \phi|\leq D_1 + D_2 +D_3,
\end{displaymath}
where
\begin{align*}
D_1&:= |\E_{\pi_{\W}(B(y_{1},s_{1})\cap \Gamma)} \nabla^{\phi} \phi - \E_{\pi_{\W}(B(y_{1},s_{1})\cap \Gamma_j)} \nabla^{\phi_{j}} \phi_{j}|,\\
D_2&:=|\E_{\pi_{\W}(B(y_{2},s_{2})\cap \Gamma)} \nabla^{\phi} \phi - \E_{\pi_{\W}(B(y_{2},s_{2})\cap \Gamma_j)} \nabla^{\phi_{j}} \phi_{j}|,\\
D_3&:=|\E_{\pi_{\W}(B(y_{1},s_{1})\cap \Gamma_j)} \nabla^{\phi_{j}} \phi_{j} - \E_{\pi_{\W}(B(y_{2},s_{2})\cap \Gamma_j)} \nabla^{\phi_{j}} \phi_{j}|.
\end{align*}
To see that $D_{1} +
D_{2} \leq \delta$ for large enough $j$, writing temporarily $|U| := \calL^{2}(U)$ for $U \subset \W$, and
\begin{displaymath}
B:=\pi_{\W}(B(y_{1},s_{1})\cap \Gamma)\quad \text{and} \quad B_j:=\pi_{\W}(B(y_{1},s_{1})\cap \Gamma_j),
\end{displaymath}
we perform the following estimate:
\begin{align*}
D_1&=|\E_{B} \nabla^{\phi} \phi - \E_{B_j} \nabla^{\phi_{j}} \phi_{j}|
\\& = \frac{1}{|B|}\left|\int_{B}\nabla^{\phi}\phi\, d \mathcal{L}^2 - \int_{B_j} \nabla^{\phi_j}\phi_j \,d \mathcal{L}^2\right|
+ \frac{\left||B|-|B_j|\right|}{|B||B_j|}\left|\int_{B_j} \nabla^{\phi_j} \phi_j\,d \mathcal{L}^2\right|\\
&\leq \frac{1}{|B|}\left|\int_{B}\nabla^{\phi}\phi d \mathcal{L}^2 - \int_{B_j} \nabla^{\phi_j}\phi_j d \mathcal{L}^2\right|+ \frac{
|B_j\triangle
B|}{|B|}\|\nabla^{\phi_{j}} \phi_{j}\|_{\infty}.
\end{align*}
The same is true with "1" replaced by "2". By
\eqref{form10} and \eqref{eq:symm_diff}, and the uniform bound
$\|\nabla^{\phi_{j}} \phi_{j}\|_{\infty} \leq L$, both terms above
tend to zero as $j \to \infty$. So, it suffices to deal with
$D_{3}$.

We first make the subsequent estimate for certain
$s_{1}^{j} \geq s_{1}$ and $s_{2}^{j} \geq s_{2}$ to be fixed
soon:
\begin{displaymath}
D_3 \leq E_1 + E_2 + E_3,
\end{displaymath}
where
\begin{align*}
E_1&=|\E_{\pi_{\W}(B(y_{1},s_{1})\cap \Gamma_j)} \nabla^{\phi_{j}} \phi_{j} - \E_{\pi_{\W}(B(y_{1},s_{1}^{j}) \cap \Gamma_{j})} \nabla^{\phi_{j}}\phi_{j}|,\\
E_2&=|\E_{\pi_{\W}(B(y_{2},s_{2})\cap \Gamma_j)} \nabla^{\phi_{j}} \phi_{j} - \E_{\pi_{\W}(B(y_{2},s_{2}^{j}) \cap \Gamma_{j})} \nabla^{\phi_{j}}\phi_{j}|,\\
E_3&=|\E_{\pi_{\W}(B(y_{1},s_{1}^{j}) \cap \Gamma_{j})} \nabla^{\phi_{j}}\phi_{j} - \E_{\pi_{\W}(B(y_{2},s_{2}^{j}) \cap \Gamma_{j})} \nabla^{\phi_{j}}\phi_{j}|.
\end{align*}
It follows from the key
(counter) assumption \eqref{form8_blowup} that $E_{3} \leq 2/j$
if the balls $B(y_{1},s_{1}^{j})$ and $B(y_{2},s_{2}^{j})$ belong
to $\calG_{j}(\Gamma_{j},B(0,1))$. This requires that
\begin{itemize}
\item[(a)] $\dist(y_{1},\Gamma_{j}) \leq s_{1}/10$ and
$\dist(y_{2},\Gamma_{j}) \leq s_{2}/10$, \item[(b)] $s_{1}^{j}
\geq 2^{-j}$ and $s_{2}^{j} \geq 2^{-j}$, and \item[(c)]
$B(y_{1},s_{1}^{j})$ and $B(y_{2},s_{2}^{j})$ have $2^{j}$-thin
boundary with respect to $\calH^{3}|_{\Gamma_{j}}$.
\end{itemize}
Condition (a) is automatically satisfied for large enough $j$,
since $y_{1},y_{2} \in \Gamma$. Condition (b) is also trivially
satisfied for large enough $j$, since $s_{1}^{j} \geq s_{1}$ and
$s_{2}^{j} \geq s_{2}$. To achieve condition (c), fix an auxiliary
parameter $\tau > 0$, which will depend on
$\delta,y_{1},y_{2},s_{1},s_{2},\Gamma$ and $L$. Then, for large
enough $j$, Lemma \ref{l:A-small} guarantees the existence of
$s_{1}^{j} \in [s_{1},(1 + \tau)s_{1}]$ and $s_{2}^{j} \in
[s_{2},(1 + \tau)s_{2}]$ such that (c) is satisfied for
$B(y_{1},s_{1}^{j})$ and $B(y_{2},s_{2}^{j})$. We choose
$s_{1}^{j},s_{2}^{j}$ accordingly, and then $E_{3} \leq 2/j \leq
\delta$ (for $j \geq 2/\delta$).

It remains to estimate $E_{1}$ and $E_{2}$; by symmetry, we may
concentrate on $E_{1}$. Recalling Lemma \ref{annulusControl}, we
first have
\begin{displaymath} E_{1} \lesssim \frac{\calH^{3}(A(y_{1},s_{1},s_{1}^{j}) \cap \Gamma_{j})}{\calL^{2}(\pi_{\W}(B(y_{1},s_{1}^{j}) \cap \Gamma_{j}))} \lesssim_{L} \frac{\calH^{3}(A(y_{1},s_{1},(1 + \tau)s_{1}) \cap \Gamma_{j})}{s_{1}^{3}} \end{displaymath}
since $s_{1}^{j} \leq (1 + \tau)s_{1}$, and
$\calL^{2}(\pi_{\W}(B(y_{1},s_{1}^{j}) \cap \Gamma_{j})) \gtrsim_{L}
s_{1}^{3}$, see Remark \ref{remark0}. Lemma \ref{annulusControl2}
then tells us that
\begin{displaymath} \limsup_{j \to \infty} \calH^{3}(A(y_{1},s_{1},(1 + \tau)s_{1}) \cap \Gamma_{j}) \lesssim_{L} \calH^{3}(A(y_{1},s_{1},(1 + \tau)s_{1}) \cap \Gamma), \end{displaymath}
and now we choose $\tau =
\tau(\delta,y_{1},y_{2},s_{1},s_{2},\Gamma,L)$ so small that the
right hand side above is smaller than $c_{L}\delta$ for some small
constant $c_{L}$ depending only on $L$, and the same holds with
"$1$" replaced by "$2$". If $c_{L}$ is small
enough, this implies that $E_{1} + E_{2} \leq \delta$.

Combining the estimates for $D_{i}$ and $E_{i}$ above, we conclude
that
\begin{displaymath} |\E_{\pi_{\W}(B(y_{1},s_{1})\cap\Gamma)} \nabla^{\phi} \phi - \E_{\pi_{\W}(B(y_{2},s_{2})\cap \Gamma)}\nabla^{\phi} \phi| \leq 3\delta. \end{displaymath}
Since $\delta > 0$ was arbitrary, this proves that
$\E_{\pi_{\W}(B(y,s) \cap \Gamma)} \nabla^{\phi} \phi$ is
independent of the ball $B(y,s) \subset B(0,1)$ with $y \in
\Gamma$ and satisfying $\calH^{3}(\partial B(y,s) \cap \Gamma) =
0$.
\end{proof}

\subsubsection{Conclusion}\label{ss:conclusion}
In this section, we conclude the proof of Proposition
\ref{dreamProp}. Under the standing counter assumption derived in
Lemma \ref{l:normalization}, we will infer from Lemma \ref{l:a.s.const}
that $\nabla^{\phi} \phi$ is a.e. constant on
$\pi_{\W}(B(0,1) \cap \Gamma)\supseteq \pi_{\W}(B(0,b_L))$. This will contradict the conclusion of Lemma \ref{eq:limit_beta}, which gave  $\beta_{\CG,\Gamma,L}(B(0,1)) \geq \varepsilon$.

\begin{lemma}\label{l:constancy}
Let $\phi$ be as in Lemma \ref{eq:limit_beta}, with graph $\Gamma$. Then $\phi \in \Adm_{\CG,L}(B(0,1))$. \end{lemma}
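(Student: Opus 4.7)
To prove Lemma \ref{l:constancy}, Lemma \ref{eq:limit_beta} already gives that $\phi$ is intrinsic $L$-Lipschitz, so it remains to show that $\nabla^{\phi}\phi$ is almost everywhere constant on $\pi_{\W}(B(0,b_L))$. Lemma \ref{l:a.s.const} provides a constant $c \in \R$ such that
\[
\mathbb{E}_{\pi_{\W}(B(y,s) \cap \Gamma)} \nabla^{\phi}\phi = c
\]
for every ball $B(y,s) \subset B(0,1)$ with $y \in \Gamma$ and $\calH^{3}(\partial B(y,s) \cap \Gamma) = 0$. The plan is to identify this $c$ with $\nabla^{\phi}\phi(w_0)$ at every intrinsic differentiability point $w_0 \in \pi_{\W}(B(0,b_L))$ of $\phi$ such that $p_0 := w_0 \cdot \phi(w_0)$ lies strictly inside $B(0,1)$. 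By Theorem \ref{t:intrRadem}, such points form a set of full $\calL^{2}$-measure in $\pi_{\W}(B(0,b_L))$, modulo a harmless boundary issue addressed at the end.

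Fix such $w_0$ and write $c' := \nabla^{\phi}\phi(w_0)$. The key construction is a blow-up: consider the rescaled graphs $\Gamma_\rho := \delta_{\rho^{-1}}(p_0^{-1} \cdot \Gamma)$, which by Lemma \ref{l:transl} are intrinsic $L$-Lipschitz graphs through the origin, parametrized by
\[
\phi_\rho(y,t) = \rho^{-1}\phi_{p_0^{-1}}(\rho y, \rho^{2} t).
\]
Intrinsic differentiability of $\phi_{p_0^{-1}}$ at $0$ says that $\phi_{p_0^{-1}}(y,t) = c'y + o(\|(y,t)\|)$, and since $\|(\rho y, \rho^{2} t)\| = \rho \|(y,t)\|$, this yields immediately that $\phi_\rho(y,t) \to c'y$ pointwise; the local uniformity on $\W$ then follows from the equicontinuity implied by the intrinsic $L$-Lipschitz bound. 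Denoting the limit by $\psi(y,t) := c'y$, the graph $\Gamma^{\psi}$ coincides with the vertical subgroup $\{(c'y, y, t) : y,t \in \R\}$.

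For $\rho > 0$ small enough that $B(p_0, \rho) \subset B(0,1)$, and avoiding the at most countable set of radii with $\calH^{3}(\partial B(p_0, \rho) \cap \Gamma) > 0$, Lemma \ref{l:average} applied with $x = y = p_0$, $s = 1$, $r = \rho$ combined with Lemma \ref{l:a.s.const} gives
\[
c = \mathbb{E}_{\pi_{\W}(B(p_0,\rho) \cap \Gamma)} \nabla^{\phi}\phi = \mathbb{E}_{\pi_{\W}(B(0,1) \cap \Gamma_\rho)} \nabla^{\phi_\rho}\phi_\rho.
\]
Since $\Gamma^{\psi}$ is a vertical plane, its intersection with $\partial B(0,1)$ has $\calH^{3}$-measure zero, and therefore Lemma \ref{l:conv_grad} applied to $\phi_\rho \to \psi$ (along a sequence $\rho_n \to 0$), combined with the symmetric difference estimate \eqref{eq:symm_diff} for the corresponding measures, yields
\[
\lim_{\rho \to 0} \mathbb{E}_{\pi_{\W}(B(0,1) \cap \Gamma_\rho)} \nabla^{\phi_\rho}\phi_\rho = \mathbb{E}_{\pi_{\W}(B(0,1) \cap \Gamma^{\psi})} \nabla^{\psi}\psi = c'.
\]
Hence $c = c' = \nabla^{\phi}\phi(w_0)$, as required.

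The main technical step is verifying the locally uniform convergence $\phi_\rho \to \psi$ from intrinsic differentiability at a single point; this is where the blow-up philosophy really pays off, by converting pointwise intrinsic differentiability into a global statement about graphs to which Lemma \ref{l:conv_grad} can be applied. The boundary issue $\|p_0\| = 1$ is handled by replacing $B(0,1)$ with $B(0,R)$ for co-countably many $R \in (0,1)$ (those with $\calH^{3}(\Gamma \cap \partial B(0,R)) = 0$, a condition excluding at most countably many $R$) and letting $R \to 1$; the exceptional set $\{w_0 : p_0 \in \partial B(0,R)\}$ is contained in $\pi_{\W}(\Gamma \cap \partial B(0,R))$ and hence has $\calL^{2}$-measure zero by Lemma \ref{lipschitzLemma}.
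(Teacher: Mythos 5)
Your argument is correct in substance, but it takes a genuinely different route from the paper. The paper proves the constancy of $\nabla^{\phi}\phi$ by invoking the Lebesgue differentiation theorem on the homogeneous-type space $(\W,d_{\Gamma},\calL^{2})$, where $d_{\Gamma}$ is the graph distance: since $B_{d_{\Gamma}}(\pi_{\W}(y),s)=\pi_{\W}(B(y,s)\cap\Gamma)$, the averages that Lemma \ref{l:a.s.const} shows to be constant converge, at $\calL^{2}$-a.e.\ point and along radii with $\calH^{3}(\partial B(y_0,s_k)\cap\Gamma)=0$, directly to the pointwise intrinsic gradient. You instead perform a blow-up at each point of intrinsic differentiability: the rescaled functions $\phi_{\rho}=(\phi_{p_0^{-1}})_{\rho^{-1}}$ converge locally uniformly to the intrinsic linear map $\psi(y,t)=c'y$ with $c'=\nabla^{\phi}\phi(w_0)$ (this does follow from the definition of intrinsic differentiability, even uniformly on bounded sets, since $\|(\rho y,\rho^{2}t)\|=\rho\|(y,t)\|$), and then you identify the constant $c$ of Lemma \ref{l:a.s.const} with $c'$ by combining Lemma \ref{l:average} with the convergence machinery of Lemma \ref{l:conv_grad} and \eqref{eq:symm_diff}, using that the vertical plane $\Gamma^{\psi}$ meets $\partial B(0,1)$ in an $\calH^{3}$-null set and $\nabla^{\psi}\psi\equiv c'$. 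This is heavier than the paper's one-line differentiation argument (you rerun a limit procedure at every point rather than quoting a differentiation theorem), but it is self-contained given the lemmas already established in the section, and it avoids introducing the metric measure space $(\W,d_{\Gamma},\calL^{2})$ altogether. One small slip: your closing paragraph about the case $\|p_0\|=1$ does not actually dispose of it, since exhausting by good radii $R<1$ only covers graph points in the open ball, and $R=1$ need not be a radius with $\calH^{3}(\Gamma\cap\partial B(0,1))=0$. The cleaner remark (implicit in the paper as well) is that this case is vacuous: since $\phi(0)=0$, the bound \eqref{intrili2} gives $\|p_0\|=\|w_0\cdot\phi(w_0)\|\leq(1+L)\|w_0\|\lesssim_{L}b_{L}<1$ for every $w_0\in\pi_{\W}(B(0,b_L))$, so all relevant graph points lie well inside $B(0,1)$ and your blow-up applies at every point of intrinsic differentiability of $\pi_{\W}(B(0,b_L))$.
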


Recall that we have shown in Lemma \ref{l:a.s.const} that the
averages of $\nabla^{\phi}\phi$ are the same for a large class of
sets in $\W$. In order to conclude that then $\nabla^{\phi}\phi$
must be constant almost everywhere on $\pi_{\W}(B(0,1) \cap
\Gamma)$, and in particular on $\pi_{\W}(B(0,b_L))$, we apply a Lebesgue differentiation theorem for the
measure space $(\W,\mathcal{L}^2)$ endowed with the \emph{graph
distance} $d_{\Gamma}$. The latter can be defined for an arbitrary
intrinsic Lipschitz function $\phi:\mathbb{W} \to \mathbb{V}$ with
graph $\Gamma$ by setting
\begin{displaymath}
d_{\Gamma}(w,w') = d_{\mathbb{H}} ( w \cdot\phi(w),w'\cdot\phi(w')),\quad w,w'\in \mathbb{W}.
\end{displaymath}

\begin{proof}[Proof of Lemma \ref{l:constancy}]

The triple $(\W,d_{\Gamma},\calL^{2})$ is a metric space of
homogeneous type, so the usual Lebesgue differentiation theorem is
valid:
\begin{equation}\label{eq:leb_diff}
\lim_{r\to
0+}\frac{1}{\calL^{2}(B_{d_{\Gamma}}(w_0,r))}\int_{B_{d_{\Gamma}}(y,s)}f(w)
\, d\calL^{2} =f(w_0),\quad\mathcal{L}^2\text{ a.e. }w_0 \in
\mathbb{W},
\end{equation}
holds for all $f\in L^1_{\mathrm{loc}}(\W,\mathcal{L}^2)$. See for instance \cite{HKST}.

We apply this theorem for $f=\nabla^{\phi}\phi$. More precisely,
we choose for every Lebesgue point $w_0$ of $f = \nabla^{\phi}
\phi$ a sequence of radii $(s_{k})_{k \in \N}$ such that $s_{k}
\to 0$ and $\calH^{3}(\partial B(y_0,s_{k}) \cap \Gamma) = 0$ for
$\pi_{\W}(y_0) = w_0$. Then we apply \eqref{eq:leb_diff} to
conclude that
\begin{equation}\label{eq:exp_lim}
\lim_{k\to \infty}
\mathbb{E}_{B_{d_{\Gamma}}(w_0,s_k)}\nabla^{\phi}\phi=
\nabla^{\phi}\phi(w_0).
\end{equation}
Without loss of generality we may assume here that $w_0$ has been
chosen so that the pointwise intrinsic gradient
$\nabla^{\phi}\phi$ exists in $w_0$.

Finally, it follows immediately from the definition of the graph
distance that
\begin{equation}\label{eq:ball_proj}
B_{d_{\Gamma}}(\pi_{\mathbb{W}}(y), s) = \pi_{\W}(B(y,s)\cap
\Gamma)
\end{equation}
for all $y$ {on the graph of $\phi$} and $s>0$, and hence the
claim of the lemma follows from \eqref{eq:exp_lim}
and Lemma \ref{l:a.s.const}.
 \end{proof}

\begin{proof}[Proof of Proposition \ref{dreamProp}] According to Remark \ref{remark0}, it suffices prove \eqref{form29}.  Assume that this is not true. Then, by Lemma \ref{eq:limit_beta} and Lemma \ref{l:constancy}, there
exists an intrinsic Lipschitz function $\phi \colon \W \to \V$ with graph $\Gamma$ such that
\begin{displaymath} \phi \in \Adm_{\CG,L}(B(0,1)) \quad \text{and} \quad \beta_{\CG,\Gamma,L}(B(0,1)) \geq \varepsilon. \end{displaymath}
This contradiction proves Proposition \ref{dreamProp}.
\end{proof}

\subsection{Proof of the weak geometric lemma for constant gradient $\beta$-numbers}\label{ss:WGL_CG}
In this section, we apply Proposition \ref{dreamProp} to prove the
weak geometric lemma for the $\beta_{\CG}$-numbers, Theorem
\ref{t:CG-WGL}. In brief, if the integral appearing in Theorem
\ref{t:CG-WGL} is large, then we can find many balls
centered on the graph $\Gamma$ with large $\beta_{\CG}$-number
(this is quantified in Lemma \ref{discretisation}). Then,
Proposition \ref{dreamProp} implies that $\nabla^{\phi}\phi$
fluctuates strongly inside (the projections of) such balls, which
can be used to find a lower bound for
$\|\nabla^{\phi}\phi\|_{L^{2}(\pi_{\W}(B(x,R) \cap \Gamma)}^{2}$
with $x\in \Gamma$ and $R>0$. Since we also have the upper bound
$\lesssim LR^{3}$ for this quantity, we will eventually find the
correct upper bound for the integral in Theorem \ref{t:CG-WGL}.

\begin{definition} A collection $\mathcal{B}$ of balls in a metric space $(X,d)$ is \emph{pre-dyadic}, if
\begin{displaymath} \mathop{\sum_{B \in \mathcal{B}}}_{r/N \leq  \diam(B) < r} \chi_{B}(x) \lesssim_{N} 1, \quad x \in X, \: r > 0, \end{displaymath}
for any $N > 1$. A collection of balls (or sets in general) in
$(X,d)$ is \emph{dyadic}, if for every pair of sets $B,B'$ in the
family, either $B \cap B' = \emptyset$, or $B \subset B'$, or $B'
\subset B$.
\end{definition}

\begin{lemma}\label{discretisation} Let $\Gamma$ be an intrinsic Lipschitz graph, and let $x \in \Gamma$, $R > 0$. Write
\begin{displaymath} \int_{0}^{R} \int_{\Gamma \cap B(x,R)} \chi_{\{(y,s)\in \Gamma \times \mathbb{R}_+:\; \beta_{\CG}(B(y,s))\geq \varepsilon\}}(y,s) d\calH^{3}(y) \, \frac{ds}{s} =: C. \end{displaymath}
Then, there exists a pre-dyadic family of balls $B(x_{i},r_{i})
\subset B(x,2R)$ with $x_{i} \in \Gamma$ such that
$\beta_{\CG}(B(x_{i},r_{i})) \geq \varepsilon/2$ for every $i$,
every ball $B(x_{i},r_{i})$ has $A_{0}$-thin boundary
with respect to $\calH^{3}|_{\Gamma}$ for some $A_{0}
\geq 1$, and
\begin{displaymath} \sum_{i} \calH^{3}(B(x_{i},r_{i}) \cap \Gamma) \gtrsim C. \end{displaymath}
\end{lemma}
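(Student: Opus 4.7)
The plan is a dyadic discretization of the $s$-integral, combined with a thin-boundary adjustment via Lemma \ref{l:A-small} and a $5r$-covering. First I would split $C = \sum_{k \geq 0} I_k$, where
\begin{displaymath} I_k := \int_{2^{-k-1}R}^{2^{-k}R} \calH^3\bigl(\{y \in \Gamma \cap B(x,R) : \beta_{\CG}(B(y,s)) \geq \varepsilon\}\bigr) \, \tfrac{ds}{s}. \end{displaymath}
The top scale satisfies $I_0 \leq \log 2 \cdot \calH^3(\Gamma \cap B(x,R)) \lesssim_L R^3$, so either $\sum_{k \geq 1} I_k \geq C/2$ (the main case) or $C \lesssim_L R^3$ (in which case any single bad ball, slightly adjusted, does the job). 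On each scale $k \geq 1$, pigeonhole applied to $I_k$ produces a radius $s_k \in [2^{-k-1}R, 2^{-k}R]$ with $\calH^3(F_k) \geq I_k / \log 2$, where $F_k := \{y \in \Gamma \cap B(x,R) : \beta_{\CG}(B(y,s_k)) \geq \varepsilon\}$.

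Second, I would enlarge each $s_k$ using Lemma \ref{l:A-small} with $\delta = 1/8$: for every $y \in F_k$, this yields a radius $s'_y \in [s_k, (9/8) s_k]$ such that $B(y,s'_y)$ has $A_0$-thin boundary with respect to $\calH^3|_\Gamma$, where $A_0 := A_{1/8}$ is absolute. A short monotonicity argument --- exploiting that $\Adm_{\CG,L}(B(y,s'_y)) \subseteq \Adm_{\CG,L}(B(y,s_k))$ (since constancy of the intrinsic gradient on the larger projected ball $\pi_{\W}(B(y, b_L s'_y))$ implies constancy on $\pi_{\W}(B(y, b_L s_k))$), and that the supremum defining $\beta_{\CG}(B(y,s'_y))$ is taken over the larger set $\pi_{\W}(B(y,s'_y) \cap \Gamma)$ --- shows
\begin{displaymath} \beta_{\CG}(B(y,s'_y)) \geq \tfrac{s_k}{s'_y}\,\beta_{\CG}(B(y,s_k)) \geq \tfrac{8}{9}\varepsilon \geq \tfrac{\varepsilon}{2}, \end{displaymath}
so the $\beta$-threshold is preserved. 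Since $k \geq 1$ forces $s'_y \leq (9/8)(R/2) < R$, we also have $B(y,s'_y) \subset B(x,2R)$.

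The $5r$-covering theorem applied to $\{B(y,s'_y) : y \in F_k\}$ extracts a disjoint subfamily $\mathcal{B}_k$ whose $5$-fold enlargements cover $F_k$; the $3$-regularity of $\Gamma$ then yields $\calH^3(F_k) \lesssim_L \sum_{B \in \mathcal{B}_k} \calH^3(B \cap \Gamma)$. The union $\mathcal{B} := \bigcup_{k \geq 1} \mathcal{B}_k$ is the sought family: each ball satisfies $\beta_{\CG} \geq \varepsilon/2$, has $A_0$-thin boundary, and lies in $B(x,2R)$, and summing over $k$ gives $\sum_{B \in \mathcal{B}} \calH^3(B \cap \Gamma) \gtrsim_L \sum_{k \geq 1} I_k \gtrsim C$. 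The pre-dyadic property holds because balls inside any single $\mathcal{B}_k$ are pairwise disjoint, while balls of diameter within any prescribed range $[r/N, r]$ lie in only $O(\log N)$ of the scales $\mathcal{B}_k$. The main (modest) difficulty throughout is bookkeeping: orchestrating the enlargement factor $\delta$ so that thin boundary, preservation of $\beta_{\CG} \geq \varepsilon/2$, and containment in $B(x,2R)$ are simultaneously achieved. Taking $\delta = 1/8$ and discarding the top scale $k = 0$ accomplishes all three.
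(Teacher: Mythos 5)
Your argument is correct and follows essentially the same route as the paper's: a dyadic decomposition of the $s$-integral, a covering argument at each scale (you pigeonhole a single radius per scale and run the $5r$-covering lemma, where the paper uses per-point radii on a maximal separated net --- an immaterial difference), the radius adjustment via Lemma \ref{l:A-small} with $\delta = 1/8$ combined with the near-monotonicity $\beta_{\CG}(B(y,s')) \geq (s/s')\,\beta_{\CG}(B(y,s))$, which you correctly justify via the inclusion of admissible classes, and pre-dyadicity from within-scale disjointness. The only blemish --- the top-scale ball may reach out to $B(x,\tfrac{17}{8}R)$ rather than $B(x,2R)$ --- is shared by the paper's own proof at its scale $j=1$ and is harmless in the application.
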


\begin{proof} Write $\Gamma_{R} := \Gamma \cap B(x,R)$. For $j \geq 1$, let
\begin{displaymath} E_{j} := \{y \in \Gamma_{R} : \beta_{\CG}(B(y,s)) \geq \varepsilon \text{ for some } s \in (2^{-j}R , 2^{-j + 1}R]\}. \end{displaymath}
Let $P_{j} \subset E_{j}$ be a maximal $2^{-j + 5}R$-separated
subset, and for each $y \in P_{j}$, choose a radius $s_{y} \in
(2^{-j}R , 2^{-j + 1}R]$ such that $\beta_{\CG}(B(y,s_{y})) \geq
\varepsilon$. Then, any pair of balls
$B(y_{1},2s_{y_{1}}),B(y_{2},2s_{y_{2}})$ with distinct
$y_{1},y_{2} \in P_{j}$ is disjoint. Further, since the sets
$B(y,2^{-j + 7}R) \cap \Gamma$ with $y \in P_{j}$ cover $E_{j}$,
we have
\begin{displaymath} \calH^{3}(E_{j}) \leq \sum_{y \in P_{j}} \calH^{3}(B(y,2^{-j + 7}R) \cap \Gamma) \sim \sum_{y \in P_{j}} (2^{-j}R)^{3} \sim \sum_{y \in P_{j}} \calH^{3}(B(y,s_{y}) \cap \Gamma). \end{displaymath}
Next, observe that for any $y \in \Gamma_{R}$, we have
\begin{displaymath} \int_{2^{-j}R}^{2^{-j + 1}R} \chi_{\{(y,s) \in \Gamma \times \R_{+} : \beta_{\CG}(y,s)) \geq \varepsilon\}}(y,s) \, \frac{ds}{s}
\lesssim \chi_{E_{j}}(y), \end{displaymath} since the left hand
side is always bounded by $\lesssim 1$, and it is evidently zero
for $y \notin E_{j}$. It follows that
\begin{align*} \sum_{j = 1}^{\infty} \calH^{3}(E_{j}) & = \sum_{j = 1}^{\infty} \int_{\Gamma_{R}} \chi_{E_{j}}(y) \, d\calH^{3}(y) \\
&\gtrsim \sum_{j = 1}^{\infty} \int_{\Gamma_{R}} \int_{2^{-j}R}^{2^{-j +1}R} \chi_{\{(y,s) \in \Gamma \times \R_{+} : \beta_{\CG}(B(y,s)) \geq \varepsilon\}}(y,s) \, d\calH^{3}(y) \, \frac{ds}{s}\\
& = \int_{0}^{R} \int_{\Gamma_{R}} \chi_{\{(y,s) \in \Gamma \times
\R_{+} : \beta_{\CG}(B(y,s)) \geq \varepsilon\}}(y,s) \,
d\calH^{3}(y) \, \frac{ds}{s} = C, \end{align*} and consequently
\begin{equation}\label{form17} \sum_{j = 1}^{\infty} \sum_{y \in P_{j}} \calH^{3}(B(y,s_{y}) \cap \Gamma) \gtrsim C. \end{equation}
Next, observe that if $\beta_{\CG}(B(y,s)) \geq \varepsilon$ and $s' \in [s,\frac{9}{8}s]$, then $\beta_{\CG}(B(y,s')) \geq \varepsilon/2$. Then,
for every ball $B(y,s_{y})$, apply Lemma \ref{l:A-small} with
$\delta = 1/8$ and $\mu = \calH^{3}|_{\Gamma}$ to
find a radius $s_{y}' \in [s_{y},\frac{9}{8}s_{y}]$ such that
$B(y,s_{y}')$ has $A_{1/8}$-thin boundary with respect to
$\calH^{3}|_{\Gamma}$, and $\beta_{\CG}(B(y,s_{y}')) \geq
\varepsilon/2$. Then the $3$-regularity of $\Gamma$ implies that \eqref{form17} holds for the balls
$B(y,s_{y}')$ as well. Finally, since the balls $B(y,s_{y}')$ with
$y$ in a fixed set $P_{j}$ are disjoint, the collection
\begin{displaymath} \{B(y,s_y'): y \in P_j, j \in \N\}  \end{displaymath}
is pre-dyadic. Indeed, if $x \in \He$, $N > 1$ and $r > 0$ are
given, there are clearly at most $\lesssim_{N}1$ indices $j$ such
that $x \in B(y,s_{y}')$ and $r/N \leq \diam(B(y,s_{y}')) < r$ for
some $y \in P_{j}$. The proof is complete. \end{proof}

The next lemma "refines" a pre-dyadic family of balls into a
dyadic one.

\begin{lemma}\label{predyadic} Let $(X,d,\mu)$ be a metric measure space, and let $\delta > 0$ and $A \geq 1$. Assume that $\calB$ is a pre-dyadic family of balls with $\sup\{\diam(B) : B \in \calB\} < \infty$, and with $\sum_{B \in \calB} \mu(B) = C$. Moreover, assume that every ball $B \in \calB$ contains a (possibly empty) family of disjoint sub-balls $\calF(B)$ such that that $\diam(B') \geq \delta \diam(B)$ for $B' \in \calF(B)$. Finally, assume that all the balls $B'$ in the families $\calB$ and $\calF(B)$, $B \in \calB$, have $A$-thin boundary, and the balls $B \in \calB$ have the doubling property
\begin{displaymath} \mu(5B) \leq A\mu(B). \end{displaymath}
Then, there exists a dyadic sub-collection $\{B_{j}\} \subset \calB$ such that 
\begin{enumerate}
\item[(i)] $\sum \mu(B_{j}) \gtrsim_{A,\delta} C,$ and
\item[(ii)] if $B_j \subset B_{j'}$ for $j \neq j'$ then $B_j \cap B'= \emptyset$ for all $B' \in \calF(B_{j'})$ or $B \subset B'$ for some $B' \in \calF(B_{j'})$.
\end{enumerate}
\end{lemma}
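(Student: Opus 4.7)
The approach is a greedy selection that processes balls in decreasing order of diameter, growing a dyadic family $\calD = \{B_j\}$ and using the thin-boundary and doubling hypotheses to bound the mass of the rejected balls. After rescaling so that $\sup_{\calB}\diam(B) \leq 1$, fix $\rho \in (0,1)$ (any convenient choice; $\rho = 1/2$ suffices) and stratify $\calB = \bigsqcup_{k \geq 0} \calB_k$ where $B \in \calB_k$ iff $\diam(B) \in [\rho^{k+1},\rho^k)$. Process $\calB_0, \calB_1, \ldots$ in order (and within each $\calB_k$ in arbitrary order); a candidate $B \in \calB_k$ is admitted to $\calD$ iff both (D) every already-accepted $B^* \in \calD$ is either disjoint from $B$ or contains $B$, and (F) the smallest $B^* \in \calD$ currently containing $B$ (if any) has the property that $B$ is disjoint from every $B' \in \calF(B^*)$ or $B$ is contained in some $B' \in \calF(B^*)$. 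Condition (D) inductively guarantees the dyadic property of $\calD$. Moreover, (F) for the immediate parent propagates along nested chains: if $B \subset B^* \subset B^{**}$ and (F) holds at the $B^*$ step relative to $B^{**}$, then either $B^* \subset B'$ for some $B' \in \calF(B^{**})$ (in which case $B \subset B'$ as well), or $B^*$ is disjoint from $\calF(B^{**})$ (in which case so is $B$). Hence $\calD$ satisfies conclusion (ii).

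To bound the rejected mass, split the discarded balls into three classes according to the reason for rejection. \emph{Class (sam):} $B \in \calB_k$ straddles an accepted $B^* \in \calB_k$ at the same scale. Since $B \cap B^* \neq \emptyset$ and $\diam(B) \leq \diam(B^*)$, one has $B \subset 3B^*$; the pre-dyadic property of $\calB_k$ and the doubling bound $\mu(3B^*) \leq A \mu(B^*)$ give $\sum_{(sam)}\mu(B) \lesssim_A \mu(B^*)$ for each fixed accepted $B^*$. \emph{Class (lar):} $B \in \calB_k$ straddles a strictly larger accepted $B^* \in \calD$ at scale $k^* < k$; the centre of $B$ lies in the annulus $A(x^*,r^*(1-\lambda),r^*(1+\lambda))$ of relative width $\lambda = \rho^k/r^* \lesssim \rho^{k-k^*}$, and $B \subset 2B^*$. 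The $A$-thin-boundary property of $B^*$ plus doubling yield $\mu(\text{annulus}\cap 2B^*) \lesssim_A \rho^{k-k^*}\mu(B^*)$; pre-dyadicity controls the overlap of $\calB_k$, and summing a geometric series in $k > k^*$ produces $\sum_{(lar)}\mu(B) \lesssim_A \rho\,\mu(B^*)$ for each fixed $B^*$. \emph{Class (F):} $B$ straddles some $B' \in \calF(B^*)$ with $\diam(B') \geq \delta \diam(B^*)$. The same annulus argument applied to $B'$, together with the inclusion $2B' \subset 2B^*$, the disjointness of $\calF(B^*)$ (which implies bounded overlap of $\{2B'\}_{B'\in\calF(B^*)}$), and doubling of $B^*$, yields $\sum_{(F)}\mu(B) \lesssim_{A,\delta} \rho\,\mu(B^*)$ for each fixed $B^*$. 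Summing the three bounds over $B^* \in \calD$,
\begin{displaymath}
C = \sum_{B \in \calB} \mu(B) = \sum_{B^* \in \calD} \mu(B^*) + \sum_{\text{rejected}} \mu(B) \lesssim_{A,\delta} \sum_{B^* \in \calD} \mu(B^*),
\end{displaymath}
which is conclusion (i).

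The main obstacle is the same-scale rejection, where the thin-boundary hypothesis supplies no smallness (here $\lambda \sim 1$). One relies instead on the inclusion $B \subset 3B^*$, pre-dyadicity and doubling; this is enough to bound the same-scale rejected mass by $\lesssim_A \mu(B^*)$ per accepted $B^*$, but without any small prefactor $\rho$. Consequently the implicit constant in $\sum \mu(B_j) \gtrsim_{A,\delta} C$ cannot be forced close to $1$ by shrinking $\rho$, but it is nevertheless a fixed positive constant depending only on $A$ and $\delta$, in accordance with the statement of the lemma.
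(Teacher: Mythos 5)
Your construction itself (greedy admission in decreasing order of scale, with the dyadic property and conclusion (ii) enforced at admission time and propagated to higher ancestors) is sound, and it is a genuinely different route from the paper's: the paper first uses the $5r$-covering lemma together with pre-dyadicity and doubling to pass to a \emph{disjoint} subfamily at each scale, and then keeps only every other scale band, with band ratio $N_{\delta} \geq 2/\delta$, so that every discarded ball automatically lies in a very thin annulus of a strictly larger retained ball or of one of its $\calF$-sub-balls; there are then no same-scale or comparable-scale interactions at all. Your classes (sam) and (lar) do work, modulo harmless bookkeeping (in (sam) one gets $B \subset 5B^{*}$ rather than $3B^{*}$ since within a band $\diam(B)$ may exceed $\diam(B^{*})$ by a factor $1/\rho$, and the taxonomy should also cover a candidate that \emph{contains} an earlier-accepted same-band ball, which your rule (D) rejects but which is handled by the same estimate).

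The genuine gap is in class (F), exactly at the point the paper's scale gap is designed to eliminate. First, a rejected $B$ straddling $B' \in \calF(B^{*})$ may have $\diam(B)$ comparable to $\diam(B')$: since $\diam(B')$ is only bounded below by $\delta\,\diam(B^{*})$ while $\diam(B)$ can be nearly $\diam(B^{*})$, the relative annulus width $\lambda = \diam(B)/r'$ can be of order $1/\delta$, and in that regime the $A$-thin-boundary hypothesis for $B'$ gives nothing (indeed $B$ need not even lie in $2B'$, the only region the hypothesis controls), so the claimed bound $\sum_{(F)}\mu(B) \lesssim_{A,\delta}\rho\,\mu(B^{*})$ is unjustified as stated. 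This part is patchable: for the $\lesssim \log_{1/\rho}(1/\delta)$ bands with $\rho^{k} \gtrsim \delta\,\diam(B^{*})$, use that all (F)-rejected balls charged to $B^{*}$ lie inside $B^{*}$ and have bounded overlap, giving $\lesssim_{\rho}\mu(B^{*})$ per band. Second, and more seriously, your summation over $B' \in \calF(B^{*})$ rests on the claim that disjointness of the $B'$ implies bounded overlap of the dilates $2B'$. That is false in a general metric measure space (no metric doubling is assumed): in a star-shaped tree, arbitrarily many disjoint balls of comparable size can have dilates all containing the branch point, and $\calF(B^{*})$ may be infinite. Since the thin-boundary hypothesis bounds annuli by $A\lambda\,\mu(2B')$, you would need $\sum_{B' \in \calF(B^{*})}\mu(2B') \lesssim \mu(B^{*})$, which your argument does not deliver. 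The paper never invokes such an overlap statement: after the covering-lemma disjointification, the total mass of the removed balls at a given scale equals the measure of their union, which is covered by the thin annuli, and only the measures $\mu(B')$ of the retained balls enter. (The paper's final display does tacitly use $\mu(2B') \lesssim \mu(B')$ for the $\calF$-balls, which holds in the intended application, but it does not rely on any bounded-overlap claim of the kind you assert.) As written, the class (F) estimate is the missing piece of your proof.
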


\begin{proof} Split the balls in $\calB$ into families $\calB_{j} := \{B \in \calB : N^{j - 1} \leq \diam(B) < N^{j}\}$, where $N = N_{\delta} \geq 2/\delta$. By the assumption of the family $\calB$ being pre-dyadic, and the doubling property of the balls $B \in \calB$, we can (by applying the $5r$-covering lemma to the balls in $\calB_{j}$) choose a subfamily of $\calB_{j}$ \textbf{disjoint} balls (still denoted by $\calB_{j}$) with
\begin{displaymath} \sum_{j} \sum_{B \in \calB_{j}} \mu(B) \gtrsim_{A,\delta} C. \end{displaymath}
Next, observe that either
\begin{displaymath} \sum_{j} \sum_{B \in \calB_{2j}} \mu(B) \gtrsim_{A,\delta} C \quad \text{or} \quad \sum_{j} \sum_{B \in \calB_{2j + 1}} \mu(B) \gtrsim_{A,\delta} C. \end{displaymath}
We assume that the former option holds, and from now on we will
only consider balls $B \in \calB_{2j}$, $j \in \Z$. Note that if
$j < i$, $B_{1} \in \calB_{2j}$, $B_{2} \in \calB_{2i}$ and
$B'_{2} \in \calF(B_{2})$, then
\begin{equation}\label{form12} \frac{\diam(B_{1})}{\diam(B_{2}')} \leq \frac{N^{2j}}{\delta N^{2i - 1}} = \frac{N^{2(j - i) + 1}}{\delta} \leq \frac{N^{j - i}}{\delta} \leq 2^{j - i}, \end{equation}
recalling that $N \geq 2/\delta$.

Since $\sup\{\diam(B) : B \in \calB\} < \infty$, the collection
$\calB_{2j}$ is empty for large enough $j$. Let $j_{0}$ be the
index of the largest non-empty collection $\calB_{2j_{0}}$, and
set
\begin{displaymath} \calG_{j_{0}} := \calB_{2j_{0}} \cup \bigcup_{B \in \calB_{2j_{0}}} \calF(B). \end{displaymath}
Inductively, for $j < i \leq j_{0}$, let $\calR_{j}^{i}$ be the
collection of balls in $\calB_{2j}$, which meet the boundary of
one of the balls in $\calG_{i}$, let
\begin{displaymath} \calG_{j}^{0} := \calB_{2j} \setminus \bigcup_{j < i \leq j_{0}} \calR_{j}^{i}, \end{displaymath}
and finally
\begin{displaymath} \calG_{j} := \calG_{j}^{0} \cup \bigcup_{B \in \calG_{j}^{0}} \calF(B). \end{displaymath}
Note that if $B \in \calR_{j}^{i}$, $j < i \leq j_{0}$, then there
is some ball $B' = B(y,r) \in \calG_{i}$ such that
\begin{align*} B \subset \{x : \dist(x,\partial B') \leq \diam(B)\} & \subset \{x : \dist(x,\partial B') \leq 2^{j - i} \diam(B')\}\\
& \subset B(y,2r) \cap A(y,(1 - 2^{j - i - 1})r,(1 + 2^{j - i +
1})r).  \end{align*} (The second inclusion follows from
\eqref{form12}).  Consequently, by the disjointness of the balls
in $\calB_{2j} \supset \calR_{j}^{i}$, and the $A$-thinness of
boundaries of the balls in $\calB \supset \calG_{i}$, we infer
that
\begin{align*} \sum_{B \in \calR_{j}^{i}} \mu(B) & = \mu\left(\bigcup_{B \in \calR_{j}^{i}} B \right)\\
& \leq \mu\left(\bigcup_{B(y,r) \in \calG_{i}} B(y,2r) \cap A(y,(1 - 2^{j - i - 1})r,(1 + 2^{j - i + 1})r) \right)\\
& \leq \sum_{B(y,r) \in \calG_{i}} \mu(B(y,2r) \cap A(y,(1 - 2^{j
- i - 1})r,(1 + 2^{j - i + 1})r))\\& \lesssim A2^{j - i} \sum_{B'
\in \calG_{i}} \mu(B'). \end{align*} Finally, since every ball in
$\calB_{2j}$ either belongs to one of the collections
$\calR_{j}^{i}$, with $j < i \leq j_{0}$, or to $\calG_{j}^{0}
\subset \calG_{j}$, we have
\begin{align} C \lesssim_{A,\delta} \sum_{j \leq j_{0}} \sum_{B \in \calB_{2j}} \mu(B) & \leq \sum_{j \leq j_{0}} \left[ \sum_{j < i \leq j_{0}} \sum_{B \in \calR_{j}^{i}} \mu(B) + \sum_{B \in \calG_{j}} \mu(B) \right] \notag\\
& \lesssim A\sum_{j \leq j_{0}} \left[ \sum_{j < i \leq j_{0}} 2^{j - i} \sum_{B' \in \calG_{i}} \mu(B') + \sum_{B \in \calG_{j}} \mu(B) \right] \notag\\
&\label{form13} \sim A\sum_{j \leq j_{0}} \sum_{B \in \calG_{j}}
\mu(B). \end{align} Define
\begin{displaymath} \calG := \bigcup_{j \leq j_{0}} \calG_{j}. \end{displaymath}
Inspecting the definition of $\calG_{j}$, it is clear that $\calG$
can indeed be written as
\begin{displaymath} \calG = \{B_{j}\} \cup \bigcup_{j} \calF(B_{j}) \end{displaymath}
for a certain sub-collection $\{B_{j}\} \subset \calB$. Since
$\sum_{B' \in \calF(B)} \mu(B') \leq \mu(B)$ for all balls $B \in
\calB$, we infer from \eqref{form13} that $\sum \mu(B_{j})
\gtrsim_{A,\delta} C$, as required by condition (i). The fact that $\{B_{j}\}$ is a
dyadic family of balls follows immediately from the inductive
definition. In fact, the construction shows that even the family $\calG$ is dyadic, and this combined with \eqref{form12} implies condition (ii). The proof is complete. \end{proof}

\subsubsection{Proof of the Theorem \ref{t:CG-WGL}} During the proof, we will abbreviate $\lesssim_{\varepsilon,L}$ and $\gtrsim_{\varepsilon,L}$ to simply $\lesssim$ and $\gtrsim$. We apply Lemma \ref{discretisation} to infer that if
\begin{equation}\label{definitionOfC} \int_{0}^{R} \int_{\Gamma \cap B(x,R)} \chi_{\{(y,s)\in \Gamma \times \mathbb{R}_+:\; \beta_{\CG}(B(y,s))\geq \varepsilon\}}(y,s) d\calH^{3}(y) \, \frac{ds}{s} =: C, \end{equation}
then there exists a pre-dyadic family of balls $\{B_{j}\}$,
contained in $B(x,2R)$ and centred on $\Gamma$, with $A_0$-thin boundaries, with $\beta_{\CG}(B_{j}) \geq
\varepsilon/2$, and such that
\begin{equation}\label{form16} \sum_{j} \diam(B_{j})^{3} \gtrsim \sum_{j} \calH^{3}(B_{j} \cap \Gamma) \gtrsim C. \end{equation}
For each such ball $B_{j}$, we use Proposition \ref{dreamProp} to
find another ball $\hat{B}_{j} \subset B_{j}$ with $A_\varepsilon$-thin
boundary, such that $\diam(\hat{B}_{j}) \geq
\delta_{\varepsilon,L} \diam(B)$,
\begin{equation}\label{form27} \calL^{2}(\pi_{\W}(\hat{B}_{j} \cap \Gamma)) \sim \diam(B_{j})^{3}, \end{equation}
and
\begin{equation}\label{form15} |\E_{\pi_{\W}(\hat{B}_{j} \cap \Gamma)}\nabla^{\phi}\phi - \E_{\pi_{\W}(B_{j} \cap \Gamma)}\nabla^{\phi}\phi| \geq \delta_{\varepsilon,L}. \end{equation}

Note that every ball in the family $\{B_j, \hat{B}_j\}$ has $\max\{A_0,A_\varepsilon\}$-thin boundary. Hence, using Lemma \ref{predyadic} on the metric space
$(\He,d_{\He},\calH^{3}|_{\Gamma})$, with $\calB := \{B_{j}\}$ and
$\calF(B_{j}) := \{\hat{B}_{j}\}$, we find a dyadic sub-collection
$\{B_{j_{i}}, \hat{B}_{j_{i}}\}$, which still satisfies $\sum_{i}
\calH^{3}(B_{j_{i}} \cap \Gamma) \gtrsim C$. To avoid the double
indices, we assume that the family $\{B_{j}, \hat{B}_{j}\}$ is
dyadic itself. Since $\pi_{\W}$ is injective on $\Gamma$, it
follows that the family $\{A^{j},B^{j}\}$ with $A^{j} := \pi_{\W}(\hat{B}_{j} \cap \Gamma)$ and $B^{j} := \pi_{\W}(B_{j} \cap \Gamma)$ is also dyadic. Moreover, by (ii) of Lemma \ref{predyadic}, the family $\{A^{j},B^{j}\}$ even satisfies slightly more, namely if $j \neq j'$, then
\begin{equation}\label{betterThanDyadic} B^{j} \subset B^{j'} \quad \Longrightarrow \quad B^{j} \subset A^{j'} \text{ or } B^{j} \cap A^{j'} = \emptyset. \end{equation}

Holding that thought, we now make some remarks of more abstract
nature. Given two sets $A,B \subset \W$ with $A \subset B$, let
$V_{A,B}$ be the subspace of $L^{2}(\W)$ consisting of those
functions which are zero outside $B$, are constant on both $B
\setminus A$ and $A$, and have integral zero. Let $\pi_{A,B} :=
\pi_{V_{A,B}}$ be the orthogonal projection onto this subspace.
Finding an explicit formula for $\pi_{A,B}$ is simple, as
$V_{A,B}$ is one-dimensional, and spanned by the unit vector
\begin{displaymath} e_{A,B} := \left(\frac{|B \setminus A|}{|A||B|} \right)^{1/2} \chi_{A} - \left(\frac{|A|}{|B||B \setminus A|}\right)^{1/2} \chi_{B \setminus A}. \end{displaymath}
Here, and for the rest of the proof, we write $|U| :=
\calL^{2}(U)$ for $U \subset \W$. It follows that $\pi_{A,B}(f) =
(f \cdot e_{A,B})e_{A,B}$, and in particular
\begin{align} \|\pi_{A,B}(f)\|_{2}^{2} = |f \cdot e_{A,B}|^{2} & = \frac{|B \setminus A|}{|A||B|} \left( \int_{A} f\, d\mathcal{L}^2- \frac{|A|}{|B \setminus A|} \int_{B \setminus A} f d\mathcal{L}^2\right)^{2}\notag\\
& = \frac{|B \setminus A|}{|A||B|} \left(\frac{|B|}{|B \setminus A|} \int_{A} f\,d\mathcal{L}^2 - \frac{|A|}{|B \setminus A|} \int_{B} f\,d\mathcal{L}^2 \right)^{2}\notag\\
&\label{form14} = \frac{|A||B|}{|B \setminus A|}
\left(\frac{1}{|A|} \int_{A} f\, d\mathcal{L}^2 - \frac{1}{|B|}
\int_{B} f \,d\mathcal{L}^2\right)^{2} \end{align} 

We apply these observations to the subspaces $V_{A^{j},B^{j}}$ with the family $\{A^{j},B^{j}\}$ defined above. Since this family is dyadic and satisfies \eqref{betterThanDyadic}, it is easy to check that the subspaces $V_{A^{j},B^{j}}$ are pairwise orthogonal. Now, let 
\begin{displaymath} f := \nabla^{\phi}\phi \chi_{\pi_{\W}(B(x,2R) \cap \Gamma)} \end{displaymath}
and recall that $\|f\|_{L^{\infty}} \lesssim_{L} 1$. Since  $B^j \subset \pi_{\W}(B(x,2R)
\cap \Gamma)$ for all $j \in \N$, it follows from Bessel's inequality, \eqref{form14}, \eqref{form15}, \eqref{form27} and \eqref{form16} (in this order) that
\begin{align*} R^{3} \gtrsim \|f\|_{2}^{2} \geq \sum_{j} \|\pi_{A^{j},B^{j}}(f)\|_{2}^{2} & = \sum_{j} \frac{|A^{j}||B^{j}|}{|B^{j} \setminus A^{j}|}\left[ |\E_{A^{j}}(f) - \E_{B^{j}}(f)|^{2} \right] \\
& \gtrsim \delta_{\varepsilon,L}^{2} \sum_{j} |A^{j}| \sim
\delta_{\varepsilon,L}^{2} \sum_{j} \diam(B_{j})^{3} \gtrsim
\delta_{\varepsilon,L}^{2} C. \end{align*} Recalling the
definition of $C$ from \eqref{definitionOfC}, this completes the proof of Theorem
\ref{t:CG-WGL}.

\subsection{Proof of the weak geometric lemma for intrinsic Lipschitz graphs}
Armed with Theorem \ref{t:CG-WGL}, we are nearly ready to prove
the WGL for intrinsic Lipschitz graphs (Theorem \ref{wglintlg}).
First, we need a few short lemmas.
 The first one states that any intrinsic Lipschitz graph $\Gamma$ has a "big flat piece" inside any ball $B(x,r)$ with $x \in \Gamma$:

\begin{lemma}\label{bigFlatPiece}
For every $\varepsilon > 0$ and $L \geq 1$, there exists $\delta =
\delta_{\varepsilon,L} > 0$ with the following property. If
$\Gamma$ is an intrinsic $L$-Lipschitz graph, $x \in \Gamma$ and
$r > 0$, then there exists a ball $B(y,s) \subset B(x,r)$ with $y
\in \Gamma$ and $s \geq \delta r$ such that
$\beta_{\Gamma}(B(y,s)) \leq \varepsilon$.
\end{lemma}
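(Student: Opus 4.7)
The plan is to combine Theorem~\ref{t:CG-WGL} with a compactness argument rooted in Proposition~\ref{constantGradientGraphs}. First I would apply Theorem~\ref{t:CG-WGL} with a small parameter $\eta = \eta(\varepsilon, L) > 0$, to be fixed later. A Chebyshev argument in the $(y,s)$-variable against the Carleson estimate shows that some sub-ball $B(y_1, s_1) \subset B(x, r/2)$ with $y_1 \in \Gamma$ and $s_1 \geq \delta_1(\eta, L) \cdot r$ satisfies $\beta_{\CG}(B(y_1, s_1)) \leq \eta$: the slab $(\Gamma \cap B(x,r/2)) \times [\delta_1 r, r/2]$ carries $d\calH^{3}\,ds/s$-mass $\sim \log(1/\delta_1)\,r^{3}$, which exceeds the Carleson constant $C(\eta,L)\,r^{3}$ once $\delta_1$ is small enough. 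Remark~\ref{remk:bcg} then provides an intrinsic $L$-Lipschitz $\psi \in \Adm_{\CG,L}(B(y_1, s_1))$ whose graph $\Gamma^{\psi}$ lies within Hausdorff distance $\eta s_1$ of $\Gamma$ inside $B(y_1,s_1)$, with $\nabla^{\psi}\psi$ constant a.e.\ on $\pi_{\W}(B(y_1, b_L s_1))$.

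The second step would reduce Lemma~\ref{bigFlatPiece} to the following \emph{sub-claim}: for every $\eta'>0$ and $L\ge 1$ there exists $\delta_2 = \delta_2(\eta', L) > 0$ such that whenever $\sigma : \W \to \V$ is intrinsic $L$-Lipschitz with $\nabla^{\sigma}\sigma \equiv c$ a.e.\ on $\pi_{\W}(B(0, b_L))$ and $0 \in \Gamma^{\sigma}$, some ball $B(p, \delta_2) \subset B(0, 1)$ with $p \in \Gamma^{\sigma}$ satisfies $\beta_{\Gamma^{\sigma}}(B(p, \delta_2)) \le \eta'$. Applying this (via Lemma~\ref{l:transl}) to the rescaling-translation of $\psi$ that normalises $B(y_1, s_1)$ to $B(0,1)$ then produces a ball $B(p_2, s_2) \subset B(y_1, s_1)$ with $p_2 \in \Gamma^{\psi}$, $s_2 \gtrsim_{L} \delta_2 s_1$, and $\beta_{\Gamma^{\psi}}(B(p_2, s_2)) \le \eta'$.

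The sub-claim is the heart of the matter, and I would prove it by contradiction. A failing sequence would yield intrinsic $L$-Lipschitz $\sigma_j$ with $\nabla^{\sigma_j}\sigma_j \equiv c_j$ on $\pi_{\W}(B(0, b_L))$ and $|c_j| \le L$, such that no admissible sub-ball of radius $\ge \delta_j \to 0$ has $\beta_{\Gamma^{\sigma_j}} \le \eta'$. Proposition~3.10 in \cite{FS} yields (along a subsequence) a locally uniform limit $\sigma_\infty$, intrinsic $L$-Lipschitz, and one may assume $c_j \to c_\infty$. The distributional identity~\eqref{eq:distr} for $\sigma_j$ passes to the limit via locally uniform convergence of $\sigma_j$ and $\sigma_j^{2}$, yielding $\nabla^{\sigma_\infty}\sigma_\infty = c_\infty$ a.e.\ on $\pi_{\W}(B(0, b_L))$. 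By Theorem~\ref{t:intrRadem} there is an intrinsic differentiability point $w_\infty$ interior to this set; at the corresponding graph point $p_\infty := w_\infty \cdot \sigma_\infty(w_\infty)$, the intrinsic differential is an intrinsic linear map $\W \to \V$, whose intrinsic graph is a vertical plane through the origin. Consequently $\beta_{\Gamma^{\sigma_\infty}}(B(p_\infty, s)) \to 0$ as $s \to 0$; fixing $s_\infty > 0$ with $\beta_{\Gamma^{\sigma_\infty}}(B(p_\infty, s_\infty)) \le \eta'/3$ and transferring via local Hausdorff convergence $\Gamma^{\sigma_j} \to \Gamma^{\sigma_\infty}$ produces $p_j \in \Gamma^{\sigma_j}$ with $\beta_{\Gamma^{\sigma_j}}(B(p_j, s_\infty/2)) \le \eta'$ for all large $j$, contradicting the counter-assumption once $\delta_j < s_\infty/2$.

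To wrap up, I would combine the two approximations: picking $y_2 \in \Gamma$ within $\eta s_1$ of $p_2$ and using both the Hausdorff closeness of $\Gamma$ to $\Gamma^{\psi}$ in Step~1 and the vertical plane furnished by the sub-claim, one finds $\beta_{\Gamma}(B(y_2, s_2/2)) \lesssim \eta' + \eta/\delta_2$. Choosing $\eta' = \varepsilon/(2C)$ and then $\eta = \varepsilon \delta_2/(2C)$ (with $C$ absorbing the implicit constants) gives $\beta_{\Gamma}(B(y_2, s_2/2)) \le \varepsilon$ on a ball of radius $\gtrsim_{\varepsilon, L} r$, which is what the lemma asserts. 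I expect the main obstacle to be the sub-claim, where two delicate points must be verified: that the local constant-gradient condition is genuinely preserved under the compactness limit (via the distributional form of Burgers' equation) and that a single intrinsic differentiability point $w_\infty$ of $\sigma_\infty$ suffices to produce a flat ball of size contradicting the counter-assumption uniformly in $j$.
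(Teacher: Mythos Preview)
Your argument is correct, but it takes a substantially longer route than the paper's, and the detour turns out to be unnecessary in a rather instructive way. The paper argues directly by compactness: assume the lemma fails, normalise so that each counterexample $\Gamma_j$ satisfies $\beta_{\Gamma_j}(B(y,s))\ge\varepsilon$ for every sub-ball of $B(0,1)$ centred on $\Gamma_j$ with $s\ge 2^{-j}$, pass to a locally uniform limit $\Gamma$, and observe that then $\beta_\Gamma(B(y,s))\ge\varepsilon$ for \emph{every} sub-ball of $B(0,1)$ centred on $\Gamma$. This is impossible, because by Theorem~\ref{t:intrRadem} almost every point of $\Gamma$ has a vertical tangent subgroup, and any such point has arbitrarily small $\beta$-number at small scales. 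That is the whole proof.

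Now look at your sub-claim: its proof is exactly this compactness-plus-differentiability argument, applied to the special class of constant-gradient graphs. But notice that you never actually \emph{use} the constant-gradient hypothesis in the sub-claim's proof. You carefully verify (via the distributional identity \eqref{eq:distr}) that the constant-gradient condition passes to the limit $\sigma_\infty$, and then\ldots invoke only Theorem~\ref{t:intrRadem}, which holds for every intrinsic Lipschitz function. The constant gradient of $\sigma_\infty$ plays no role; you cannot appeal to Proposition~\ref{constantGradientGraphs} anyway, since the constancy is only on the bounded set $\pi_\W(B(0,b_L))$. So your sub-claim proof already proves Lemma~\ref{bigFlatPiece} in full generality, and the entire first step --- Theorem~\ref{t:CG-WGL}, the Chebyshev selection of a small-$\beta_{\CG}$ ball, the approximating $\psi$, and the final transfer from $\Gamma^\psi$ back to $\Gamma$ --- can simply be deleted. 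What your approach buys is nothing here; what it costs is the dependence on the much heavier Theorem~\ref{t:CG-WGL}.
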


\begin{proof} The proof is based on the compactness of intrinsic $L$-Lipschitz graphs,
and the existence of tangent subgroups. We make a counter
assumption that there exist $\varepsilon > 0$ and $L \geq 1$ with
the following properties. For every $j \in \N$, there exists an
intrinsic Lipschitz graph $\Gamma_{j} = \{w \cdot \phi_{j}(w) : w
\in \W\}$, and a ball $B(x_{j},r_{j})$ with $x_{j} \in \Gamma_{j}$
such that
\begin{displaymath} \beta_{\Gamma_{j}}(B(y,sr_j)) \geq \varepsilon \end{displaymath}
for all balls $B(y,sr_j) \subset B(x_{j},r_{j})$ with $y \in
\Gamma$ and $s \geq 2^{-j}$. Without loss of generality (or
recalling a similar reduction from the previous section), we may
assume that $x_{j} = \phi_{j}(0) = 0$, and $r_{j} = 1$ for all $j
\in \N$. Further, by the compactness results for intrinsic
$L$-Lipschitz functions, already employed in the previous section,
we may assume without loss of generality that the functions
$\phi_{j}$ converge locally uniformly to an intrinsic
$L$-Lipschitz function $\phi \colon \W \to \V$ with graph
$\Gamma$. One can then easily check (emulating the argument for
Lemma \ref{eq:limit_beta}) that
\begin{displaymath} \beta_{\Gamma}(B(y,s)) \geq \varepsilon \end{displaymath}
for any ball $B(y,s) \subset B(0,1)$ with $y \in \Gamma$.
But this is absurd: the function $\phi$ is intrinsically
differentiable at almost every point of $w \in \W$, which means
precisely that $\calH^{3}$-almost every point $y \in \Gamma$ has a vertical
tangent subgroup to $\Gamma$ (Theorem 4.15 in \cite{FSS11}). We record that a subgroup $\mathbb{G} \subset \He$ is a tangent group to $\Gamma$ in $y \in \G$ if
$$\lim_{h \ra \infty} \delta_{h}(y^{-1}\cdot \Gamma)=\mathbb{G}$$
with respect to the Hausdorff convergence of compact subsets of $\He$.   But this is
evidently impossible for such $y \in \Gamma$ that
$\beta_{\Gamma}(B(y,s)) \geq \varepsilon$ for all sufficiently
small $s > 0$. All the points $y \in \Gamma \cap
\operatorname{int} B(0,1)$ have this property by the previous
discussion, and we have reached a contradiction.
\end{proof}

Recall that the graph of an entire intrinsic Lipschitz function
$\phi:\W \to \V$ with constant gradient almost everywhere is a
left translate of a vertical plane (Proposition
\ref{constantGradientGraphs}). The next lemma, essentially a
corollary of this fact, gives a substantial improvement over the
previous lemma for constant gradient graphs.

\begin{lemma}\label{stability0} For every $\varepsilon > 0$ and $L \geq 1$, there exists $\delta = \delta_{\varepsilon,L} > 0$ with the following property.
Assume that $\Gamma = \{w \cdot \phi(w) : w \in \W\}$ is an
intrinsic $L$-Lipschitz graph, and $x \in \Gamma$, $r > 0$ are
such that $\phi \in \Adm_{\CG,L}(B(x,r))$. Then, if $y \in
\Gamma$, $0 < s < r$, and $B(y,s/\delta) \subset B(x,b_{L}r)$, we
have
\begin{displaymath} \beta(B(y,s)) \leq \varepsilon. \end{displaymath}
\end{lemma}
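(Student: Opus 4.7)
The plan is to argue by contradiction via a compactness/blow-up argument that reduces Lemma \ref{stability0} to the "entire" constant-gradient result, Proposition \ref{constantGradientGraphs}. Suppose the conclusion fails for some $\varepsilon > 0$ and $L \geq 1$. Then for every $j \in \N$ there exist an intrinsic $L$-Lipschitz graph $\Gamma_j = \{w \cdot \phi_j(w) : w \in \W\}$, a point $x_j \in \Gamma_j$ and radius $r_j > 0$ with $\phi_j \in \Adm_{\CG,L}(B(x_j, r_j))$, and further $y_j \in \Gamma_j$, $0 < s_j < r_j$, satisfying $B(y_j, j s_j) \subset B(x_j, b_L r_j)$ yet $\beta_{\Gamma_j}(B(y_j, s_j)) > \varepsilon$.

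Now renormalize: let $\tilde{\Gamma}_j := \delta_{1/s_j}(\tau_{y_j^{-1}}(\Gamma_j))$, parametrized by the intrinsic $L$-Lipschitz function $\tilde{\phi}_j = (\phi_j)_{y_j^{-1}, s_j^{-1}}$ from Lemma \ref{l:transl}, with $\tilde{\phi}_j(0) = 0$. Since left translations are isometries and $\delta_{1/s_j}$ scales $d_\He$ and $r$ by the same factor, the vertical $\beta$-number is preserved: $\beta_{\tilde{\Gamma}_j}(B(0,1)) > \varepsilon$. By the chain rule \eqref{eq:chain} combined with the equivariance of $\pi_\W$ under left translations and dilations (as used around \eqref{form28}), the function $\tilde{\phi}_j$ has intrinsic gradient equal to some constant $c_j \in \R$ almost everywhere on $\pi_\W(B(0, b_L r_j / s_j)) \supset \pi_\W(B(0, j))$. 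By Lemma \ref{l:bound_grad}, $|c_j| \leq L$, so after passing to a subsequence $c_j \to c \in [-L, L]$. A further subsequence converges locally uniformly (by Proposition 3.10 of \cite{FS}, as in the proof of Lemma \ref{eq:limit_beta}) to an intrinsic $L$-Lipschitz function $\tilde{\phi} \colon \W \to \V$ with graph $\tilde{\Gamma}$ passing through the origin.

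Locally uniform convergence of $\tilde{\phi}_j$ and $\tilde{\phi}_j^2$ yields distributional convergence, and so by \eqref{eq:distr} one has distributional convergence $\nabla^{\tilde{\phi}_j} \tilde{\phi}_j \to \nabla^{\tilde{\phi}}\tilde{\phi}$ (cf.\ \eqref{form32}). Since $\nabla^{\tilde{\phi}_j}\tilde{\phi}_j \equiv c_j$ on any bounded $U \subset \W$ for $j$ large enough, and $c_j \to c$, we conclude that $\nabla^{\tilde{\phi}}\tilde{\phi} = c$ almost everywhere on \emph{all} of $\W$. Proposition \ref{constantGradientGraphs} then forces $\tilde{\Gamma}$ to be a left translate of a vertical subgroup, so $\beta_{\tilde{\Gamma}}(B(0,1)) = 0$.

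To close the contradiction, we need upper semicontinuity of vertical $\beta$-numbers along locally uniformly converging graphs. Fix any vertical subgroup $\W'$ and $z \in \He$. For each $j$, the assumption $\beta_{\tilde{\Gamma}_j}(B(0,1)) > \varepsilon$ produces $u_j \in B(0,1) \cap \tilde{\Gamma}_j$ with $\dist_\He(u_j, z \cdot \W') > \varepsilon$. The locally uniform convergence $\tilde{\phi}_j \to \tilde{\phi}$ gives Hausdorff convergence of $\tilde{\Gamma}_j \cap B(0,1)$ to $\tilde{\Gamma} \cap B(0,1)$, so a subsequence of $(u_j)$ converges to some $u \in \tilde{\Gamma} \cap \overline{B(0,1)}$ with $\dist_\He(u, z \cdot \W') \geq \varepsilon$. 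Taking the infimum over $(z, \W')$ yields $\beta_{\tilde{\Gamma}}(B(0,1)) \geq \varepsilon$, the desired contradiction. The main subtlety in executing this plan is verifying that the renormalized functions $\tilde{\phi}_j$ inherit a constant intrinsic gradient on an expanding ball in $\W$; this is where Lemma \ref{l:transl} together with the equivariance of $\pi_\W$ under the Heisenberg group action plays the decisive role.
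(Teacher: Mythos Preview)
Your argument is correct and follows essentially the same blow-up/compactness strategy as the paper: renormalize so that the bad ball becomes $B(0,1)$, use the inclusion $B(y_j,js_j)\subset B(x_j,b_Lr_j)$ to ensure the (renormalized) constant-gradient region exhausts $\W$, pass to a locally uniform limit, and invoke Proposition~\ref{constantGradientGraphs} to reach a contradiction. One cosmetic slip: after renormalization the constant-gradient set is $\pi_{\W}$ of the transformed ball $B(x_j,b_Lr_j)$, which is centred at the image of $x_j$ rather than at $0$; what you actually need and use is only that this set contains $\pi_{\W}(B(0,j))$, which follows directly from $B(y_j,js_j)\subset B(x_j,b_Lr_j)$, and your direct distributional-limit argument for $\nabla^{\tilde\phi}\tilde\phi\equiv c$ is a clean alternative to the paper's diagonal extraction.
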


\begin{proof} We make a counter assumption: there exist $\varepsilon > 0$ and $L \geq 1$ such that the following holds for every $\delta_{j} = 1/j$, $j \in \N$. There exists an intrinsic $L$-Lipschitz graph $\Gamma_{j} = \{w \cdot \phi_{j}(w) : w \in \W\}$ and two balls $B(y_{j},js_{j}) \subset B(x_{j},b_{L}r_{j})$ with $x_{j},y_{j} \in \Gamma_{j}$, such that
\begin{displaymath} \phi_{j} \in \Adm_{\CG,L}(B(x_{j},r_{j})) \quad \text{and} \quad \beta_{\Gamma_{j}}(B(y_{j},s_{j})) \geq \varepsilon. \end{displaymath}
We may assume that $B(y_{j},s_{j}) = B(0,1)$. Thus, $0 =
\phi_{j}(0) \in \Gamma_{j}$ and $\beta_{\Gamma_{j}}(B(0,1)) \geq
\varepsilon$, and $\phi_{j} \in
\Adm_{\CG,L}(B(z_{j},r_{j}/s_{j}))$ for a certain point $z_{j} \in
\Gamma_{j}$. The assumption $B(y_{j},j s_{j}) \subset
B(x_{j},b_L\,r_{j})$ implies that the balls
$B(z_{j},b_{L}[r_{j}/s_{j}])$ eventually cover any ball $B(0,R)$,
$R > 0$. We infer that a subsequence of the functions $\phi_{j}$
converge locally uniformly to an intrinsic $L$-Lipschitz function $\phi_{R}
\colon \W \to \V$, which has a.e. constant gradient on
$\pi_{\W}(B(0,R))$, and such that $\beta_{\Gamma_{R}}(B(0,1)) \geq
\varepsilon$ (here $\Gamma_{R}$ is the graph of $\phi_{R}$).
Finally, using a diagonal procedure, a further subsequence of the
functions $\phi_{j}$ converges locally uniformly to an intrinsic
$L$-Lipschitz function $\phi$, which has a.e. constant gradient on
$\W$, and $\beta_{\Gamma}(B(0,1)) \geq \varepsilon$. But the
existence of such a $\phi$ was ruled out in Proposition
\ref{constantGradientGraphs}, and we have reached a contradiction.
The proof of the lemma is complete. \end{proof}

Let $\Gamma$ be an intrinsic Lipschitz graph, and let $\Delta$ be
a system of David cubes on $\Gamma$; recall that every cube $Q \in
\Delta$ contains a set of the form $B(z_{Q},c\ell(Q)) \cap \Gamma$
for some point $z_{Q} \in Q$ and some constant $c > 0$. In this
section, where our task is to prove the weak geometric lemma for
intrinsic $L$-Lipschitz graphs, we write
\begin{displaymath} B_{Q} := B(z_{Q},C\ell(Q)), \end{displaymath}
where $C = C_{L} \geq 2$ is a constant depending on $L$ only. The
precise requirement for $C$ is that $b_{L}C \geq 8$, where $b_{L}
> 0$ is the constant from the definition of
$\beta_{\CG,\Gamma,L}$. With this notation for $B_{Q}$, we still
write $\beta(Q) := \beta(B_{Q})$, and $\beta_{\CG,\Gamma,L}(Q) =
\beta_{\CG,\Gamma,L}(B_{Q})$. The reason for choosing $C_{L}$ so
large is the following: if $Q,R \in \Delta$ with $Q \subset R$,
and $\delta > 0$ is any constant, then
\begin{equation}\label{form31a} B(z_{Q},C\ell(Q)/\delta) \subset (b_{L}/4)B_{R} := B(z_{R},(b_{L}/4)C\ell(R)) \end{equation}
as soon as $\ell(Q)/\ell(R) \leq b_{L}\delta/8$. This is easy to
check: if $\ell(Q)/\ell(R) \leq b_{L}\delta/8$ and $y \in
B(z_{Q},C\ell(Q)/\delta)$, then
\begin{displaymath} d_{\He}(y,z_{R}) \leq d_{\He}(y,z_{Q}) + d_{\He}(z_{Q},z_{R}) \leq (C \ell(Q)/(\delta \ell(R)) + 1)\ell(R) \leq b_{L}C\ell(R)/4, \end{displaymath}
We also mention that $C \geq 2$ implies
\begin{displaymath} Q \subset R \quad \Longrightarrow \quad B_{Q} \subset B_{R}, \end{displaymath}
which is verified in the similar manner as above. In this section,
we abbreviate $\calH^{3}(A \cap \Gamma) =: |A|$. We choose to
prove the weak geometric lemma in the following form: for any
fixed cube $Q_{0} \in \Delta$,
\begin{equation}\label{form23} \sum_{Q \in \calB_{\varepsilon}(Q_{0})} |Q| \lesssim_{\varepsilon,L} |Q_{0}|, \end{equation}
where $\calB_{\varepsilon}(Q_{0}) := \{Q \in \Delta(Q_{0}) :
\beta(Q) > \varepsilon\}$; the integral formulation is an easy
corollary. To this end, we note that the following inequality is a
simple consequence (essentially a reformulation) of Theorem
\ref{t:CG-WGL}:
\begin{equation}\label{form21} \sum_{Q \in \calB_{\CG,\varepsilon}(Q_{0})} |Q| \lesssim_{\varepsilon,L} |Q_{0}|, \end{equation}
where $\calB_{\CG,\varepsilon}(Q_{0}) := \{Q \in \Delta(Q_{0}) :
\beta_{\CG}(Q) > \varepsilon\}$.

Let $\eta > 0$ be a constant, which will depend only on
$\varepsilon$ and $L$. Now, by \eqref{form21}, the weak geometric
lemma \eqref{form23} will follow, if we are able to prove that
\begin{equation}\label{form22} \sum_{Q \in \calB_{\varepsilon}(Q_{0}) \setminus \calB_{\CG,\eta}(Q_{0})} |Q| \lesssim_{\varepsilon,L} |Q_{0}|. \end{equation}
The following corollary of Lemma \ref{stability0} will be crucial:
\begin{cor}\label{stability2} For every $\varepsilon > 0$ and $L \geq 1$, there exists $\eta = \eta_{\varepsilon,L}$ with the following property. Assume that $Q,R \in \Delta$ are cubes such that $Q \subset R$, $\beta(R) \leq \eta$, and $\beta_{\CG}(P) \leq \eta$ for all $P \in \Delta$ with $Q \subset P \subset R$. Then $\beta(Q) \leq \varepsilon$. \end{cor}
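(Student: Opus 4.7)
The plan is to split into two cases according to the scale ratio $\ell(Q)/\ell(R)$, using the hypothesis $\beta(R) \leq \eta$ when $Q$ is close in scale to $R$, and propagating flatness through the $\beta_{\CG}$ information at a single carefully chosen intermediate ancestor otherwise. Fix $\varepsilon > 0$ and $L \geq 1$, let $\delta = \delta_{\varepsilon/3, L}$ be the constant supplied by Lemma \ref{stability0}, set $\rho := b_L \delta/8$, and choose $\eta > 0$ to be a small constant multiple of $\varepsilon \rho$ (with also $\eta \leq b_L/2$). In the first case, $\ell(Q) \geq \rho \ell(R)$, the inclusion $B_Q \subset B_R$ shows that any vertical plane witnessing $\beta(R) \leq \eta$ also approximates $\Gamma \cap B_Q$, and rescaling yields $\beta(Q) \leq \eta \cdot r(B_R)/r(B_Q) \leq \eta/\rho \leq \varepsilon$.

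In the opposite case $\ell(Q) < \rho \ell(R)$, I would pick an ancestor $\hat{P} \in \Delta(R)$ of $Q$ with $\ell(\hat{P})/\ell(Q) \in [1/(2\rho), 1/\rho]$ --- such $\hat{P}$ exists by the dyadic structure and the hypothesis $\ell(Q)/\ell(R) < \rho$ --- and use $\beta_{\CG}(\hat{P}) \leq \eta$ to obtain $\psi \in \Adm_{\CG,L}(B_{\hat{P}})$ (with intrinsic graph $\Gamma^{\psi}$) such that $|\phi(w) - \psi(w)| \leq \eta \cdot r(B_{\hat{P}})$ for all $w \in \pi_{\W}(B_{\hat{P}} \cap \Gamma)$, where $\phi$ parametrizes $\Gamma$. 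To invoke Lemma \ref{stability0} on the CG graph $\Gamma^{\psi}$, I let $x' \in \Gamma^{\psi}$ be the point with $\pi_{\W}(x') = \pi_{\W}(z_{\hat{P}})$, so $d_{\He}(x', z_{\hat{P}}) \leq \eta \cdot r(B_{\hat{P}})$. Observing that $B(x', b_L r(B_{\hat{P}})/2) \subset B(z_{\hat{P}}, b_L r(B_{\hat{P}}))$ whenever $\eta \leq b_L/2$, the admissibility transfers: $\psi \in \Adm_{\CG,L}(B(x', r(B_{\hat{P}})/2))$ when $\psi$ is regarded as the parametrization of $\Gamma^{\psi}$. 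Choosing further $y' \in \Gamma^{\psi}$ with $\pi_{\W}(y') = \pi_{\W}(z_Q)$ and $s := 2 r(B_Q)$, I would then verify the inclusion $B(y', s/\delta) \subset B(x', b_L r(B_{\hat{P}})/2)$ using the scale ratio $\ell(\hat{P})/\ell(Q) \geq 4/(b_L \delta)$ and the bound $d_{\He}(z_Q, z_{\hat{P}}) \leq \ell(\hat{P}) = r(B_{\hat{P}})/C \leq b_L r(B_{\hat{P}})/8$, where the last step invokes the standing choice $b_L C \geq 8$. Lemma \ref{stability0} then produces a vertical plane $\pi$ with $d_{\He}(p, \pi) \leq (\varepsilon/3) s$ for every $p \in \Gamma^{\psi} \cap B(y', s)$.

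To conclude, I would transfer this back to $\Gamma$: for each $y \in \Gamma \cap B_Q$, the partner point $y'' \in \Gamma^{\psi}$ with $\pi_{\W}(y'') = \pi_{\W}(y)$ satisfies $d_{\He}(y, y'') \leq \eta \cdot r(B_{\hat{P}})$, and the triangle inequality gives $d_{\He}(y'', y') \leq 2\eta \cdot r(B_{\hat{P}}) + r(B_Q) \leq s$ for $\eta$ small. Hence $d_{\He}(y, \pi) \leq \eta \cdot r(B_{\hat{P}}) + (2\varepsilon/3) r(B_Q)$, which after dividing by $r(B_Q)$ yields $\beta(Q) \leq 2\eta/\rho + 2\varepsilon/3 \leq \varepsilon$ provided $\eta \leq \varepsilon \rho/6$. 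The most delicate point of the argument will be the bookkeeping in Case B: namely, realizing $\psi$ --- whose admissibility is given on the ball $B_{\hat{P}}$ centered at a graph point $z_{\hat{P}} \in \Gamma$ --- as admissible on a ball centered at a point $x' \in \Gamma^{\psi}$ (as Lemma \ref{stability0} requires), and ensuring that the blown-up ball $B(y', s/\delta)$ fits inside that smaller admissibility region. The crucial quantitative slack here comes from the choice $b_L C \geq 8$ built into the definition of $B_{\hat{P}}$, which guarantees that $z_Q$ --- and hence its near-partner $y'$ on $\Gamma^{\psi}$ --- lies deep inside the effective CG region for $\psi$.
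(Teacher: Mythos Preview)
Your proposal is correct and follows essentially the same approach as the paper's proof: split on the scale ratio $\ell(Q)/\ell(R)$, handle the near-scale case trivially via $\beta(R)\le\eta$ and the inclusion $B_Q\subset B_R$, and in the far-scale case pick a single intermediate ancestor $\hat P$ (the paper's $P=Q^{(n_{\varepsilon,L})}$), use $\beta_{\CG}(\hat P)\le\eta$ to produce a CG approximant $\psi$, apply Lemma~\ref{stability0} to $\Gamma^\psi$ on a sub-ball comparable to $B_Q$, and transfer back to $\Gamma$ via the $\eta$-closeness. One small caution on the bookkeeping you flag: with your exact choice $\rho=b_L\delta/8$ the inclusion $B(y',s/\delta)\subset B(x',b_Lr(B_{\hat P})/2)$ is saturated with no room for the displacement $d_{\He}(y',x')$, so you will need to shrink $\rho$ by a fixed factor (equivalently, take the ancestor $\hat P$ a few generations higher, as the paper does by choosing $n_{\varepsilon,L}$ ``large enough''); once you do that, the rest goes through exactly as you outline.
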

\begin{proof} Write $\delta := \delta_{\varepsilon/8,L}$, where $\delta_{\varepsilon/8,L} > 0$ is the constant from Lemma \ref{stability0}.
 Write $Q^{(n)}$ for the $n^{th}$ dyadic ancestor of $Q$, with $Q^{(0)} := Q$. Let $n_{\varepsilon,L} \in \N$ be a large number depending
 on $\varepsilon$ and $L$; we will eventually pick $n_{\varepsilon,L}$ first, and then require that $\eta$ is sufficiently small depending
 on $n_{\varepsilon,L}$. If $R = Q^{(n)}$ for some $n < n_{\varepsilon,L}$, we have
\begin{displaymath} \beta(Q) \leq \beta(R) \cdot \frac{\ell(R)}{\ell(Q)} = 2^{n} \beta(R) \leq 2^{n_{\varepsilon,L}}\eta \leq \varepsilon, \end{displaymath}
assuming $\eta \leq 2^{-n_{\varepsilon,L}}\varepsilon$. This is
the first restriction we place on $\eta$.

Now, assume that $R = Q^{(n)}$ for some $n \geq
n_{\varepsilon,L}$. It follows that $P := Q^{(n_{\varepsilon,L})}$
satisfies $Q \subset P \subset R$, hence $\beta_{\CG}(P) \leq
\eta$. Recalling Remark \ref{remk:bcg} this means that there exists an intrinsic $L$-Lipschitz
function $\psi \colon \W \to \V$ with graph $\Gamma_{P}$ such that
$\psi$ has a.e.\ constant gradient on $\pi_{\W}(b_{L}B_{P}) =
\pi_{\W}(B(z_{P},b_{L}C\ell(P)))$, and
\begin{equation}
\label{bcggp}\sup_{y \in B_{P} \cap \Gamma} \frac{\dist_{\He}(y,\Gamma_{P})}{C\ell(P)} \leq 2\eta. \end{equation}
Let $x_{P},x_{Q}$ be the nearest points to $z_{P},z_{Q} \in B_{P}
\cap \Gamma$ on $\Gamma_{P}$, so that
\begin{displaymath}
d_{\He}(x_{P},z_{P}) \leq 2C\eta\ell(P)\quad \text{and}\quad
d_{\He}(x_{Q},z_{Q}) \leq 2C\eta \ell(P).
\end{displaymath}
 Write $r := C\ell(P)/2$,
and observe that
\begin{equation}\label{form30} (b_{L}/4)B_{P} \subset B(x_{P},b_{L}r) \subset b_{L}B_{P} \end{equation}
for $\eta$ small enough. Then, write $s := 2C\ell(Q)$, and observe
that \eqref{form31a}  gives the following chain of inclusions,
assuming $2^{n_{\varepsilon,L}} = \ell(P)/\ell(Q)$ to be large enough,
and $\eta_{\varepsilon,L} > 0$ to be small enough:
\begin{equation}\label{form31} B(x_{Q},s/\delta) \subset B(z_{Q},2s/\delta) \subset (b_{L}/4)B_{P} \subset B(x_{P},b_{L}r). \end{equation}
From \eqref{form30} we first conclude that $\psi$ has a.e.\
constant gradient on $\pi_{\W}(B(x_{P},b_{L}r))$, which means that
\begin{displaymath} \psi \in \Adm_{\CG,L}(B(x_{P},r)). \end{displaymath}
Then, from \eqref{form31} we infer that Lemma \ref{stability0} is
applicable to the ball $B(x_{Q},s)$. Consequently,
\begin{displaymath} \beta_{\Gamma_{P}}(B(x_{Q},s)) \leq \varepsilon/8. \end{displaymath}
Thus, there exists a set of the form $z \cdot \W'$, where $z \in
\He$ and $\W'$ is a vertical subgroup, such that $\dist_{\He}(y,z
\cdot \W') \leq C(\varepsilon/2)\ell(Q)$ for every $y \in
B(x_{Q},s) \cap \Gamma_{P}$. Finally, recalling that we wish to
prove $\beta_{\Gamma}(Q) \leq \varepsilon$, we fix
\begin{displaymath} x \in B_{Q} \cap \Gamma \subset B_{P} \cap \Gamma. \end{displaymath}
By \eqref{bcggp} we can pick $y \in \Gamma_{P}$ with $d_{\He}(x,y) \leq 2C\eta
\ell(P)$. Then, for $\eta > 0$ small enough, we have $y \in
B(x_{Q},s) \cap \Gamma_{P}$, and $\dist_{\He}(x,z \cdot \W') \leq
2C\eta \ell(P) + C(\varepsilon/2)\ell(Q) \leq \varepsilon
C\ell(Q)$. This proves that $\beta_{\Gamma}(Q) \leq \varepsilon$,
as required. \end{proof}

We also record a version of the "big flat piece lemma", Lemma
\ref{bigFlatPiece}, for cubes:
\begin{lemma}\label{bigFlatCube} For every $\eta > 0$ and $L \geq 1$, there exists $\delta = \delta_{\eta,L} > 0$ with the following property. For any cube $R \in \Delta$, there exists a cube $Q \in \Delta(R)$ such that $\ell(Q) \geq \delta \ell(R)$ and $\beta(Q) \leq \eta$.  \end{lemma}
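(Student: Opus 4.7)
The plan is to deduce Lemma \ref{bigFlatCube} from Lemma \ref{bigFlatPiece} by a routine \emph{ball-to-cube} conversion, using the David cube structure to match scales. Given $R \in \Delta$, property (iii) of the David cube construction provides a point $z_{R} \in R \cap \Gamma$ such that $B(z_{R}, c\ell(R)) \cap \Gamma \subset R$, where $c > 0$ is the absolute constant from that property.

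I would apply Lemma \ref{bigFlatPiece} to the ball $B(z_{R}, c\ell(R)/2)$ with a small auxiliary parameter $\eta' = \eta'(\eta,L) > 0$ to be calibrated at the end. This produces a sub-ball $B(y,s) \subset B(z_{R}, c\ell(R)/2)$ with $y \in \Gamma$ and $s \geq \delta_{0} c\ell(R)/2$ for some $\delta_{0} = \delta_{\eta',L} > 0$, together with (by definition of $\beta$) a vertical plane $z \cdot \W$ satisfying $\sup_{y' \in B(y,s) \cap \Gamma} \dist_{\He}(y', z \cdot \W) \leq 2\eta' s$. Next, I would take $Q \in \Delta$ to be the unique cube containing $y$ with $\ell(Q) \in (s/(2(C+1)), s/(C+1)]$, where $C = C_{L}$ is the constant used in this section to define $B_{Q} = B(z_{Q}, C\ell(Q))$. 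This choice immediately yields $\ell(Q) \geq \delta \ell(R)$ with $\delta := \delta_{0} c / (4(C+1))$, depending only on $\eta$ and $L$.

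Two verifications would then complete the argument. First, that $Q \in \Delta(R)$: every point of $Q$ lies within $\diam(Q) \leq \ell(Q) \leq c\ell(R)/2$ of $y \in B(z_{R}, c\ell(R)/2)$, so inside $B(z_{R}, c\ell(R)) \cap \Gamma \subset R$, and then the nesting property of David cubes together with $\ell(Q) < \ell(R)$ forces $Q \subsetneq R$. Second, that $\beta(Q) \leq \eta$: the containment $B_{Q} \subset B(y,s)$ holds because $d_{\He}(z_{Q}, y) \leq \ell(Q)$ and $(C+1)\ell(Q) \leq s$, so the same vertical plane $z \cdot \W$ approximates $B_{Q}$ and yields $\beta(Q) \leq \beta(B_{Q}; z \cdot \W) \leq 2\eta' s / (C\ell(Q)) \leq 4\eta'(C+1)/C$; the calibrated choice $\eta' := \eta C / (4(C+1))$ makes this at most $\eta$. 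The only obstacle here is this bookkeeping with constants: since $\beta$ is normalized by the radius $C\ell(Q) < s$, passing from $B(y,s)$ to $B_{Q}$ inflates the approximation error by the bounded factor $s/(C\ell(Q)) \sim 2(C+1)/C$, and $\eta'$ must be chosen small enough (in an $L$-dependent way) to absorb this inflation while Lemma \ref{bigFlatPiece} still delivers a ball of scale $\gtrsim_{\eta,L} \ell(R)$.
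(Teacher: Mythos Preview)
Your proof is correct and follows essentially the same approach as the paper: apply Lemma \ref{bigFlatPiece} inside the ball $B(z_{R},c\ell(R))$ with a suitably shrunk parameter, then choose a David cube $Q \ni y$ of scale comparable to $s$ so that $B_{Q} \subset B(y,s)$, which forces $Q \subset R$ and $\beta(Q) \leq \eta$. The paper's version is terser (it simply asserts that $B(y,s)$ contains some $B_{Q}$ with $Q \subset R$ and $\ell(Q) \sim_{L} s$, and absorbs the inflation factor into an unspecified constant $c_{L}$), while you have spelled out the explicit choice $\ell(Q) \in (s/(2(C+1)), s/(C+1)]$ and the calibration $\eta' = \eta C/(4(C+1))$; the underlying argument is the same.
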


\begin{proof} Let $c:=c_{L} > 0$ be a constant depending only on $L$, and let $\varepsilon := c_{L}\eta$. By Lemma \ref{bigFlatPiece} applied to the ball $B(z_{R},c\ell(R))$ with $B(z_{R},c\ell(Q)) \cap \Gamma \subset R$, there exists a point $y \in B(z_{R},c\ell(R)) \cap \Gamma$ and a radius $s \gtrsim_{\varepsilon,L} \ell(R)$ such that $B(y,s) \subset B(z_{R},c\ell(R))$ and $\beta_{\Gamma}(B(y,s)) \leq \varepsilon$. Now, it suffices to note that $B(y,s)$ contains a ball of the form $B_{Q}$ for some $Q \subset R$ with $\ell(Q) \sim_{L} s$. Hence $\beta(Q) \lesssim_{L} \varepsilon = c_{L}\eta$. Choosing $c_{L} > 0$ small enough, the lemma follows.  \end{proof}

We are prepared to prove the weak geometric lemma. As discussed,
it remains to verify \eqref{form22}. The proof is nearly verbatim
the same as on the last pages of \cite{To2}, but we record the
details for completeness.

\subsubsection{Proof of the weak geometric lemma for intrinsic Lipschitz graphs}

We write $\calG_{\eta} := \calG_{\CG,\eta}(Q_{0}) := \Delta(Q_{0}) \setminus \calB_{\CG,\eta}(Q_{0})$, where $\eta = \eta_{\varepsilon,L}$ is the constant from Corollary \ref{stability2}. Then, we partition the collection $\calG_{\eta}$ into \emph{trees}. A family $\calT \subset \Delta$ is called a tree, if
\begin{itemize}
\item $\calT$ has a maximal element $Q(\calT)$, called the
\emph{root} of $\calT$. \item If $Q,Q' \in \calT$ with $Q \subset
Q'$, and $Q \subset P \subset Q'$, then also $P \in \calT$. \item
If $Q \in \calT$, then either all, or none, of the children of $Q$
belong to $\calT$. \item Those cubes $Q \in \calT$ with no
children in $\calT$ are called the \emph{stopping cubes} of
$\calT$, and they are denoted by $Stop(\calT)$.
\end{itemize}
We now partition $\calG_{\eta}$ into trees by an inductive
procedure. The roots of the initial trees are simply the maximal
cubes in $\calG_{\eta}$. Then, if $\calT_{1}$ is one of these
trees, and $Q \in \calT_{1}$, we add all the children of $Q$ into
$\calT_{1}$ if all of them belong to $\calG_{\eta}$; if not, we
declare that $Q \in Stop(\calT)$ and stop building $\calT$ along
this branch.

After the initial trees have been constructed, and in case some
cubes in $\calG_{\eta}$ still remain outside them, we repeat the
previous procedure: we pick the maximal cubes in $\calG_{\eta}$,
which are not, yet, contained in a tree, and we declare these to
be the roots of new trees. These trees are, then, constructed by
the rule described above.

Iterating this algorithm produces a partition of $\calG_{\eta}$
into a countable number of trees $\{\calT_{1},\calT_{2},\ldots\}$.
If $Q(\calT_{i}) = Q_{0}$ for some (unique) $i = i_{0}$, we set
$Q(\calT_{i})' := Q(\calT_{i})$. In the opposite case,
$Q(\calT_{i}) \subsetneq Q_{0}$, it follows from the construction
that either the parent or one of the siblings of $Q(\calT_{i})$,
say $Q'$ is in $\Delta(Q_{0}) \setminus \calG_{\eta} =
\calB_{\CG,\eta}(Q_{0})$. In this case, we set $Q(\calT_{i})' :=
Q'$.

With this notation in hand, and observing that any cube $Q'$ can
 only serve as $Q(\calT_{i})'$ for boundedly many indices $i$,
we are prepared to show that the roots $Q(\calT_{i})$ satisfy a
Carleson property:
\begin{equation}\label{form24} \sum_{i} |Q(\calT_{i})| \lesssim \sum_{i} |Q(\calT_{i})'| \lesssim |Q_{0}| + \sum_{Q \in \calB_{\CG,\eta}(Q_{0})} |Q| \lesssim_{\varepsilon} |Q_{0}|. \end{equation}
The last inequality is, of course, \eqref{form21}.

Next, recalling our objective \eqref{form22}, and writing
$\calB_{\varepsilon} := \calB_{\varepsilon}(Q_{0})$, we make the
following natural splitting:
\begin{displaymath} \sum_{Q \in \calB_{\varepsilon} \cap \calG_{\eta}} |Q| = \sum_{i} \sum_{Q \in \calB_{\varepsilon} \cap \calT_{i}} |Q|.\end{displaymath}
By \eqref{form24}, it remains to prove that
\begin{equation}\label{form25} \sum_{Q \in \calB_{\varepsilon} \cap \calT_{i}} |Q| \lesssim_{\varepsilon,L} |Q(\calT_{i})| \end{equation}
for any fixed $i$. To this end, let $\calF_{i}$ be the family of
maximal cubes $Q \in \calT_{i}$ with the property that $\beta(Q)
\leq \eta$. By Corollary \ref{stability2}, if $Q' \in \calT_{i}$
is contained in a cube in $\calF_{i}$, then $\beta(Q') \leq
\varepsilon$, and hence $Q' \notin \calB_{\varepsilon}$. So, in
fact the summation in \eqref{form25} runs over at most those cubes
which are not contained in any cube in $\calF_{i}$; let those
cubes be called $\calH_{i}$.

Fix $R \in \calH_{i}$, and let $Q \in \Delta(R)$ be the maximal
cube with the property that $\beta(Q) \leq \eta$. Lemma
\ref{bigFlatCube} promises that there exists such a cube $Q$ with
$\ell(Q) \geq \delta_{\eta,L} \ell(R)$. If $Q \in \calT_{i}$, then
obviously $Q \in \calF_{i}$, and we set $f(R) := Q$. If, on the
other hand, $Q$ already lies outside $\calT_{i}$, then there is
some cube $Q' \in Stop(\calT_{i})$ with $Q \subset Q' \subset R$,
and we set $f(R) := Q'$. In either case $|f(R)| \gtrsim_{\eta,L}
|R|$. Taking into account that both families $\calF_{i}$ and
$Stop(\calT_{i})$ consist of disjoint cubes, and that each cube $Q
\in \calF_{i} \cup Stop(\calT_{i})$ can only be assigned as $f(R)$
for boundedly many cubes $R$ (with bounds depending only on
$\eta,L$), we may conclude that
\begin{displaymath} \sum_{R \in \calH_{i}} |R| \lesssim_{\eta,L} \sum_{R \in \calH_{i}} |f(R)| \lesssim_{\eta,L} \sum_{Q \in \calF_{i}} |Q| + \sum_{Q \in Stop(\calT_{i})} |Q| \lesssim |Q(\calT_{i})|. \end{displaymath}
Since $\eta = \eta_{\varepsilon,L}$ only depends on $\varepsilon$
and $L$, we have proven \eqref{form25}, and the weak geometric
lemma \eqref{form23} (Theorem \ref{wglintlg}) for intrinsic Lipschitz graphs.

\subsubsection{BPiLG implies WGL} We will  conclude the paper with a brief discussion of Theorem \ref{wglbp}. The analogous result in the Euclidean setting is due to David and Semmes and it follows by Theorem 1.8 \cite[Part IV, Chapter 1]{DS2}. Before restating it in our setting, we record a definition.
\begin{definition}
Let $C_0>0$ and $\gamma:(0,1] \ra (0,\infty)$. We define $\text{WGL}(C_0, \gamma)$ to be the collection of $3$-regular sets $E \subset \He$ with regularity constant at most $C_0$  such that
\begin{equation*}
\label{wgleqbp}
\int_{0}^{R} \int_{E \cap B(x,R)} \chi_{\{(y,s)\in E \times
\mathbb{R}_+:\; \beta(B(y,s))\geq \varepsilon\}}(y,s)\,
d\calH^{3}(y) \, \frac{ds}{s} \leq \gamma(\varepsilon) \, R^{3}
\end{equation*}
for $\varepsilon > 0$, $x \in E$ and $R > 0$.
\end{definition}
We can now provide the restatement of Theorem 1.8 \cite[Part IV, Chapter 1]{DS2} in the Heisenberg group.

\begin{thm}
\label{ds18heis}
Let $E \subset \He$ be a $3$-regular set. Suppose that there exist $\theta>0, C_0>0$ and $\gamma:(0,1] \ra (0,\infty)$ such that for each $x \in E$ and $0 < R \leq \diam_{\He}(E)$ there exists $\tilde{E} \in \text{WGL}(C_0, \gamma)$ satisfying
$$\mathcal{H}^3(E \cap \tilde{E} \cap B(x,R)) \geq \theta R^3.$$
Then $E$ satisfies the WGL.
\end{thm}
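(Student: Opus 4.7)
The plan is to adapt the Euclidean argument of David and Semmes \cite[Part~IV, Chapter~1]{DS2} to the Heisenberg setting; all the tools involved (David cubes, Ahlfors regularity, Carleson packing arguments, stopping-time constructions) are already available for us in $\He$. First, I would reformulate the conclusion in terms of David cubes: by the argument leading to \eqref{cubeWGL}, it suffices to establish the Carleson packing estimate
\begin{equation*}
\sum_{Q \in \Delta(Q_0),\, \beta_E(Q) \geq \varepsilon} \calH^3(Q) \lesssim_{\varepsilon, \theta, C_0, \gamma} \calH^3(Q_0), \qquad Q_0 \in \Delta,
\end{equation*}
for every fixed top cube $Q_0 \in \Delta$.

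Fix $Q_0$. Applying the hypothesis to a ball of comparable radius containing $Q_0$, I would produce $\tilde{E} \in \text{WGL}(C_0, \gamma)$ with $\calH^3(E \cap \tilde{E} \cap Q_0) \geq \theta' \calH^3(Q_0)$ for some $\theta' > 0$ depending only on $\theta$ and the regularity constants. Then I would run a stopping-time construction inside $Q_0$: let $\calS \subset \Delta(Q_0)$ be the family of \emph{maximal} cubes $S$ for which $\calH^3(E \cap \tilde{E} \cap S) < \tau \calH^3(S)$, where $\tau > 0$ is a small threshold to be chosen. The cubes in $\calS$ are pairwise disjoint, and by balancing the cumulative surplus $\calH^3(Q_0 \setminus \tilde{E})$ against the deficit on each stopping cube (as in the proof of Theorem \ref{t:main2_inv}), one obtains $\sum_{S \in \calS} \calH^3(S) \leq (1 - \eta)\calH^3(Q_0)$ for some $\eta > 0$ depending on $\theta', \tau, C_0$.

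The heart of the argument is then to control $\sum_{Q \in \calG,\, \beta_E(Q) \geq \varepsilon} \calH^3(Q)$, where $\calG$ is the family of cubes not contained in any $S \in \calS$. For $Q \in \calG$, the bound $\calH^3(E \cap \tilde{E} \cap Q) \geq \tau \calH^3(Q)$ holds, so $E$ and $\tilde{E}$ share a $3$-regular, measure-dense common piece in $B_Q$. I would show that $\beta_E(B_Q) \geq \varepsilon$ forces $\beta_{\tilde{E}}(M B_Q) \gtrsim_{\varepsilon, \tau, C_0} 1$ for some absolute $M \geq 1$: if instead $\tilde{E}$ were uniformly close to some vertical plane $z \cdot \W'$ on $MB_Q$, then by Remark \ref{changeOfMidpoint} the common piece $E \cap \tilde{E} \cap Q$ would hug $z \cdot \W'$ too, and a quantitative Ahlfors-regularity argument (possibly combined with a subsidiary stopping time at finer scales to quarantine "spikes" of $E$ lying outside $\tilde{E}$) would propagate this closeness to all of $E \cap B_Q$, contradicting $\beta_E(B_Q) \geq \varepsilon$. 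The WGL for $\tilde{E}$, applied to an enlarged ball containing $Q_0$, then bounds the sum over $Q \in \calG \cap \{\beta_E \geq \varepsilon\}$ by $\lesssim \gamma(\varepsilon')\calH^3(Q_0)$ for some $\varepsilon' = \varepsilon'(\varepsilon, \tau, C_0) > 0$.

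Finally, iterating the whole construction on each $S \in \calS$ (which legitimately serves as a new top cube because the hypothesis of Theorem \ref{ds18heis} is scale-free) and summing the resulting geometric series $\sum_n (1 - \eta)^n < \infty$ closes the Carleson packing estimate, and hence Theorem \ref{ds18heis}. The main obstacle is exactly the comparison step between $\beta_E$ and $\beta_{\tilde{E}}$ on the good cubes in $\calG$: because the vertical $\beta$-numbers here are $L^\infty$-type quantities, the purely measure-theoretic big-intersection hypothesis does not immediately make the two sup norms comparable, and one has to combine it with a careful regularity-plus-density argument to rule out isolated excursions of $E$ away from $\tilde{E}$. This is the same delicacy that makes the Euclidean counterpart non-trivial, and transferring it to $\He$ requires only the measure-theoretic tool of Lemma \ref{lipschitzLemma} plus the standard properties of David cubes, both of which are in hand.
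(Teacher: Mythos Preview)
Your proposal is correct and follows exactly the approach the paper indicates: the paper itself does not give a proof but simply states that it ``follows exactly as the proof of Theorem 1.8 \cite[Part IV, Chapter 1]{DS2}, modulo notational changes,'' and your sketch is a faithful outline of that David--Semmes argument transplanted to $\He$. You have in fact supplied considerably more detail than the paper does, correctly identifying the $\beta_E$ versus $\beta_{\tilde{E}}$ comparison on good cubes as the only substantive point and noting that the required tools (David cubes, Ahlfors regularity, Lemma~\ref{lipschitzLemma}) are already in hand.
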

As it happens, the proof of Theorem \ref{ds18heis} follows exactly as the  proof of Theorem 1.8 \cite[Part IV, Chapter 1]{DS2}, modulo notational changes. Therefore we skip the details. Now observe that Theorem \ref{wglbp} follows  from Theorem \ref{wglintlg} and Theorem \ref{ds18heis}, since the $3$-regularity and WGL constants (for any fixed $\varepsilon > 0$) of an intrinsic $L$-Lipschitz graph are bounded above by  constants depending only on $\varepsilon$ and $L$.


\begin{thebibliography}{1234}
\bibitem{ASV} \textsc{V. B. Adesi, F. Serra Cassano, D. Vittone}: \emph{The Bernstein problem for intrinsic graphs in Heisenberg groups and calibrations}, Calc. Var. PDE. \textbf{30} (2007), Issue 1, 17--49

\bibitem{AK} \textsc{L. Ambrosio and B. Kirchheim}: \emph{Rectifiable sets in metric and Banach spaces}, Math. Ann. \textbf{318} (2000), 527--555
\bibitem{ASCV} \textsc{L. Ambrosio, F. Serra Cassano, D. Vittone}: \emph{Intrinsic Regular Hypersurfaces in Heisenberg
Groups}, J. Geom. Anal. \textbf{16} (2) (2006), 187--232
\bibitem{AKL} \textsc{L. Ambrosio, B. Kleiner, and E. Le Donne}: \emph{Rectifiability of Sets of Finite Perimeter in Carnot
Groups: Existence of a Tangent Hyperplane}, J. Geom. Anal. \textbf{19} (2009), 509--540
\bibitem{AS} \textsc{G. Arena and R. Serapioni}: \emph{Intrinsic regular submanifolds in Heisenberg groups are differentiable graphs}, Calc. Var. PDE. \textbf{35} (4) (2009), 517 -- 536
\bibitem{BDFMT} \textsc{Z. M. Balogh, E. Durand Cartagena, K. F\"assler, P. Mattila and J. T. Tyson}: \emph{ The effect of projections on dimension in the Heisenberg group}, Rev. Mat. Iberoam. \textbf{29} (2013), no. 2, 381--432
\bibitem{B} \textsc{F. Bigolin}: \emph{Intrinsic regular hypersurfaces in Heisenberg groups and weak solutions of non linear first-order PDEs}, PhD thesis, Trento, (2009)
\bibitem{BSC} \textsc{F. Bigolin and F. Serra Cassano}: \emph{Distributional solutions of Burgers' equation and intrinsic regular graphs in Heisenberg groups}, J. Math. Anal. Appl., \textbf{366} (2) (2010), 561--568
\bibitem{BCSC} \textsc{F. Bigolin, L. Caravenna and F. Serra Cassano}: \emph{Intrinsic Lipschitz graphs in Heisenberg groups and continuous solutions of a Balance equation}, Ann. Inst. H. Poincar\'e Anal. Non Lin\'eaire \textbf{32} (5) (2015), 925--963
\bibitem{BLU} \textsc{A. Bonfiglioli, E. Lanconelli and F. Uguzzoni}: \emph{Stratified Lie Groups and Potential Theory for Their Sub-Laplacians}, Springer Monographs in Mathematics 2007
\bibitem{CDPT} \textsc{L. Capogna, D. Danielli, S.D. Pauls and J. Tyson}: \emph{An Introduction to the Heisenberg Group and the Sub-Riemannian Isoperimetric Problem}, Progress in Mathematics (2007), Birkh\"auser

\bibitem{CM} \textsc{ V. Chousionis and P. Mattila}: {\em Singular integrals on self-similar sets and removability for Lipschitz harmonic functions in Heisenberg groups}, J. Reine Angew. Math. \textbf{ 691} (2014), 29--60

\bibitem{CMT} \textsc{ V. Chousionis, V. Magnani and J. T. Tyson} {\em Removable sets for Lipschitz harmonic functions on Carnot groups}, Calc. Var. PDE. \textbf{53} (2015), no. 3-4, 755--780
\bibitem{CMPS} \textsc{G. Citti, M. Manfredini, A. Pinamonti and F. Serra Cassano}: \emph{Smooth approximation for intrinsic Lipschitz functions in the Heisenberg group}, Cal. Var. PDE \textbf{49} (3) (2014), 1279--1308
\bibitem{Da2} \textsc{G. David}: \emph{Morceaux de graphes lipschitziennes et int\'egrales singuli\'eres sur un surface}, Rev. Mat. Iberoamericana \textbf{4} (1988), 73--114

\bibitem{DS3}
\textsc{G. David and S. Semmes}:
\emph{Singular integrals and rectifiable sets in ${\bf R}\sp n$: Beyond Lipschitz graphs.}
Ast\'erisque \textbf{193}, 152 pp. (1991)
\bibitem{Da} \textsc{G. David}: \emph{Wavelets and Singular Integrals on Curves and Surfaces}, Springer-Verlag Berlin Heidelberg 1991
\bibitem{DS2} \textsc{G. David and S. Semmes}:
\emph{Analysis of and on uniformly rectifiable sets.}
In: Mathematical Surveys and Monographs 38,  Amer. Math. Soc., Providence, RI (1993)
\bibitem{DS} \textsc{G. David and S. Semmes}: \emph{Quantitative rectifiability and Lipschitz mappings}, Trans. Amer. Math. Soc. \textbf{337} (2) (1993), 855--889

\bibitem{Da3} \textsc{G. David}: \emph{Unrectifiable $1$-sets have vanishing analytic capacity}. Rev. Mat.
Iberoamericana \textbf{14}  (1998), no. 2, 369--479
\bibitem{DM}
{\sc G. David and P. Mattila}:
\emph{Removable sets for {L}ipschitz harmonic functions in the plane}.
 Rev.\ Mat.\ Iberoamericana \textbf{16}, (2000),  no. 1, 137--215
\bibitem{EV} \textsc{V. Eiderman and A. Volberg} {\em Non-homogeneous harmonic analysis: 16 years of development}, Russ. Math. Surv. \textbf{68} 973, (2013)
\bibitem{Er} \textsc{L. Erd\H{o}s}: \emph{Notes on the Burgers equation}, available at https://www.mathematik.uni-muenchen.de/~lerdos/SS06/Pde/burg.pdf
\bibitem{ffp} \textsc{F. Ferrari, B. Franchi, and H. Pajot}: \emph{ The geometric traveling salesman problem in the Heisenberg group} Rev. Mat. Iberoam. \textbf{23} (2007), no. 2, 437--480
\bibitem{F} \textsc{A. F. Filippov}: \emph{Differential Equations with Discontinuous Righthand Sides: Control Systems}, Springer 1988
\bibitem{FSS01} \textsc{B. Franchi, R. Serapioni and F. Serra Cassano}: \emph{Rectifiability and perimeter in the Heisenberg group}, Math. Ann. \textbf{321} (2001), 479--531
\bibitem{FSS} \textsc{B. Franchi, R. Serapioni and F. Serra Cassano}: \emph{Intrinsic Lipschitz graphs in Heisenberg groups}, J. Nonlinear
Convex Anal. \textbf{7} (3) (2006), 423--441
\bibitem{FSS11} \textsc{B. Franchi, R. Serapioni and F. Serra
Cassano}: \emph{Differentiability of Intrinsic Lipschitz Functions
Within Heisenberg Groups}, J. Geom. Anal., \textbf{21} (2011),
1044--1084
\bibitem{FMS} \textsc{B. Franchi, M. Marchi and R. Serapioni}: \emph{Differentiability and Approximate Differentiability for Intrinsic Lipschitz Functions in Carnot Groups and a Rademacher Theorem}, Anal. Geom. Metr. Spaces \textbf{2} (1) (2014),  2299--3274
\bibitem{FS} \textsc{B. Franchi and R. Serapioni}: \emph{Intrinsic graphs within Carnot groups}, J. Geom. Anal. \textbf{26}, (2016),  Issue 3, 1946--1994 

\bibitem{HKST} \textsc{J. Heinonen, P. Koskela, N. Shanmugalingam and J. Tyson}: \emph{Sobolev Spaces on Metric Measure Spaces}, Cambridge University Press, 2015
\bibitem{J} \textsc{P. Jones}: \emph{Lipschitz and bi-Lipschitz functions}, Rev. Mat. Iberoamericana \textbf{4} (1988), 115--122

\bibitem{Joesc}
\textsc{P. Jones}: \emph{Square functions, Cauchy integrals, analytic capacity, and harmonic measure,} Harmonic analysis and partial differential equations (El Escorial, 1987), Lecture Notes in Math., vol. 1384, Springer, Berlin, 1989, 24--68

\bibitem{Jo1}
\textsc{P. Jones}:
\emph{Rectifiable sets and the traveling salesman problem.}
Invent. Math. \textbf{102} (1990), 1--15 

\bibitem{jui} \textsc{N. Juillet}: \emph{ A counterexample for the geometric traveling salesman problem in the Heisenberg group.} Rev. Mat. Iberoam. \textbf{26} (2010), no. 3, 1035--1056
\bibitem{sean2} \textsc{S. Li and R. Schul}: \emph{An upper bound for the length of a Traveling Salesman path in the Heisenberg group.} Rev. Mat. Iberoam. \textbf{32} (2016), no. 2, 391-417

\bibitem{sean1} \textsc{S. Li and R. Schul}: \emph{The traveling salesman problem in the Heisenberg group: upper curvature bound.} Trans. Amer. Math. Soc. \textbf{368}, (2016), no 7, 4585--4620

\bibitem{M} \textsc{P. Mattila}: \emph{Geometry of sets and measures in Euclidean spaces}, Cambridge University Press, 1995
\bibitem{MSS} \textsc{P. Mattila, R. Serapioni and F. Serra Cassano}: \emph{Characterizations of intrinsic rectifiability in Heisenberg groups}, Ann. Scuola Norm. Sup. Pisa Cl. Sci. \textbf{5} Vol. IX (2010), 687--723
\bibitem{NToV1}
{\sc F. Nazarov,  X. Tolsa and A. Volberg}:
\emph{On the uniform rectifiability of {A}{D}-regular measures with bounded
  {R}iesz transform operator: the case of codimension 1},  Acta Math. \textbf{213:2} (2014), 237--321

\bibitem{NToV2}
{\sc F. Nazarov,  X. Tolsa and A. Volberg}:
\emph{The {R}iesz transform, rectifiability, and removability for
  {L}ipschitz harmonic functions.} Publ. Mat. \emph{58:2} (2014), 517--532
  
  \bibitem{S} \textsc{F. Serra Cassano}: \emph{Some topics of geometric measure theory in Carnot groups.}
  In Geometry, Analysis and Dynamics on sub-Riemannian Manifolds, vol. I. Edited by  D. Barilari, U. Boscain, M. Sigalotti. EMS Ser.Lect. Notes in Math. EMS, Zurich, 2016
 

  \bibitem{To1}
\textsc{X. Tolsa}:
{\em Analytic capacity, the {C}auchy transform, and non-homogeneous
  {C}alder\'on-{Z}ygmund theory}, volume 307 of {\em Progress in Mathematics}.
 Birkh\"auser Verlag, Basel, 2014

\bibitem{To2} \textsc{X. Tolsa}: \emph{Uniform measures and uniform rectifiability}, J. London Math. Soc. (2) \textbf{92} (2015), 1--18
\end{thebibliography}
\end{document}